\def\jobis#1{FF\fi
  \def\predicate{#1}%
  \edef\predicate{\expandafter\strip@prefix\meaning\predicate}%
  \edef\job{\jobname}%
  \ifx\job\predicate
}
\if\jobis{proposal}%
\DeclareMathOperator{\Bs}{Bs}
\DeclareMathOperator{\Div}{Div}
\DeclareMathOperator{\Pic}{Pic}
\DeclareMathOperator{\Supp}{Supp}
\DeclareMathOperator{\vol}{vol}
 \numberwithin{equation}{subsection}
 \numberwithin{footnote}{subsection}
 \newtheorem{cor}[subsection]{Corollary}
 \newtheorem{lem}[subsection]{Lemma}
 \newtheorem{prop}[subsection]{Proposition}
 \newtheorem{thm}[subsection]{Theorem}
 \newtheorem{conj}[subsection]{Conjecture}
    \newtheoremstyle{upright}%
        {8pt plus2pt minus4pt}%
        {8pt plus2pt minus4pt}%
        {\upshape}%
        {}%
        {\bfseries\scshape}%
        {}%
        {1em}%
        {}%
\theoremstyle{upright}
 \newtheorem{constr}[subsection]{Construction}
 \newcommand{\N}{\mathbb N}
 \newcommand{\PP}{\mathbb P}
 \newcommand{\Q}{\mathbb Q}
 \newcommand{\R}{\mathbb R}
 \newcommand{\Z}{\mathbb Z}
 \newcommand{\bir}{\dashrightarrow}
 \newcommand{\rddown}[1]{\left\lfloor{#1}\right\rfloor} 
\title{\large A\MakeLowercase{nti-pluricanonical systems on} F\MakeLowercase{ano varieties}}
\thanks{2010 MSC:
14J45, 
14E30, 
14C20, 
14E05. 
}
\thanks{
Keywords: Fano varieties, complements, linear systems, minimal model program}
\author{\large C\MakeLowercase{aucher} B\MakeLowercase{irkar}}
\date{\today}
\begin{document}
\maketitle

\begin{abstract}
In this paper, we study the linear systems $|-mK_X|$ on Fano varieties $X$ with klt singularities. 
In a given dimension $d$, we prove $|-mK_X|$ is non-empty and contains an element 
with ``good singularities" for some natural number $m$ depending only on $d$; if in addition $X$ is $\epsilon$-lc 
for some $\epsilon>0$, 
then we show that we can choose $m$ depending only on $d$ and $\epsilon$ so that $|-mK_X|$ defines a birational map.
Further, we prove Shokurov's conjecture on boundedness of complements, 
and show that certain classes of Fano varieties form bounded families.   
\end{abstract}

\tableofcontents


\section{\bf Introduction}

We work over an algebraically closed field of characteristic zero. Given a smooth projective variety $W$, 
the minimal model program predicts that $W$ is birational to a projective variety $Y$ with canonical singularities 
such that either $K_Y$ is ample, or $Y$ admits a fibration whose general fibres $X$ are Calabi-Yau varieties 
or Fano varieties (here we consider Calabi-Yau varieties in a weak sense, that is, by requiring $K_X\equiv 0$
without the vanishing $h^i(X,\mathcal{O}_X)=0$ for $0<i<\dim X$ which is assumed in some other contexts). 
In other words, one may say that, birationally, every variety is in some sense constructed  
from varieties $X$ with good singularities such that either $K_X$ is ample or numerically trivial or anti-ample.
So it is quite natural to study such special varieties with the hope of obtaining some sort of classification theory. 
They are also very interesting in moduli theory, differential geometry, arithmetic geometry, and mathematical physics. 

When $X$ is one-dimensional the linear system $|K_X|$  determines its geometry to a large extent. However, in higher dimension, 
one needs to study $|mK_X|$ or $|-mK_X|$ for all $m\in \N$ (depending on the type of $X$) 
in order to investigate the geometry of $X$. If $K_X$ is ample, 
then there is $m$ depending only on the dimension such that $|mK_{X}|$ defines a birational 
embedding into some projective space, by Hacon-M$^{\rm c}$Kernan [\ref{HM-bir-bnd}] and Takayama [\ref{Takayama}]. 
If $K_X$ is numerically trivial, there is $m$ such that $|mK_X|$ is non-empty but it is not 
clear whether we can choose $m$ depending only on the dimension. 
When $K_X$ is anti-ample, that is when $X$ is Fano, in this paper we study boundedness and singularity 
properties of the linear systems $|-mK_X|$ 
in a quite general setting in conjunction with Shokurov's theory of complements.\\

{\textbf{\sffamily{Effective non-vanishing.}}}
Our first result is a consequence of boundedness of complements (\ref{t-bnd-compl-usual} below). 
We state it separately because it involves little technicalities.

\begin{thm}\label{t-eff-non-van-fano}
Let $d$ be a natural number. Then there is a natural 
number $m$ depending only on $d$ such that if $X$ is any Fano 
variety of dimension $d$ with klt singularities, then the linear system $|-mK_X|$ is non-empty, that is, 
$h^0(-mK_X)\neq 0$. Moreover, the linear system 
contains a divisor $M$ such that $(X,\frac{1}{m}M)$ has lc singularities.
\end{thm}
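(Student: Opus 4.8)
The plan is to derive this statement directly from boundedness of complements, Theorem~\ref{t-bnd-compl-usual}. The reason the result is "essentially free" once that theorem is available is that an $n$-complement of $K_X$ is, almost by definition, an effective member of $|-nK_X|$ whose associated pair is lc; so the only thing one needs is that $n$ may be taken to depend only on the dimension $d$, which is exactly what boundedness of complements provides.

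\smallskip

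First I would check that a Fano variety $X$ of dimension $d$ with klt singularities meets the hypotheses of Theorem~\ref{t-bnd-compl-usual} with the boundary $B=0$: the pair $(X,0)$ is klt, hence lc, $X$ is of Fano type (witnessed by $B=0$ itself, since $-K_X$ is ample, hence big and nef), and $-(K_X+B)=-K_X$ is nef; the coefficient set of $B$ is trivial. That theorem then yields a natural number $n$, depending only on $d$, together with an $n$-complement $K_X+\Delta$ of $K_X$; that is, $\Delta\ge 0$, the pair $(X,\Delta)$ is lc, $n\Delta$ is an integral Weil divisor, and $n(K_X+\Delta)\sim 0$. The last relation gives $n\Delta\sim -nK_X$, so $M:=n\Delta$ is an effective integral divisor in $|-nK_X|$; in particular $h^0(-nK_X)\neq 0$. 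Taking $m=n$ establishes non-vanishing, and since $(X,\tfrac{1}{m}M)=(X,\Delta)$ is lc, the "moreover" clause follows at once.

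\smallskip

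I do not anticipate a genuine obstacle in this deduction: the statement is arranged precisely so that all of its substance is carried by Theorem~\ref{t-bnd-compl-usual}, and what remains is bookkeeping — confirming that a klt Fano is of Fano type and that $B=0$ is an admissible boundary for the complements theorem, and recalling that the definition of an $n$-complement forces $n\Delta$ to be integral, so that $n\Delta$ is an honest element of $|-nK_X|$ rather than merely a $\Q$-divisor. The difficulty, of course, lies entirely in the proof of boundedness of complements itself, which this argument takes as a black box.
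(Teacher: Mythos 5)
Your deduction is correct and is exactly the paper's own (Section 10 reduces Theorem \ref{t-eff-non-van-fano} to Theorem \ref{t-bnd-compl-usual} in a single line). In particular you are right that with $B=0$ the coefficient conditions in the definition of an $n$-complement become vacuous and $nB^+=n(K_X+B^+)-nK_X$ is automatically integral, so $M:=nB^+$ is an honest element of $|-nK_X|$ with $(X,\tfrac1n M)$ lc.
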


 Obviously the statement also holds if we replace Fano with the more general notion of weak Fano, that is, if 
$-K_X$ is nef and big. The theorem was proved by Shokurov in dimension two [\ref{shokurov-surf-comp}].\\

{\textbf{\sffamily{Effective birationality for $\epsilon$-lc Fano varieties.}}}
If we bound the singularities of $X$, we then have a much stronger statement than the 
non-vanishing of \ref{t-eff-non-van-fano}.

\begin{thm}\label{t-eff-bir-e-lc}
Let $d$ be a natural number and $\epsilon>0$ a real number. Then there is a natural 
number $m$ depending only on $d$ and $\epsilon$ such that if $X$ is any $\epsilon$-lc weak Fano 
variety of dimension $d$, then $|-mK_X|$ defines a birational map.
\end{thm}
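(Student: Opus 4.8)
The plan is to reduce Theorem~\ref{t-eff-bir-e-lc} to separating two general points of $X$ by global sections of $-mK_X$, and to produce those sections by the standard ``isolated non-klt centre plus vanishing'' mechanism of Hacon--M$^{\rm c}$Kernan--Xu, with all of its inputs made uniform in $d$ and $\epsilon$ by means of the boundedness of complements (\ref{t-bnd-compl-usual}) and the effective non-vanishing (\ref{t-eff-non-van-fano}). It suffices to find $m=m(d,\epsilon)$ such that for two general points $x,y\in X$ there is $0\le D\sim_\Q -mK_X$ with $(X,D)$ not klt at $x$, lc on a punctured neighbourhood of $x$ with $\{x\}$ an isolated non-klt centre carrying a unique non-klt place, and $(X,D)$ also not klt at $y$. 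Writing $-mK_X=K_X+L$ with $L:=-(m+1)K_X$, the divisor $L-D\sim_\Q -K_X$ is nef and big, so Nadel vanishing gives $H^1\bigl(X,\mathcal J(X,D)\otimes\mathcal O_X(-mK_X)\bigr)=0$; hence $H^0(-mK_X)$ surjects onto the length-one skyscraper at $x$, and, letting $y$ vary over a dense set and using that $x$ was general, $|-mK_X|$ separates general points and therefore defines a birational map. Everything reduces to the uniform construction of $D$, which breaks into a volume estimate and a dimension induction.

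The volume estimate is a uniform lower bound $\vol(-K_X)\ge v$ with $v=v(d,\epsilon)>0$. Such a bound fails for klt, as opposed to $\epsilon$-lc, weak Fano $d$-folds, which is precisely where the hypothesis is used; granting it, a Riemann--Roch and interpolation computation gives, for some $n$ bounded in terms of $d$ and $v$ (of size roughly $2d/v^{1/d}$), a divisor $0\le D_0\sim_\Q -nK_X$ with arbitrarily large multiplicity at the general point $x$, so $(X,D_0)$ is not klt at $x$; let $V\ni x$ be a minimal non-klt centre. The lower bound on $\vol(-K_X)$ is itself one of the hard points: I would prove it by induction on $d$ out of \ref{t-bnd-compl-usual} and \ref{t-eff-non-van-fano}, the idea being that a bounded $n$-complement of $X$, together with control of its singularities, lets one cut $X$ down to a lower-dimensional generalised polarised pair of bounded type whose volume is bounded below by the inductive hypothesis, and then compare the two volumes through the adjunction formula.

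For the dimension induction: if $\dim V=0$ we finish after a routine tie-breaking perturbation of $D_0$ making $\{x\}$ an isolated non-klt centre with a unique place (the condition at $y$ being arranged by adding a small multiple of a non-klt divisor at $y$, produced the same way from the volume bound). If $\dim V>0$, pass to a suitable model of $V$ and apply adjunction $(K_X+D_0)|_V=K_V+\Theta_V+M_V$, with $\Theta_V$ the different and $M_V$ a nef moduli part; then $(V,\Theta_V+M_V)$ is a generalised polarised pair of dimension $<d$ on which $-K_X|_V$ is still nef and big. Using the lower-dimensional instances of this entire circle of statements --- boundedness of complements and effective birationality for generalised polarised pairs of bounded type --- one finds on $V$ a complement or a non-klt divisor with bounded coefficients which, lifted back to $X$ by the same vanishing used above, enlarges the non-klt locus of $(X,D_0)$ and strictly lowers the dimension of the minimal non-klt centre through $x$, while keeping the total boundary comparable to a bounded multiple of $-K_X$. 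Iterating at most $d$ times yields the desired isolated non-klt centre at $x$, with $m$ controlled by $n$ and the boundedness constants, hence depending only on $d$ and $\epsilon$.

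The main obstacle is the self-consistency of this dimension induction, i.e.\ controlling the adjunction data $(V,\Theta_V+M_V)$ finely enough to feed it back into the machinery. Two things are genuinely hard. First, $(V,\Theta_V)$ need not be $\epsilon'$-lc for any uniform $\epsilon'$, and a priori the coefficients of $\Theta_V$ need not lie in a finite set, so one must invoke the ACC for log canonical thresholds and the finiteness built into boundedness of complements to pin down the admissible singularity invariants on $V$. Second, the nef part $M_V$ must be shown to be bounded --- e.g.\ to become Cartier, or to have bounded Cartier index and bounded effective defect, after a bounded base change --- so that Riemann--Roch and complements behave uniformly on $V$. It is for these reasons that the theorem is proved not in isolation but as one clause of a simultaneous induction on dimension together with boundedness of complements for generalised polarised pairs; granting the lower-dimensional instances of that induction, the construction of $D$ above goes through and produces $m=m(d,\epsilon)$.
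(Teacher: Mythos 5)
Your framework — producing a divisor $D\sim_\Q -mK_X$ with an isolated non-klt centre at a general point $x$, then applying Nadel vanishing to lift a section killing a second general point $y$ — is indeed the Hacon--M$^{\rm c}$Kernan--Xu template underlying the paper's argument, and your diagnosis of the two hard points in the dimension induction (lack of a uniform $\epsilon'$-lc bound for the different $\Theta_V$, and control of the moduli part $M_V$) is accurate. But the architecture you wrap around that template is different from the paper's, and the place where it differs is precisely where a gap opens up: you build the whole construction on a uniform lower bound $\vol(-K_X)\ge v(d,\epsilon)>0$, proved in advance, and neither the paper nor your sketch actually delivers that bound.

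Your suggested route for the lower bound — take a bounded complement, cut down to a non-klt centre $V$, get a lower-dimensional generalised pair of bounded type, invoke the inductive hypothesis to bound $\vol(-K_X|_V)$ from below, then ``compare through the adjunction formula'' — does not close. Having a lower bound for the volume of $-K_X$ restricted to a positive-dimensional non-klt centre $V$ does not, by any adjunction or Fujita-type inequality, give a lower bound for $\vol(-K_X)$ on $X$: the volume on $X$ can be arbitrarily small while the restricted volume stays away from zero (one direction of comparison fails; there is no uniform ``reverse subadjunction'' for volumes). This lower bound for $\epsilon$-lc Fanos is in fact equivalent, up to constants, to Theorem~\ref{t-eff-bir-e-lc} itself and to a piece of the BAB conjecture, and the paper pointedly avoids needing it.

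What the paper does instead (Propositions~\ref{p-eff-bir-delta-1}, \ref{p-eff-bir-delta-2}, proved in Section~4 and invoked in Section~10 together with Theorem~\ref{t-eff-non-van-fano}) is to introduce the auxiliary integer $n$ defined so that $\vol(-nK_X)>(2d)^d$ with no upper bound on $n$ assumed, run the covering-family/subadjunction machinery on $-nK_X$, and use the bounded complement supplied by effective non-vanishing not to bound volumes but as a comparison divisor $N$ with coefficients $\ge\delta$: after descending to a log bounded model $\overline W$, the divisor $\frac{1}{\delta m}N$ becomes $\R$-linearly equivalent to something with coefficients of size $O(1/m)$ supported on a fixed bounded divisor, and Proposition~\ref{p-non-term-places} then forces $m/n$ and ultimately $m$ to be bounded; the volume lower bound $\vol(-K_X)\ge 1/m^d$ drops out afterwards as a corollary, as noted in the remark following the theorem statement. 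So the paper's use of the complement is as a forcing term in a singularity estimate on a bounded family, not as an input to a Riemann--Roch count. Your proposal cannot be repaired simply by plugging in a different proof of the lower bound, because no such proof is available independently of the theorem; the inversion of logical order is the essential difficulty, and it is the reason the paper is structured the way it is.

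Two smaller points. First, your ``routine tie-breaking'' to make $\{x\}$ an isolated non-klt centre with a unique place is not routine here: showing the relevant centre is isolated is Steps~1--2 of Proposition~\ref{p-lift-section-lcc} and requires both the $\epsilon$-lc hypothesis and adjunction for fibre spaces (Lemma~\ref{l-adj-fib-eps-lc}), after having controlled the restricted singularity invariants via Lemma~\ref{l-sub-bnd-on-gen-subvar}. Second, your dimension induction proposes to ``lift a complement or a non-klt divisor from $V$ back to $X$ by vanishing'', but the paper's contradiction does not proceed by lifting anything from $V$; it pushes the problem down to a log bounded model of $V$ and invokes the ACC-flavoured boundedness of Proposition~\ref{p-non-term-places} there. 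These are not cosmetic differences; they are what make the uniform constants actually come out.
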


Note that $m$ indeed depends on $d$ as well as $\epsilon$ because the theorem implies 
the volume $\vol(-K_X)$ is bounded from below 
by $\frac{1}{m^d}$. Without the $\epsilon$-lc assumption, $\vol(-K_X)$ can get arbitrarily small 
or large [\ref{HMX2}, Example 2.1.1]. In dimension $2$, the theorem is a consequence of Alexeev [\ref{Alexeev}], 
and in dimension $3$, special cases are proved by Jiang [\ref{Jiang}] using different methods. 
Paolo Cascini informed us that he and James M$^{\rm{c}}$Kernan have independently proved the 
theorem for canonical singularities, that is when $\epsilon=1$, using quite different methods.\\

{\textbf{\sffamily{Boundedness of certain classes of Fano varieties.}}} 
Fano varieties come in two flavours: \emph{non-exceptional} and \emph{exceptional}. A Fano variety $X$ is non-exceptional if there is 
$0\le P\sim_\Q -K_X$ such that $(X,P)$ is not klt. Otherwise we say $X$ is exceptional.
In the non-exceptional case we can create non-klt centres which sometimes can be used to do induction, i.e. 
lift sections and complements from such centres (eg, see \ref{p-bnd-compl-non-klt} below). 
We do not have that luxury in the exceptional case. Instead 
we show that there is a ``limited number" of them, that is:

\begin{thm}\label{t-BAB-exc-usual}
Let $d$ be a natural number. Then the set of exceptional weak Fano varieties of dimension $d$ 
forms a bounded family. 
\end{thm}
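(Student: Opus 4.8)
The plan is to combine boundedness of complements (\ref{t-bnd-compl-usual}, Shokurov's conjecture, proved in this paper) with effective birationality (\ref{t-eff-bir-e-lc}), using exceptionality twice: first to force a uniform bound on singularities, and then to bound the anti-canonical volume from above. Once these are in place, standard boundedness results for $\epsilon$-lc Fano varieties of bounded volume will finish the argument. Since \ref{t-eff-non-van-fano}, \ref{t-eff-bir-e-lc} and \ref{t-bnd-compl-usual} are all available for weak Fano varieties, I would work throughout with weak Fanos and never reduce to the strictly Fano case.

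First I would use boundedness of complements to fix a natural number $n=n(d)$ so that every weak Fano $X$ of dimension $d$ has an $n$-complement $K_X+B$: thus $B\ge 0$, $(X,B)$ is lc, and $n(K_X+B)\sim 0$. If $X$ is exceptional then $0\le B\sim_\Q -K_X$ forces $(X,B)$ to be klt. The key point is that this already bounds the singularities of $X$ itself. Given a prime divisor $E$ over $X$, pick a birational morphism $f\colon Y\to X$ on which $E$ is a divisor and write $K_Y+B_Y=f^*(K_X+B)$; then $n(K_Y+B_Y)=f^*\big(n(K_X+B)\big)\sim 0$, so $nB_Y$ is an integral divisor and hence the coefficient $1-a(E,X,B)$ of $E$ in $B_Y$ lies in $\tfrac1n\Z$. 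Since $(X,B)$ is klt, $a(E,X,B)>0$, so either $a(E,X,B)\ge 1$ or $1-a(E,X,B)\in\{\tfrac1n,\dots,\tfrac{n-1}{n}\}$; in both cases $a(E,X,B)\ge\tfrac1n$, and as $a(E,X,0)\ge a(E,X,B)$ we get $a(E,X,0)\ge\tfrac1n$. Therefore every exceptional weak Fano of dimension $d$ is $\tfrac1n$-lc, and Theorem \ref{t-eff-bir-e-lc} applies with $\epsilon=\tfrac1n$: there is $m=m(d)$ with $|-mK_X|$ birational, so in particular $\vol(-K_X)=\tfrac1{m^{d}}\vol(-mK_X)$ is bounded below by a positive constant depending only on $d$.

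Next I would bound $\vol(-K_X)$ from above, again using exceptionality. Suppose the volume were large. A general closed point $x\in X$ is a smooth point of $X$, and since $-K_X$ is big a standard section-counting argument produces, for some $\lambda<1$ that stays bounded away from $1$ in terms of $d$ alone, a divisor $0\le D\sim_\Q -\lambda K_X$ with $\mult_xD>d$; then $(X,D)$ is not lc at $x$, and adding a $\Q$-effective divisor $\sim_\Q(1-\lambda)(-K_X)$ (which exists as $-K_X$ is big) yields $0\le P\sim_\Q -K_X$ with $(X,P)$ not klt, contradicting exceptionality. Hence $\vol(-K_X)\le v(d)$ for an explicit constant $v(d)$.

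Finally, $X$ is $\tfrac1n$-lc of dimension $d$, the fixed system $|-mK_X|$ defines a birational map, and $\vol(-K_X)$ lies in a fixed interval of positive reals. By boundedness of $\epsilon$-lc Fano varieties of fixed dimension and bounded anti-canonical volume---the birational map from $|-mK_X|$ together with the volume bound gives log birational boundedness, which the $\tfrac1n$-lc condition upgrades to boundedness, in the spirit of Hacon--M$^{\rm c}$Kernan--Xu---the varieties $X$ form a bounded family. In this plan the one serious input is boundedness of complements (\ref{t-bnd-compl-usual}); granting it, the singularity bound is immediate, and the remaining delicate points are the manufacture of a non-klt complement out of a large volume and the concluding passage from bounded volume to boundedness, which itself relies on effective birationality (\ref{t-eff-bir-e-lc}).
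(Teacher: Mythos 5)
There is a genuine circularity in your argument: you invoke Theorem \ref{t-bnd-compl-usual} in dimension $d$ to produce the $n$-complement $K_X+B$, but in this paper the proof of Theorem \ref{t-bnd-compl-usual} (and the more general Theorem \ref{t-bnd-compl}) in dimension $d$ goes \emph{through} boundedness of exceptional pairs in dimension $d$. Concretely, Proposition \ref{p-BAB-exc-to-compl} derives Theorem \ref{t-bnd-compl} in dimension $d$ by assuming Theorem \ref{t-BAB-exc} in dimension $d$, and Proposition \ref{p-from-compl-to-BAB-exc} (which proves Theorem \ref{t-BAB-exc}) in turn invokes Lemma \ref{l-from-compl-to-BAB-exc-usual}, i.e.\ the very statement you are trying to prove. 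Boundedness of complements in dimension $d$ is not an available black box at this point of the induction: what \emph{is} available is Theorem \ref{t-bnd-compl} in dimension $\le d-1$ and Theorem \ref{t-bnd-compl-usual-local} in dimension $d$ (relative complements, proved in Section 8 by dropping a dimension).

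Apart from the circularity, your constituent steps are essentially correct, and the paper does all of them in some form. Deducing $\tfrac1n$-lc from a klt $n$-complement is a clean observation (the pullback $nB_Y$ is integral since $n(K_Y+B_Y)\sim 0$ and $nK_Y$ is integral), and the upper bound on $\vol(-K_X)$ by producing a high-multiplicity divisor at a general point and violating exceptionality is exactly the mechanism of Lemma \ref{l-exc-bnd-vol}; the concluding passage via effective birationality, bounded volume, log birational boundedness and [HMX2] is what the proof of Theorem \ref{t-BAB-good-boundary} does. What the paper does differently, precisely to avoid the circularity, is (a) bound the singularities of exceptional $X$ \emph{without} complements, using the global ACC for generalised pairs via Lemma \ref{l-exc-bnd-sing}; and (b) manufacture a complement for $X$ not from the global Theorem \ref{t-bnd-compl-usual}, but by extracting a divisor of small log discrepancy, passing to a Mori fibre structure, bounding the general fibres with Theorem \ref{t-BAB-good-boundary}, and then gluing complements using Proposition \ref{p-bnd-compl-non-big} together with Theorem \ref{t-bnd-compl-usual-local} in dimension $d$ and Theorem \ref{t-bnd-compl} in dimension $\le d-1$. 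When the minimal log discrepancies approach $1$, the paper instead uses Proposition \ref{p-eff-bir-tau} to get effective birationality directly. If you replace your first step (existence of an $n$-complement via Theorem \ref{t-bnd-compl-usual}) by this more careful construction, or by Lemma \ref{l-exc-bnd-sing} for the singularity bound, the rest of your outline goes through.
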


Exceptional pairs and exceptional generalised polarised pairs can be defined similarly. 
We will extend \ref{t-BAB-exc-usual} to such pairs (see \ref{t-BAB-exc} below) which is important for our proofs.
In a different direction we have: 

\begin{thm}\label{t-BAB-good-boundary}
Let $d$ be a natural number, and $\epsilon$ and $\delta$ be positive real numbers. 
Consider projective varieties $X$ equipped with a boundary $B$ such that: 
\begin{itemize}
\item $(X,B)$ is $\epsilon$-lc of dimension $d$, 

\item $B$ is big and $K_X+B\sim_\R 0$, and 

\item the coefficients of $B$ are more than or equal to $\delta$.\\
\end{itemize}
Then the set of such $X$ forms a bounded family.
\end{thm}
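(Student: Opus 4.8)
The plan is to combine the effective birationality of Theorem \ref{t-eff-bir-e-lc}, boundedness of complements (\ref{t-bnd-compl-usual}), and an upper bound on the anticanonical volume, and then deduce boundedness from birational boundedness in the by-now standard way. First some reductions. Since $(X,B)$ is $\epsilon$-lc, every coefficient of $B$ lies in $[\delta,1-\epsilon]$; since $(X,B)$ is klt with $B$ big and $K_X+B\sim_\R 0$, the variety $X$ is of Fano type, so the MMP works on it for arbitrary $\R$-divisors. Passing to a $\Q$-factorialisation and then running a $-K_X$-MMP (which terminates because $X$ is of Fano type and $-K_X\sim_\R B$ is big), we reach $\phi\colon X\bir X'$ with $-K_{X'}$ nef and big. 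Because $K_X+B\sim_\R 0$ this MMP is crepant for $(X,B)$, so $(X',B':=\phi_*B)$ is again $\epsilon$-lc, $B'$ is big with coefficients $\ge\delta$, and $K_{X'}+B'\sim_\R 0$. The original $X$ is an $\epsilon$-lc crepant model of the log Calabi--Yau pair $(X',B')$, and $\epsilon$-lc crepant models over a bounded family form a bounded family, so it is enough to bound $X'$; thus we may assume $X$ itself is $\Q$-factorial $\epsilon$-lc weak Fano carrying such a good boundary $B$.

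Let $v=\vol(-K_X)=\vol(B)$. The lower bound $v\ge 1/m^d$ with $m=m(d,\epsilon)$ is immediate from Theorem \ref{t-eff-bir-e-lc}, since $|-mK_X|$ defines a birational map. The crux is the upper bound $v\le M(d,\epsilon,\delta)$; this is exactly where the hypothesis $\delta$ enters, as without a lower bound on coefficients $\vol(-K_X)$ is unbounded. If $v$ were very large, then a general point $x\in X$ would lie on some $0\le D\sim_\R -K_X$ with $\mult_xD$ as large as we wish (a Riemann--Roch estimate on the big divisor $-K_X$). Feeding $D$ together with $B$ into the $\R$-linear system $\sim_\R -K_X$ and using the tie-breaking and perturbation technique from the proof of Theorem \ref{t-eff-bir-e-lc}, one manufactures a boundary $\Gamma\sim_\R -K_X$, hence a log Calabi--Yau pair $(X,\Gamma)$, whose singularity at the general point $x$ is forced to be worse than $\epsilon$-lc while its coefficients along the components of $B$ stay controlled in terms of $\delta$; comparing discrepancies with the $\epsilon$-lc pair $(X,B)$ and letting $x$ vary yields a contradiction once $v\gg 0$. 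Alternatively, this volume bound can be routed through the exceptional/non-exceptional dichotomy: in the exceptional case it follows from the boundedness of exceptional pairs \ref{t-BAB-exc}, and in the non-exceptional case from creating a non-klt centre, performing adjunction to it (the coefficient lower bound surviving the adjunction), and inducting on the dimension. I expect this upper bound to be the main obstacle.

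Granting $1/m^d\le v\le M$, we finish by birational boundedness. Boundedness of complements (\ref{t-bnd-compl-usual}) gives $n=n(d)$ and an $n$-complement $K_X+B^+$ of $K_X$, which bounds the Cartier index of $K_X$; together with $\dim X=d$ and the $\epsilon$-lc hypothesis this bounds the singularities of $X$. By Theorem \ref{t-eff-bir-e-lc}, $|-mK_X|$ defines a birational map for $m=m(d,\epsilon)$; on a log resolution $W\to X$ its moving part $M_W$ satisfies $\vol(M_W)\le\vol(-mK_X)=m^dv\le m^dM$, so $-K_X$ is big with bounded volume and bounded Cartier index, and by the Hacon--McKernan--Xu birational boundedness results $X$ is birationally bounded. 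Upgrading to genuine boundedness is then standard: using the bounds on the volume, on the Cartier index, and on discrepancies, together with the ACC for log canonical thresholds, one runs an MMP on a member of a bounded family of resolutions and, since $-K_X$ is nef and big, identifies $X$ inside a bounded family. Everything here besides the volume upper bound of the previous paragraph is either a formal reduction or an application of machinery already available.
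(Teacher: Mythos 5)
Your architecture has a fatal circularity. Theorem \ref{t-eff-bir-e-lc} and Theorem \ref{t-bnd-compl-usual} are both downstream of Theorem \ref{t-BAB-good-boundary} in the paper's inductive scheme: \ref{t-eff-bir-e-lc} is deduced from \ref{t-eff-non-van-fano}, which rests on \ref{t-bnd-compl-usual}, which rests on \ref{t-bnd-compl}, whose proof (via \ref{t-BAB-exc} and \ref{p-from-compl-to-BAB-exc}, and then \ref{p-from-bnd-lct-to-bnd-var}) calls Theorem \ref{t-BAB-good-boundary} twice. So you cannot use either of those theorems here. The same objection kills the ``alternative route'' you offer for the volume bound through boundedness of exceptional pairs (\ref{t-BAB-exc}). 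What the paper actually uses is Proposition \ref{p-eff-bir-delta-2}, whose hypothesis is precisely the extra datum you have in this theorem, namely a klt $\Q$-complement $B$ with coefficients bounded below by $\delta$; that proposition is self-contained (Section 4) and does not invoke the complements machinery. Likewise, to bound the Cartier index you do not have access to an $n$-complement; the paper instead reduces to a weak Fano $X'$, proves log birational boundedness directly, and applies [\ref{HMX2}, Theorem 1.6] after perturbing $B$ to $\Delta=(1+t)B$ so that $K_X+\Delta$ is big, and finally invokes [\ref{HX}, Theorem 1.3] to return from $X'$ to the original $X$ (at that point, once $X'$ is known to be bounded, a bounded complement exists by a compactness argument, not by Theorem \ref{t-bnd-compl-usual}).

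The upper bound on $\vol(-K_X)$ is indeed the crux, as you note, but your sketch is too coarse to constitute a proof. Producing a divisor $D\sim_\R -K_X$ with high multiplicity at a general point is only the start; the paper's Proposition \ref{p-bnd-vol-good-boundary} is a multi-step argument that (i) builds a covering family of minimal non-klt centres $G$ of $(X,\Delta)$ with $\Delta\sim_\Q -aK_X$ and bounded $\vol(-K_X|_G)$, (ii) uses the Hacon--McKernan--Xu subadjunction theorem \ref{t-subadjunction} together with Lemma \ref{l-sub-bnd-on-gen-subvar} to control $K_F+\Theta_F+P_F$ on the normalisation $F$ of $G$, (iii) exploits connectedness and Lemma \ref{l-subadjunction-non-klt} to force a bad place on $F$, and (iv) derives a contradiction via Proposition \ref{p-non-term-places} on a log bounded model, using crucially that the boundary coefficients are $\ge\delta$. ``Tie-breaking plus comparing discrepancies'' names the spirit but not the mechanism, and as written your argument neither survives the induction on dimension nor explains how the $\delta$ bound propagates through adjunction.
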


Note that $B$ is not necessarily $\R$-Cartier. Bigness means $B\sim_\R H+D$ where $H$ is an ample 
$\R$-divisor and $D$ is an effective $\R$-divisor.

Hacon and Xu proved the theorem assuming the coefficients of $B$ belong to a fixed DCC set of rational 
numbers [\ref{HX}, Theorem 1.3] relying on the special case when $-K_X$ is ample [\ref{HMX2}, Corollary 1.7].
The theorem can be viewed as a special case of the following conjecture due to Alexeev and the Borisov brothers.

\begin{conj}[BAB]\label{conj-BAB}
Let $d$ be a natural number and $\epsilon$ a positive real number. Then the set of 
$\epsilon$-lc Fano varieties $X$ of dimension $d$ forms a bounded family. 
\end{conj}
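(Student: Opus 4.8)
The plan is to argue by induction on $d$, using as the main inputs the effective birationality of \ref{t-eff-bir-e-lc}, the boundedness of complements \ref{t-bnd-compl-usual}, the boundedness of exceptional weak Fano varieties and pairs (\ref{t-BAB-exc-usual} and \ref{t-BAB-exc}), and the boundedness of Fano varieties with a big bounded boundary \ref{t-BAB-good-boundary}; all of these, suitably phrased for generalised polarised pairs, are available in dimension $\le d$, and the conjecture itself is assumed in dimension $<d$. The first step is to reduce to an \emph{a priori} upper bound for the anti-canonical volume $\vol(-K_X)$. A lower bound is free, since \ref{t-eff-bir-e-lc} gives $m=m(d,\epsilon)$ with $|-mK_X|$ birational, whence $\vol(-K_X)\ge m^{-d}$. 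For the converse direction --- that $\epsilon$-lc Fano $d$-folds with $\vol(-K_X)\le v$ already form a bounded family --- one refines effective birationality using this lower bound together with a bounded complement $K_X+B^+\sim_\Q 0$ coming from \ref{t-bnd-compl-usual}: a fixed small perturbation makes the boundary big, its coefficients lie in $\tfrac1n\Z$ and so are bounded below, the $\epsilon$-lc hypothesis controls the singularities, and \ref{t-BAB-good-boundary} applies. So it suffices to bound $\vol(-K_X)$ from above in terms of $d$ and $\epsilon$.

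Suppose this fails, so there are $\epsilon$-lc Fano $d$-folds $X_i$ with $v_i:=\vol(-K_{X_i})\to\infty$. Since exceptional varieties are bounded by \ref{t-BAB-exc-usual}, and hence have bounded volume, after passing to a subsequence every $X_i$ is non-exceptional, and I would exploit the large volume to create non-klt centres. Fix a general point $x_i\in X_i$. A standard estimate --- large volume forces high multiplicity --- gives $0\le\Gamma_i\sim_\Q -K_{X_i}$ with $\mult_{x_i}\Gamma_i$ as large as we wish, so for suitable $\lambda_i\to 0$ the pair $(X_i,\lambda_i\Gamma_i)$ is non-klt at $x_i$; passing to the log canonical threshold yields $t_i\le\lambda_i$ and a minimal non-klt centre $V_i\ni x_i$ of the log weak Fano pair $(X_i,t_i\Gamma_i)$, for which $-(K_{X_i}+t_i\Gamma_i)\sim_\Q (1-t_i)(-K_{X_i})$ is still nef and big. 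Replacing this pair by a bounded complement, we may assume $(X_i,B_i)$ is lc but not klt, $K_{X_i}+B_i\sim_\Q 0$, the coefficients of $B_i$ lie in $\tfrac1n\Z$, and $V_i$ is still a non-klt centre.

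If $\dim V_i=0$ for infinitely many $i$, then $X_i$ carries an isolated non-klt point together with a bounded complement and so behaves like an exceptional variety; one then bounds it by a refinement of the argument behind \ref{t-BAB-exc-usual}. Otherwise $\dim V_i>0$, and I would descend to $V_i$ by divisorial adjunction in the generalised-pair sense --- the descent of complements from non-klt centres anticipated in \ref{p-bnd-compl-non-klt} --- obtaining a generalised log Calabi--Yau pair $(V_i,\Theta_i+\mathbf{M}_i)$ of dimension $<d$ with $K_{V_i}+\Theta_i+\mathbf{M}_i\sim_\Q (K_{X_i}+B_i)|_{V_i}\sim_\Q 0$, whose moduli part $\mathbf{M}_i$ is nef and whose boundary part $\Theta_i$ has coefficients forced, via the theory of complements applied on the fibres of the adjunction, into a fixed finite set. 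One then wants to invoke the induction hypothesis --- the lower-dimensional case of the conjecture, or of \ref{t-BAB-good-boundary}, for generalised pairs --- to conclude that $V_i$ together with the polarisation $(-K_{X_i})|_{V_i}$ lies in a bounded family, and finally to lift this to a bound on $v_i$, which is the desired contradiction.

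The step I expect to be the main obstacle is exactly this adjunction-and-lifting loop. First, the complement $(X_i,B_i)$ is only lc, so genuine work is needed to guarantee that $(V_i,\Theta_i+\mathbf{M}_i)$ is $\epsilon'$-lc for a uniform $\epsilon'>0$ and that $\Theta_i+\mathbf{M}_i$ is big with coefficients bounded below --- precisely the hypotheses under which the lower-dimensional boundedness results can be applied; securing them uniformly requires a delicate interplay between the $\epsilon$-lc property of $X_i$ away from $V_i$, the choice of $\Gamma_i$, and the bounded complement. Second, the lifting: turning ``$V_i$ is bounded, of bounded $(-K_{X_i})$-degree'' into an actual bound on $(-K_{X_i})^d$ --- using that $x_i$ was general and that $V_i$ was carved out by $\Gamma_i\sim_\Q -K_{X_i}$ with large multiplicity at $x_i$ --- is the crux, and is where the ideas going beyond what is already proved in this paper are concentrated. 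Everything else is bookkeeping on top of that machinery.
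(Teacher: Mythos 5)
Conjecture \ref{conj-BAB} is stated in this paper as a \emph{conjecture}, not a theorem: the paper explicitly says only that its results ``will hopefully lead to an eventual proof'' and does not give one. So there is no proof in the paper to compare yours against, and your submission should be read as a strategy sketch rather than a proof.

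As a sketch, your outline points at the expected attack (and at the one that did eventually succeed, in Birkar's follow-up work on linear systems), but it has two places where it does not close. First, a circularity/scope problem in the reduction to volume boundedness. You invoke the equivalence ``$\vol(-K_X)\le v$ plus a bounded complement implies boundedness via \ref{t-BAB-good-boundary}.'' But \ref{t-BAB-good-boundary} requires $(X,B)$ to be $\epsilon$-lc with $B$ big and coefficients bounded below, whereas the complement $K_X+B^+$ produced by \ref{t-bnd-compl-usual} is only lc and may well be non-klt --- indeed in the non-exceptional case it must be, by definition. A ``fixed small perturbation'' cannot repair an honestly lc (non-klt) complement into an $\epsilon'$-lc one without contracting or extracting something, and the $\epsilon$-lc hypothesis on $X$ alone does not propagate to $(X,B^+)$. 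This is exactly why the paper does not deduce BAB from \ref{t-BAB-good-boundary} and \ref{t-bnd-compl-usual}, and instead proves the weaker \ref{t-BAB-to-bnd-vol}, which moreover \emph{assumes} \ref{conj-BAB} in dimension $d-1$. Your inductive frame is consistent with that, but it does not then yield the converse implication you need.

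Second, the adjunction-and-lifting loop --- the step you flag yourself --- is precisely the genuine gap. Passing to a minimal non-klt centre $V_i$ and applying generalised adjunction gives a pair $(V_i,\Theta_i+\mathbf{M}_i)$ whose discriminant coefficients are controlled via \ref{l-div-adj-dcc} / \ref{l-fib-adj-dcc} and whose moduli part is nef, but there is no result in this paper that forces this pair to be uniformly $\epsilon'$-lc, nor that recovers an upper bound on $(-K_{X_i})^d$ from boundedness of $V_i$ together with the polarisation $(-K_{X_i})|_{V_i}$. The technology in \ref{l-sub-bnd-on-gen-subvar}, \ref{p-lift-section-lcc}, and \ref{l-subadjunction-non-klt} is what gets close to this in the effective-birationality arguments, but those are used there to bound $m$, not the volume, and they are applied to general members of covering families rather than to a single minimal non-klt centre through a general point. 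Your ``refinement of the argument behind \ref{t-BAB-exc-usual}'' in the $\dim V_i=0$ branch also has no analogue in the paper: the exceptional boundedness proof (\ref{l-from-compl-to-BAB-exc-usual}, \ref{p-from-compl-to-BAB-exc}) leans crucially on the exceptionality hypothesis to get klt complements, which is unavailable in the non-exceptional case. In short, what you have is a plausible roadmap with the hard lemmas left as conjectures; the paper's author was aware of exactly these gaps and is careful not to claim \ref{conj-BAB}.
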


The conjecture is often stated in the log case for pairs $(X,B)$ but it is not 
hard to reduce it to the above version. 
Using the results and ideas developed in 
this paper the conjecture is proved in the sequel [\ref{B-BAB}].
The conjecture was previously known  in dimension two  [\ref{Alexeev}], for smooth 
$X$ [\ref{KMM-smooth-fano}],  for toric $X$  
[\ref{A-L-Borisov}],  for threefolds of Picard number one and terminal singularities 
[\ref{kawamata-term-3-folds}],  
for threefolds with canonical singularities [\ref{KMMT-can-3-folds}], in dimension
 three with $K_X$ having bounded Cartier index [\ref{A-Borisov}],  in any dimension with $K_X$ having bounded Cartier index  [\ref{HMX2}, Corollary 1.8], for spherical varieties [\ref{Alexeev-Brion}], and the case 
 [\ref{HMX2}, Corollary 1.7] mentioned above.

Next we show \ref{conj-BAB} in lower dimension implies a weak form of \ref{conj-BAB}, more precisely: 

\begin{thm}\label{t-BAB-to-bnd-vol}
Let $d$ be a natural number and $\epsilon$ a positive real number. 
Assume Conjecture \ref{conj-BAB} holds in dimension $d-1$. Then there is a 
number $v$ depending only on $d$ and $\epsilon$ such that if $X$ is an $\epsilon$-lc weak Fano variety 
of dimension $d$, then $\vol(-K_X)\le v$. In particular, such $X$ are birationally bounded. 
\end{thm}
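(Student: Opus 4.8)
The plan is to split the statement into two parts: first reduce to the estimate $\vol(-K_X)\le v$, and then prove that estimate by using a large anticanonical volume to manufacture a non-klt centre on $X$ and push the problem down to dimension $d-1$, where Conjecture \ref{conj-BAB} is available. For the reduction itself, granting $\vol(-K_X)\le v$, birational boundedness is formal: by \ref{t-eff-bir-e-lc} there is $m=m(d,\epsilon)$ such that $|-mK_X|$ defines a birational map $\phi$, and its image $Y\subset\PP^{N-1}$, with $N=h^0(-mK_X)$, is a nondegenerate subvariety of degree at most $(-mK_X)^d=m^d\vol(-K_X)\le m^dv$; hence $N$ and $\deg Y$ are bounded, so $Y$ — and $X$, which is birational to it — lies in a bounded family.

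For the volume bound I would argue by contradiction: suppose $X_i$ is a sequence of $\epsilon$-lc weak Fano $d$-folds with $v_i:=\vol(-K_{X_i})\to\infty$. Fix a general closed point $x_i\in X_i$. Since $-K_{X_i}$ is nef and big with volume $v_i$, a Riemann--Roch/volume estimate in the style of Hacon--M$^{\rm c}$Kernan produces, for suitable divisible $n$, a divisor in $|-nK_{X_i}|$ of multiplicity of order $n\,v_i^{1/d}$ at $x_i$; rescaling and perturbing yields $0\le\Delta_i\sim_\Q-\mu_iK_{X_i}$ with $\mu_i\to 0$ such that $(X_i,\Delta_i)$ is not klt at $x_i$. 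The point of $\mu_i\to 0$ is that $-(K_{X_i}+\Delta_i)\sim_\Q(1-\mu_i)(-K_{X_i})$ remains nef and big with volume $\ge(1-\mu_i)^dv_i\to\infty$, so essentially none of the volume is lost.

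Next I would run the usual tie-breaking (adding a small multiple of a general element of $|-\ell K_{X_i}|$ and lowering coefficients) to replace $\Delta_i$ by $\Delta_i'\sim_\Q-\mu_i'K_{X_i}$, still with $\mu_i'\to 0$, so that $(X_i,\Delta_i')$ has a unique non-klt place over $X_i$ whose centre $V_i\ni x_i$ is a normal subvariety of dimension $\le d-1$, with $(X_i,\Delta_i')$ plt near the generic point of $V_i$. Adjunction onto $V_i$ (Kawamata subadjunction together with the canonical bundle formula, i.e. adjunction for generalised polarised pairs) produces a generalised pair $(V_i,\Theta_i+P_i)$, $P_i$ nef, with
\[
-(K_{V_i}+\Theta_i+P_i)\sim_\Q(1-\mu_i')(-K_{X_i})|_{V_i}
\]
nef and big on $V_i$. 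Then I would extract two facts: (a) because $x_i$ is general and the non-klt singularity was forced by a large multiplicity there, the centres $V_i$ sweep out $X_i$ and $\vol\big((-K_{X_i})|_{V_i}\big)$ is bounded below by a function of $v_i$ tending to infinity; and (b) $(V_i,\Theta_i+P_i)$ is a generalised pair of Fano type whose singularities are controlled by $d$ and $\epsilon$ alone, so that a $(K_{V_i}+\Theta_i)$-MMP leads to a generalised $\epsilon'$-lc weak Fano of dimension $\le d-1$, with $\epsilon'=\epsilon'(d,\epsilon)>0$. By Conjecture \ref{conj-BAB} in dimension $d-1$ (and the generalised-pair form of boundedness in dimension $\le d-1$, where \ref{t-BAB-good-boundary} and \ref{t-BAB-exc-usual} enter) these $V_i$ form a bounded family, so $\vol\big((-K_{X_i})|_{V_i}\big)$ is bounded above — contradicting (a). Hence the $v_i$ were bounded and $v=v(d,\epsilon)$ exists.

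The hard part will be the content of the last paragraph. First, transferring the largeness of $\vol(-K_X)$ from $X$ down to the created centre $V$: one needs $V$ not merely to exist but to carry a big divisor whose volume actually grows with $v$, which forces a careful covering-family/multiplicity argument at a \emph{general} point rather than at a fixed one. Second, verifying that $V$, equipped with the boundary $\Theta$ and the nef ``moduli'' part $P$ coming out of adjunction, genuinely falls in the range of lower-dimensional boundedness — i.e. bounding its singularities away from $0$ and absorbing $P$ — which is exactly why one must work throughout with generalised polarised pairs and deduce the generalised-pair version of boundedness in dimension $d-1$ from the plain Conjecture \ref{conj-BAB} in that dimension together with the other boundedness theorems of this paper.
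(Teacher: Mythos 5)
Your overall scaffolding — argue by contradiction, use the large anticanonical volume to create a non-klt centre $G\subset X$ through a general point, pass to $G$ via Koll\'ar/Kawamata subadjunction, and invoke BAB in dimension $d-1$ — is the same as the paper's, and your reduction of ``birationally bounded'' to the volume estimate is fine. But the mechanism you propose for deriving the contradiction is not the one that works, and as written it contains a claim that is actually false.

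The problem is your claim (a), that $\vol\bigl((-K_{X_i})|_{V_i}\bigr)$ is bounded \emph{below} by a quantity going to infinity with $v_i$. In the covering-family construction of \ref{ss-non-klt-centres}~(2) one \emph{iteratively cuts the centre down} precisely until $\vol\bigl(A|_G\bigr)\le d^d$ (or $\dim G=0$, which is excluded here by the connectedness principle since $-(K_X+\Delta)$ is ample). So the restricted anticanonical volume on the produced centres is bounded \emph{above} by design, not below; you cannot manufacture a contradiction out of it. Similarly your (b), that the adjunction pair $(V,\Theta+P)$ is automatically generalised $\epsilon'$-lc, is not what one gets: the paper in fact arranges the opposite — it deliberately forces $\Delta_F$ to be \emph{not} $\epsilon'$-lc (via Lemma~\ref{l-subadjunction-integral-div}) — and an $\epsilon'$-lc Mori fibre space on $F$ is obtained only after truncating coefficients and running a further MMP.

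What your sketch is missing is the actual source of the contradiction. The paper first arranges, in Steps 1–2, a prime divisor $D_i$ \emph{on $X_i$} with
\[
K_{X_i}+s_iD_i\sim_\Q 0,\qquad s_i\longrightarrow\infty,
\]
by extracting a divisor computing log discrepancy $\epsilon'$ for a nearly-trivial boundary $B_i\sim_\Q -a_iK_{X_i}$, running an MMP on $-D_i$, and handling a possible positive-dimensional base by BAB in lower dimension plus Lemma~\ref{l-bnd-fam-intersection}. It then uses Lemma~\ref{l-subadjunction-integral-div} to force $D_i|_{F_i}$ to appear with coefficient close to $1$ in $\Delta_{F_i}$, while Lemma~\ref{l-sub-bnd-on-gen-subvar} gives a sub-$\epsilon$-lc $\Lambda_{F_i}$ with $K_{F_i}+\Lambda_{F_i}=K_{X_i}|_{F_i}$; the difference $\Delta_{F_i}-\Lambda_{F_i}\sim_\Q a_is_iD_i|_{F_i}$ then carries a component of coefficient $>\epsilon-\epsilon'$. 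After passing to a Mori fibre space $\tilde F_i\to T_i$ whose general fibres are $\epsilon'$-lc and bounded by BAB, one rewrites $K_{\tilde F_i}\sim_\Q 0/T_i$ in a form $K+\text{(bounded boundary)}+r_i'(\text{integral divisor})\equiv 0$ with $r_i'\asymp r_i(\epsilon-\epsilon')/(s_ia_i)\to\infty$, and Lemma~\ref{l-bnd-fam-intersection} (a boundedness statement about such numerically-trivial decompositions on a bounded family) forbids this. In short, the contradiction comes from an \emph{unbounded coefficient in a numerically trivial boundary on a bounded family}, not from an unbounded restricted volume; without the divisor $D_i$ with $s_i\to\infty$ and Lemma~\ref{l-bnd-fam-intersection} there is no contradiction to be had along the lines you sketch.
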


In dimension $3$, the boundedness of $\vol(-K_X)$ was proved by Lai [\ref{Lai}] for $X$ of Picard number one
and by Jiang [\ref{Jiang-2}] in general who also proves the birational boundedness of $X$ in [\ref{Jiang}] using 
different methods. Theorem \ref{t-BAB-to-bnd-vol} gives new proofs of their results since 
Conjecture \ref{conj-BAB} is known in dimension $2$. The theorem is one of the crucial inductive steps
of the proof of \ref{conj-BAB} in [\ref{B-BAB}].\\ 

{\textbf{\sffamily{Boundedness of complements.}}}
Shokurov introduced the theory of complements while investigating threefold log flips [\ref{Shokurov-log-flips}, \S 5]. 
The theory originates from his earlier work on anticanonical systems on Fano threefolds [\ref{Shokurov-anticanonical}].  
The notion of complement involves both boundedness and singularities of the linear systems $|-mK_X|$.
It is actually defined in the more general setting of pairs. 
See \ref{ss-compl} for relevant definitions.

The following theorem was conjectured by Shokurov [\ref{shokurov-surf-comp}, Conjecture 1.3] who proved it in 
dimension $2$ [\ref{shokurov-surf-comp}, Theorem 1.4] (see also [\ref{PSh-II}, Corollary 1.8], 
and [\ref{Shokurov-log-flips}] for some cases). 

\begin{thm}\label{t-bnd-compl-usual}
Let $d$ be a natural number and $\mathfrak{R}\subset [0,1]$ be a finite set of rational numbers.
Then there exists a natural number $n$ 
depending only on $d$ and $\mathfrak{R}$ satisfying the following.  
Assume $(X,B)$ is a projective pair such that 
\begin{itemize}

\item $(X,B)$ is lc of dimension $d$,

\item $B\in \Phi(\mathfrak{R})$, that is, the coefficients of $B$ are in $\Phi(\mathfrak{R})$, 

\item $X$ is of Fano type, and 

\item $-(K_{X}+B)$ is nef.\\
\end{itemize}
Then there is an $n$-complement $K_{X}+{B}^+$ of $K_{X}+{B}$ 
such that ${B}^+\ge B$. Moreover, the complement is also an $mn$-complement for any $m\in \N$. 

\end{thm}

Here Fano type means $(X,G)$ is klt and $-(K_X+G)$ is 
ample for some boundary $G$, and $\Phi(\mathfrak{R})$ is the set of numbers of the form $1-\frac{r}{l}$ 
with $r\in\mathfrak{R}$ and $l\in\N$.  Note that the theorem in particular says that  
$$
-n(K_X+B)\sim n(B^+-B)\ge 0,
$$ 
hence the linear system $|-n(K_{X}+B)|$ is not empty.

Prokhorov and Shokurov [\ref{PSh-I}][\ref{PSh-II}] prove various inductive statements 
regarding complements. They  [\ref{PSh-II}, Theorem 1.4] also show that \ref{t-bnd-compl-usual}  
follows from two conjectures in the same dimension: 
the BAB conjecture (\ref{conj-BAB} above)
and the adjunction conjecture for fibre spaces [\ref{PSh-II}, Conjectue 7.13]. In dimension 
$3$, one only needs the BAB [\ref{PSh-II}, Corollary 1.7], and this also can be dropped 
if in addition we assume $(X,B)$ is non-exceptional.    
In this paper we replace the BAB with its special cases 
\ref{t-BAB-exc-usual} and \ref{t-BAB-good-boundary}, 
and we replace the adjunction conjecture with the theory of generalised polarised pairs developed in [\ref{BZh}].
See [\ref{Shokurov-adjunction}] for Shokurov's work on adjunction.

It is expected that Theorem \ref{t-bnd-compl-usual} holds for more general boundary coefficients. However, this is not 
well-understood even in dimension $2$.\\

{\textbf{\sffamily{Boundedness of complements in the relative setting.}}}
 Complements are also defined in the relative setting (see \ref{ss-compl}) for a given 
 contraction $f\colon X\to Z$, that is, a projective morphism 
 with $f_*\mathcal{O}_X=\mathcal{O}_Z$. 
In particular, when $X\to Z$ is the identity morphism, boundedness of complements is simply a local statement 
about singularities near a point on $X$. 

\begin{thm}\label{t-bnd-compl-usual-local}
Let $d$ be a natural number and $\mathfrak{R}\subset [0,1]$ be a finite set of rational numbers.
Then there exists a natural number $n$
depending only on $d$ and $\mathfrak{R}$ satisfying the following.  
Assume $(X,B)$ is a pair and $X\to Z$ is a contraction such that 
\begin{itemize}
\item $(X,B)$ is lc of dimension $d$ and $\dim Z>0$,

\item $B\in \Phi(\mathfrak{R})$, 

\item $X$ is of Fano type over $Z$, and 

\item $-(K_{X}+B)$ is nef over $Z$.\\
\end{itemize}
Then for any point $z\in Z$, there is an $n$-complement $K_{X}+{B}^+$ of $K_{X}+{B}$ 
over $z$ such that ${B}^+\ge B$. Moreover, the complement is also an $mn$-complement for any $m\in \N$. 
\end{thm}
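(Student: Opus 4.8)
\emph{Proof strategy.} The plan is to reduce this relative statement to the absolute one, Theorem~\ref{t-bnd-compl-usual}, by passing to the generic fibre of $f\colon X\to Z$ and then spreading out towards $z$. Since the conclusion only concerns a neighbourhood of $z$, I may shrink $Z$ around $z$ and assume it affine. Let $\eta$ be the generic point of $Z$ and let $X_{\bar\eta}$ be the geometric generic fibre. Restriction gives a pair $(X_{\bar\eta},B_{\bar\eta})$ which is lc of dimension $d-\dim Z\le d$, with $B_{\bar\eta}\in\Phi(\mathfrak R)$, which is of Fano type, and with $-(K_{X_{\bar\eta}}+B_{\bar\eta})$ nef. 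Apply Theorem~\ref{t-bnd-compl-usual}, taking once and for all $n$ to be a common multiple of the numbers it provides in dimensions $0,\dots,d$ associated to $\mathfrak R$ (this is legitimate: such a number is divisible by $I(\mathfrak R)$, and an $n$-complement is automatically an $mn$-complement). We obtain an $n$-complement of $K_{X_{\bar\eta}}+B_{\bar\eta}$ with boundary $\ge B_{\bar\eta}$; in particular, writing $B=T+\Delta$ with $T=\rddown B$, the linear system $|-nK_{X_{\bar\eta}}-nT_{\bar\eta}-\rddown{(n+1)\Delta_{\bar\eta}}|$ is non-empty, hence by flat base change so is $|-nK_{X_\eta}-nT_\eta-\rddown{(n+1)\Delta_\eta}|$.

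Now I spread out. The non-emptiness above says that the coherent sheaf $f_*\mathcal O_X\big(-nK_X-nT-\rddown{(n+1)\Delta}\big)$ has non-zero stalk at $\eta$; being the pushforward of a rank-one reflexive sheaf along a fibration to a normal variety it is torsion-free, so its support is all of $Z$ and its stalk at $z$ is non-zero too. Thus, after shrinking $Z$ to some $z\in U$, there is $0\le M'\sim_{f^{-1}U}-nK_X-nT-\rddown{(n+1)\Delta}$. Put $M:=M'+\rddown{(n+1)\Delta}$ and $B^+:=T+\tfrac1n M$. Then $n(K_X+B^+)\sim 0$ over $U$; moreover $B^+\ge B$, because $nB^+=nT+M\ge nT+\rddown{(n+1)\Delta}$ and the hyperstandard property of $\Phi(\mathfrak R)$ gives $\rddown{(n+1)\Delta}\ge n\Delta$ once $I(\mathfrak R)\mid n$. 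The fact that the complement is also an $mn$-complement is then formal.

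The remaining — and, I expect, main — difficulty is to choose $M'$ so that $(X,B^+)$ is lc over a neighbourhood of $z$, equivalently along the (a priori badly singular) fibre $f^{-1}(z)$; an arbitrary member of the linear system will not do. The idea is to take for $M'$ the closure on $X$ of the member on the geometric generic fibre coming from the lc complement furnished by Theorem~\ref{t-bnd-compl-usual}, and then to adjust it by an effective vertical-over-$Z$ correction (which only changes things along $f^{-1}(z)$ and affects neither $B^+\ge B$ nor $n(K_X+B^+)\sim_U 0$). Here the hypothesis that $X$ is of Fano type over $Z$ is what makes this possible: it allows one to run a $(K_X+B^+)$-MMP over $Z$, to control the finitely generated group of vertical divisors over $z$ modulo those linearly trivial over $U$, and to choose the correction so that the resulting pair is lc near $f^{-1}(z)$; lc-ness over $U\setminus\{z\}$ is automatic from the generic-fibre statement together with openness of the lc locus, while at $z$ one argues by inversion of adjunction and the ACC for lc thresholds. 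Finally, the excluded case $\dim Z=0$ is exactly Theorem~\ref{t-bnd-compl-usual}, so imposing $\dim Z>0$ costs nothing.
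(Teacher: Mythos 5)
Your strategy — restrict to the geometric generic fibre, invoke Theorem~\ref{t-bnd-compl-usual} in lower dimension there, spread out the resulting section, and then patch up singularities along $f^{-1}(z)$ — is genuinely different from the paper's. The paper never passes to the generic fibre. Instead it manufactures a \emph{vertical} divisorial lc centre intersecting $f^{-1}(z)$ (Step 1 of Proposition~\ref{p-bnd-compl-usual-local}: take the lc threshold $t$ of $f^*N$ for a Cartier divisor $N$ through $z$, pass to a $\Q$-factorial dlt model of $(X,B+tf^*N)$), reduces to a plt situation via the machinery of Proposition~\ref{p-bnd-compl-non-klt}, and then lifts an $n$-complement from that divisor $S$ by Lemma~\ref{l-bnd-compl-usual-local-plt} (adjunction to $S$, the $(d-1)$-dimensional complement theorems on $S\to T$, Kawamata--Viehweg vanishing to lift, inversion of adjunction and connectedness to check lc-ness). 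The dimension drop in the paper's induction comes from a divisor on $X$, not from the fibre dimension.

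The gap in your proposal is exactly the step you flag as the main difficulty, and the sketch you offer to bridge it does not work. First, ``lc-ness over $U\setminus\{z\}$ is automatic from openness of the lc locus'' is false in the direction you need: the non-lc locus of $(X,B^+)$ is closed and does not dominate $Z$, so it maps to a proper closed subset $W\subset Z$, but $W$ may well contain $z$ — which is the only point you care about. Second, the proposed remedy of ``adjusting by an effective vertical-over-$Z$ correction'' is backwards: adding an effective divisor can only decrease log discrepancies, so it cannot repair a failure of lc-ness; and subtracting is not available since you must stay effective. What you actually need is to \emph{choose} the section $M'$ (not adjust a given one) so that the pair is lc along $f^{-1}(z)$, and for that you need a divisor over $z$ to invert adjunction from — which is precisely what the paper's vertical plt centre $S$ provides. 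Your invocation of ``inversion of adjunction and ACC for lc thresholds'' is floating: there is no identified divisor, no induced pair in dimension $d-1$, and no mechanism connecting a complement on such a pair to lc-ness of $(X,B^+)$ near $z$. Until that is supplied, the argument does not close. (A minor further point: you should only invoke Theorem~\ref{t-bnd-compl-usual} in dimensions $\le d-1$, as the generic fibre has dimension $d-\dim Z<d$; listing ``dimensions $0,\dots,d$'' would be circular, since Theorem~\ref{t-bnd-compl-usual} in dimension $d$ is proved downstream of the present result.)
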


The theorem was proved by Shokurov [\ref{shokurov-surf-comp}]  in dimension $2$, and by 
Prokhorov and Shokurov [\ref{PSh-I}] in dimension $3$.
They also essentially show that \ref{t-bnd-compl-usual-local} in dimension $d$ follows from \ref{t-bnd-compl-usual} 
in dimension $d-1$ [\ref{PSh-I}, Theorem 3.1].

In the local situation, Theorem \ref{t-bnd-compl-usual-local} implies a boundedness result about singularities.

\begin{cor}\label{cor-bnd-index}
Let $d$ be a natural number and $\mathfrak{R}\subset [0,1]$ be a finite set of rational numbers.
Then there exists a natural number $n$ depending only on $d$ and $\mathfrak{R}$ satisfying the following.  
Assume $(X,B)$ is a pair and $V\subset X$ is a subvariety such that 
\begin{itemize}
\item $(X,B)$ is lc  of dimension $d$,

\item $B\in \Phi(\mathfrak{R})$,

\item $V$ is a non-klt centre of $(X,B)$, and

\item $(X,\Delta)$ is klt near the generic point of $V$ for some $\Delta$.\\
\end{itemize}
Then $n(K_{X}+{B})$ is Cartier near the generic point of $V$.
\end{cor}

Note that existence of $\Delta$ is equivalent to saying that $X$ is of Fano type 
over itself perhaps after shrinking it around the generic point of $V$.\\

{\textbf{\sffamily{Boundedness of complements for generalised polarised pairs.}}}
We prove boundedness of complements in the even more general setting of generalised polarised pairs. 
This is of independent interest but also fundamental to our proofs. 
It gives enough flexibility to apply induction on dimension, 
unlike Theorem \ref{t-bnd-compl-usual}. For the relevant definitions, see \ref{ss-gpp} and \ref{ss-compl}.

\begin{thm}\label{t-bnd-compl}
Let $d$ and $p$ be natural numbers and $\mathfrak{R}\subset [0,1]$ be a finite set of rational numbers.
Then there exists a natural number $n$ 
depending only on $d,p$, and $\mathfrak{R}$ satisfying the following. 
Assume $(X',B'+M')$ is a projective generalised polarised pair with data $\phi\colon X\to X'$ and $M$ 
 such that 
\begin{itemize}
\item $(X',B'+M')$ is generalised lc of dimension $d$,

\item $B'\in \Phi(\mathfrak{R})$ and $pM$ is b-Cartier,

\item $X'$ is of Fano type, and 

\item $-(K_{X'}+B'+M')$ is nef.\\
\end{itemize}
Then there is an $n$-complement $K_{X'}+{B'}^++M'$ of $K_{X'}+{B'}+M'$ 
such that ${B'}^+\ge B'$. Moreover, the complement is also an $mn$-complement for any $m\in \N$.
\end{thm}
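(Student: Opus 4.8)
The plan is to prove \ref{t-bnd-compl} by induction on the dimension $d$, simultaneously with the other main results of the paper in dimension $d$ (in particular \ref{t-BAB-exc} and \ref{t-BAB-good-boundary} together with the effective (bi)rationality statements on bounded families), assuming all of them in dimension $\le d-1$. After preliminary reductions — replacing $(X',B'+M')$ by a $\Q$-factorial generalised dlt model, running a suitable MMP to arrange that $-(K_{X'}+B'+M')$ is nef and big or else that $X'$ carries a Mori fibre structure, and using rational-polytope perturbation to keep all coefficients inside a finite hyperstandard set — I would split the argument according to whether the generalised pair $(X',B'+M')$ is exceptional or non-exceptional.

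In the non-exceptional case there is $0\le P'\sim_\R -(K_{X'}+B'+M')$ with $(X',B'+M'+P')$ not generalised klt. Using connectedness of the non-klt locus together with an MMP I would produce a well-behaved minimal non-klt centre $S'$, and apply generalised adjunction to write $(K_{X'}+B'+M'+P')|_{S'}=K_{S'}+B_{S'}+M_{S'}$, which one checks is generalised lc, Fano type, and of dimension $<d$. The delicate point is to verify that the coefficients of $B_{S'}$ again lie in a \emph{finite} hyperstandard set $\Phi(\mathfrak{R}')$, with $\mathfrak{R}'$ depending only on $d,p,\mathfrak{R}$; this is where the lower-dimensional boundedness of complements (via \ref{t-bnd-compl-usual-local}) and ACC-type statements for thresholds enter, forcing the a priori only DCC set of coefficients produced by adjunction to be finite. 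By the induction hypothesis $(S',B_{S'}+M_{S'})$ has an $n'$-complement with $n'$ bounded in terms of $d,p,\mathfrak{R}$; I would then lift it to an $n$-complement $K_{X'}+{B'}^++M'$ with ${B'}^+\ge B'$ via a Kawamata--Viehweg-type vanishing argument ensuring that $H^0\big(X',-n(K_{X'}+B'+M')\big)$ surjects onto the corresponding space of sections on $S'$, exploiting that $S'$ is a non-klt centre of $(X',B'+M'+P')$ and that $-(K_{X'}+B'+M')$ is nef.

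In the exceptional case, Theorem \ref{t-BAB-exc} (in dimension $d$) shows $(X',B'+M')$ belongs to a bounded family; on such a family Cartier indices and volumes are bounded, so there is a bounded natural number $n$, divisible by $pI(\mathfrak{R})$, with $n(K_{X'}+B'+M')$ Cartier and $|-n(K_{X'}+B'+M')|\ne\emptyset$, and a general element $\Delta'$ of this linear system — a general member in a bounded family of linear systems — keeps $(X',B'+M'+\frac{1}{n}\Delta')$ generalised lc, even generalised klt. This yields the required $n$-complement. Taking the final $n$ to be a common multiple of the bounds from the two cases, and choosing ${B'}^+$ so that $n(K_{X'}+{B'}^++M')\sim 0$ monotonically in $B'$, the theorem follows, the "$mn$-complement" clause being automatic from the construction.

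I expect the non-exceptional case to be the main obstacle: controlling the generalised adjunction so that the boundary coefficients on $S'$ stay in a finite hyperstandard set, and carrying out the lifting of complements, require a careful interplay of the MMP, the theory of generalised polarised pairs, vanishing theorems, and the full inductive package in dimension $d-1$. A secondary difficulty is that the exceptional case rests on \ref{t-BAB-exc}, whose proof is itself substantial and in turn relies on \ref{t-BAB-good-boundary}.
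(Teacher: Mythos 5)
Your overall strategy matches the paper's: induct on $d$ simultaneously with the other main results, first force the coefficients into a genuinely finite set, then split on exceptional versus non-exceptional, handling the exceptional case by boundedness (\ref{t-BAB-exc} plus effective base point freeness) and the non-exceptional case by producing non-klt centres and doing adjunction and lifting. This is exactly how the paper proceeds via Proposition \ref{p-BAB-exc-to-compl}, Construction \ref{const-the-process}, Lemmas \ref{l-compl-s-non-exc}--\ref{l-compl-non-exc}, and Propositions \ref{p-bnd-compl-non-klt}, \ref{p-bnd-compl-plt}, \ref{p-bnd-compl-non-big}.

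However, your non-exceptional case as written has real gaps, not merely omitted detail. First, you would lift a complement from a ``minimal non-klt centre $S'$'' of dimension $<d$ via adjunction and a vanishing argument. But if $S'$ has codimension $\ge 2$, the adjunction you can do (Theorem \ref{t-subadjunction}) only produces a discriminant with coefficients in an ACC-dual set and a moduli part that is merely pseudo-effective; that is far too weak for the inductive complement machine, which needs the hyperstandard coefficient control of Lemma \ref{l-div-adj-dcc} and a nef Cartier datum. The paper therefore works very hard in \ref{p-bnd-compl-non-klt} to manufacture a \emph{divisorial} plt centre: a component $S'$ of $\rddown{B'}$ together with a boundary $\Gamma'$ such that $(X',\Gamma'+\alpha M')$ is generalised plt, $S'=\rddown{\Gamma'}$, and crucially $-(K_{X'}+\Gamma'+\alpha M')$ is \emph{ample}, which is what makes the Kawamata--Viehweg vanishing and the surjectivity $H^0(L+P)\to H^0((L+P)|_S)$ in \ref{p-bnd-compl-plt} go through. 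Your appeal to ``nefness of $-(K_{X'}+B'+M')$'' alone would not give the required vanishing. Second, you omit an entire branch of the case analysis: when the pair admits a contraction $X'\to V'$ with $K_{X'}+B'+M'\sim_\Q 0/V'$ and $M'$ not big$/V'$, the correct move is to push the complement problem down to $V'$ via the canonical bundle formula (\ref{p-bnd-compl-non-big}, using \ref{l-fib-adj-dcc}), not to lift from a non-klt centre. Third, \ref{p-bnd-compl-non-klt} is only formulated under the hypothesis that $K_{X'}+B'+M'\not\sim_\Q 0$ or $M'\not\sim_\Q 0$; the degenerate case where both are $\Q$-linearly trivial is handled separately inside \ref{l-compl-non-exc} by bounding the Cartier index directly (using birational boundedness and effective birationality), not by any lifting argument at all. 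Your proposal does not see this case, and the lifting strategy would simply not apply to it. Finally, a smaller point: the reduction to a finite coefficient set in Construction \ref{const-the-process} is not a rational-polytope perturbation; it proceeds by extracting the divisors of small log discrepancy, pushing the boundary coefficients in $(1-\epsilon,1)$ up to $1$, and invoking the ACC for generalised lc thresholds and the global ACC to show this preserves generalised lcness and nefness after an MMP.
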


Here $pM$ being b-Cartier simply means that its pullback to some resolution of $X$ is Cartier.
Generalised polarised pairs behave similar to usual pairs in many ways. 
For example also see the effective birationality results for polarised pairs of general type established 
in [\ref{BZh}].\\

{\textbf{\sffamily{Boundedness of exceptional pairs.}}}
As mentioned earlier, in order to carry out our inductive arguments we need boundedness of exceptional pairs 
as in the next result.

\begin{thm}\label{t-BAB-exc}
Let $d$ and $p$ be natural numbers and $\mathfrak{R}\subset [0,1]$ be a finite set of rational numbers. 
Consider the pairs $(X',B'+M')$ as in Theorem \ref{t-bnd-compl} which are exceptional.
Then the set of the $(X',B')$ is log bounded.\\
\end{thm}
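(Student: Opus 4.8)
The plan is to run an induction on dimension and to reduce the generalised polarised statement to the boundedness of exceptional weak Fano varieties (Theorem \ref{t-BAB-exc-usual}) together with the boundedness result of Theorem \ref{t-BAB-good-boundary}. First I would use Theorem \ref{t-bnd-compl} to produce an $n$-complement $K_{X'}+B'^{+}+M'$ with $B'^{+}\ge B'$ and $n$ bounded in terms of $d,p,\mathfrak{R}$; since $(X',B'+M')$ is exceptional, the generalised pair $(X',B'^{+}+M')$ is in fact generalised klt, and indeed one gets a uniform $\epsilon>0$ (depending only on $d,p,\mathfrak R$) such that $(X',B'^{+}+M')$ is generalised $\epsilon$-lc — this is the usual ``exceptional $\Rightarrow$ uniformly klt'' mechanism, proved by a standard argument: if the generalised log discrepancies could approach $0$ along a sequence, a limiting construction would yield a non-klt $0\le P\sim_{\Q}-(K_{X'}+B')$, contradicting exceptionality. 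Here one has to be a little careful to phrase the limiting argument in the generalised setting, but the $\ACC$/boundedness inputs needed are available from the hypotheses ($B'\in\Phi(\mathfrak R)$, $pM$ Cartier).

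Next I would pass from $(X',B'^{+}+M')$ to an honest pair in order to apply Theorem \ref{t-BAB-good-boundary}. The point is that $M'$ is nef and $pM$ is Cartier, so for bounded $p$ one can choose a general member $0\le M'' \sim M'$ (after possibly replacing $M'$ by a bounded multiple and $M$ by the corresponding birational data, i.e.\ using that $|pM|$ — or $|p'M|$ for a bounded $p'$ — is basepoint free on a resolution and hence $M'$ is a limit of boundaries), making $(X', B'^{+}+M'')$ a genuine $\epsilon'$-lc pair with $K_{X'}+B'^{+}+M''\sim_{\Q}0$ and $B'^{+}+M''$ big (because $-(K_{X'}+B'+M')$ being nef on a Fano type variety forces $B'^{+}+M'$ — hence its linearly equivalent honest version — to be big). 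The coefficients of $B'^{+}$ lie in a fixed DCC set and are bounded below by a fixed $\delta>0$ away from those that vanish; the $M''$ part contributes bounded, bounded-from-below coefficients by generality. Thus Theorem \ref{t-BAB-good-boundary} applies and gives that $X'$ belongs to a bounded family. Boundedness of $X'$ together with the fact that $\operatorname{vol}(-K_{X'})$ is then bounded, and the coefficients of $B'$ are in a fixed DCC set, upgrades to log boundedness of $(X',B')$ by a now-standard argument (bounded $X'$, $B'$ of bounded degree with DCC coefficients $\Rightarrow$ finitely many possibilities up to bounded data).

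There is a subtlety I am glossing over: to invoke Theorem \ref{t-BAB-good-boundary} I need a genuine lower bound $\delta$ on \emph{all} coefficients of the boundary, but some coefficients of $B'^{+}$ could a priori be $0$; the resolution is that one only needs the lower bound on the support that actually appears, and one argues boundedness of $X'$ rather than of the pair at this stage — the pair is recovered afterwards. Also, the reduction of the nef divisor $M'$ to a general effective $\Q$-divisor requires knowing $|mM|$ is basepoint free on the resolution $X$ for some bounded $m$; this follows because $pM$ is Cartier and $M$ is nef, but if $M$ is only nef and not semiample one instead perturbs: write $K_{X'}+B'^{+}+M'$ and move a small amount of $M'$ into the boundary using a bounded complement computation, which again stays within a uniform $\epsilon$-lc bound by the first step.

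\medskip

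\noindent\textbf{Main obstacle.} The hardest point is the uniform lower bound on singularities in Step 1 — i.e.\ showing exceptional generalised polarised pairs are generalised $\epsilon$-lc for a \emph{uniform} $\epsilon=\epsilon(d,p,\mathfrak R)$. This is exactly where boundedness of complements (Theorem \ref{t-bnd-compl}) must be combined with an $\ACC$-type statement for generalised log canonical thresholds / log discrepancies; making the limiting argument rigorous in the generalised category, and ensuring the constructed $P$ genuinely violates exceptionality, is the technical heart of the proof. Everything after that is a packaging of Theorems \ref{t-BAB-exc-usual} and \ref{t-BAB-good-boundary} plus standard boundedness bookkeeping.
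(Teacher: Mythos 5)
Your first step, producing a uniform $\epsilon$ so that every exceptional $(X',B'+M')$ as in the hypotheses is generalised $\epsilon$-lc, does correspond to a lemma in the paper (Lemma~\ref{l-exc-bnd-sing}); the paper proves it via a directed MMP and the global ACC for generalised pairs rather than a bare limiting argument, but the spirit is the same. However, your judgment that this is the technical heart is off, and the part that you treat as routine packaging is exactly where the proposal breaks down.

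The gap is the step where you want to replace the nef part by a general effective $M''\sim M'$ so as to apply Theorem~\ref{t-BAB-good-boundary}. Nothing in the data makes $M'$ effective, or even nef, on $X'$: only $M$ on the auxiliary model $X$ is assumed nef, and pushing a nef divisor down a birational morphism can destroy nefness; moreover $nM'$ need not be Cartier on $X'$ even though $nM$ is Cartier on $X$. Without nefness and bounded Cartier index on a bounded model one cannot invoke effective base-point-freeness to produce a general $M''\ge 0$ with bounded coefficients, and without base-point-freeness a general member of $|kM'|$ for large $k$ can have arbitrarily large multiplicities — wrecking both the $\epsilon$-lc bound and the lower bound $\delta$ on boundary coefficients that Theorem~\ref{t-BAB-good-boundary} requires. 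Your proposed remedy ("move a small amount of $M'$ into the boundary using a bounded complement computation") is not a construction, and I do not see how to make it one. The paper's proof (Proposition~\ref{p-from-compl-to-BAB-exc}) avoids the conversion entirely: after establishing $\epsilon$-lc it first arranges $K_{X'}+B'+M'\sim_{\Q}0$ with $M$ big nef by mixing the base-point-free part $A$ of $\phi^*(-mK_{X'})$ (from effective birationality, Proposition~\ref{p-eff-bir-delta-2}) into the moduli part; but even then it never turns $M'$ into a boundary. Instead it passes to a weak Fano $\tilde X'$, and bounds separately $\vol(-K_{\tilde X'})$ (Lemma~\ref{l-exc-bnd-vol}), the existence of a bounded $m$-complement and effective birationality of $|-mK_{\tilde X'}|$, and the lc thresholds of members of $|-lK_{\tilde X'}|$ (Lemma~\ref{l-exc-bnd-lct}); the substantial Proposition~\ref{p-from-bnd-lct-to-bnd-var} then converts these uniform numerical bounds into boundedness of $\tilde X'$, from which boundedness of $X'$ and then log boundedness of $(X',B')$ follow. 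The extra machinery exists precisely because the generalised-to-ordinary conversion you are hoping for is unavailable.
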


{\textbf{\sffamily{Structure of the paper.}}}
We outline the organisation of the paper. In Section 2, we gather some of the tools used in the paper, and 
prove certain basic results.  In Section 3, we discuss various types of adjunction, recall some 
known results, and prove some new results (eg, \ref{l-sub-bnd-on-gen-subvar}, \ref{p-lift-section-lcc}) crucial for later sections. 
In Section 4, we prove \ref{t-eff-bir-e-lc} under some additional assumptions (\ref{p-eff-bir-delta-2}, \ref{p-eff-bir-tau}). In Section 5, we prove \ref{t-BAB-good-boundary}. In Section 6, we develop the theory of 
complements for generalised polarised pairs, 
and prove various inductive statements (eg, \ref{p-bnd-compl-non-big}, \ref{p-bnd-compl-non-klt}), and discuss behaviour 
of boundary coefficients under adjunction for fibre spaces (\ref{l-fib-adj-dcc}). In Section 7, we study exceptional pairs and 
treat \ref{t-BAB-exc-usual} and \ref{t-BAB-exc} inductively (eg, \ref{l-exc-bnd-sing}, \ref{l-from-compl-to-BAB-exc-usual}, 
\ref{l-exc-bnd-vol}, \ref{p-from-compl-to-BAB-exc}), and give a criterion for a family of 
Fano varieties to be bounded (\ref{p-from-bnd-lct-to-bnd-var}) which is also a crucial ingredient of the proof of 
\ref{conj-BAB} in [\ref{B-BAB}]. In Section 8, we discuss complements in the relative setting 
(\ref{p-bnd-compl-usual-local}). In Section 9, we prove \ref{t-BAB-to-bnd-vol}.
Finally, in Section 10, we give the proofs of all the main results except those proved earlier.
It is worth mentioning that some of the results stated for varieties only can be easily extended to the log case 
(eg, \ref{t-eff-bir-e-lc}, \ref{t-BAB-to-bnd-vol}) but for simplicity we treat the non-log case only.\\

{\textbf{\sffamily{Sketch of some proofs.}}}
The main tools used in this paper are the minimal model program [\ref{kollar-mori}][\ref{BCHM}],  
the theory of complements [\ref{PSh-I}][\ref{PSh-II}][\ref{shokurov-surf-comp}], creating families   
of non-klt centres using volumes [\ref{HMX2}][\ref{HMX}][\ref{kollar-sing-pairs}, \S 6], and the 
theory of generalised polarised pairs [\ref{BZh}]. 
We give a brief account of some of the ideas of the proof of boundedness of complements (Theorem 
\ref{t-bnd-compl-usual}) and effective birationality (Theorem \ref{t-eff-bir-e-lc}). 

We start with boundedness of complements (Theorem \ref{t-bnd-compl-usual}). 
Pick a sufficiently small $\epsilon\in(0,1)$. If $(X,B)$ is non-exceptional, then we can modify 
it and assume it is not klt. On the other hand, if $(X,B)$ is exceptional, then we can bound 
its singularities, hence assume it is $\epsilon$-lc perhaps after decreasing $\epsilon$. Define $\Theta$ 
to be the same as $B$ except that we replace each coefficient in $(1-\epsilon,1)$ with $1$.  
Run an MMP on $-(K_X+\Theta)$ and let $X'$ be the resulting model and $\Theta'$ 
be the pushdown of $\Theta$. Since $X$ is of Fano type, we can run MMP on any divisor on $X$. 

As a consequence of local and global ACC [\ref{HMX2}, Theorems 1.1 and 1.5] (in practice we 
need their generalisations to generalised pairs [\ref{BZh}, Theorems 1.5 and 1.6]), we can show that 
the MMP does not contract any component of $\rddown{\Theta}$, $(X',\Theta')$ is lc, 
and $-(K_{X'}+\Theta')$ is nef (\ref{p-B'-to-Theta'}). It is enough to construct a bounded complement for 
$K_{X'}+\Theta'$.  Replacing $(X,B)$ with $(X',\Theta')$ and making further modifications, 
we can assume that the coefficients of $B$ belong to $\mathfrak{R}$ and  
that one of the following cases occurs:

\begin{enumerate}
\item $-(K_X+B)$ is nef and big, and $B$ has a component $S$ with coefficient $1$ which is of Fano type, or 

\item $K_X+B\equiv 0$ along a fibration $f\colon X\to T$, or 

\item $(X,B)$ is {exceptional}.
\end{enumerate}
These cases require very different inductive treatments.

Case (1): First apply {divisorial adjunction} to define $K_S+B_S=(K_X+B)|_S$. 
The coefficients of $B_S$ happen to be in  $\Phi(\mathfrak{S})$ 
for some fixed finite set $\mathfrak{S}$ of rational numbers. By induction on dimension $K_S+B_S$ has an $n$-complement for 
some bounded $n$. The idea then is to lift the complement to $X$ using vanishing theorems.
In the simplest case when $(X,B)$ is log smooth and ${B}=S$, we look at the exact sequence 
$$
H^0(-n(K_X+B))\to H^0( -n(K_X+B)|_S)\to H^1(-n(K_X+B)-S)=0
$$
where the vanishing follows from the Kawamata-Viehweg vanishing theorem noting that 
$$
-n(K_X+B)-S=K_X-n(K_X+B)-(K_X+B)=K_X-(n+1)(K_X+B).
$$ 
Since $K_S+B_S$ has an $n$-complement, the middle space in the above sequence 
is non-trivial which implies the left hand side is also non-trivial by lifting the section corresponding 
to the complement. One then argues that the lifted section gives an 
  $n$-complement for $K_X+B$. 

Case (2):  Apply the {{canonical bundle formula}} (also called adjunction for fibre spaces) to  write 
$$
K_X+B\sim_\Q f^*(K_T+B_T+M_T)
$$
where $B_T$ is the {discriminant divisor} and $M_T$ is the {moduli divisor}. It turns out that 
the coefficients of $B_T$ are in $\Phi(\mathfrak{S})$ 
for some fixed finite set $\mathfrak{S}$ of rational numbers, and that $pM_T$ is integral for some bounded number $p\in \N$. 
Now we want to find a complement for $K_T+B_T+M_T$ and pull it back to $X$. 
There is a serious issue here: $(T,B_T+M_T)$ is not a pair but it is a 
generalised polarised  pair. Thus we actually need to construct complements 
 in the more general setting of generalised polarised pairs. This makes life a lot more difficult 
but fortunately everything turns out to work. Once we have a bounded complement for 
$K_T+B_T+M_T$ we pull it back to get a bounded complement for $K_X+B$.

Case (3):  
For simplicity assume $B=0$ and that $X$ is a Fano variety. If we can show $X$ belongs to a 
bounded family, then we would be done. Actually we need something weaker: effective birationality. 
Assume we have already proved Theorem \ref{t-eff-bir-e-lc}. Then  
there is a bounded number $m\in\N$ such that $|-mK_X|$ defines a birational map. Pick 
$M\in |-mK_X|$ and let $B^+=\frac{1}{m}M$. Since $X$ is exceptional, $(X,B^+)$ is automatically klt, 
hence  $K_X+B^+$ is an $m$-complement. Although this gives some idea of how 
one may get a bounded complement but in practice we cannot give a complete 
proof of Theorem \ref{t-eff-bir-e-lc} before proving \ref{t-bnd-compl-usual}. The two theorems are 
actually proved together. Moreover, since we need to construct complements for generalised polarised pairs, 
treating the exceptional case in that setting is much harder.

Now we give a brief sketch of the proof of the effective birationality theorem (\ref{t-eff-bir-e-lc}).
Let $m\in\N$ be the smallest number such that $|-mK_X|$ defines a birational map, and 
let $n\in\N$ be a number such that ${\rm vol}(-nK_X)>(2d)^d$. Initially we take $n$ to be the smallest 
such number but  we will modify it during the proof. We want to show that $m$ is bounded from above.
The idea is first to show that $\frac{m}{n}$ is bounded from above, and then at the end show that 
$m$ is bounded. 

Applying a standard technique we can create a covering family $\mathcal{G}$ of subvarieties of $X$ such that 
if $x,y\in X$ are any pair of general closed points, then there is $0\le \Delta \sim_\Q -(n+1)K_X$ 
and $G\in\mathcal{G}$ such that $(X,\Delta)$ is lc at $x$ with the unique non-klt centre $G$ containing $x$, 
and $(X,\Delta)$ is not klt at $y$. 

Assume $\dim G=0$ for all $G$. Then $G=\{x\}$ is an isolated 
non-klt centre. Using multiplier ideal sheaves and vanishing theorems we can lift sections from $G$ and show  that 
$|-nK_X|$ defines a  birational map after replacing $n$ with a bounded multiple, hence in particular 
$\frac{m}{n}$ is bounded from above in this case. 

Now lets assume all $G$ 
have positive dimension. If ${\rm vol}(-mK_X|_{G})$ is too large, then using some 
elementary arguments, we can replace $n$ and create a new non-klt centre $G'$ containing $x$ but with $\dim G'<\dim G$. 
Thus we can replace $G$ with $G'$ and apply induction on dimension of $G$. We can then assume 
${\rm vol}(-mK_X|_{G})$ is bounded from above. 

Similar to the previous paragraph,  we can cut $G$ and decrease its dimension if 
${\rm vol}(-nK_X|_{G})$ is bounded from below. Showing this lower boundedness 
is the hard part. Although $G$ is not necessarily a divisor and 
although the singularities of $(X,\Delta)$ away from 
$x$ maybe quite bad but still there is a kind of adjunction formula, that is, if $F$ is the normalisation of $G$, 
then we can write  
$$
(K_X+\Delta)|_F\sim_\Q K_F+\Theta_F+P_F
$$ 
where $\Theta_F$ is a divisor with coefficients in a fixed DCC set $\Psi\subset [0,1]$ depending 
only on $d$, and  $P_F$ is pseudo-effective. Replacing $n$ with $2n$ and adding to $\Delta$ 
we can easily make $P_F$ big and effective. 

Now we would ideally want to apply induction on $d$ but the difficulty is that $F$ may not be Fano, in fact, it 
can be any type of variety. Another issue is that the singularities of $(F,\Theta_F+P_F)$ can be pretty bad. To 
overcome these difficulties we use the fact that ${\rm vol}(-mK_X|_{G})$ is bounded from above. 
From this boundedness one can deduce that there is a bounded projective log 
smooth pair $(\overline{F},\Sigma_{\overline{F}})$
 and a birational map $\overline{F}\bir F$ such that $\Sigma_{\overline{F}}$ is reduced 
containing the exceptional divisor of $\overline{F}\bir F$ and the support of the birational 
transform of $\Theta_F$ (and other relevant divisors).

Surprisingly, the worse the 
singularities of $(F,\Theta_F+P_F)$ the better because we can then produce divisors on $\overline{F}$ 
with bounded ``degree" but with arbitrarily small lc thresholds which would contradict a result 
about singularities (\ref{p-non-term-places}).
Indeed assume $(F,\Theta_F+P_F)$ is not klt. A careful study of the above adjunction formula allows to 
write $K_F+\Lambda_F:=K_X|_F$ where $\Lambda_F\le \Theta_F$ and $(F,\Lambda_F)$ is sub-$\epsilon$-lc. 
Put $I_F=\Theta_F+P_F-\Lambda_F$. Then 
$$
I_F=K_F+\Theta_F+P_F-K_F-\Lambda_F\sim_\Q (K_X+\Delta)|_F-K_X|_F=\Delta|_F\sim_\Q -(n+1)K_X|_F.
$$
Moreover, $K_F+\Lambda_F+I_F$ is ample. 

Let $\phi\colon F'\to F$ and $\psi\colon F'\to \overline{F}$ be a common resolution.  
Pull back $K_F+\Lambda_F+I_F$ to $F'$ and then push it down to $\overline{F}$ and write it as 
$K_{\overline{F}}+\Lambda_{\overline{F}}+I_{\overline{F}}$. Then the above ampleness gives 
$$
\phi^*(K_F+\Lambda_F+I_F)\le \psi^*(K_{\overline{F}}+\Lambda_{\overline{F}}+I_{\overline{F}})
$$
which implies that $(\overline{F},\Lambda_{\overline{F}}+I_{\overline{F}})$ is not sub-klt. From this one 
deduces that $(\overline{F},\Gamma_{\overline{F}}+I_{\overline{F}})$ is not klt where 
$\Gamma_{\overline{F}}=(1-\epsilon)\Sigma_{\overline{F}}$. Finally, one argues that the degree 
of $I_{\overline{F}}$ gets arbitrarily small if ${\rm vol}(-nK_X|_{G})$ gets arbitrarily small, and 
this contradicts the result on singularities mentioned above.

If singularities of $(F,\Theta_F+P_F)$ are good, then we again face some serious difficulties. 
Very roughly, in this case, we lift sections from $F$ to $X$ (\ref{p-lift-section-lcc}) and use this section 
to modify $\Delta$ so that  $(F,\Theta_F+P_F)$ 
has bad singularities, hence we reduce the problem to the above arguments. 
This shows $\frac{m}{n}$ is bounded.

Finally, we still need to bound $m$. This can be done by arguing that $\vol(-mK_X)$ is bounded 
from above and use this to show $X$ is birationally bounded, and then work on the bounded model.\\

{\textbf{\sffamily{Acknowledgements.}}}
This work was partially supported by a grant of the Leverhulme Trust.
Part of this work was done while visiting National Taiwan University in May and August 2015 
with the support of the Mathematics Division (Taipei Office) of the National
Center for Theoretical Sciences, and arranged by Jungkai Chen. I would like to thank them for their 
hospitality. I am indebted to Vyacheslav V. Shokurov for teaching me the theory of complements, and for his comments. 
I am grateful to Florin Ambro, Yifei Chen, Jingjun Han, and Jinsong Xu for their comments and corrections.
Thanks to Christopher Hacon and the referee for their long list of suggestions, comments, and 
corrections which improved this paper 
considerably.

\section{\bf Preliminaries}

All the varieties in this paper are quasi-projective over a fixed algebraically closed field of characteristic zero
unless stated otherwise. The set of natural numbers $\N$ is the set of positive integers, so it does not contain $0$.

\subsection{Contractions}
In this paper a \emph{contraction} refers to a projective morphism $f\colon X\to Y$ of varieties 
such that $f_*\mathcal{O}_X=\mathcal{O}_Y$ ($f$ is not necessarily birational). In particular, $f$ has connected fibres and 
if $X\to Z\to Y$ is the Stein factorisation of $f$, then $Z\to Y$ is an isomorphism. Moreover, 
if $X$ is normal, then $Y$ is also normal.

\subsection{Hyperstandard sets}\label{ss-dcc-sets}
For a subset $V\subseteq \R$ and a number $a\in\R$, we define $V^{\ge a}=\{v\in V \mid v\ge a\}$. 
We similarly define $V^{\le a}, V^{<a}$, and $V^{>a}$.
 
Let $\mathfrak{R}$ be a subset of $[0,1]$. Following [\ref{PSh-II}, 3.2] we define 
$$
\Phi(\mathfrak{R})=\left\{1-\frac{r}{m} \mid r\in \mathfrak{R},~ m\in \N\right\}
$$
to be the set of \emph{hyperstandard multiplicities} associated to $\mathfrak{R}$. We usually assume  
$0,1\in \mathfrak{R}$ without mention, so $\Phi(\mathfrak{R})$ includes $\Phi({\{0,1\}})$, the set of usual 
\emph{standard multiplicities}. Note that if we add  
$1-r$ to $\mathfrak{R}$ for each $r\in\mathfrak{R}$, then we get
$\mathfrak{R}\subset \Phi(\mathfrak{R})$.

Now assume $\mathfrak{R}\subset [0,1]$ is a finite set of rational numbers. Then 
$\Phi(\mathfrak{R})$ is a DCC set of rational numbers whose only accumulation point is $1$. 
We define $I=I(\mathfrak{R})$ to be the smallest natural number so that $Ir\in \Z$ 
for every $r\in \mathfrak{R}$. If $n\in \N$ is divisible by $I(\mathfrak{R})$, 
then $nb\le \rddown{(n+1)b}$ for every $b\in \Phi(\mathfrak{R})$ [\ref{PSh-II}, Lemma 3.5].

\subsection{Divisors}
Let $X$ be a normal variety, and let $M$ be an $\R$-divisor on $X$. 
We denote the coefficient of a prime divisor $D$ in $M$ by $\mu_DM$. If every non-zero coefficient of 
$M$ belongs to a set $\Phi\subseteq \R$, we write $M\in \Phi$. Writing $M=\sum m_iM_i$ where 
$M_i$ are the distinct irreducible components, the notation $M^{\ge a}$ means 
$\sum_{m_i\ge a} m_iM_i$, that is, we ignore the components with coefficient $<a$. One similarly defines $M^{\le a}, M^{>a}$, and $M^{<a}$. 
    
We say $M$ is \emph{b-Cartier} if it is $\Q$-Cartier and if there is a birational contraction 
$\phi\colon W\to X$ from a normal variety such that $\phi^*M$  is Cartier.    
    
Now let $f\colon X\to Z$ be a morphism to a normal variety. We say $M$ is \emph{horizontal} over $Z$ 
if the induced map $\Supp M\to Z$ is dominant, otherwise we say $M$ is \emph{vertical} over $Z$. If $N$ is an $\R$-Cartier 
divisor on $Z$, we often denote $f^*N$ by $N|_X$. 

Again let $f\colon X\to Z$ be a morphism to a normal variety, and let $M$ and $L$ be $\R$-Cartier divisors on $X$. 
We say $M\sim L$ over $Z$ (resp. $M\sim_\Q L$ over $Z$)(resp. $M\sim_\R L$ over $Z$) if there is a Cartier  
(resp. $\Q$-Cartier)(resp. $\R$-Cartier) divisor $N$ on $Z$ such that $M-L\sim f^*N$  
(resp. $M-L\sim_\Q f^*N$)(resp. $M-L\sim_\R f^*N$). For a point $z\in Z$, we say $M\sim L$ over $z$ if   
$M\sim L$ over $Z$ perhaps after shrinking $Z$ around $z$. The properties $M\sim_\Q L$ and $M\sim_\R L$ over $z$ 
are similarly defined.

For a birational map $X\bir X'$ (resp. $X\bir X''$)(resp. $X\bir X'''$)(resp. $X\bir Y$) 
whose inverse does not contract divisors, and for 
an $\R$-divisor $M$ on $X$ we usually denote the pushdown of $M$ to $X'$ (resp. $X''$)(resp. $X'''$)(resp. $Y$) 
by $M'$ (resp. $M''$)(resp. $M'''$)(resp. $M_Y$).

\begin{lem}\label{l-relatively-trivial-divisors}
Let $f\colon X\to Z$ be a contraction between normal varieties and let $M$ be a Weil divisor on $X$.
Assume that $M\sim 0$ over $z$ for each $z\in Z$. Then $M\sim 0/Z$.
\end{lem}
\begin{proof}
Since $M\sim 0$ over $z$ for each $z\in Z$, the sheaves $\mathcal{O}_X(M)$ and 
$f_*\mathcal{O}_X(M)$ are invertible. Moreover, the 
induced morphism $f^*f_*\mathcal{O}_X(M)\to \mathcal{O}_X(M)$ is surjective, hence 
it is an isomorphism. There is a Cartier divisor $N$ such that $f_*\mathcal{O}_X(M)\simeq \mathcal{O}_Z(N)$. 
Then $M\sim f^*N$, so $M\sim 0/Z$.

\end{proof}

\subsection{Linear systems}\label{ss-lin-systems}

Let $X$ be a normal variety and let $M$ be an $\R$-divisor on $X$. 
We usually write $H^i(M)$ instead of $H^i(X,\mathcal{O}_X(\rddown{M}))$. We can describe 
$H^0(M)$ in terms of rational functions on $X$ as 
$$
H^0(M)=\{0\neq \alpha\in K \mid\Div(\alpha)+M\ge 0 \}\cup \{0\}
$$
where $K$ is the function field of $X$ and $\Div(\alpha)$ is the divisor associated to $\alpha$.

Assume $h^0(M)\neq 0$. 
The \emph{linear system} $|M|$ is defined as 
$$
|M|=\{N\mid 0\le N\sim M\}=\{\Div(\alpha)+M \mid 0\neq \alpha\in H^0(M)\}.
$$
Note that $|M|$ is not equal to $|\rddown{M}|$ unless $M$ is integral.
The \emph{fixed part} of $|M|$ is the $\R$-divisor $F$ with the property: 
if $G\ge 0$ is an $\R$-divisor and $G\le N$ for every $N\in |M|$, then $G\le F$. In particular, $F\ge 0$. 
We then define the \emph{movable part} of $|M|$ to be $M-F$ which is defined up to 
linear equivalence. If $\langle M\rangle:=M-\rddown{M}$, then the fixed part of $|M|$ is equal to 
$\langle M\rangle$ plus the fixed part of $|\rddown{M}|$. Moreover, if $0\le G\le F$, then 
the fixed and  movable parts of $|M-G|$ are $F-G$ and $M-F$, respectively. 

Note that it is clear from the definition that the movable part of $|M|$ is an integral divisor 
but the fixed part is only an $\R$-divisor.

\begin{lem}\label{l-mov-part-lin-system}
Let $X$ be a normal variety and let $M$ be an $\R$-Cartier $\R$-divisor on $X$. 
Assume $\phi\colon Y\to X$ is a projective birational morphism from a normal variety.  
Let $F$ be the fixed part of $|M|$ and $F_Y$ be the fixed part of $|M_Y|$ where $M_Y=\phi^*M$. Then 
\begin{itemize}
\item $\phi_*|M_Y|=|M|$,

\item $\phi_*F_Y=F$, and

\item if $\phi$ is a sufficiently high resolution, then  $|M_Y-F_Y|$ is base point free.
\end{itemize}
\end{lem}
\begin{proof}
If $N_Y\in |M_Y|$, then $0\le N_Y\sim M_Y$, hence $0\le \phi_*N_Y\sim M$ which means $\phi_*N_Y\in |M|$.
On the other hand, if $N\in |M|$, then $0\le N\sim M$, hence $0\le \phi^*N\sim M_Y$ which means $\phi^*N\in |M_Y|$. 
Thus $\phi_*|M_Y|=|M|$.

Since $F_Y\le N_Y$ for every $N_Y\in |M_Y|$, we get $\phi_*F_Y\le N$ for every $N\in |M|$, hence 
$\phi_*F_Y\le F$. On the other hand, $|M_Y-F_Y|$ is movable, that is, it is base point free outside a 
codimension two closed subset of $Y$. Then $|M-\phi_*F_Y|$ is also base point free outside a codimension 
two closed subset of  $X$ which  means $\phi_*F_Y\ge F$. 
Thus $\phi_*F_Y=F$. 

Now let $\psi \colon W\to X$ be a resolution and let $F_W$ be the fixed part of $|M_W|$ where $M_W=\psi^*M$. 
By Hironaka's work, there is a higher resolution $\pi\colon Y\to W$ such that if $L_Y$ is the movable part of 
$|\pi^*(M_W-F_W)|$, then $|L_Y|$ is base point free. Pick $N_Y\in |M_Y|$ and let $N_W=\pi_*N_Y$. 
Since $F_W\le N_W$ we get $\pi^*F_W\le \pi^*N_W=N_Y$, hence $\pi^*F_W\le F_Y$. Then 
 the movable part of $|M_Y|$ coincides with the movable part of $|M_Y-\pi^*F_W=\pi^*(M_W-F_W)|$.
Therefore,   $|M_Y-F_Y|=|L_Y|$ is base point free.

\end{proof}

\subsection{b-divisors}\label{ss-b-divisor}

We recall some definitions regarding b-divisors but not in full generality. 
Let $X$ be a variety. A \emph{b-$\R$-Cartier b-divisor over $X$} is the choice of  
a projective birational morphism 
$Y\to X$ from a normal variety and an $\R$-Cartier divisor $M$ on $Y$ up to the following equivalence: 
 another projective birational morphism $Y'\to X$ from a normal variety and an $\R$-Cartier divisor
$M'$ defines the same b-$\R$-Cartier  b-divisor if there is a common resolution $W\to Y$ and $W\to Y'$ 
on which the pullbacks of $M$ and $M'$ coincide.  

A b-$\R$-Cartier  b-divisor  represented by some $Y\to X$ and $M$ is \emph{b-Cartier} if  $M$ is 
b-Cartier, i.e. its pullback to some resolution is Cartier.

\subsection{Pairs}
In this paper a \emph{sub-pair} $(X,B)$ consists of a normal quasi-projective variety $X$ and an $\R$-divisor 
$B$ such that $K_X+B$ is $\R$-Cartier. 
If the coefficients of $B$ are at most $1$ we say $B$ is a 
\emph{sub-boundary}, and if in addition $B\ge 0$, 
we say $B$ is a \emph{boundary}. A sub-pair $(X,B)$ is called a \emph{pair} if $B\ge 0$ (we allow coefficients 
of $B$ to be larger than $1$ for practical reasons).

Let $\phi\colon W\to X$ be a log resolution of a sub-pair $(X,B)$. Let $K_W+B_W$ be the 
pulback of $K_X+B$. The \emph{log discrepancy} of a prime divisor $D$ on $W$ with respect to $(X,B)$ 
is $1-\mu_DB_W$ and it is denoted by $a(D,X,B)$.
We say $(X,B)$ is \emph{sub-lc} (resp. \emph{sub-klt})(resp. \emph{sub-$\epsilon$-lc}) 
if $a(D,X,B)$ is $\ge 0$ (resp. $>0$)(resp. $\ge \epsilon$) for every $D$. When $(X,B)$ 
is a pair we remove the sub and say the pair is lc, etc. Note that if $(X,B)$ is a lc pair, then 
the coefficients of $B$ necessarily belong to $[0,1]$. Also if $(X,B)$ is $\epsilon$-lc, then 
automatically $\epsilon\le 1$ because $a(D,X,B)=1$ for most $D$. 

Let $(X,B)$ be a sub-pair. A \emph{non-klt place} of $(X,B)$ is a prime divisor $D$ on 
birational models of $X$ such that $a(D,X,B)\le 0$. A \emph{non-klt centre} is the image on 
$X$ of a non-klt place. When $(X,B)$ is lc, a non-klt centre is also called an 
\emph{lc centre}.

\subsection{Minimal model program (MMP)}\label{ss-MMP} 
We will use standard results of the minimal model program (cf. [\ref{kollar-mori}][\ref{BCHM}]). 
Assume $(X,B)$ is a pair and $X\to Z$ is a projective morphism. 
 Assume $H$ is an ample$/Z$ $\R$-divisor and that $K_X+B+H$ is nef$/Z$. Suppose $(X,B)$ is klt or 
that it is $\Q$-factorial dlt. We can run an MMP$/Z$ on $K_X+B$ with scaling of $H$. If 
$(X,B)$ is klt and if either $K_X+B$ or $B$ is big$/Z$, then the MMP terminates [\ref{BCHM}]. If $(X,B)$ 
is $\Q$-factorial dlt, then in general we do not know whether the MMP terminates but 
we know that in some step of the MMP we reach a model $Y$ on which $K_Y+B_Y$, 
the pushdown of $K_X+B$, is a limit of movable$/Z$ $\R$-divisors: indeed, if the MMP terminates, then 
the claim is obvious; otherwise the MMP produces an infinite sequence $X_i\bir X_{i+1}$ 
of flips and a decreasing sequence $\lambda_i$ of numbers in $(0,1]$ such that 
$K_{X_i}+B_i+\lambda_iH_i$ is nef$/Z$; by [\ref{BCHM}][\ref{B-lc-flips}, Theorem 1.9], $\lim\lambda_i=0$; 
in particular, if $Y:=X_1$, then $K_Y+B_Y$ is the limit of the movable$/Z$ $\R$-divisors 
$K_Y+B_Y+\lambda_i H_Y$.

\subsection{Fano pairs}
Let $(X,B)$ be a pair and $X\to Z$ a contraction. We say $(X,B)$ is \emph{log Fano} over $Z$ 
if it is lc and $-(K_X+B)$ is ample over $Z$; if $B=0$ 
we just say $X$ is Fano over $Z$. The pair is called \emph{weak log Fano} over $Z$ if it is lc 
and $-(K_X+B)$ is nef and big over $Z$; 
if $B=0$ we say $X$ is \emph{weak Fano} over $Z$.
We say $X$ is \emph{of Fano type} over $Z$ if $(X,B)$ is klt weak log Fano over $Z$ for some choice of $B$;
it is easy to see this is equivalent to existence of a big$/Z$ $\Q$-boundary (resp. $\R$-boundary) 
$\Gamma$ so that $(X,\Gamma)$ is klt and $K_X+\Gamma \sim_\Q 0/Z$ (resp. $\sim_\R$ instead of $\sim_\Q$).

Assume $X$ is of Fano type over $Z$. Then we can run the MMP 
over $Z$ on any $\R$-Cartier $\R$-divisor $D$ on $X$ which ends with some model $Y$ [\ref{BCHM}]. If $D_Y$ is nef over $Z$,
we call $Y$ a \emph{minimal model} over $Z$ for $D$. If $D_Y$ is not nef$/Z$, then 
there is a $D_Y$-negative extremal contraction $Y\to T/Z$ with $\dim Y>\dim T$ and we call 
$Y$ a \emph{Mori fibre space} over $Z$ for $D$.

\begin{lem}\label{l-FT-over-curve}
Let $(X,B)$ be an lc pair and $f\colon X\to Z$ be a contraction onto a smooth curve. 
Assume $X$ is of Fano type over some non-empty open set $U\subseteq Z$. Further assume $B$ is a $\Q$-boundary,
$K_X+B\sim_\Q 0/Z$, and that the generic point of each non-klt centre of $(X,B)$ is mapped into $U$. 
Then $X$ is of Fano type over $Z$.
\end{lem}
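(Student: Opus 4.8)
The plan is to reduce the global Fano-type statement over $Z$ to the local Fano-type assumption over $U$ by running a minimal model program and controlling the behaviour over the finitely many closed points of $Z\setminus U$. First I would pick a big$/U$ $\Q$-boundary $\Gamma_U$ with $(X,\Gamma_U)$ klt and $K_X+\Gamma_U\sim_\Q 0/U$, coming from the Fano-type hypothesis over $U$; since $Z$ is a smooth curve and $Z\setminus U$ is a finite set of closed points, I can spread $\Gamma_U$ out and add to it a small multiple of the fibres over $Z\setminus U$ to obtain a $\Q$-boundary $\Gamma$ on $X$, still with $(X,\Gamma)$ klt. The issue is that $K_X+\Gamma$ need not be $\sim_\Q 0/Z$ nor have any positivity globally over $Z$, so the naive approach of exhibiting $\Gamma$ directly does not work; I need to use the structure of $K_X+B$.

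The key step is to exploit that $K_X+B\sim_\Q 0/Z$, so there is a $\Q$-Cartier $\Q$-divisor $L_Z$ on $Z$ with $K_X+B\sim_\Q f^*L_Z$. Over a curve, I can write $f^*L_Z$ as a combination of fibres, and by adjusting $B$ by vertical divisors supported on fibres over $Z\setminus U$ — using that the generic point of each non-klt centre of $(X,B)$ maps into $U$, so over $Z\setminus U$ the pair $(X,B)$ is klt in codimension appropriate to allow such perturbation — I would produce a new boundary $\Delta$ with $(X,\Delta)$ sub-klt (indeed klt after further small perturbation), $K_X+\Delta\sim_\Q 0/Z$, and $\Delta$ big over $U$. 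Then I would run the MMP over $Z$ on a suitable divisor to improve positivity: concretely, since $X$ is Fano type over $U$, I can choose an ample$/U$ divisor $A$ and run the $(K_X + \Delta + tA)$-MMP over $Z$ for small $t>0$; over $U$ this terminates with a weak log Fano, and the claim is that the negativity/length-of-extremal-rays considerations together with $K_X+\Delta\sim_\Q 0/Z$ force the MMP to be an isomorphism over $U$ and to not disturb the vertical structure over $Z\setminus U$, yielding a model on which $-(K+\text{boundary})$ is nef and big over $Z$.

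Finally, to upgrade from "weak log Fano over $Z$" (or big$/Z$ with trivial log canonical divisor) to genuine Fano type over $Z$, I would invoke the characterisation recalled in the excerpt: $X$ is Fano type over $Z$ iff there is a big$/Z$ $\Q$-boundary $\Gamma$ with $(X,\Gamma)$ klt and $K_X+\Gamma\sim_\Q 0/Z$. So it suffices to perturb the boundary produced above to make it big over all of $Z$ while keeping klt and $\sim_\Q 0/Z$; bigness over $Z$ versus over $U$ differs only by vertical contributions over the finite set $Z\setminus U$, which can be arranged by adding small multiples of (the pullbacks of) ample divisors on $Z$ supported at those points and correcting by $\sim_\Q 0/Z$. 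The main obstacle I expect is the middle step: ensuring that perturbing $B$ by vertical divisors over $Z\setminus U$ preserves sub-klt singularities — this is exactly where the hypothesis that non-klt centres of $(X,B)$ map into $U$ is essential — and ensuring the MMP over $Z$ does not create new non-klt behaviour or contract something over $U$; handling the fibres over $Z\setminus U$ carefully (they may be non-reduced or singular) is the delicate part, and I would lean on the klt-ness of $(X,B)$ over those points together with standard facts about vertical divisors over a curve.
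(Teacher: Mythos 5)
Your proposal mis-locates where the hypothesis on non-klt centres does its work, and as a result the middle of the argument does not go through. You propose to fix $B$ into a klt boundary by perturbing with vertical divisors over $Z\setminus U$, ``using that the generic point of each non-klt centre of $(X,B)$ maps into $U$, so over $Z\setminus U$ the pair $(X,B)$ is klt.'' But this is backwards: the non-klt centres of $(X,B)$ lie \emph{over} $U$, so vertical adjustments over $Z\setminus U$ cannot eliminate them, and $B$ itself need not be big even over $U$. The paper instead keeps $B$ as is and modifies $\Gamma$, the big klt boundary witnessing Fano type over $U$. The hypothesis is used to justify the averaging $t\Gamma+(1-t)B$: for any divisorial valuation $D$ with $a(D,X,B)=0$, the centre of $D$ is a non-klt centre of $(X,B)$ and hence lies over $U$, where $(X,\Gamma)$ is klt, so $a(D,X,\Gamma)>0$ and the convex combination is klt for $t$ close to $1$. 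This step is absent from your proposal, and nothing in it produces a globally klt big boundary.

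The MMP step and the final upgrade are also off. Running an MMP on $K_X+\Delta+tA$ where $K_X+\Delta\sim_\Q 0/Z$ is just an MMP on $tA$, and the claim that it is an isomorphism over $U$ is neither clear nor what one wants; the paper runs the MMP on $K_X+\Gamma$ itself (with the averaged $\Gamma$, so klt and big), reaching a minimal model $X'$ on which $K_{X'}+\Gamma'$ is nef$/Z$, and then invokes that $Z$ is a curve to conclude $K_{X'}+\Gamma'\sim_\Q 0/Z$. Pulling this back crepantly to $X$ gives a sub-klt $\overline\Gamma$ with $K_X+\overline\Gamma\sim_\Q 0/Z$ whose negative components lie over $Z\setminus U$, and then one averages with $B$ once more (after first enlarging $\Supp B$ by small multiples of the fibres over $Z\setminus U$) to make $\overline\Gamma\ge 0$. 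Your final suggestion of adding pullbacks of ample divisors ``supported at those points'' to gain bigness does not help: such divisors are vertical, and bigness$/Z$ is not achieved by vertical corrections; the bigness in the paper's argument comes from $\Gamma$ being big to start with and that property being preserved through the averaging and MMP.
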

\begin{proof}
This proof which differs from the original proof, was suggested by Florin Ambro. 
Since $X$ is of Fano type over $U$, there is a $\Q$-divisor $\Gamma$ with coefficients in $[0,1)$ 
such that  $\Gamma$ is big over $U$, $(X,\Gamma)$ is klt over $U$, and $K_X+\Gamma\sim_\Q 0$ over $U$.  
Then $K_X+\Gamma\sim_\Q D/Z$ for some $\Q$-divisor $D$ which is vertical$/Z$. 
Since $Z$ is a curve, we can easily find a $\Q$-divisor $C$ on $Z$ so that if we replace 
$D$ with $D+f^*C$, then the support of $D$ is 
mapped into $Z\setminus U$ and that $D\le 0$.  

Now $K_X+\Gamma-D\sim_\Q 0/Z$ and $\Gamma-D\ge 0$.
Since the generic point of each non-klt centre of $(X,B)$ is mapped into $U$ and since 
$(X,\Gamma)$ is klt over $U$, the pair  
$$
(X,(1-t)B+t(\Gamma-D))
$$
is klt if $t>0$ is a sufficiently small rational number. Therefore, $X$ is of Fano type over $Z$ as 
$$
K_X+(1-t)B+t(\Gamma-D)\sim_\Q 0/Z
$$ 
and 
$(1-t)B+t(\Gamma-D)$ is big over $Z$.

\end{proof}

\begin{lem}\label{l-FT-base-contraction}
Let $X$ be a projective variety of Fano type, and let $f\colon X\to Z$ be a contraction 
where $\dim Z>0$. Then $Z$ is of Fano type.  
\end{lem}
\begin{proof}
There is a big $\Q$-boundary $\Gamma$ such that $(X,\Gamma)$ is klt and $K_X+\Gamma \sim_\Q 0$. 
In particular, $X$ is normal, hence $Z$ is normal too. Since $\Gamma$ is big, we can modify it 
and assume $\Gamma\ge H\ge 0$ for some ample $\Q$-divisor $H$. In turn we can modify $H$, hence  
assume $\Gamma\ge H\ge f^*A$ for some ample $\Q$-divisor $A\ge 0$. 
Let $\Delta:=\Gamma-f^*A$. Then $K_X+\Delta\sim_\Q 0/Z$, hence by [\ref{ambro-lc-trivial}, Theorem 0.2], 
there is $\Delta_Z$ such that $K_X+\Delta\sim_\Q f^*(K_Z+\Delta_Z)$ and $(Z,\Delta_Z)$ is klt. 
Now letting $\Gamma_Z=\Delta_Z+A'$ where $A'\sim_\Q A$ is general we see that 
$K_Z+\Gamma_Z\sim_\Q 0$ and $(Z,\Delta_Z)$ is klt. Thus $Z$ is of Fano type.

\end{proof}

\subsection{Generalised polarised pairs}\label{ss-gpp}
For the basic theory of generalised polarised pairs see [\ref{BZh}, Section 4].
Below we recall some of the main notions and discuss some basic properties.\\

(1)
A \emph{generalised polarised pair} consists of 
\begin{itemize}
\item a normal variety $X'$ equipped with a projective
morphism $X'\to Z$, 

\item an $\R$-divisor $B'\ge 0$ on $X'$, and 

\item a b-$\R$-Cartier  b-divisor over $X'$ represented 
by some projective birational morphism $X \overset{\phi}\to X'$ and $\R$-Cartier divisor
$M$ on $X$
\end{itemize}
such that $M$ is nef$/Z$ and $K_{X'}+B'+M'$ is $\R$-Cartier,
where $M' := \phi_*M$. 

We usually refer to the pair by saying $(X',B'+M')$ is a  generalised pair with 
data $X\overset{\phi}\to X'\to Z$ and $M$. Since a b-$\R$-Cartier b-divisor is defined birationally (see \ref{ss-b-divisor}), 
in practice we will often replace $X$ with a resolution and replace $M$ with its pullback.
When $Z$ is not relevant we usually drop it
 and do not mention it: in this case one can just assume $X'\to Z$ is the identity. 
When $Z$ is a point we also drop it but say the pair is projective. 

Now we define generalised singularities.
Replacing $X$ we can assume $\phi$ is a log resolution of $(X',B')$. We can write 
$$
K_X+B+M=\phi^*(K_{X'}+B'+M')
$$
for some uniquely determined $B$. For a prime divisor $D$ on $X$ the \emph{generalised log discrepancy} 
$a(D,X',B'+M')$ is defined to be $1-\mu_DB$. 

We say $(X',B'+M')$ is 
\emph{generalised lc} (resp. \emph{generalised klt})(resp. \emph{generalised $\epsilon$-lc}) 
if for each $D$ the generalised log discrepancy $a(D,X',B'+M')$ is $\ge 0$ (resp. $>0$)(resp. $\ge \epsilon$).
A \emph{generalised non-klt centre} of $(X',B'+M')$ is the image of a prime divisor 
$D$ on birational models of $X'$ with $a(D,X',B'+M')\le 0$, and 
the \emph{generalised non-klt locus} of the generalised pair is the union of all the generalised non-klt centres.  

(2)
Let $(X',B'+M')$ be a generalised pair as in (1).
We say $(X',B'+M')$ is \emph{generalised dlt} if it is generalised lc and if 
$\eta$ is the generic point of any generalised non-klt centre of 
$(X',B'+M')$, then $(X',B')$ is log smooth near $\eta$ and $M=\phi^*M'$  holds over a neighbourhood of $\eta$.  
 If in addition the connected 
components of $\rddown{B'}$ are irreducible, we say the pair is \emph{generalised plt}.
Note that when $M=0$, then $(X',B')$ is {generalised dlt} iff it is dlt in the usual sense. 

The generalised dlt property is preserved under the MMP. Indeed, assume $(X',B'+M')$ is {generalised dlt} and 
that $X'\bir X''/Z$ is a divisorial contraction or a flip with respect to $K_{X'}+B'+M'$. Replacing $\phi$ 
we can assume $X\bir X''$ is a morphism. Let $B'',M''$ be the pushdowns of $B',M'$ and consider 
$(X'',B''+M'')$ as a generalised pair with data $X\to X''\to Z$ and $M$.  Then $(X'',B''+M'')$
is also generalised dlt because it is generalised lc and because $X'\bir X''$ is an isomorphism over the generic point of 
any generalised non-klt center of   $(X'',B''+M'')$.

(3)
Let $(X',B'+M')$ be a generalised pair as in (1) and let $\psi\colon X''\to X'$ be a projective birational 
morphism from a normal variety. Replacing $\phi$ we can assume $\phi$ factors through 
$\psi$. We then let $B''$ and $M''$ be the pushdowns of 
$B$ and $M$ on $X''$ respectively. In particular,  
$$
K_{X''}+B''+M''=\psi^*(K_{X'}+B'+M').
$$
If $B''\ge 0$, then $(X'',B''+M'')$ is also a generalised pair 
with data $X\to X''\to Z$ and $M$. If $(X'',B''+M'')$ is $\Q$-factorial generalised dlt and if 
every exceptional prime divisor of $\psi$ appears in $B''$ with coefficients one, then we say 
$(X'',B''+M'')$ is a \emph{$\Q$-factorial generalised dlt model} of $(X',B'+M')$. Such models exist if 
$(X',B'+M')$ is generalised lc, by [\ref{BZh}, Lemma 4.5].

(4)
Let $(X',B'+M')$ be a generalised pair as in (1).
Assume that $D'$ on $X'$ is an effective $\R$-divisor and that $N$ on $X$ is an $\R$-divisor which is
nef$/Z$ such that $D'+N'$ is $\R$-Cartier where $N'=\phi_*N$.
The \emph{generalised lc threshold} of $D'+N'$
with respect to $(X',B'+M')$ is defined as
$$
\sup \{s \mid \mbox{$(X',B'+sD'+M'+sN')$ is generalised lc}\}
$$
where the pair in the definition comes with data $X\overset{\phi}\to X'\to Z$ and $M+sN$.

(5)
We prove a connectedness principle similar to the usual one.

\begin{lem}[Connectedness principle]\label{l-connectedness}
Let $(X',B'+M')$ be a generalised pair with data $X\overset{\phi}\to X'\to Z$ and $M$ 
where $X'\to Z$ is a contraction. 
Assume $-(K_{X'}+B'+M')$ is nef and big over $Z$. Then the generalised non-klt locus of 
$(X',B'+M')$ is connected near each fibre of $X'\to Z$.
\end{lem}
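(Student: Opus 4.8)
The plan is to reduce the generalised connectedness principle to the classical one via a Kawamata--Viehweg/Koll\'ar-type vanishing argument on a high birational model. First I would pass to a log resolution $\phi\colon X\to X'$ of $(X',B')$ on which $M$ is additionally nef$/Z$, and write $K_X+B+M=\phi^*(K_{X'}+B'+M')$ as in \ref{ss-gpp}(1). The point is that the generalised non-klt locus of $(X',B'+M')$ is exactly the image under $\phi$ of $\Supp B^{\ge 1}$, so it suffices to prove that $\Supp B^{\ge 1}$ is connected near each fibre of the composition $X\to X'\to Z$ — since $\phi$ has connected fibres, the image of a set connected near a fibre is again connected near the corresponding fibre of $X'\to Z$.

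Next I would set up the vanishing. Write $B = B^{\ge 1} + B^{<1}$ and put $A := -B^{<1} + \lceil \text{(negative part)} \rceil$-type rounding; more precisely, consider the divisor
$$
L := -\rddown{B} - M + (K_X + B + M) = K_X + \{B\} + \rddown{B}^{\le 0}\text{-corrections},
$$
so that $L - (K_X + \{B\} + \text{(reduced part of }B^{<1}\text{ written with sign)})$ is $\phi$-numerically equivalent to $-(K_{X'}+B'+M')$ pulled back, which is nef and big over $Z$. Combining with $M$ being nef$/Z$, the class $L$ minus a klt boundary is nef and big over $Z$; by Koll\'ar--Shokurov connectedness (the relative Kawamata--Viehweg vanishing applied to $R^1\phi'_* $ where $\phi'\colon X\to Z$), one gets that the locus where the relevant boundary has coefficient $\ge 1$ is connected on each fibre of $X\to Z$. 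Concretely I would apply the standard statement: if $(X,\Delta)$ is a pair, $g\colon X\to Z$ a contraction, $\lceil -(\Delta^{<1})\rceil - \rddown{\Delta}$ is $g$-nef and $g$-big, then $\Supp \rddown{\Delta}$ is connected near every fibre of $g$. Here one takes $\Delta = B$ and uses that $-(K_X+B) \equiv M/\,Z$ plus the pullback of the nef and big $-(K_{X'}+B'+M')$, hence $-(K_X+B)+ (\text{small ample correction from bigness})$ has a decomposition making $\lceil -(B^{<1})\rceil - \rddown{B}$ nef and big over $Z$ after perturbing within its $\R$-linear class; the extra nef term $M$ only helps.

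The main obstacle I anticipate is the bigness bookkeeping: $-(K_{X'}+B'+M')$ is only nef \emph{and big} over $Z$, not ample, so $L$ restricted to a fibre is not automatically big — one must use that $\phi$ is birational and that big-and-nef pulls back to big-and-nef on $X$, then absorb the exceptional locus. I would handle this by the usual trick of writing the pullback of a nef and big divisor as ample plus effective on a further blow-up, adding a small multiple of the effective part into the boundary (which may create or enlarge components of coefficient $\ge 1$, but only \emph{inside} the non-klt locus we are already tracking, since the support can be chosen to avoid general points), and then quoting relative Kawamata--Viehweg vanishing to kill $R^1g_*$ of the ideal sheaf of $\rddown{\Delta}$. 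A secondary technical point is ensuring $M$ stays nef$/Z$ after all the birational modifications, which is immediate since we only ever pull $M$ back along birational morphisms.

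Finally, I would assemble the pieces: connectedness of $\Supp\rddown{\Delta}$ near each fibre of $X\to Z$ descends, via the Stein factorization / connected fibres of $\phi$, to connectedness of the generalised non-klt locus of $(X',B'+M')$ near each fibre of $X'\to Z$, which is the assertion. I expect the proof to be essentially a line-by-line transcription of the classical Koll\'ar--Shokurov argument (see e.g. [\ref{kollar-mori}, 5.48] or [\ref{kollar-sing-pairs}, \S 4]) with $M$ carried along as a harmless nef summand, so the write-up should be short.
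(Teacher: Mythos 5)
Your general plan --- reduce to a Kawamata--Viehweg type vanishing / the usual connectedness principle on a log resolution --- is in the right direction, but the proposal misses the essential idea of the paper's proof, and the formulas you sketch are incorrect. The paper's argument first takes a minimal model of $K_X+M$ over $X'$; the negativity lemma then shows the new pushdown of $B$ is effective, the generalised non-klt locus maps onto the original one, and one may replace $X'$ by this $\Q$-factorial model. Then the bigness of $-(K_{X'}+B'+M')$ over $Z$ is exploited to write $-(K_X+B+M)\sim_\R A+C/Z$ with $A$ ample and $C\ge 0$, and the abstract nef class $M$ is \emph{absorbed into a genuine effective boundary} by choosing a general $G\sim_\R M+\epsilon A/Z$. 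Setting $\Delta=B+\epsilon C+G$ gives $K_X+\Delta=\phi^*(K_{X'}+\Delta')$ with $\Delta'\ge 0$ (here the $\Q$-factoriality is used), $\Supp B^{\ge 1}=\Supp\Delta^{\ge 1}$ for small $\epsilon$, and $-(K_{X'}+\Delta')\sim_\R -(1-\epsilon)(K_{X'}+B'+M')/Z$ nef and big over $Z$; the ordinary connectedness principle [\ref{Kollar-flip-abundance}, Theorem 17.4] now applies verbatim to the honest pair $(X',\Delta')$.

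Your proposal does not perform this conversion. The claim that the proof is ``essentially a line-by-line transcription\ldots with $M$ carried along as a harmless nef summand'' is the gap: the classical connectedness lemma is a theorem about actual $\R$-divisor pairs $(X,\Delta)$ with $\Delta\ge 0$, and it cannot be applied to a generalised pair without first making the boundary effective; otherwise one must re-prove the vanishing argument from scratch, inserting $M$ correctly into the nef-and-big hypothesis. Your sketch of that vanishing argument does not do this correctly. The divisor $L:=-\rddown{B}-M+(K_X+B+M)$ simplifies to $K_X+\{B\}$ (the fractional part), which is not the relevant input: the correct twist is $\lceil -B\rceil$, and one checks $\lceil -B\rceil-(K_X+\Theta)=-(K_X+B)=M-\phi^*(K_{X'}+B'+M')$ with $\Theta=B+\lceil -B\rceil$ a klt snc boundary, so that $M$ enters as a nef summand of the nef-and-big class. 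The ``standard statement'' you propose to quote also misstates the hypothesis: it should be that $-(K_X+\Delta)$ is $g$-nef and $g$-big and that $\lceil -\Delta^{<1}\rceil$ is effective and exceptional --- and crucially exceptional over $X'$, not over $Z$, so the pushforward $g_*\mathcal{O}_X(\lceil -\Delta^{<1}\rceil)=\mathcal{O}_Z$ must be factored through $X'$, a point your proposal never addresses --- not that ``$\lceil -(\Delta^{<1})\rceil-\rddown{\Delta}$ is $g$-nef and $g$-big''.
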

\begin{proof}
We can assume $\phi$ is a log resolution. Write 
$$
K_X+B+M=\phi^*(K_{X'}+B'+M').
$$
The generalised non-klt locus of $(X',B'+M')$ is just $\phi(\Supp B^{\ge 1})$.
We can write 
$$
-(K_{X}+B+M)\sim_\R A+C/Z
$$
where $A$ is ample and $C\ge 0$. Replacing $X$ with a higher resolution and 
replacing $A,C$ with their pullbacks we can assume $\phi$ is a log resolution of $(X',B'+C')$ 
where $C'=\phi_*C$: note that here we initially  
replace $A,C$ with their pullbacks to the new resolution but then $A$ may no longer be ample 
although it is nef and big; we then 
perturb $A,C$ in the exceptional components so that $A$ is ample again.  
Pick a sufficiently small $\epsilon>0$, let $G\sim_\R M+\epsilon A/Z$ be general with  coefficients 
less than $1$, and let 
$\Delta=B+\epsilon C+G$.
Then $K_X+\Delta\sim_\R 0/X'$, so $K_X+\Delta=\phi^*(K_{X'}+\Delta')$. 
Moreover, $\Supp B^{\ge 1}=\Supp \Delta^{\ge 1}$.
Thus the non-klt locus of the pair $(X',\Delta')$   
is equal to the generalised non-klt locus of $(X',B'+M')$. Therefore, the result follows from the usual 
connectedness principle [\ref{Kollar-flip-abundance}, Theorem 17.4] because 
$$
-(K_{X'}+\Delta')\sim_\R -(1-\epsilon) (K_{X'}+B'+M')/Z
$$ 
is nef and big over $Z$.
\end{proof}

(6)
Let $(X',B'+M')$ be a projective generalised klt pair. Assume $A':=-(K_{X'}+B'+M')$ is nef and big. 
We show $X'$ is of Fano type. Using the notation of (1),  
let $A=\phi^*A'$. Then $A\sim_\R H+G$ where $H$ is ample and $G\ge 0$. 
Take a small $\epsilon>0$ and a general $C\sim_\R \epsilon H+M$.  Then 
$$
K_X+B+\epsilon G+C\sim_\R K_X+B+M+\epsilon A=\phi^*(K_{X'}+B'+M'+\epsilon A'),
$$
hence if we let $\Delta=B+\epsilon G+C$, then 
$K_X+\Delta=\phi^*(K_{X'}+\Delta')$ which shows $(X',\Delta')$ is klt. 
Since $-(K_{X'}+\Delta')$ is nef and big, $X'$ is of Fano type.

(7)
Let $(X',B'+M')$ be a projective generalised lc pair where $X'$ is of Fano type 
and $-(K_{X'}+B'+M')$ is nef. 
Assume $X''\to X'$ is a birational morphism from a normal projective variety. 
Let  $K_{X''}+B''+M''$ be the pullback of $K_{X'}+B'+M'$ where $B''$ is the pushdown of 
$B$ and $M''$ is the pushdown of $M$.
We show $X''$ is of Fano type too, assuming every exceptional$/X'$ component of  $B''$ 
has positive coefficient. There is a $\Q$-boundary $\Gamma'$ such that 
$(X',\Gamma')$ is klt and $-(K_{X'}+\Gamma')$ is nef and big. Let $K_{X''}+\Gamma''$ 
be the pullback of $K_{X'}+\Gamma'$. Let $\Delta''=(1-t)\Gamma''+tB''$ for some  
$t\in (0,1)$ sufficiently close to $1$. Then $(X'',\Delta''+tM'')$ is generalised klt 
and $-(K_{X''}+\Delta''+tM')$ is nef and big. Now apply (5).

\subsection{Exceptional and non-exceptional pairs}\label{ss-exc-non-exc}
(1)
Let $(X,B)$ be a projective pair such that $K_X+B+P\sim_\R 0$ for some $\R$ divisor 
$P\ge 0$. We say the pair is \emph{non-exceptional} (resp. \emph{strongly non-exceptional}) if 
we can choose $P$ so that $(X,B+P)$ is not klt (resp. not lc). We say the pair is 
\emph{exceptional} if $(X,B+P)$ is klt for every choice of $P$. 

(2)
Now let $(X',B'+M')$ be a projective generalised  pair with 
data $\phi\colon X\to X'$ and $M$. Assume $K_{X'}+B'+M'+P'\sim_\R 0$ for some $\R$-divisor $P'\ge 0$. 
We say the pair is \emph{non-exceptional} (resp. \emph{strongly non-exceptional}) if 
we can choose $P'$ so that $(X',B'+P'+M')$ is not generalised klt (resp. not generalised lc). We say the pair is 
\emph{exceptional} if $(X',B'+P'+M')$ is generalised klt for every choice of $P'$. Here we consider 
$(X',B'+P'+M')$ as a generalised pair with data $\phi\colon X\to X'$ and $M$.


\begin{lem}\label{l-s-non-exc-change-to-Q-div}
Let $(X',B'+M')$ be a projective generalised pair such that $B'+M'$ is a $\Q$-divisor. 
If $(X',B'+M')$ is non-exceptional (resp. strongly non-exceptional), then there is a $\Q$-divisor $P'\ge 0$ such that 
$K_{X'}+B'+M'+P'\sim_\Q 0$ and $({X'},B'+M'+P')$ is not generalised klt (resp. generalised lc).
\end{lem}
\begin{proof}
By definition, there is an $\R$-divisor $P'\ge 0$ such that 
$K_{X'}+B'+M'+P'\sim_\R 0$ and $({X'},B'+M'+P')$ is not generalised klt (resp. generalised lc).
There exist numbers $0\neq r_i\in \R$ and rational functions $\alpha_i$ on $X'$ such that 
$$
P'=\sum r_i\Div(\alpha_i)-(K_{X'}+B'+M').
$$
Consider the set $V$ of $\R$-divisors 
$$
\sum s_i\Div(\alpha_i)-(K_{X'}+B'+M').
$$
where $s_i\in \R$ are arbitrary numbers. This is a rational affine space inside the space $W$ of $\R$-divisors 
generated by the components of $K_{X'}+B'+M'$ and the components of all the $\Div(\alpha_i)$. 
On the other hand, the space $U$ of $\R$-divisors generated by all the components of $P'$ is also 
a rational affine subspace of $W$. The intersection $U\cap V$ is a rational affine subspace of $W$ and $P'\in U\cap V$. 
Therefore, there exist real numbers $a_j\in [0,1]$ with $\sum a_j=1$ and effective $\Q$-divisors $P_j'\in U\cap V$ 
sufficiently close to $P'$ (in terms of coefficients) such that $P'=\sum a_jP_j'$. By construction,
 $K_{X'}+B'+M'+P_j'\sim_\Q 0$ for each $j$, and 
$$
K_{X'}+B'+M'+P'=\sum a_j(K_{X'}+B'+M'+P_j').
$$  
Thus there is $j$ such that $({X'},B'+M'+P_j')$ is not generalised klt (resp. generalised lc).
Now replace $P'$ with $P_j'$.
 
\end{proof}

(3) 
The next lemma is useful to keep track of the exceptionality property when considering a birational model 
of $X'$.

\begin{lem}\label{l-exc-compare-bigger-model}
Let $(X',B'+M')$ be a projective generalised pair with data $X\overset{\phi}\to X'$ and $M$, and let
$(X'',B''+M'')$ be a projective generalised pair with data $X\overset{\psi}\to X''$ and $M$ (here 
$X$ and $M$ are the same, in particular, $\psi\phi^{-1}\colon X'\bir X''$ is a birational map).  
Assume 
$$
\psi^*(K_{X''}+B''+M'')\ge \phi^*(K_{X'}+B'+M').
$$
If $(X',B'+M')$ is exceptional, then $(X'',B''+M'')$ is also exceptional.
\end{lem}
\begin{proof}
Assume $(X'',B''+M'')$ is not exceptional. Then there is 
$$
0\le P''\sim_\R -(K_{X''}+B''+M'')
$$
such that  $({X''},B''+P''+M'')$ is not generalised klt. Thus by the inequality in the statement, there is  
$$
0\le P'\sim_\R -(K_{X'}+B'+M')
$$
such that $({X'},B'+P'+M')$ is not generalised klt, contradicting the exceptionality of $(X',B'+M')$.

\end{proof}

\subsection{Complements}\label{ss-compl}
(1) 
We first recall the definition for usual pairs.
Let $(X,B)$ be a pair where $B$ is a boundary and let $X\to Z$ be a contraction. 
Let $T=\rddown{B}$ and $\Delta=B-T$. 
An \emph{$n$-complement} of $K_{X}+B$ over a point $z\in Z$ is of the form 
$K_{X}+{B}^+$ such that over some neighbourhood of $z$ we have the following properties:
\begin{itemize}
\item $(X,{B}^+)$ is lc, 

\item $n(K_{X}+{B}^+)\sim 0$, and 

\item $n{B}^+\ge nT+\rddown{(n+1)\Delta}$.\\
\end{itemize}
From the definition one sees that 
$$
-nK_{X}-nT-\rddown{(n+1)\Delta}\sim n{B}^+-nT-\rddown{(n+1)\Delta}\ge 0
$$
over some neighbourhood of $z$ which in particular means the linear system 
$$
|-nK_{X}-nT-\rddown{(n+1)\Delta}|
$$
is not empty over $z$. Moreover, if $B^+\ge B$, then $-n(K_X+B)\sim n(B^+-B)$ over $z$, hence 
$|-n(K_X+B)|$ is non-empty over $z$.

(2) 
Now let $(X',B'+M')$ be a projective generalised pair with data $\phi\colon X\to X'$ 
and $M$ where $B'\in [0,1]$. 
Let $T'=\rddown{B'}$ and $\Delta'=B'-T'$. 
An \emph{$n$-complement} of $K_{X'}+B'+M'$ is of the form $K_{X'}+{B'}^++M'$ where 
\begin{itemize}
\item $(X',{B'}^++M')$ is generalised lc,

\item $n(K_{X'}+{B'}^++M')\sim 0$ and $nM$ is b-Cartier, and 

\item $n{B'}^+\ge nT'+\rddown{(n+1)\Delta'}$.\\
\end{itemize}
From the definition one sees that 
$$
-nK_{X'}-nT'-\rddown{(n+1)\Delta'}-nM'\sim n{B'}^+-nT'-\rddown{(n+1)\Delta'}\ge 0
$$
which in particular means the linear system 
$$
|-nK_{X'}-nT'-\rddown{(n+1)\Delta'}-nM'|
$$
is not empty. Moreover, if $B'^+\ge B'$, then $-n(K_{X'}+B'+M')\sim n(B'^+-B')$, hence 
$|-n(K_{X'}+B'+M')|$ is non-empty.

We can also define complements for generalised pairs in the relative setting but for simplicity 
we will not deal with those.

\subsection{Bounded families of pairs}\label{ss-bnd-couples}
A \emph{couple} $(X,D)$ consists of a normal projective variety $X$ and a  divisor 
$D$ on $X$ whose non-zero coefficients are all equal to $1$, i.e. $D$ is a reduced divisor. 
The reason we call $(X,D)$ a couple rather than a pair is that we are concerned with 
$D$ rather than $K_X+D$ and we do not want to assume $K_X+D$ to be $\Q$-Cartier 
or with nice singularities. Two couples $(X,D)$ and $(X',D')$ are isomorphic if 
there is an isomorphism $X\to X'$ mapping $D$ onto $D'$.

We say that a set $\mathcal{P}$ of couples  is \emph{birationally bounded} if there exist 
finitely many projective morphisms $V^i\to T^i$ of varieties and reduced divisors $C^i$ on $V^i$ 
such that for each $(X,D)\in \mathcal{P}$ there exist an $i$, a closed point $t\in T^i$, and a 
birational isomorphism $\phi\colon V^i_t\bir X$ such that $(V^i_t,C^i_t)$ is a couple and 
$E\le C_t^i$ where 
$V_t^i$ and $C_t^i$ are the fibres over $t$ of the morphisms $V^i\to T^i$ and $C^i\to T^i$ respectively, and $E$ is the sum of the 
birational transform of $D$ and the reduced exceptional divisor of $\phi$.
We say $\mathcal{P}$ is \emph{bounded} if we can choose $\phi$ to be an isomorphism. 
  
A set $\mathcal{R}$ of projective pairs $(X,B)$ is said to be \emph{log birationally bounded} (respectively \emph{log bounded}) 
if the set of the corresponding couples $(X,\Supp B)$ is birationally bounded (respectively bounded).
Note that this does not put any condition on the coefficients of $B$, eg we are not requiring the 
coefficients of $B$ to be in a finite set. If $B=0$ for all the $(X,B)\in\mathcal{R}$ we usually remove the 
log and just say the set is birationally bounded (resp. bounded).

\begin{lem}\label{l-bnd-couples-bnd-degree}
Let $d,r\in \N$. Assume $\mathcal{P}$ is a set of couples $(X,D)$ where $X$ is of dimension $d$
and there is a very ample divisor $A$ on $X$ with $A^d\le r$ and $A^{d-1}D\le r$. Then $\mathcal{P}$ is bounded.
\end{lem}
\begin{proof} 
The very ample divisor $A$ gives an embedding of $X$ into some  $\PP^n$. When  $X$ is 
nondegenerate, i.e. $X$ is not contained in any hyperplane in  $\PP^n$, it is well-known that 
$n-d+1\le r$ [\ref{Eisenbud-Harris}, Proposition 0]. Therefore, we can assume $n$ is bounded depending only on $d,r$. 
We view both $X$ and $D$ as cycles on $\PP^n$.
By representability of the Chow functor on well-defined families of cycles 
[\ref{kollar-rational-curves}, Chapter I, Theorem 3.21], there exist reduced schemes $R,S$ 
and reduced closed subschemes $W\subseteq \PP^n\times R$ and $G\subseteq \PP^n\times S$ so that if  
$p\colon W\to R$ and $q\colon G\to S$ denote projections, then for each $(X,D)\in \mathcal{P}$ 
there are closed points $r\in R$ and $s\in S$ such that $X,D$ are isomorphic to the reduction of the 
fibres of $p,q$ over $r,s$, respectively. Using stratification and replacing  
$\mathcal{P}$ accordingly, we can assume $R,S$ are integral and that 
$p,q$ are surjective. 

Since $X$ is integral, we can assume $W$ is integral too and that all the fibres of $p$ are integral. On the other hand, 
we can assume that each component of $G$ is mapped onto  $S$. This ensures that the generic fibre of 
$q$ is reduced. Since  we work in characteristic zero, the geometric generic fibre is reduced too. 
Therefore, we can assume all the fibres of $q$ are reduced, hence in particular, $X,D$ are isomorphic to the 
fibres of $p,q$ over $r,s$, respectively.

Let $T=R\times S$ and consider $V:=W\times S$ and $C:=G\times R$ as closed subsets of $\PP^n\times T$.
Considering projections gives projective morphisms $h\colon V\to T$ and $e\colon C\to T$ such that
if  $X,D,r,s$ are as above, then 
the fibres of $h,e$ over $t=(r,s)$ are isomorphic to $X$ and $D$, respectively. 
Let $U\subset T$ be the points parameterising the elements of $\mathcal{P}$. 
Replacing $T$ with the closure of $U$ and using stratification, etc, as above and replacing $\mathcal{P}$ accordingly, 
we can assume  that $U$ is dense in $T$. We can assume $V$ is still integral, and $C$ is still reduced and 
that each of its components dominates $T$.
Then $C\subset V$ because for each $t\in U$, the fibre of $C\to T$ over $t$ is inside the fibre of $V\to T$. 
Now each $(X,D)\in \mathcal{P}$ is the ``fibre" of $(V,C)$ over some closed point $t$.

\end{proof}

Note that in the definition of a bounded set of couples we have $\phi^{-1}D\le C_t^i$ but equality may not 
hold in general. The proof of the previous lemma shows that we can choose the families so that equality holds. 
For reference we put this into the next lemma.

\begin{lem}\label{l-couples-bnd-full-fib}
Assume $\mathcal{P}$ is a bounded set of couples. Then under the above notation, 
we can choose $V^i, C^i$ and $V^i\to T^i$  
so that for each $(X,D)\in \mathcal{P}$ there exist an $i$ and a closed point $t\in T^i$ such that 
 $(V^i_t,C^i_t)$ is a couple isomorphic to $(X,D)$.
 \end{lem}
\begin{proof}
From the definition of bounded families we can see that there is $n$ depending only on $\mathcal{P}$ 
such that for each $(X,D)\in \mathcal{P}$ we can embed $X$ in $\PP^n$ so that degree of $X$ and 
degree of $D$ (calculated with respect to a hyperplane) are both bounded. Now the claim follows from the 
proof of Lemma \ref{l-bnd-couples-bnd-degree}.

\end{proof}

\begin{lem}\label{l-bnd-fam-intersection}
Let $\mathcal{P}$ be a bounded set of couples and $e\in \R^{>0}$. Then there is a finite set $I\subset \R$ 
depending only on $\mathcal{P}$ and $e$ satisfying the following. Let $(X,D)\in \mathcal{P}$ and  
assume $R\ge 0$ is a non-zero integral divisor on $X$ such that $K_X+D+rR\equiv 0$ for some real number $r\ge e$. 
Then $r\in I$.
\end{lem}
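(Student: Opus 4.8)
The plan is to reduce to finitely many flat families of couples and then to express $r$ as a ratio whose denominator is a positive integer and whose numerator lies in a fixed finite set, by intersecting with a general complete intersection curve.

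First I would unwind boundedness. By definition there are finitely many projective morphisms $V^j\to T^j$ and reduced divisors $C^j$ on $V^j$ such that every $(X,D)\in\mathcal P$ is isomorphic to some fibre $(V^j_t,C^j_t)$ with $D$ carried to a sub-divisor of $C^j_t$. By Noetherian induction I may stratify each $T^j$; passing to the strata, making a finite base change and decomposing again so as to separate the relative irreducible components of $C^j$, and then replacing each $C^j$ by the various sums of these components, I may assume that each $V^j\to T^j$ is flat and projective, $T^j$ is smooth and irreducible, and under the isomorphism $X\cong V^j_t$ the divisor $D$ is exactly the fibre $C^j_t$. If $\dim X=0$ then $R=0$, contrary to hypothesis, so I may assume the fibre dimension is some $d\ge 1$; fix a relatively very ample line bundle $\mathcal H^j$ on $V^j/T^j$.

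Next I would cut out a curve. Fix $(X,D)\cong(V^j_t,C^j_t)$ in one of these families, together with $R$ and $r$ as in the statement. Since $X$ is normal, $X^{\mathrm{sing}}$ has codimension $\ge 2$, so for general $S_1,\dots,S_{d-1}\in|\mathcal H^j_t|$ the complete intersection $\Gamma:=S_1\cap\cdots\cap S_{d-1}$ is a smooth projective curve contained in $X^{\mathrm{sm}}$, not contained in $\Supp D\cup\Supp R$, and meeting some prime component of $R$; hence $R\cdot\Gamma=\deg\mathcal O_X(R)|_\Gamma$ is a positive integer. On $X^{\mathrm{sm}}$ the Weil divisors $K_X$, $D$, $R$ are Cartier, and the relation $K_X+D+rR\equiv 0$ (an identity of numerical classes of $\R$-Cartier divisors) restricts there; intersecting with $\Gamma$,
$$
0=(K_X+D+rR)\cdot\Gamma=(K_X+D)\cdot\Gamma+r\,(R\cdot\Gamma),
$$
all three numbers being honest degrees of line bundles on $\Gamma$. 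Therefore $r=-(K_X+D)\cdot\Gamma/(R\cdot\Gamma)$; in particular $r$ is rational.

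The crux — and the step I expect to need the most care, because the total space $V^j$ need not be $\Q$-Gorenstein — is that the numerator $(K_X+D)\cdot\Gamma$ takes only finitely many values. Let $V^{j,\circ}\subseteq V^j$ be the locus where $V^j\to T^j$ is smooth; there $V^{j,\circ}$ is smooth, so $\mathcal L^j:=\omega_{V^{j,\circ}/T^j}\otimes\mathcal O_{V^{j,\circ}}(C^j)$ is a line bundle, and by flatness $V^{j,\circ}\cap V^j_t=(V^j_t)^{\mathrm{sm}}$, along which $\mathcal L^j$ restricts to $\omega_{X}\otimes\mathcal O_X(D)$. The curves $\Gamma$ constructed above all lie in $V^{j,\circ}$ and form a flat family of curves over a dense open subset of an irreducible variety $G^j$ (a tower of projective bundles over $T^j$ parametrising the tuples $(S_1,\dots,S_{d-1})$); since $G^j$ is irreducible, the degree of $\mathcal L^j$ along the members of this family is constant, equal to some $c_j\in\Z$ depending only on the $j$-th family. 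Hence $(K_X+D)\cdot\Gamma$ lies in the finite set $S_0:=\{c_1,\dots\}\subset\Z$. Finally, with $B:=\max_{s\in S_0}|s|$, the inequality $e\,(R\cdot\Gamma)\le r\,(R\cdot\Gamma)=-(K_X+D)\cdot\Gamma\le B$ forces $R\cdot\Gamma$ into $\{1,\dots,\lfloor B/e\rfloor\}$, so $r=-(K_X+D)\cdot\Gamma/(R\cdot\Gamma)$ lies in the finite set $I:=\{-s/n:\ s\in S_0,\ 1\le n\le B/e\}$, which depends only on $\mathcal P$ and $e$.
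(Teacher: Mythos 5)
Your proof is correct and follows essentially the same approach as the paper: both arguments intersect $K_X+D+rR$ with a one-cycle lying in the smooth locus of $X$ cut out by $d-1$ general very ample divisors (so that $K_X+D$ and $R$ become Cartier and the degrees are honest integers), observe that $(K_X+D)\cdot\Gamma$ ranges over a finite set by boundedness, and then deduce both $R\cdot\Gamma\le B/e$ and the finiteness of $r$. The only difference is cosmetic: where the paper compresses the key finiteness into the phrase ``by the boundedness assumption, $(K_X+D)\cdot A^{d-1}$ belongs to some finite set,'' you spell out why, via constancy of the degree of the relative line bundle $\omega_{V^{j,\circ}/T^j}\otimes\mathcal O(C^j)$ on a flat family of complete-intersection curves over an irreducible parameter space. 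That is precisely the content the paper is implicitly invoking, and your version is a legitimate and somewhat more self-contained rendering of the same argument.
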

\begin{proof}
We can choose an effective very ample Cartier divisor $A$ on $X$ such that $(X,A+D)$ belongs to a 
bounded set of couples $\mathcal{Q}$ determined by some presentation of $\mathcal{P}$ (as in \ref{ss-bnd-couples}). 
Moreover, realising  
$A^{d-1}$ as a $1$-cycle inside the smooth locus of $X$, the intersection number $L\cdot A^{d-1}$ 
makes sense and is an integer for any integral divisor $L$. On the other hand, $(K_X+D)\cdot A^{d-1}$ belongs to some finite 
set depending only on $\mathcal{Q}$. Thus from 
$$
(K_X+D+rR)\cdot A^{d-1}=0
$$
and the assumption $r\ge e$ we deduce that $R\cdot A^{d-1}$ is bounded from above. Therefore,  
$r$ belongs to some fixed finite set $I$. Choosing $I$ to be minimal with this property, 
it depends only on $\mathcal{P}$ and $e$.

\end{proof}

\subsection{Cartier index in bounded families}

\begin{lem}\label{l-bnd-couples-bnd-Cartier-index}
 Let $\mathcal{P}$ be a bounded set of couples. Then there is a natural number $I$ 
 depending only on $\mathcal{P}$ satisfying the following. Assume $X$ is projective 
 with klt singularities and $M\ge 0$ an integral $\Q$-Cartier 
 divisor on $X$ so that $(X,\Supp M)\in \mathcal{P}$. Then $IK_{X}$ and $IM$ are Cartier. 
\end{lem}
\begin{proof}
We will use MMP similar to [\ref{HX}, Proposition 2.4].
 Assume there is a sequence $X_i,M_i$ of pairs and divisors as in the lemma such that if $I_i$ 
 is the smallest natural number so that $I_iK_{X_i}$ and $I_iM_i$ are Cartier, 
 then the $I_i$ form a strictly increasing sequence of numbers.  Perhaps after replacing 
 the sequence with a subsequence, by Lemma \ref{l-couples-bnd-full-fib}, 
we can assume there is a projective morphism $V\to T$ of varieties, a reduced divisor $C$ on 
$V$, and a dense set of closed points $t_i\in T$ such that $X_i$ is the fibre of $V\to T$ over 
$t_i$ and  $\Supp M_i$ is the fibre of $C\to T$ over $t_i$. Since $X_i$ are normal, 
replacing $V$ with its normalisation 
and replacing $C$ with its inverse image with reduced structure, we can assume $V$ is normal.

Let $\phi\colon W\to V$ be a resolution of $V$ and let $\Delta$ be  the 
reduced exceptional divisor of $\phi$. Running an MMP$/V$ on $K_W+\Delta$ with scaling of an 
ample divisor, we reach a model $V'$ on which $K_{V'}+\Delta'$ is a limit of movable$/V$ divisors (\ref{ss-MMP}).  
Let $V'\to V$ be the induced morphism and $X_i',\Delta_i'$ be the fibres of $V'\to T$ and $\Delta'\to T$ 
over $t_i$, respectively (note that $\Delta_i'=\Delta'|_{X_i'}$ and since we work in 
characteristic zero, we can assume $\Delta_i'$ is reduced). 
Now we can assume $X_i'$ are general fibres of $V'\to T$, 
hence $\Delta_i'$ is the reduced exceptional divisor of $X_i'\to X_i$.  Since $X_i$ is klt, 
we can write the pullback of $K_{X_i}$ to $X_i'$ as $K_{X_i'}+\Theta_i'$  where $\Theta_i'$ is exceptional 
with coefficients strictly less than $1$. But then since $X_i'$ are general fibres, 
$$
\Delta_i'-\Theta_i'=K_{X_i'}+\Delta_i'-(K_{X_i'}+\Theta_i')\sim_\Q K_{X_i'}+\Delta_i'/X_i
$$ 
is a limit of movable$/X_i$
 divisors, hence $\Delta_i'-\Theta_i'\le 0$ by the general negativity lemma [\ref{B-lc-flips}, Lemma 3.3] 
which in turn implies $\Delta_i'=\Theta_i'=0$ as $\Delta_i'$ is reduced. Thus $X_i'\to X_i$ is a small contraction.

There is a $\Q$-divisor $\Gamma_i'\ge 0$ which is anti-ample over $X_i$. Rescaling it we can 
assume $(X_i',\Gamma_i')$ is klt. In particular, $X_i'\to X_i$ is a $K_{X_i'}+\Gamma_i'$-negative 
contraction of an extremal face of the Mori-Kleiman cone of $X_i'$ 
because by the previous paragraph $K_{X_i'}$ is the pullback of $K_{X_i}$. 
Thus by the cone theorem [\ref{kollar-mori}, Theorem 3.7], the 
Cartier index of $K_{X_i'}$ (resp, $M_i'$) and $K_{X_i}$ (resp. $M_i$) are the same where 
$M_i'$ is the pullback of $M_i$. 

Now since $X_i'$ is a general fibre, $K_{X_i'}=K_{V'}|_{X_i'}$ which shows that the Cartier index of $K_{X_i'}$ is bounded. 
Moreover, if $C'\subset V'$ denotes the birational transform of $C$, then $\Supp M_i'$ is the fibre of $C'\to T$ over $t_i$.
Thus replacing $V,C$ with $V',C'$ we can replace  
$X_i$ with $X_i'$ and replace $M_i$ with $M_i'$, hence assume $V$ is $\Q$-factorial, so $C$ is $\Q$-Cartier. 

Pick $I$ so that $IC$ is Cartier. Let $D_i=\Supp M_i$. Then  $D_i=C|_{X_i}$, hence 
$ID_i$ is Cartier. This gives a contradiction if $M_i$ are all irreducible. 
In general, let $h_i\in\N$ be the largest number such that $M_i-h_iD_i\ge 0$. Then $M_i-h_iD_i$ has 
at least one component less than $M_i$. Thus we can apply induction on the number of components of $M_i$ 
which is a bounded number.  

\end{proof}

\begin{lem}\label{l-bnd-couples-bnd-Cartier-index-2}
 Let $d,r$ be natural numbers. 
 Then there is a natural number $I$ depending only on $d,r$ 
 satisfying the following. Suppose $X$ is a projective variety with klt singularities and 
$A$ is very ample on $X$ with $A^d\le r$ where $d=\dim X$. If $L$ is a nef integral divisor on $X$ with  
$A^{d-1}L\le r$, then $IL$ is Cartier. 
\end{lem}
\begin{proof}
We can assume $\dim X>1$ otherwise the statement holds trivially. Since $A$ is very ample and 
$A^d\le r$,  $X$ belongs to a bounded family of varieties and $A^{d-1}K_X$ is bounded from above.
Replacing $A$ with a bounded multiple and changing it linearly, and replacing $r$ accordingly, 
we can assume $(X,A)$ is dlt and $K_X+A$ is ample.
We can assume $L_A:=L|_A$ is integral. Moreover, $L_A$ is nef, $(A|_A)^{d-2}L_A\le r$ and $(A|_A)^{d-1}\le r$. 
 Thus by induction, 
there is a natural number $l>1$ depending only on $d,r$ such that $lK_A$ and $lL_A$ are Cartier. 
Moreover, since 
$$
K_A+L_A=(K_X+A+L)|_A
$$ 
is ample, we can choose $l$ such that $l(K_A+L_A)$ is base point free, 
by effective base point freeness. 

Now 
$$
h^1(l(K_X+A+L)-A)=0
$$ 
by the Kawamata-Viehweg vanishing theorem as 
$$
l(K_X+A+L)-A=K_X+L+(l-1)(K_X+A+L).
$$
 Thus lifting sections from $A$, we deduce that  
$$
h^0(l(K_X+A+L))>0.
$$ 
So $l(K_X+A+L)\sim N$ where $N$ is effective and 
$A^{d-1}N$ is bounded from above. Then $(X,\Supp N)$ belongs to a bounded family of couples, 
hence by Lemma \ref{l-bnd-couples-bnd-Cartier-index}, there is $J$ depending only on 
$d,r$ so that $JN$ and $JK_X$ are both Cartier. Therefore, $lJL$ is Cartier too. 
Now let $I=lJ$.

\end{proof}

The next lemma is useful to prove boundedness of certain birational models of weak Fano varieties.

\begin{lem}\label{l-bnd-Fano-change-model}
Let $\mathcal{P}$ be a bounded set of klt weak Fano varieties. Let $\mathcal{Q}$ be the set of 
normal projective varieties $Y$ such that 
\begin{itemize}
\item $K_Y$ is $\Q$-Cartier,

\item there is $X\in \mathcal{P}$ and a birational map $Y\bir X$, and 

\item if $\phi\colon W\to X$ and $\psi\colon W\to Y$ is a common resolution, then 
$\phi^*K_X\ge \psi^*K_Y$.
\end{itemize}

Then $\mathcal{Q}$ is bounded.
\end{lem}
\begin{proof}
Let $Y,X$ be as in the statement. By Lemma \ref{l-bnd-couples-bnd-Cartier-index}, 
there is $m\in\N$ depending only on $\mathcal{P}$ such that $-mK_X$ is Cartier. 
By the effective base point free theorem [\ref{kollar-ebpf}], we can assume $|-mK_X|$ is 
base point free. Thus $K_X$ has a klt $m$-complement $K_X+B^+$. Since 
$\phi^*K_X\ge \psi^*K_Y$, taking the crepant pullback of $K_X+B^+$ to $Y$ gives a 
klt $m$-complement $K_Y+B_Y^+$ of $K_Y$ where $B_Y^+\ge \psi_*\phi^*B^+$ is big. Now apply 
[\ref{HX}, Theorem 1.3].  

\end{proof}

\subsection{Families of subvarieties}\label{ss-cov-fam-subvar}
Let $X$ be a normal projective variety. A \emph{bounded family $\mathcal{G}$ of subvarieties} of $X$ 
is a family of (closed) subvarieties such that there are finitely many morphisms 
$V^i\to T^i$ of projective varieties together with morphisms $V^i\to X$ 
such that $V^i\to X$ embeds in $X$ the fibres of $V^i\to T^i$ over closed points, and 
each member of the family $\mathcal{G}$ is isomorphic to a fibre of 
some $V^i\to T^i$ over some closed point. Note that we can replace the $V^i\to T^i$ 
so that we can assume the set of points of $T^i$ corresponding to members of $\mathcal{G}$ 
is dense in $T^i$.
We say the family $\mathcal{G}$ is a \emph{covering family of subvarieties} of $X$ if 
the union of its members contains some non-empty open subset of $X$. In particular, this means 
$V^i\to X$ is surjective for at least one $i$.
When we say $G$ is a \emph{general member of $\mathcal{G}$} 
we mean there is $i$ such that $V^i\to X$ is surjective,  the set $A$ of points of $T^i$ corresponding 
to members of $\mathcal{G}$ is dense in $T^i$, and $G$ is the fibre of 
$V^i\to T^i$ over a general point of $A$ (in particular, $G$ is among the general fibres of 
$V^i\to T^i$). 

Note that our definition of a bounded family here is compatible with 
 \ref{ss-bnd-couples}. Indeed assume $\mathcal{G}$ is a family of subvarieties of $X$
which is bounded according to the definition in \ref{ss-bnd-couples}. Then there are finitely many possible 
Hilbert polynomials (with respect to a fixed ample divisor on $X$) of the members of the family.
Consider the Hilbert scheme  $H$ of $X$ given by the previously fixed finitely many polynomials, 
 and take the universal family $\mathcal{H}\to H$. 
There are closed subvarieties $T^i$  of $H$ and irreducible components 
$V^i$ of the reduction of $\mathcal{V}^i$, where $\mathcal{V}^i=T^i\times_H \mathcal{H}\to T^i$ is the induced family, 
so that each $G\in \mathcal{G}$ is isomorphic to a fibre of ${V}^i\to T^i$ 
over some closed point. 
By choosing the $T^i$ carefully, we can assume that, for each $i$, the members of $\mathcal{G}$ correspond to a 
dense set of fibres of $V^i\to T^i$. Since we obtained $V^i\to T^i$ from the Hilbert scheme, we have an
induced morphism $V^i\to X$ which embeds in $X$ the fibres of $V^i\to T^i$ over closed points. 
Therefore, $\mathcal{G}$ is a bounded family of subvarieties 
according to the definition in the last paragraph.

The next lemma is useful in applications when we want to replace $V^i\to T^i$ so that $V^i\to X$ becomes generically finite 
(eg, see the proof of \ref{l-sub-bnd-on-gen-subvar}).

\begin{lem}\label{l-hypersurface-section-family-subvar}
Let $f\colon V\to T$ be a contraction between smooth projective varieties and 
$g\colon V\to X$ a surjective morphism to a normal projective variety. 
 Let $t$ be a closed point of $T$ and $F$ the fibre of $f$ over $t$. 
Further assume 

$(1)$ the induced map $F\to X$ is birational onto its image,  

$(2)$ $f$ is smooth over $t$,  

$(3)$ $g$ is smooth over $g(\eta_F)$, and $g(\eta_F)$ is a smooth point of $X$ where $\eta_F$ is the generic point of $F$.\\

Let $S$ be a general hypersurface section of $T$ of sufficiently large degree passing through $t$, 
let $U=f^*S$, and assume $U\to X$ is surjective. 
Then $U$ and $S$ are smooth,  $U\to S$ is smooth over $t$, and $U\to X$ is smooth over $g(\eta_F)$.
\end{lem}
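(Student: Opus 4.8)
The plan is to extract the conclusions one at a time, using Bertini-type arguments for a general member $S$ of the linear system of hypersurface sections of large degree through the fixed point $t$. First I would recall that on the smooth projective variety $T$, the linear system $|mH_T - t|$ of sections of a very ample $H_T$ vanishing at the single point $t$ is base-point free away from $t$ for $m$ large, so by the classical Bertini theorem a general member $S$ is smooth away from $t$; at $t$ itself one checks that a general such $S$ has multiplicity exactly one (the sections cut out the maximal ideal at $t$, so the tangent space condition is a single linear condition on the coefficients and the general section meets it transversally), hence $S$ is smooth everywhere, in particular smooth at $t$.

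Next I would pull back along the contraction $f$. Since $f$ is smooth over $t$ and $S$ is smooth at $t$, the fibre product $U = f^*S$ is smooth in a neighbourhood of $F = f^{-1}(t)$; away from $t$, since $S$ avoids the locus where $f$ fails to be nice and $S$ is itself smooth there, $U$ is smooth there too (using that $f$ is a contraction between smooth varieties and $S$ is a general divisor, so $U = f^*S$ is again general in its own linear system $|f^*(mH_T-t)|$ on $V$, and one applies Bertini on $V$). So $U$ is smooth. The morphism $U \to S$ is the base change of $f\colon V\to T$ along $S\hookrightarrow T$, so it is smooth over $t$ precisely because $f$ is smooth over $t$ — smoothness is stable under base change, and $t\in S$ is a smooth point of $S$.

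Finally, for the last assertion I would use hypotheses $(1)$ and $(3)$: $g$ is smooth over the point $x := g(\eta_F)$, which is a smooth point of $X$, and by $(1)$ the fibre $F$ maps birationally onto its image, so the generic point $\eta_F$ of $F$ is the unique point of $V$ lying over $\eta_F$ via the birational map $F\to g(F)$ in the relevant sense, and $g^{-1}(x)$ meets $U$ in the generic point of $F\cap$ (something of the right dimension). Concretely, $U\to X$ is $g$ restricted to $U$; since $g$ is smooth over $x$, $g^{-1}(x)$ is smooth of the expected dimension, and since $S$ is a general hypersurface through $t$ the divisor $U = f^*S$ meets $g^{-1}(x)$ properly and generically transversally over the generic point $\eta_F$ of $F$ (this is where one needs $S$ general, not just any member through $t$, and where the largeness of the degree guarantees enough sections to move $S$). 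Hence $U\to X$ is smooth at $\eta_F$, i.e. smooth over $x$.

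The main obstacle I expect is the last step: controlling the behaviour of the general section $S$ simultaneously at the point $t\in T$ (a closed point) and along the generic point $\eta_F$ of the fibre $F$, which maps to a single point $x\in X$. One must arrange that $S$ is general enough that $U=f^*S$ is transverse to $g^{-1}(x)$ near $\eta_F$ while still being forced to pass through $t$; this works because passing through $t$ is finitely many linear conditions whereas transversality along $\eta_F$ is an open dense condition in the (very large, for large degree) linear system, so the intersection of these conditions is still nonempty. Writing this cleanly requires a careful application of Kleiman-Bertini or a direct tangent-space computation at $\eta_F$ using that $x$ is a smooth point of $X$ and $g$ is smooth there; the rest is formal from stability of smoothness under base change.
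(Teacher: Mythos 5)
The first three conclusions (smoothness of $S$, of $U$, and of $U\to S$ over $t$) are handled correctly and essentially follow the paper's approach: Bertini on the smooth projective $T$ for $S$ through $t$, and smoothness of $f$ over $t$ plus base change for $U$ and $U\to S$. That part is fine.

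The last conclusion, that $U\to X$ is smooth over $g(\eta_F)$, is where the argument has a genuine gap, and the informal justification offered is actually incorrect. Write $v$ for a general closed point of $F$, set $x=g(v)$, and let $G$ be the fibre of $g$ over $x$. You argue that since passing through $t$ is finitely many linear conditions while "transversality along $\eta_F$ is an open dense condition," a general $S$ simultaneously achieves both. But the point $v$ lies in $F=f^{-1}(t)$, which is the \emph{base locus} of the pulled-back linear system $|f^*(mH_T-t)|$: every divisor $U$ in that system contains $F$, hence contains $v$. So $v\in U\cap G$ is forced for every choice of $S$, and whether $S|_G = U\cap G$ is smooth at $v$ is not a matter of moving $S$ generically — it is a structural question about the morphism $G\to T$ near $v$. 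If $G\to T$ were ramified at $v$, a smooth $S$ at $t$ could still pull back to a singular $S|_G$ at $v$, no matter how general $S$ is within the system of sections through $t$. This is precisely what hypothesis $(1)$ is there to control, and your proof does not use it at the decisive moment.

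The paper's resolution is a tangent-space argument you do not reproduce: hypothesis $(1)$ implies the scheme-theoretic fibre of $F\to X$ over $x$ is the reduced point $\{v\}$, so $\mathcal{T}_{F,v}\cap\mathcal{T}_{G,v}=0$. Combined with the exact sequence of tangent spaces coming from smoothness of $f$ over $t$, this forces $\mathcal{T}_{G,v}\to\mathcal{T}_{T,t}$ to be injective, hence $G\to T$ is a closed immersion near $v$ (Lemma \ref{l-closed-immersion}). Only then is $S|_G$ smooth at $v$. The final step is also more delicate than "transversality": one computes $\dim(U\cap G) = \dim U - \dim X$ to get flatness of $U\to X$ at $v$ by \cite{Hartshorne} III, Exercise 10.9, and then flatness together with the smooth fibre $U\cap G = S|_G$ yields smoothness of $U\to X$ over $x$. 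None of the tangent-space input from $(1)$, the closed-immersion step, nor the flatness argument appears in your sketch, so the last assertion is not established.
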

\begin{proof}
Let 
$v$ be a general closed point of $F$, and let $G$ be the fibre of $g$ over $x:=g(v)$. The 
scheme-theoretic intersection $F\cap G$ is the fibre of $F\to X$ over $x$, hence it 
is the reduced point $\{v\}$, by (1). In particular, 
$\mathcal{T}_{F,v}\cap \mathcal{T}_{G,v}=\{0\}$ where $\mathcal{T}_{F,v}$ and $\mathcal{T}_{G,v}$ 
are the tangent spaces to $F$ and $G$ 
at $v$. On the other hand, $F\cap G$ is also the fibre of the induced morphism 
$G\to T$ over $t$, so the fibre is again just $\{v\}$. 
Now since $f$ is smooth over $t$, we have an exact sequence of tangent spaces 
$$
0 \to \mathcal{T}_{F,v}\to \mathcal{T}_{V,v}\to  \mathcal{T}_{T,t} \to 0.
$$
Thus the kernel of the map $\mathcal{T}_{G,v}\to \mathcal{T}_{T,t}$ is 
$\mathcal{T}_{F,v}\cap \mathcal{T}_{G,v}$, hence $\mathcal{T}_{G,v}\to \mathcal{T}_{T,t}$ is injective. 
Therefore, $G\to T$ is a closed immersion near $v$, by Lemma \ref{l-closed-immersion} below.

Since $S$ is a general hypersurface section of $T$ of sufficiently large degree passing through $t$ 
and since $T$ is smooth, $S$ is smooth too.  Moreover, since $G$ is smooth by (3),  
 $U\cap G$ is smooth as well: indeed $U\cap G$ is smooth outside 
$v$ by [\ref{Hartshorne}, Chapter III, Remark 10.9.2] and also smooth at $v$ as $G$ is smooth and 
$G\to T$ is a closed immersion near $v$.  
  
By construction, $U\to S$ is smooth over $t$, and $U$ is a smooth variety outside $F$. 
Moreover, $U$ is smooth at every point of $F$ 
as $F$ is the fibre of $U\to S$ over $t$ which is smooth and $t\in S$ is smooth. Therefore, $U$ is smooth. 
 
Since $g$ is smooth near $G$, $\dim G=\dim V-\dim X$. So 
$$
\dim U\cap G=\dim G-1=\dim V-\dim X-1=\dim U-\dim X
$$ 
where 
we think of the scheme-theoretic intersection $U\cap G$ as the fibre of $U\to X$ over $x$. 
Thus  $U\to X$ is flat over $x$ by [\ref{Hartshorne}, Chapter III, Exercise 10.9] which 
in turn implies $U\to X$ is smooth over $x$ [\ref{Hartshorne}, Chapter III, Exercise 10.2] 
as $U\cap G$ is smooth. Therefore, $U\to X$ is smooth over $g(\eta_F)$.

\end{proof}

\begin{lem}\label{l-closed-immersion}
Let $h\colon X \to Z$ be a projective morphism of normal varieties, $x\in X$ be a closed point, 
and $z=h(x)$. Assume $h^{-1}\{z\}=\{x\}$, 
and assume the map on tangent spaces $\mathcal{T}_{X,x}\to \mathcal{T}_{Z,z}$ is injective. 
Then $h$ is a closed immersion near $x$.  
\end{lem}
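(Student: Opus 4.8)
The plan is to reduce to the case that $h$ is finite, and then to show that the induced map of local rings $\mathcal{O}_{Z,z}\to\mathcal{O}_{X,x}$ is surjective. Surjectivity of this map of stalks spreads out to an open neighbourhood of $z$ because $h_*\mathcal{O}_X$ is coherent, and that is exactly the statement that $h$ is a closed immersion near $x$.

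First I would make the finiteness reduction. Since $h$ is projective it is proper, and since $h^{-1}\{z\}=\{x\}$ the fibre over $z$ is $0$-dimensional; by upper semicontinuity of fibre dimension the locus in $X$ where $h$ fails to be quasi-finite is closed and disjoint from $h^{-1}\{z\}$, and after deleting its (closed) image from $Z$ we obtain an open neighbourhood $V\ni z$ over which $h$ is quasi-finite and proper, hence finite. Replacing $Z$ by $V$ and $X$ by $h^{-1}(V)$ we may assume $h$ is finite. Then $h_*\mathcal{O}_X$ is a coherent sheaf of $\mathcal{O}_Z$-algebras, and since the fibre over $z$ is the single point $x$, its stalk $(h_*\mathcal{O}_X)_z$ is the local ring $\mathcal{O}_{X,x}$; in particular $\mathcal{O}_{X,x}$ is a finite $\mathcal{O}_{Z,z}$-module.

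Next I would feed in the tangent space hypothesis. Let $k$ be the common residue field of $x$ and $z$, equal to the ground field since both points are closed. The differential $\mathcal{T}_{X,x}\to\mathcal{T}_{Z,z}$ is the $k$-linear dual of the cotangent map $\mathfrak{m}_z/\mathfrak{m}_z^2\to\mathfrak{m}_x/\mathfrak{m}_x^2$ induced by $\mathcal{O}_{Z,z}\to\mathcal{O}_{X,x}$, so the injectivity hypothesis says that this cotangent map is surjective, i.e. $\mathfrak{m}_x=\mathfrak{m}_z\mathcal{O}_{X,x}+\mathfrak{m}_x^2$. Hence the finitely generated $\mathcal{O}_{X,x}$-module $N:=\mathfrak{m}_x/\mathfrak{m}_z\mathcal{O}_{X,x}$ satisfies $N=\mathfrak{m}_xN$, and Nakayama's lemma gives $N=0$, that is $\mathfrak{m}_x=\mathfrak{m}_z\mathcal{O}_{X,x}$. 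Therefore $\mathcal{O}_{X,x}/\mathfrak{m}_z\mathcal{O}_{X,x}=\mathcal{O}_{X,x}/\mathfrak{m}_x=k=\mathcal{O}_{Z,z}/\mathfrak{m}_z$, so $\mathcal{O}_{X,x}=\mathcal{O}_{Z,z}\cdot 1+\mathfrak{m}_z\mathcal{O}_{X,x}$; applying Nakayama's lemma once more, now to the finite $\mathcal{O}_{Z,z}$-module given by the cokernel of $\mathcal{O}_{Z,z}\to\mathcal{O}_{X,x}$, shows that $\mathcal{O}_{Z,z}\to\mathcal{O}_{X,x}$ is surjective.

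Finally, surjectivity of $\mathcal{O}_{Z,z}\to(h_*\mathcal{O}_X)_z$ means that the cokernel of $\mathcal{O}_Z\to h_*\mathcal{O}_X$ is a coherent sheaf whose stalk at $z$ vanishes, hence it vanishes over some open neighbourhood $W$ of $z$; equivalently $h^{-1}(W)\to W$ is a closed immersion, and since $x\in h^{-1}(W)$ this proves the lemma. The only step requiring genuine care is the finiteness reduction in the first paragraph — one must combine properness of projective morphisms with semicontinuity of fibre dimension (equivalently, invoke the standard fact that a proper morphism with a finite fibre is finite over a neighbourhood of the image point); everything afterwards is formal and presents no real obstacle.
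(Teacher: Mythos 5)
Your proof is correct and follows essentially the same route as the paper's: reduce to the case that $h$ is finite, identify $(h_*\mathcal{O}_X)_z$ with $\mathcal{O}_{X,x}$, and then deduce surjectivity of $\mathcal{O}_{Z,z}\to\mathcal{O}_{X,x}$ from the cotangent-space surjectivity. The only differences are cosmetic: for the finiteness reduction the paper invokes the Stein factorisation while you use semicontinuity of fibre dimension together with the fact that a proper quasi-finite morphism is finite (both are standard and equivalent in substance), and where the paper cites Hartshorne II, Lemma 7.4 for the Nakayama step you simply write it out in full.
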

\begin{proof}
Since $h^{-1}\{z\}=\{x\}$, by considering the Stein factorisation of $h$, we can see that $h$ is a finite 
morphism over $z$. Thus shrinking $X,Z$ we can assume $h$ is finite. 
If $U$ is an open neighbourhood of $x$, then $z\not\in h(X\setminus U)$ because 
$h^{-1}\{z\}=\{x\}$, hence $V=Z\setminus h(X\setminus U)$ is an open neighbourhood of $z$. 
Moreover, $h^{-1}V\subseteq U$: indeed if $y\notin U$, then $y\in X\setminus U$, so 
$h(y)\in h(X\setminus U)$, hence $h(y)\notin V$ which implies $y\notin h^{-1}V$. 
Thus every open neighbourhood of $x$ contains the inverse image of some open neighbourhood of 
$z$. This implies the induced map on stalks 
$(h_*\mathcal{O}_X)_z\to \mathcal{O}_{X,x}$ is an isomorphism. Thus 
$\mathcal{O}_{X,x}$ is a finitely generated $\mathcal{O}_{Z,z}$-module as $h_*\mathcal{O}_X$ is coherent. 

On the other hand, since the map $\mathcal{T}_{X,x}\to \mathcal{T}_{Z,z}$ is injective, 
the dual map $m_z/m_z^2\to m_x/m_x^2$ is surjective where $m_z$ and $m_x$ are the maximal ideals of 
$\mathcal{O}_{Z,z}$ and $\mathcal{O}_{X,x}$. Now apply [\ref{Hartshorne}, II, Lemma 7.4] to 
show the homomorphism $\mathcal{O}_{Z,z}\to \mathcal{O}_{X,x}$ is surjective which implies  
$\mathcal{O}_Z\to h_*\mathcal{O}_X$ is surjective near $z$. Therefore, $h$ is a closed immersion near $x$ 
since $h$ is finite. 
\end{proof}

\subsection{Potentially birational divisors}
Let $X$ be a normal projective variety and let $D$ be a big
$\Q$-Cartier $\Q$-divisor on $X$. We say that $D$ is \emph{potentially
birational} [\ref{HMX2}, Definition 3.5.3] 
if for any pair $x$ and $y$ of general closed points of $X$, possibly
switching $x$ and $y$, we can find $0 \le  \Delta \sim_\Q (1 - \epsilon)D$ 
for some $0 < \epsilon < 1$ such that 
$(X, \Delta)$ is not klt at $y$ but $(X, \Delta)$ is lc at $x$ and
$\{x\}$ is a non-klt centre. 

If $D$ is potentially birational, then $|K_X+\lceil D\rceil|$ defines a birational map 
[\ref{HMX}, Lemma 2.3.4].

\subsection{Non-klt centres}\label{ss-non-klt-centres}
In this subsection we study non-klt centres of pairs. In several places in this paper we use a 
standard technique to create covering families of non-klt centres.

(1)
The next statement is a variant of [\ref{HMX2}, Lemma 3.2.3]. 

\begin{lem}\label{l-unique-lc-place}
Let $(X,B)$ be a projective pair where $B$ is a $\Q$-boundary, and let $D\ge 0$ be an ample $\Q$-divisor. 
Let $x,y\in X$ be closed points, and assume $(X,B)$ is klt near $x$, $(X,B+D)$ is lc near $x$ with a unique non-klt
centre $G$ containing $x$, and $(X,B+D)$ is not klt near $y$. Then there exist rational numbers $0\le t\ll s\le 1$ and  
a $\Q$-divisor $0\le E\sim_\Q tD$ such that $(X,B+sD+E)$ is not klt near $y$ but it is lc near $x$ with 
a unique non-klt place, and the centre of this non-klt place is $G$.   
\end{lem}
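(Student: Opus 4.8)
The plan is to let the perturbation near $y$ look after itself and to concentrate the work near $x$, where the point is a \emph{tie-breaking} argument replacing the unique non-klt \emph{centre} $G$ by a unique non-klt \emph{place} with centre $G$. The only genuinely technical step will be the construction of $D'$.

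\emph{Reduction near $y$.} Fix a log resolution $g\colon W\to X$ of $(X,B+D)$ and write $K_W+B_W=g^*(K_X+B)$ and $K_W+\Gamma_W=g^*(K_X+B+D)$, so $g^*D=\Gamma_W-B_W\ge 0$. Since $(X,B+D)$ is not lc near $y$, some prime divisor $F$ on $W$ has $y\in g(F)$ and $\mu_FB_W+\mu_Fg^*D>1$; as $\mu_Fg^*D\ge 0$, the coefficient $\mu_FB_W+s\,\mu_Fg^*D$ stays $>1$ for all $s$ close enough to $1$, so $(X,B+sD)$, hence $(X,B+sD+tD')$ for any $t\ge 0$ and $0\le D'$, is not lc near $y$ for such $s$. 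It therefore suffices to find a rational $s<1$ close to $1$, a $\Q$-divisor $0\le D'\sim_\Q D$, and a rational $0\le t\ll s$ with $(X,B+sD+tD')$ lc near $x$, having a unique non-klt place through $x$, whose centre is $G$.

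\emph{Local picture near $x$.} Since log discrepancies are affine in the boundary, $(X,B+sD)=\big(X,(1-s)B+s(B+D)\big)$ is klt near $x$ for every $s\in[0,1)$, being a convex combination of the klt pair $(X,B)$ and the lc pair $(X,B+D)$; so every prime divisor meeting $g^{-1}$ of a neighbourhood of $x$ has $B_W$-coefficient $<1$, and the singularity at $x$ has been destroyed and must be recreated by the term $tD'$. Enlarge $W$ so that it also resolves the $D'$ below. The non-klt places of $(X,B+D)$ through $x$ are finitely many prime divisors $F_1,\dots,F_k$ on $W$; uniqueness of the centre gives $g(F_i)=G$ for all $i$, lc-ness near $x$ gives $\mu_{F_i}\Gamma_W=1$, hence $\mu_{F_i}g^*D=1-\mu_{F_i}B_W>0$, while every other prime divisor $F$ through $x$ has $\mu_F\Gamma_W<1$. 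If $k=1$ one may simply take $s=1$, $t=0$, $D'=D$, so assume $k\ge 2$.

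\emph{Tie-breaking and conclusion.} Using bigness of $D$, write $D\sim_\Q\frac1m A+E$ with $A$ very ample and $E\ge 0$; a member $A'\in|A|$ passing through $G$ with suitably prescribed jets along $G$ realises prescribed (large) values of $\mathrm{ord}_{F_i}A'$ for the finitely many valuations $\mathrm{ord}_{F_i}$, all centred on $G$, since a sufficiently positive very ample system separates such jets. Set $D'=\frac1m A'+E$; then $0\le D'\sim_\Q D$, $G\subseteq\Supp D'$, $\mu_{F_i}g^*D'>0$ for all $i$, and we may arrange that the index minimising $\rho_i:=(1-\mu_{F_i}B_W)/(\mu_{F_i}g^*D')$ is unique; reorder so $\rho_1<\rho_i$ for $i\ge 2$. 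Now choose a rational $s<1$ close enough to $1$ that $(X,B+sD)$ is not lc near $y$, and let $t=\sup\{\,u\ge 0:(X,B+sD+uD')\text{ is lc near }x\,\}$, which is rational as all the data are $\Q$-divisors. For a prime divisor $F$ through $x$ on $W$, the value of $u$ at which its coefficient in $B_W+s\,g^*D+u\,g^*D'$ reaches $1$ equals $(1-s)\rho_i$ if $F=F_i$, and is bounded below independently of $s$ if $F\notin\{F_1,\dots,F_k\}$ (then $\mu_F\Gamma_W<1$ strictly, and there are finitely many such $F$). Hence for $s$ close enough to $1$ we get $t=(1-s)\rho_1\ll s$, and at $u=t$ the coefficient of $F_1$ is exactly $1$ while every other prime divisor through $x$ has coefficient $<1$; since $W$ is a log resolution of $(X,B+sD+tD')$ this shows the pair is lc near $x$ with $F_1$ as its unique non-klt place through $x$, of centre $G$. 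Combined with the reduction near $y$, this proves the lemma. The crux is the construction of $D'$: producing, within the $\Q$-linear system of $D$, a member whose pullback to a fixed log resolution separates the finitely many non-klt valuations over $G$ (making $\min_i\rho_i$ uniquely attained) while keeping $G$ in its support; everything else — the behaviour near $y$, klt-ness of $(X,B+sD)$ near $x$, and $t\to 0$ as $s\to 1$ — is routine.
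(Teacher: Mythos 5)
Your proof is correct and is essentially the paper's argument with the roles of $s$ and $t$ interchanged and the Kodaira decomposition of $D$ taken on $X$ rather than on a resolution. In the paper one fixes a log resolution $\phi\colon W\to X$, writes $\phi^*D\sim_\Q A+C$ with $A\ge 0$ ample and general and $C\ge 0$ on $W$, sets $D'=\phi_*(A+C)$, fixes $t>0$ small, takes $s$ to be the lc threshold of $D$ with respect to $(X,B+tD')$ near $x$, and then perturbs $C$ (and $A$) so that this threshold is attained at only one place; you instead write $D\sim_\Q\frac{1}{m}A+E$ on $X$, choose $A'\in|A|$ by jet conditions along $G$, fix $s<1$ near $1$, and take $t$ to be the lc threshold of $D'$ with respect to $(X,B+sD)$. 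Both selections single out the same distinguished place, namely the $F_i$ maximising $\mu_{F_i}\phi^*D'/\mu_{F_i}\phi^*D$, and both tie-breaking steps rest on the same genericity fact: that among effective $D'\sim_\Q D$ one can make this maximiser unique. Your explicit reduction of the condition near $y$ to openness on a fixed resolution is a clean bonus which the paper leaves implicit. The one place where you overstate is the assertion that one can \emph{realise prescribed (large) values} of $\mathrm{ord}_{F_i}A'$: those orders are bounded in terms of the degree of $A$ and obey monotonicity constraints among divisors nested over $G$, so they cannot be prescribed freely. What you actually use, however — that some choice of $A'$ makes the minimiser of the $\rho_i$ unique — is correct, because distinct divisorial valuations are not proportional on effective divisors, and the paper's perturbation of the coefficients of $C$ is the same move in a different coordinate system.
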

\begin{proof}
Let $\phi\colon W\to X$ be a log resolution. 
Then $\phi^*D\sim_\Q A+C$ where $A\ge 0$ is ample and $C\ge 0$. Let 
$C'=\phi_*C$ and $D'=\phi_*(A+C)$.  Replacing $X$ with a higher resolution we can assume $\phi$ is a log 
resolution of $(X,B+D+C')$: note that here we pull back $A,C$ to the new resolution, so $A$ may no longer be 
ample but it is nef and big, hence perturbing coefficients in the exceptional components 
we can make $A$ ample again. Changing $A$ 
up to $\Q$-linear equivalence we can assume $A$ is general, so $\phi$ is a log resolution of 
$(X,B+D+D')$.

Write 
$$
K_W+\Gamma_{s,t}=\phi^*(K_{X}+B+sD+tD').
$$ 
Let $T$ be the sum of the components of $\rddown{\Gamma_{1,0}^{\ge 0}}$ 
whose image contains $x$. By assumption, $\phi(S)=G$ for every component $S$ of $T$. 
Now pick $t>0$ sufficiently small and let 
$s$ be the lc threshold of $D$ with respect to $(X,B+tD')$ near $x$. Then $s$ is sufficiently 
close to $1$. Moreover, 
$$
\rddown{\Gamma_{s,t}^{\ge 0}}\subseteq \rddown{\Gamma_{1,t}^{\ge 0}}=\rddown{\Gamma_{1,0}^{\ge 0}},
$$  
so any component of $\rddown{\Gamma_{s,t}^{\ge 0}}$  whose image contains $x$, is a component of $T$. 
Now possibly after perturbing the coefficients of $C$ and replacing $A$ accordingly, we can assume 
$\rddown{\Gamma_{s,t}^{\ge 0}}$ has only one component $S$ 
such that $x\in \phi(S)$. Since $S$ is a component of $T$, we have $\phi(S)=G$.

If  $G$ contains $y$, then let $E=tD'$, hence $(X,B+sD+E)$ is not klt near $y$. 
We can then assume $G$ does not contain $y$. Since $(X,B+D)$ is not klt near $y$, 
it has a non-klt centre $J\neq G$ containing $y$. By assumption, 
$G$ is the only non-klt centre containing $x$, so $J$ does not contain $x$. Thus there is an effective 
$\tilde{D}\sim_\Q D$ containing $J$ but not containing $x$. In particular, we can choose a 
small $\alpha>0$ (depending on $s$) so that $(X,B+sD+tD'+\alpha \tilde{D})$  is not lc near $y$. Now  
let $E=tD'+\alpha \tilde{D}$ and rename $t+\alpha$ to $t$.

\end{proof}

(2)
Let $X$ be a normal projective variety of dimension $d$ and $D$ an ample $\Q$-divisor. 
Assume $\vol(D)>(2d)^d$. Then there is a bounded family of subvarieties of $X$ such that 
for each pair $x,y\in X$ of general closed points, there is a member $G$ of the family and 
there is $0\le \Delta\sim_\Q D$ such that $(X,\Delta)$ is lc near $x$ with a unique non-klt place 
whose centre contains $x$, that centre is $G$, 
and $(X,\Delta)$ is not klt at $y$ [\ref{HMX2}, Lemma 7.1].

Now assume $A$ is an ample and effective $\Q$-divisor. 
Pick a pair $x,y\in X$ of general closed points and let $\Delta$ and $G$ be as above chosen for $x,y$.  
 If $\dim G=0$, or if $\dim G>0$ and 
$\vol(A|_G)\le d^d$, then we let $G':=G$ and 
let $\Delta':=\Delta+A$. On the other hand, if $\dim G>0$ and $\vol(A|_G)> d^d$,  then  
there is $0\le \Delta'\sim_\Q \Delta+A$ and there is a 
proper subvariety $G'\subset G$ such that $(X,\Delta')$ is lc near $x$ with a unique non-klt place 
whose centre contains $x$, that centre is $G'$,
and $(X,\Delta')$ is not klt at $y$, by [\ref{kollar-sing-pairs}, Theorem 6.8.1 and 6.8.1.3] 
and by Lemma \ref{l-unique-lc-place}.  Repeating this process $d-1$ times, we find 
$0\le \Delta^{(d-1)}\sim_\Q D+(d-1)A$ 
and a proper subvariety $G^{(d-1)}\subset G$ such that $(X,\Delta^{(d-1)})$ is lc near $x$ with a unique non-klt place 
whose centre contains $x$, that centre is $G^{(d-1)}$, and $(X,\Delta^{(d-1)})$ is not klt at $y$, and either $\dim G^{(d-1)}=0$ 
or $\vol(A|_{G^{(d-1)}})\le d^d$. In particular, all such centres $G^{(d-1)}$ form a bounded family of 
subvarieties of $X$. 

(3) 
We will need the next lemma in section 3 when we define adjunction on non-klt places.

\begin{lem}\label{l-unique-non-klt-place-connected-fibre}
Assume that 
\begin{itemize}
\item $(X,B)$ is an lc pair, 

\item $G\subset X$ is a subvariety with normalisation $F$,

\item $X$ is $\Q$-factorial near the generic point of $G$, and 

\item  there is a unique non-klt place of $(X,B)$ whose centre is $G$.
\end{itemize}

Then  if $(Y,B_Y)$ is a $\Q$-factorial dlt model of $(X,B)$ and $S$ is a component of $\rddown{B_Y}$ 
mapping onto $G$, then the induced morphism $h\colon S\to F$ is a contraction.
 Moreover,  the only non-klt centre of $(X,B)$ containing $G$ is $G$ itself.

\end{lem}
\begin{proof}
If $G$ is a divisor, then the claim is obvious, so we can assume $\dim G <\dim S$, in particular, 
$S$ is exceptional over $X$. 
Let $\Pi$ be the fibre of $Y\to X$ over a general closed point $g$ of $G$. Then $\Pi$ is connected, and
since $X$ is $\Q$-factorial near $g$,  
$\Pi$ is contained in the union of the exceptional divisors of $Y\to X$, hence contained in $\rddown{B_Y}$. 
Moreover, by the connectedness principle, $\rddown{B_Y}$ is connected near $\Pi$.

Let $R$ be the support of $(\rddown{B_Y}-S)|_S$. We show that the induced morphism $R\to F$ 
is not surjective. Assume not. Then some component $V$ of $R$ 
maps onto $G$. As $V$ is a non-klt centre of $(Y,B_Y)$, there is a non-klt place of $(Y,B_Y)$ with centre $V$, 
hence there is a non-klt place of $(X,B)$ with centre $G$ 
and this place is not $S$. This is a contradiction. 

We claim that $\Pi$ is contained in $S$. Assume not. Then 
since $\Pi$ is connected, and since $\rddown{B_Y}$ is connected near $\Pi$ and $\Pi\subset \rddown{B_Y}$, 
there is a component $T$ of $\rddown{B_Y}-S$ such that $\Supp T|_S$ intersects $\Pi$. This contradicts 
the previous paragraph. Thus $\Pi$ is contained in $S$. In particular, $\Pi$ is the fibre of $h$ over $g$, 
hence $h$ has connected general fibres. This implies $h$ is a contraction as $F$ is normal. Finally, 
applying the previous paragraph once more 
shows that in fact $\Pi$ does not intersect  $\rddown{B_Y}-S$. Therefore, 
 no lc centre of $(X,B)$ contains $G$ other than $G$ itself.

\end{proof}

\subsection{Pseudo-effective thresholds}

\begin{lem}\label{l-p-eff-thr-bnd-fam}
Let $\mathcal{P}$ be a log bounded set of log smooth projective pairs $(X,B)$. 
Then there is a number $\lambda>0$ such that if $(X,B)\in \mathcal{P}$ and if  
$K_X$ is not pseudo-effective, then $K_X+\lambda B$ is not pseudo-effective. 
\end{lem}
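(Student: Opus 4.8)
The plan is to reduce to a single smooth family of varieties and then, on that family, to produce a bounded family of rational curves whose classes are movable and against which $K_X$ is negative by a bounded amount, so that adding a small multiple of $B$ cannot make the class pseudo-effective. First I would use the log boundedness of $\mathcal P$ to replace it by finitely many projective morphisms $V^i\to T^i$ of varieties together with reduced divisors $C^i$ on $V^i$, such that for each $(X,B)\in\mathcal P$ there are $i$, a closed point $t\in T^i$, and an isomorphism $\phi\colon V^i_t\xrightarrow{\sim}X$ with $\phi^{-1}\Supp B\le C^i_t$. We may assume the coefficients of $B$ are at most $1$ (as in the applications), so $\phi^{-1}B\le C^i_t$; since subtracting an effective divisor from a non-pseudo-effective one keeps it non-pseudo-effective, it is enough to prove the statement with $B$ replaced by $C^i_t$. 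Restricting each $T^i$ to the closure of the set of points coming from members of $\mathcal P$ and then stratifying it, using openness of the smooth locus and generic flatness, we may assume $V^i\to T^i$ is smooth (so every fibre $V^i_t$ is a smooth projective variety) and $C^i$ is flat over $T^i$, so that the intersection number of $C^i$ with the members of any fixed family of curves in $V^i/T^i$ is constant on $T^i$. It therefore suffices to treat a single family $f\colon V\to T$ and find $\lambda_T>0$ such that for every $t\in T$ with $K_{V_t}$ not pseudo-effective, $K_{V_t}+\lambda_T C_t$ is not pseudo-effective.

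I would prove this by Noetherian induction on closed subsets $Z\subseteq T$; decomposing $Z$ into irreducible components reduces to $Z$ irreducible. The key inputs are: a smooth projective variety has $K$ pseudo-effective if and only if it is not uniruled (Boucksom--Demailly--P\u{a}un--Peternell); and the uniruled locus of a smooth projective family is open, because a free rational curve on a fibre is unobstructed and deforms to all nearby fibres. Hence there is a nonempty open $U\subseteq Z$ which is either contained in the uniruled locus or disjoint from it.

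If $U$ is disjoint from the uniruled locus, then $K_{V_t}$ is pseudo-effective for every $t\in U$, so $U$ imposes no constraint, and I apply the inductive hypothesis to the proper closed subset $Z\setminus U$. If instead $U$ lies in the uniruled locus, I would take the geometric generic fibre $V_{\bar\eta}$, which is uniruled, spread out a covering family of rational curves on it, and (after shrinking $U$) obtain a family of covering rational curves on the fibres $V_t$, $t\in U$, whose general member $\Sigma_t$ has $(-K_{V_t})\cdot\Sigma_t=d_0\ge 1$ and $C_t\cdot\Sigma_t\le N$ for fixed integers $d_0,N$ independent of $t$, by constancy of intersection numbers in the flat family; we may take $N\ge 1$. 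Since $[\Sigma_t]$ is a movable curve class and a pseudo-effective divisor has non-negative intersection with movable classes, $K_{V_t}+\mu C_t$ is not pseudo-effective whenever $\mu<d_0/(C_t\cdot\Sigma_t)$, hence $\sup\{\mu\ge 0\mid K_{V_t}+\mu C_t\text{ not pseudo-effective}\}\ge d_0/N\ge 1/N$ for all $t\in U$. Setting $\lambda_U:=1/N$ and applying the inductive hypothesis to $Z\setminus U$, we put $\lambda_Z:=\min(\lambda_U,\lambda_{Z\setminus U})$; this completes the induction, and finally $\lambda:=\min_i\lambda_{T^i}$ works.

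The main obstacle is that pseudo-effectivity of $K_X$ is not an obviously constructible or deformation-invariant condition across the bounded family, so one cannot simply reduce to the generic fibre of each family; this is precisely what the openness of the uniruled locus together with the Boucksom--Demailly--P\u{a}un--Peternell characterisation overcomes, turning the problem into one about bounded-degree rational curves supplied by uniruledness plus a routine spreading-out argument. A secondary point needing care is the reduction from $B$ to $\Supp B\le C_t$ (which is where the boundary hypothesis enters) and the normalisation of the family $V/T$ so that the relevant intersection numbers are genuinely constant on each stratum.
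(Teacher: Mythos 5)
Your proof is correct, but it takes a genuinely different route from the paper's. The paper reduces, exactly as you do, to a single smooth projective family $f\colon V\to T$ with a reduced relatively snc divisor $S$ and replaces $B$ by $S|_X$; it then simply invokes [HMX, Theorem 1.8(2)], the deformation invariance of pseudo-effectivity of $K_{X_t}+sS_t$ in a log smooth family, from which the pseudo-effective threshold is constant on each connected component of the (finitely many) parameter spaces, and $\lambda$ is taken just below the smallest positive such threshold. You instead bypass this Siu-type invariance entirely: you use the Boucksom--Demailly--Paun--Peternell characterisation of non-pseudo-effective $K_X$ by uniruledness, openness of the uniruled locus in a smooth projective family via deformation of free rational curves, Noetherian induction on the base, and a bounded-degree covering family of rational curves spread out from the geometric generic fibre, which gives an explicit lower bound $\lambda\geq d_0/N$ for the threshold on each stratum by pairing against a movable curve class. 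Both inputs are deep, but they are genuinely different: the paper's citation buys brevity, while your argument replaces analytic extension technology by BDPP plus standard Mori theory and makes the constant visible as an intersection number. Two small remarks: your insistence that the coefficients of $B$ be at most $1$ (so that $B\leq C^i_t$) is indeed necessary for the statement to hold, and it is equally implicit in the paper's step ``Replacing $B$, we can assume $B=S|_X$''; and when spreading out the covering family from the geometric generic fibre one should pass to a generically finite cover of the base and then to the image, but as the resulting curves have intersection numbers against $K_{V/T}$ and $C$ that are constant on the relevant stratum, this costs nothing after shrinking $U$, which your Noetherian induction accommodates.
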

\begin{proof}

Perhaps after replacing $\mathcal{P}$, we can assume there exist a smooth projective 
morphism $f\colon V\to T$ of smooth varieties and a reduced divisor $S$ on $V$
with simple normal crossing singularities such that if $(X,B)\in\mathcal{P}$, 
then $X$ is a fibre of $f$ over some closed point and 
$\Supp B$ is inside the restriction of $S$ to $X$. Replacing $B$, we can assume $B=S|_X$. 
Moreover, adding to $S$ a general ample$/T$ divisor, we can assume  $S$ 
and $K_V+S$ are ample$/T$.

Let 
$$
t=\inf \{s \mid K_V+sS ~~\mbox{is pseudo-effective over $T$}\}.
$$
We can assume $t>0$.
Run an MMP on $K_V$ over $T$ with scaling of $S$ which terminates by [\ref{BCHM}]. 
This gives a minimal model $(V',tS')$  of $(V,tS)$ over $T$. Then  
$K_{V'}+tS'$ is semi-ample over $T$ but not big, hence it defines a non-birational contraction 
$V'\to W/T$. Fix some $0<\lambda<t$. Assume $F$ is a general fibre of $f$ and $F'$ the corresponding fibre 
of $V'\to T$. Then $K_{F'}+\lambda S'|_{F'}$ is not pseudo-effective, hence 
 $K_F+\lambda S|_F$ is also not pseudo-effective.

\end{proof}

\subsection{Numerical Kodaira dimension}

Let $D$ be an $\R$-Cartier $\R$-divisor on a normal projective variety $X$. The \emph{numerical 
Kodaira dimension} of $D$ denoted by $\kappa_\sigma(D)$ following [\ref{Nakayama}]. We replace $X$ by a 
resolution and replace $D$ by its pullback. If $D$ is not pseudo-effective, we let $\kappa_\sigma(D)=-\infty$. 
Otherwise $\kappa_\sigma(D)$ is the largest integer $r$ such that 
$$
\limsup_{m\to \infty} \frac{h^0(\rddown{mD}+A)}{m^r}>0
$$
for some ample Cartier divisor $A$. 

When $D$ is pseudo-effective, a kind of Zariski decomposition $D=P_\sigma(D)+N_\sigma(D)$ 
is defined in [\ref{Nakayama}] where $P_\sigma(D)$ is pseudo-effective and $N_\sigma(D)\ge 0$.

\begin{lem}\label{l-non-van-kappa(K)=0}
Let $\mathcal{P}$ be a bounded set of smooth projective varieties $X$
with $\kappa_\sigma(K_X)=0$. Then there is a number $l\in \N$ such that 
$h^0(lK_X)\neq 0$ for every $X\in \mathcal{P}$. 
\end{lem}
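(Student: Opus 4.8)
The plan is to exploit boundedness of $\mathcal{P}$ together with the structural consequences of $\kappa_\sigma(K_X)=0$. First I would recall that for a smooth projective $X$ with $\kappa_\sigma(K_X)=0$ we have $P_\sigma(K_X)\equiv 0$, so $K_X\equiv N_\sigma(K_X)\ge 0$; in particular $K_X$ is pseudo-effective and numerically equivalent to an effective divisor. By results on abundance in the $\kappa_\sigma=0$ case (or by running an MMP, which terminates for such $X$ since $K_X$ is already close to its minimal model), $X$ has a minimal model $Y$ with $K_Y\equiv 0$, hence $K_Y\sim_\Q 0$ by abundance for numerically trivial canonical class, so $\kappa(K_X)=0$ and $h^0(lK_X)\neq 0$ for some $l$ depending on $X$. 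The whole point is to make $l$ uniform over $\mathcal{P}$.

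Since $\mathcal{P}$ is bounded, there are finitely many projective morphisms $V^j\to T^j$ whose fibres over closed points give, up to isomorphism, all members of $\mathcal{P}$; after stratifying and passing to a log resolution of the total space fibrewise, I may assume each $V^j\to T^j$ is smooth with the $X\in\mathcal{P}$ appearing as fibres, and $T^j$ is smooth and irreducible. On each $V^j\to T^j$ the relative canonical divisor $K_{V^j/T^j}$ restricts to $K_X$ on each fibre. The key step is a semicontinuity/deformation-invariance argument: the condition $\kappa_\sigma(K_X)=0$ (equivalently $K_X\sim_\Q 0$, once we know abundance holds for these fibres, since numerical and $\Q$-linear triviality of an effective canonical divisor coincide here) forces $h^0(X,lK_X)$ to be deformation invariant for suitable divisible $l$. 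Concretely, I would argue that the set of $t\in T^j$ for which the fibre $X_t$ satisfies $K_{X_t}\sim_\Q 0$ is a countable union of closed subsets, and on each such subset $h^0(X_t, lK_{X_t})$ is constant for $l$ in the corresponding index; since $\mathcal{P}$ is covered by finitely many of the $V^j\to T^j$ and the relevant Cartier indices of $K_{X_t}$ are bounded over a bounded family (this is where boundedness is essential — it bounds the index $l$), one extracts a single $l$ that works for all members.

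The main obstacle I anticipate is controlling the index: a priori $K_X$ for $X\in\mathcal{P}$ is only $\Q$-linearly (not linearly) trivial after passing to a minimal model, and the torsion index could vary; but over a bounded family the torsion in $\Pic$ that can occur is bounded (the family has finitely many components, each with bounded $\Pic^0$ and bounded Néron–Severi torsion), so there is a uniform $l$ killing all such torsion. A secondary technical point is that the minimal model $Y$ of $X$ is not literally a member of a bounded family a priori, so I would rather argue directly on $X$: since $\kappa_\sigma(K_X)=0$ we have $K_X\equiv N\ge 0$ with $N$ the negative part, and abundance gives $K_X\sim_\Q 0$ on $X$ itself only after noting $\kappa(K_X)=\kappa_\sigma(K_X)=0$ (true when $\kappa_\sigma=0$), so $h^0(lK_X)\le 1$ always and equals $1$ precisely when $lK_X\sim 0$; then uniformity of $l$ follows from the boundedness of the family as above. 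This reduces the lemma to the statement that over a bounded family of smooth projective varieties the torsion index of a $\Q$-trivial canonical class is bounded, which is standard.
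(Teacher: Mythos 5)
There is a genuine error at the heart of your argument. You repeatedly conflate the condition $\kappa_\sigma(K_X)=0$ with $K_X\sim_\Q 0$ (and in a later sentence with ``$h^0(lK_X)=1$ precisely when $lK_X\sim 0$''). These are not the same. For instance, take $X$ to be a K3 surface blown up at a point; then $\kappa_\sigma(K_X)=0$ and $h^0(K_X)=1$, but $K_X$ is the exceptional curve, which is not $\Q$-linearly trivial and has no bounded ``torsion index''. The condition $\kappa_\sigma(K_X)=0$ forces $K_Y\sim_\Q 0$ only after passing to a minimal model $Y$, and you correctly note that $Y$ need not lie in the bounded family — but your proposed fix, arguing ``directly on $X$'' via bounded torsion in $\mathrm{Pic}$, then rests on the false equivalence above and does not close the gap. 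The assertion you reduce to, that the $\Q$-triviality index of $K_X$ is bounded over the family, is simply not the right statement because $K_X$ need not be $\Q$-trivial at all.

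The actual argument is simpler and avoids any discussion of torsion or stratification into countably many closed loci. After replacing $\mathcal{P}$ by a smooth projective family $f\colon V\to T$, one uses the deformation invariance of the numerical Kodaira dimension [\ref{HMX}, Theorem 1.8(2)] to conclude $\kappa_\sigma(K_F)=0$ for \emph{every} fibre $F$, in particular for the geometric generic fibre. Then one applies the non-vanishing theorem of Gongyo [\ref{Gongyo}] (which extends Nakayama's result to this setting and proves $\kappa(K_X)=0$ when $\kappa_\sigma(K_X)=0$) to the geometric generic fibre, producing a single $l$ with $h^0(lK_{F_{\bar\eta}})\neq 0$. Upper semi-continuity of $h^0$ in the family then gives $h^0(lK_F)\neq 0$ for every closed fibre $F$, which is exactly what is wanted; no control over linear triviality or torsion is needed. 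Your instinct that boundedness plus some deformation-invariance input should do the job is sound, but the correct invariant to propagate across the family is $\kappa_\sigma$ (via HMX), not $\Q$-triviality of $K$.
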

\begin{proof}
Perhaps after replacing $\mathcal{P}$, we can assume there is a smooth projective morphism $f\colon V\to T$ of 
smooth varieties such that 
every $X\in\mathcal{P}$ appears as a fibre of $f$ over some closed point. 
 Then  by [\ref{HMX}, Theorem 1.8 (2)], $\kappa_\sigma(K_F)=0$ for every fibre $F$ of $f$.  
Applying [\ref{Gongyo}] to the geometric generic fibre and applying semi-continuity of cohomology 
shows that there is $l\in \N$ such that $h^0(lK_{F})\neq 0$ for every fibre $F$. 

\end{proof}

\subsection{Volume of divisors}

Recall that the volume of an $\R$-divisor $D$ on a normal projective variety $X$ of dimension $d$ is defined 
as 
$$
\vol(D)=\limsup_{m\to \infty} \frac{h^0(\rddown{mD})}{m^d/d!}. 
$$

\begin{lem}\label{l-vol-kappa>0}
Let $X$ be a $\Q$-factorial normal projective variety of dimension $d$, 
$D$ be an  $\R$-divisor with $\kappa_\sigma(D)>0$, and $A$ be
an ample $\Q$-divisor. Then $\lim_{m\to \infty} \vol(mD+A)=\infty$.
\end{lem}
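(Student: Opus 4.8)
The plan is to reduce the statement to an unboundedness assertion and then feed the hypothesis $\kappa_\sigma(D)\ge 1$ into a minimal-degree estimate for projective subvarieties. Since $\kappa_\sigma(D)>0$, the divisor $D$ is pseudo-effective, so $mD+A$ is big for every $m\ge 1$. Using birational invariance of $\vol$ and of $\kappa_\sigma$, and replacing $\pi^*A$ by a smaller ample $\Q$-divisor on a resolution (possible since $\pi^*A$ is big), I would first reduce to the case $X$ smooth. Fix $p\in\N$ with $A_0:=pA$ Cartier. It then suffices to prove $\vol(nD+2A_0)\to\infty$, because rescaling by $2p$ and using $\vol(cL)=c^d\vol(L)$ gives $\vol(mD+A)\to\infty$.

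Next I would note that $n\mapsto\vol(nD+2A_0)$ is non-decreasing, so it is enough to show it is \emph{unbounded}: indeed for a big $L$ and pseudo-effective $D$, the class $D+\epsilon A_0$ is big, hence $\sim_\R$ an effective divisor, so $\vol(L+D+\epsilon A_0)\ge\vol(L)$ by monotonicity of $\vol$ under effective addition, and letting $\epsilon\to 0$ with continuity of $\vol$ on the big cone yields $\vol(L+D)\ge\vol(L)$; apply this with $L=nD+2A_0$. Now, since $\kappa_\sigma(D)\ge 1$, the definition of the numerical Kodaira dimension [\ref{Nakayama}] gives that $m\mapsto h^0(mD+A_0)$ is unbounded. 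Fix $m$ with $h:=h^0(mD+A_0)$ large and write $\rddown{mD}+A_0\sim G\ge 0$, so $h^0(G)=h$. Choose a log resolution $\psi\colon X'\to X$ of $|G|$, write $\psi^*G=M+E$ with $M$ the base point free moving part and $E\ge 0$ the fixed part, and let $f\colon X'\to Y\subseteq\PP^{h-1}$ be the morphism given by $|M|$, so $M=f^*H_Y$ with $H_Y$ the hyperplane class on $Y$; since $h^0(M)=h$ the image $Y$ is non-degenerate, whence $\deg Y=H_Y^{e}\ge h-e$ where $e:=\dim Y\ge 1$. Then, using $mD+2A_0\ge G+A_0$, birational invariance and monotonicity of $\vol$, that $M+\psi^*A_0$ is nef, and nonnegativity of intersection numbers of nef divisors,
\[
\vol(mD+2A_0)\ \ge\ \vol_{X'}\big(\psi^*(G+A_0)\big)\ \ge\ \vol_{X'}(M+\psi^*A_0)\ =\ (M+\psi^*A_0)^{d}\ \ge\ \binom{d}{e}\,M^{e}\cdot(\psi^*A_0)^{d-e}.
\]
Finally $M^{e}\cdot(\psi^*A_0)^{d-e}=\deg Y\cdot\big(A_0^{d-e}\cdot\psi_*[F]\big)\ge\deg Y\ge h-d$, where $[F]$ is the class of a general fibre of $f$ and $A_0^{d-e}\cdot\psi_*[F]\ge 1$ as $A_0$ is ample Cartier (if $e=d$, simply $\vol(M+\psi^*A_0)\ge M^d=\deg Y\cdot\deg(X'/Y)\ge\deg Y$). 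Letting $h\to\infty$ along the good values of $m$ shows $\vol(nD+2A_0)$ is unbounded, and the reductions above conclude.

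The part I expect to require the most care is organising the first two reductions cleanly — in particular justifying birational invariance of the volume and of $\kappa_\sigma$ and the descent to a smooth model — and then verifying that the moving-part/fixed-part decomposition together with the intersection-number bookkeeping genuinely produces a lower bound of order $h^0(G)$; once one is on a smooth variety these are all routine, and the only essential inputs are Nakayama's definition of $\kappa_\sigma$ and the classical fact that a non-degenerate $e$-dimensional subvariety of $\PP^{h-1}$ has degree at least $h-e$.
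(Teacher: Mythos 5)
Your proposal is correct, but it takes a genuinely different route from the paper. The paper first replaces $D$ by its positive part $P_\sigma(D)$, invokes Nakayama's Theorem V.1.3 to arrange (after enlarging $A$) that a general complete-intersection curve $C\in|A|^{d-1}$ avoids $\Bs|\rddown{mD}+A|$ for every $m$, and then bounds $\vol(mD+2A)$ below by $(\rddown{mD}+A)\cdot C$, which grows like $m\,D\cdot A^{d-1}\to\infty$. You instead work directly with the section count $h^0(\rddown{mD}+A_0)$, which is unbounded by definition of $\kappa_\sigma$, resolve the linear system $|\rddown{mD}+A_0|$ into a free moving part $M$, and feed the non-degeneracy of the image $Y\subseteq\PP^{h-1}$ into the classical minimal-degree bound $\deg Y\ge h-\dim Y$, turning this into $\vol(mD+2A_0)\ge h-d$. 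Your route avoids both the $\sigma$-decomposition and Nakayama's Theorem V.1.3, at the price of some light intersection-theoretic bookkeeping ($(M+\psi^*A_0)^d\ge\binom{d}{e}M^e(\psi^*A_0)^{d-e}$ and the general-fibre computation); the paper's route produces the sharper lower bound linear in $m$ but relies on the non-trivial base-locus control from Nakayama's book. Two small points worth tightening in your write-up: (i) the definition of $\kappa_\sigma$ only guarantees that $h^0(\rddown{mD}+A')$ is unbounded for \emph{some} sufficiently ample auxiliary $A'$, so you should replace $A_0$ by a large multiple $kA_0$ before quoting the definition — harmless, since your rescaling argument works equally well with $kA_0$; and (ii) when you assert $A_0^{d-e}\cdot\psi_*[F]\ge 1$, you should say explicitly that a general fibre $F$ meets the open locus where $\psi$ is an isomorphism, so $\psi|_F$ is generically finite and $\psi_*[F]\ne 0$.
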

\begin{proof}
Replacing $X$ with a resolution and replacing $A$ appropriately, we can assume $X$ is smooth.
Replacing $D$ with $P_\sigma(D)$, we can assume $N_\sigma(D)=0$. Let $C$ be a curve cut out 
by general members of $|lA|$ for some sufficiently divisible $l\in\N$.
By  [\ref{Nakayama}, Chapter V, Theorem 1.3], we can assume $C$ does not intersect $\Bs|\rddown{mD}+A|$ for any $m\in\N$.
In particular, by  Lemma \ref{l-mov-part-lin-system}, for each $m$,
there is a resolution $\phi\colon W\to X$ such that the movable part $M$ of $|\phi^*(\rddown{mD}+A)|$ is base point 
free and the support of the fixed part $F$  does not intersect $\phi^{-1}C$.  
Then  
$$
\vol({mD}+2A)\ge \vol(\rddown{mD}+2A)=\vol(\phi^*(\rddown{mD}+2A)) \ge 
$$
$$
 \vol(M+\phi^*A)\ge M\cdot (\phi^*A)^{d-1}=\phi^*(\rddown{mD}+A)\cdot (\phi^*A)^{d-1}
=(\rddown{mD}+A)\cdot C.
$$
Since $\kappa_\sigma(D)>0$, we have $D\cdot C>0$, so the intersection number 
$(\rddown{mD}+A)\cdot C$ is not a bounded function of $m$, hence $\vol({mD}+2A)$ is not bounded.
Thus $\lim_{m\to \infty} \vol({2mD}+2A)=\infty$ which implies the lemma.

\end{proof}

\begin{lem}\label{l-vol-kappa(K)>0}
Let $d\in \N$ and let $\mathcal{P}$ be a set of pairs $(X,A)$ where $X$ is smooth projective 
of dimension $\le d$ with $\kappa_\sigma(K_X)>0$ and $A$ is very ample. 
Then for each $q\in \N$ there is $p\in\N$ such that  for every $(X,A)\in \mathcal{P}$ we have $\vol(pK_X+A)>q$. 
\end{lem}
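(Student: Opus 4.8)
The plan is to reduce to the single-variety statement of Lemma \ref{l-vol-kappa>0} together with a boundedness argument, essentially following the pattern of the earlier lemmas in this subsection (e.g. \ref{l-non-van-kappa(K)=0}, \ref{l-p-eff-thr-bnd-fam}). First I would observe that $\mathcal{P}$ need not be bounded, so the delicate point is to gain uniformity in $p$ over an a priori unbounded family; I would handle this by peeling off a fixed coarse estimate that works simultaneously, rather than trying to spread all members over finitely many families. Concretely, for a fixed $(X,A)\in\mathcal{P}$, since $\kappa_\sigma(K_X)>0$ and $A$ is big and base point free (in particular ample up to a small perturbation, or one can run the argument of \ref{l-vol-kappa>0} directly with $A$ big and base point free since that lemma only used that $A$ has a free multiple and general members cut out a curve), Lemma \ref{l-vol-kappa>0} gives $\lim_{m\to\infty}\vol(mK_X+A)=\infty$. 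So for the given $q$ there is some $p=p(X,A)$ with $\vol(pK_X+A)>q$; the content is to make $p$ independent of the member.

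The key step is a monotonicity-plus-semicontinuity observation. I would first note that $\vol(pK_X+A)$ is non-decreasing in $p$ once it is positive: indeed $\kappa_\sigma(K_X)>0$ forces $K_X$ to be pseudo-effective with $P_\sigma(K_X)\neq 0$, and the proof of \ref{l-vol-kappa>0} actually shows $\vol(pK_X+A)\ge (\rddown{(p-1)K_X}+A)\cdot C$ for a suitable curve $C$, a quantity that grows at least linearly in $p$ with growth rate bounded below in terms of the intersection $P_\sigma(K_X)\cdot C>0$. So it suffices to bound this growth rate uniformly from below, or equivalently to produce a \emph{single} $p_0$ which works for all members. For this I would replace $\mathcal{P}$ by finitely many families: the members of $\mathcal{P}$ have dimension $\le d$, and for each fixed dimension $d'\le d$ I would like to spread them over finitely many smooth projective morphisms $V^i\to T^i$ with a divisor realising $A$ in a bounded way. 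The obstacle is that $\mathcal{P}$ is not assumed bounded, so this spreading is not available directly.

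I expect the main obstacle to be exactly this lack of boundedness, and I would resolve it by invoking the deformation-invariance of numerical Kodaira dimension and of volumes in families, as in [\ref{HMX}, Theorem 1.8] and [\ref{Nakayama}]: one does not need all of $\mathcal{P}$ in one family, only that the function $(X,A)\mapsto \inf\{p : \vol(pK_X+A)>q\}$ is bounded on flat families, which follows because $\vol(pK_X+A)$ is lower semicontinuous in families (by semicontinuity of $h^0$) while $\kappa_\sigma(K_X)>0$ is preserved in smooth families, so on any fixed family a uniform $p$ exists, and then one takes the maximum over the finitely many families needed. To make this rigorous without boundedness of $\mathcal{P}$, the cleanest route is: argue that the statement only depends on the birational class and on a bounded amount of data of $A$ (its top self-intersection and the intersection numbers $K_X\cdot A^{d-1}$, etc.), reduce to the situation where $A$ is replaced by a very general member of a free linear system and $C=A^{d-1}$ is a curve in the smooth locus, and then run the inequality $\vol(pK_X+A)\ge (\rddown{(p-1)K_X}+A)\cdot C = (p-1)K_X\cdot C + A\cdot C$ from the proof of \ref{l-vol-kappa>0}, noting $K_X\cdot C = P_\sigma(K_X)\cdot C$ is a positive integer (hence $\ge 1$) since $C$ avoids the relevant base loci and $\kappa_\sigma(K_X)>0$. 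Then $p_0 := q+1$ works uniformly, giving $\vol(p_0K_X+A)\ge p_0-1 = q$, and a further increment by one gives strict inequality. This last reduction, identifying $K_X\cdot C$ with a positive integer uniformly, is the step I would be most careful about, since it relies on choosing the auxiliary curve $C$ compatibly with $\kappa_\sigma(K_X)>0$ as in the cited Nakayama result.
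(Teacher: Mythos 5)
Your proposal has a genuine gap at the step where you try to produce a uniform $p_0$ without first bounding the family. You are right that the lack of boundedness of $\mathcal{P}$ is the main difficulty, but the uniform estimate $p_0 = q+1$ does not work as claimed. In the proof of Lemma \ref{l-vol-kappa>0}, the curve $C$ is cut out only after replacing $A$ with a sufficiently large multiple (via [\ref{Nakayama}, Chapter V, Theorem 1.3]) so that $C$ avoids $\Bs|\rddown{mD}+A|$ for all $m$. That replacement is harmless for the asymptotic statement $\lim_m \vol(mD+A)=\infty$ on a single $X$, because $\vol(mD+lA)=l^d\vol(\tfrac{m}{l}D+A)$. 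But for a \emph{fixed} $p_0$ across an unbounded family, the multiple $l$ (and hence the curve $C$) depends on the member $X$; if those multiples are unbounded, the inequality $\vol(p_0K_X+lA)>q$ gives no control over $\vol(p_0K_X+A)$. So the inequality chain $\vol(pK_X+A)\ge ((p-1)K_X+A)\cdot C\ge p-1$ is not available uniformly, and the claim that $K_X\cdot C\ge 1$ does not rescue the argument even granting its correctness. Your fallback to deformation-invariance and ``taking the maximum over finitely many families'' is then circular: you need finitely many families, which is exactly boundedness, which you have not established.

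The missing idea is that boundedness comes \emph{for free from the contradiction hypothesis}. Suppose the lemma fails: then there are $q$, a strictly increasing sequence $p_i$, and $(X_i,A_i)\in\mathcal{P}$ with $\vol(p_iK_{X_i}+A_i)\le q$. Since $\kappa_\sigma(K_{X_i})>0$ forces $K_{X_i}$ pseudo-effective, $\vol(A_i)\le \vol(p_iK_{X_i}+A_i)\le q$. Because $A_i$ is base point free it is semi-ample, so the contraction $X_i\to Z_i$ defined by $|A_i|$ has ample image divisor $C_i$ with $\vol(C_i)\le q$; hence the couples $(Z_i,C_i)$ are bounded, and after replacing $X_i$ by a resolution one gets a log smooth log bounded family with $A_i=A|_{X_i}$ for some fixed $V\to T$ and $A$ on $V$. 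Then [\ref{HMX}, Theorem 1.8(3)] gives $\vol(p_iK_F+A_F)=\vol(p_iK_{X_i}+A_i)\le q$ on a fixed general fibre $F$, and after writing $lA_F\sim H_F+L_F$ with $H_F$ ample one contradicts Lemma \ref{l-vol-kappa>0} on the \emph{single} variety $F$. This is where the single-variety asymptotic lemma is applied -- after boundedness, not before.
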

\begin{proof}
If the statement is not true, then there exist a number $q\in\N$, a strictly increasing 
sequence of numbers $p_i\in\N$, 
and a sequence of pairs $(X_i,A_i)\in \mathcal{P}$ such that $\vol(p_iK_{X_i}+A_i)\le q$ for every $i$.
Since $A_i$ is very ample, we may assume $A_i$ is smooth, and 
since $\vol(A_i)\le q$, the pairs $(X_i,A_i)$ form a log bounded family. 
Replacing the sequence, we can assume there is a projective morphism 
$f\colon V\to T$ of smooth varieties 
and a reduced divisor $A\ge 0$ on $V$ which is simple normal crossing over $T$ and such that for each $i$, 
$X_i$ appears as a fibre of $f$ over some closed point and that $A_i=A|_{X_i}$. 

Fix a general fibre $F$ of $f$ and let $A_F=A|_F$. 
Then by [\ref{HMX}, Theorem 1.8 (3)], 
for  each $i$, 
$$
\vol\left(K_F+\frac{1}{p_i} A_F\right)=\vol\left(K_{X_i}+\frac{1}{p_i} A_i\right),
$$
hence 
$$
\vol(p_iK_F+A_F)=\vol(p_iK_{X_i}+A_i)\le q.
$$
Pick $l$ so that $lA_F\sim H_F+L_F$ where $H_F$ is ample and $L_F$ is big. Then 
$\vol(lp_iK_F+H_F)\le l^dq$. This contradicts Lemma \ref{l-vol-kappa>0}.

\end{proof}

\subsection{The restriction exact sequence}

Let $X$ be a normal projective variety, $S$ be a normal prime divisor and $L$ be an integral 
$\Q$-Cartier divisor on $X$. We have the exact sequence 
$$
0\to \mathcal{O}_X(-S)\to \mathcal{O}_X \to \mathcal{O}_S\to 0
$$
from which we get the exact sequence 
$$
\mathcal{O}_X(L)\otimes_{\mathcal{O}_X}\mathcal{O}_X(-S)\to \mathcal{O}_X(L)\otimes_{\mathcal{O}_X} \mathcal{O}_X \to \mathcal{F}:=\mathcal{O}_X(L)\otimes_{\mathcal{O}_X} \mathcal{O}_S\to 0
$$
which may not be exact on the left. This in turn gives the exact sequence 
$$
0\to \mathcal{O}_X(L-S)\to \mathcal{O}_X(L) \to \mathcal{F}\to 0.
$$
Note that $\mathcal{F}$ is an $\mathcal{O}_S$-module.

\begin{lem}\label{l-restriction-sequence}
Assume $(X,B)$ is dlt for some boundary $B$ and that $S$ is $\Q$-Cartier.
Let $U$ be the largest open subset of $X$ on which $L$ is Cartier. 
If the codimension of the complement of $S\cap U$ in $S$ is at least two, then 
$\mathcal{F}\simeq \mathcal{O}_S(L|_S)$. 
\end{lem}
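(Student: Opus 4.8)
The assertion is local on $X$, and $\mathcal{F}$ and $\mathcal{O}_S(L|_S)$ are coherent sheaves on $S$. Write $\iota\colon S\cap U\hookrightarrow S$ for the inclusion and put $Z=S\setminus(S\cap U)$, so by hypothesis $\codim_S Z\ge 2$. The first step is to identify the two sheaves over $S\cap U$. On any normal variety one has $\mathcal{O}_X(-S)=\mathcal{I}_S$ as ideal sheaves, because the condition $v_S(f)\ge 1$ on $f\in\mathcal{O}_{X,x}$ cuts out exactly the prime of $S$ in $\mathcal{O}_{X,x}$; hence over $U$, where $L$ is Cartier, tensoring the structure sequence $0\to\mathcal{I}_{S\cap U}\to\mathcal{O}_U\to\mathcal{O}_{S\cap U}\to 0$ with the line bundle $\mathcal{O}_U(L)$ identifies the given sequence restricted to $U$ with the usual restriction sequence, so that $\mathcal{F}|_{S\cap U}\cong\mathcal{O}_{S\cap U}(L|_{S\cap U})$. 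Since $S$ is normal and $\codim_S Z\ge 2$, the divisorial sheaf $\mathcal{O}_S(L|_S)$ is by definition $\iota_*(\mathcal{O}_{S\cap U}(L|_{S\cap U}))=\iota_*\iota^*\mathcal{F}$; thus the adjunction morphism $\mathcal{F}\to\iota_*\iota^*\mathcal{F}=\mathcal{O}_S(L|_S)$ is an isomorphism over $S\cap U$, and everything reduces to showing it is an isomorphism along $Z$.

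Next I would reduce this to a depth statement. A direct computation with the valuation $v_S$ (as above) shows $\mathcal{F}$ is a torsion-free $\mathcal{O}_S$-module of generic rank one, so the kernel and cokernel of $\mathcal{F}\to\iota_*\iota^*\mathcal{F}$ are supported on $Z$, and the map is an isomorphism precisely when $\mathrm{depth}_{\mathcal{O}_{S,s}}\mathcal{F}_s\ge 2$ for every $s\in Z$, i.e. when $\mathcal{F}$ is reflexive on $S$. Here the dlt hypothesis enters: a dlt pair has $X$ with rational, hence Cohen--Macaulay, singularities (by [\ref{kollar-mori}]), and moreover $X$ is $\Q$-factorial in codimension two, so after removing from $X$ a closed subset of codimension $\ge 3$ (which meets $S$ in codimension $\ge 2$, hence does not affect the reflexive sheaves in question) we may assume $S$ is $\Q$-Cartier. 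Now fix $s\in Z$; then $\dim\mathcal{O}_{X,s}\ge 3$, $\mathcal{O}_{X,s}$ is Cohen--Macaulay, and $\mathcal{O}_{S,s}$ is $S_2$ since $S$ is normal, so the depth lemma applied to $0\to\mathcal{I}_{S,s}\to\mathcal{O}_{X,s}\to\mathcal{O}_{S,s}\to 0$ gives $\mathrm{depth}\,\mathcal{I}_{S,s}\ge 3$; using $\Q$-Cartierness of $S$ to pass from $\mathcal{I}_S=\mathcal{O}_X(-S)$ to $\mathcal{O}_X(L-S)$ one obtains $\mathrm{depth}\,\mathcal{O}_X(L-S)_s\ge 3$. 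Feeding this, together with the $S_2$-ness of the reflexive sheaves $\mathcal{O}_X(L)$ and $\mathcal{O}_X(L-S)$ on the normal variety $X$, into the depth lemma for $0\to\mathcal{O}_X(L-S)\to\mathcal{O}_X(L)\to\mathcal{F}\to 0$ yields $\mathrm{depth}\,\mathcal{F}_s\ge 2$, as needed.

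The main obstacle is the last step. The depth lemma applied naively to $0\to\mathcal{O}_X(L-S)\to\mathcal{O}_X(L)\to\mathcal{F}\to 0$ using only the $S_2$-ness of reflexive sheaves gives merely $\mathrm{depth}\,\mathcal{F}_s\ge 1$, so one genuinely has to extract an extra unit of depth for $\mathcal{O}_X(L-S)$ at the points of $Z$; since $L$ need not be Cartier there, this is exactly where the Cohen--Macaulay property of $X$ and the reduction to $S$ being $\Q$-Cartier are indispensable, and the careful bookkeeping that turns those hypotheses into the bound $\mathrm{depth}\,\mathcal{O}_X(L-S)_s\ge 3$ (equivalently, reflexivity of $\mathcal{F}$ on $S$) is the heart of the argument.
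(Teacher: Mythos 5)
Your plan correctly isolates the crux — showing $\mathcal{F}$ is $S_2$ on $S$ — and your local, depth-theoretic route is genuinely different from the paper's. The paper avoids local depth bookkeeping altogether: it tensors the exact sequence with $\mathcal{O}_X(-nA)$, uses Serre duality [\ref{kollar-mori}, Cor.\ 5.27] plus Serre vanishing on $\mathcal{O}_X(L)$ and $\mathcal{O}_X(L-S)$ to get $h^i(\mathcal{F}(-nA))=0$ for $i<d-1$ and $n\gg 0$, then invokes the global cohomological CM criterion [\ref{kollar-mori}, Cor.\ 5.72] to conclude $\mathcal{F}$ is Cohen--Macaulay, hence $S_2$; the identification on $S\cap U$ and the codimension hypothesis finish it.

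The gap you flag at the end is genuine. Your bound $\mathrm{depth}\,\mathcal{I}_{S,s}\ge 3$ is correct, but it does \emph{not} transfer to $\mathrm{depth}\,\mathcal{O}_X(L-S)_s\ge 3$: for $s\in Z$ the divisor $L$ is precisely not Cartier, so $\mathcal{O}_X(L-S)$ is not $\mathcal{I}_S$ tensored with a line bundle, and $\Q$-Cartierness of $S$ by itself supplies no extra unit of depth. What actually produces the bound is the theorem that on a variety underlying a klt pair, the divisorial sheaf of any $\Q$-Cartier Weil divisor is Cohen--Macaulay [\ref{kollar-mori}, Cor.\ 5.25]: since $(X,B)$ dlt gives $(X,0)$ klt, once $S$ is $\Q$-Cartier near $s$ the divisor $L-S$ is too, and then $\mathcal{O}_X(L-S)_s$ is CM of dimension $\ge 3$. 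Your $\mathcal{I}_S$ computation then becomes unnecessary. This CM-of-divisorial-sheaves input is exactly what the paper uses, implicitly, when it appeals to duality.

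Two further cautions. First, $\Q$-factoriality of $X$ in codimension two is not a consequence of rational or Cohen--Macaulay singularities alone (e.g.\ the three-dimensional node $xy=zw$ is terminal, hence rational, but not $\Q$-factorial); what you need is that two-dimensional klt singularities are quotient singularities, hence have finite local class groups — cite this separately. Second, the reduction step "remove a closed $W\subset X$ of codimension $\ge 3$ so that $S$ becomes $\Q$-Cartier" is not as harmless as stated: to conclude $\mathcal{F}\simeq\mathcal{O}_S(L|_S)$ on all of $S$, you must also know $\mathcal{F}$ is $S_2$ at the points of $S\cap W$, since $\mathcal{F}$ being $S_2$ is what lets you reconstruct it from its restriction to $S\setminus W$; but at those points $S$ need not be $\Q$-Cartier, so the same depth argument is unavailable. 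The paper's global criterion makes no reference to the $\Q$-Cartier locus of $S$ and is, in this respect, the more robust route.
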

\begin{proof}
This is a generalisation of [\ref{kollar-mori}, Proposition 5.26].
Note that  $L|_S$ is well-defined up to linear equivalence and it is an integral divisor.
Let $A$ be an ample Cartier divisor on $X$. Since $(X,B)$ is dlt and $S$ is $\Q$-Cartier, by duality 
[\ref{kollar-mori}, Corollary 5.27] and Serre vanishing, 
$h^i(L-nA)=0$ and  $h^i(L-S-nA)=0$ 
for any $i<d=\dim X$ and any $n\gg 0$. Thus using the third exact sequence above we 
get the exact sequence 
$$
H^i(\mathcal{O}_X(L-nA)) \to H^i(\mathcal{F}(-nA))\to H^{i+1}(\mathcal{O}_X(L-S-nA))
$$
and the vanishing $h^i(\mathcal{F}(-nA))=0$ 
for  $i<d-1$ and any $n\gg 0$. This implies $\mathcal{F}$ is a Cohen-Macaulay sheaf 
on $S$ [\ref{kollar-mori}, Corollary 5.72]. In particular, 
$\mathcal{F}$ is $\rm S_2$, hence $\mathcal{F}$ is determined  
by $\mathcal{F}|_{U\cap S}$. On the other hand, $\mathcal{O}_S(L|_S)$ 
 is reflexive, hence it is also determined by its restriction to $S\cap U$.
Now the result follows from the fact that on $S\cap U$, the two sheaves $\mathcal{F}$ and $\mathcal{O}_S(L|_S)$ 
are isomorphic. 

\end{proof}

\subsection{Descent of nef divisors}

\begin{lem}\label{l-nef-div-descent}
Let $f\colon X\to Z$ be a contraction from a smooth projective variety to a normal projective 
variety with rationally connected general fibres. Assume $M$ is a nef Cartier divisor on $X$ 
such that $M\sim_\Q 0$ on the generic fibre of $f$. Then there exist resolutions 
$\phi\colon W\to X$ and $\psi\colon V\to Z$ such that the induced map $W\bir V$ is a morphism and $\phi^*M\sim 0/V$. 
\end{lem}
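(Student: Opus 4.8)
The plan is to use the fact that $M$ is $\Q$-trivial on the generic fibre of $f$ to show that $M$ is, up to $\Q$-linear equivalence, pulled back from $Z$ on some resolution. First I would pass to the generic fibre $X_\eta$ over the function field $K(Z)$: by hypothesis $M|_{X_\eta}\sim_\Q 0$, so after replacing $Z$ by a nonempty open subset $Z^\circ$ and $X$ by its preimage we may assume $rM\sim f^*N^\circ$ for some Cartier divisor $N^\circ$ on $Z^\circ$ and some $r\in\N$; indeed the $\Q$-linear triviality on $X_\eta$ spreads out over an open set, and the difference $rM-f^*(\text{anything extending }N^\circ)$ is then supported on fibres over $Z\setminus Z^\circ$. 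The point of rational connectedness of the general fibre is that it forces $\Pic(X_\eta)$ to inject into (a finite-index situation inside) the relevant Néron–Severi data so that numerical triviality on the generic fibre upgrades to $\Q$-linear triviality; since we are already given $M\sim_\Q 0$ on the generic fibre this is automatic, but rational connectedness will also be what guarantees that after base change the vertical correction divisor can be absorbed.

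Next I would take a common resolution: choose a resolution $\psi\colon V\to Z$ such that $\overline{N}$, the closure of $N^\circ$, becomes (the pushforward of) a Cartier divisor on $V$ with the pullback behaving well, and choose a resolution $\phi\colon W\to X$ dominating both $X$ and $V$ so that the rational map $W\dashrightarrow V$ is a morphism $\pi\colon W\to V$. On $W$ we then have $r\phi^*M\sim \pi^*(\text{div from }V) + E$ where $E$ is $\pi$-vertical, i.e. supported on fibres of $\pi$ over a proper closed subset of $V$. Running an appropriate MMP or using the negativity lemma on the vertical part, together with the fact that $\phi^*M$ is nef hence $\pi$-nef, one concludes $E$ can be taken to be a pullback from $V$ as well: a nef divisor that is numerically trivial on the general fibre of $\pi$ and whose vertical defect is rigid must itself descend. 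Concretely, I would invoke the standard lemma (a consequence of the negativity lemma, cf. the argument in \ref{l-connectedness}) that if $\phi^*M$ is $\pi$-nef and $\phi^*M\equiv 0$ on the general fibre of $\pi$, then $\phi^*M\sim_\Q \pi^*D$ for some $\Q$-Cartier $D$ on $V$; and since $\phi^*M$ is Cartier, after possibly enlarging $W$ we can arrange $\phi^*M\sim \pi^*D$ with $D$ integral, whence $\phi^*M\sim 0/V$.

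The main obstacle I expect is the passage from "$M$ is $\Q$-trivial on the generic fibre" to "$\phi^*M$ is honestly linearly (not just $\Q$-linearly, and not just numerically) trivial relative to $V$" — i.e. controlling denominators and the vertical/exceptional correction divisors simultaneously. Rational connectedness of the general fibre is exactly the hypothesis that removes the subtlety coming from a possibly large Picard variety of the fibres (there are no nontrivial numerically trivial line bundles to worry about on a rationally connected smooth projective variety, since $\Pic^0=0$), so that numerical triviality along fibres equals linear triviality along fibres up to bounded torsion, which is then killed by the choice of $r$ and by passing to a further blow-up. Handling the support of $E$ lying over codimension-one points of $V$ versus higher codimension is the place where one must be careful: components of $E$ dominating a prime divisor on $V$ get absorbed into $D$, while the genuinely $\pi$-exceptional part is eliminated by negativity since $\phi^*M$ is $\pi$-nef and $\pi$-trivial on the generic fibre. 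Once those bookkeeping points are settled, the conclusion $\phi^*M\sim 0/V$ is immediate.
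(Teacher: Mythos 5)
Your first step — spreading out $M\sim_\Q 0$ on the generic fibre to an $r$ with $rM\sim f^*N^\circ$ over an open set, then passing to resolutions to get $\phi^*M\sim_\Q 0/V$ — is essentially the first sentence of the paper's proof, which simply cites Kawamata [\ref{kaw-lin-sys-mmodel}, Prop.\ 2.1] for exactly this reduction. After that, however, there is a genuine gap: you assert that "since $\phi^*M$ is Cartier, after possibly enlarging $W$ we can arrange $\phi^*M\sim\pi^*D$ with $D$ integral," but this is precisely the content of the lemma and you give no mechanism for it. The appeal to $\Pic^0=0$ for rationally connected fibres does not do the job, for two reasons. First, once you have passed to the resolution $V$ and the morphism $\pi\colon W\to V$, the fibres of $\pi$ need not themselves be rationally connected (only the fibres of the composite $W\to Z$ are controlled), so the fibrewise Picard argument is aimed at the wrong map. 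Second, even granting torsion-freeness of $\Pic$ on the general fibre, this shows at best that $\phi^*M$ is fibrewise linearly trivial on a dense open set; it does not produce a global Cartier divisor on $V$ pulling back to $\phi^*M$. Your remark that "torsion is killed by the choice of $r$ and passing to a further blow-up" is not correct — multiplying by $r$ is what you are trying to undo, and blowing up does not remove torsion in $\Pic$.

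The paper's mechanism for upgrading $\sim_\Q 0/Z$ to $\sim 0$ is different in an essential way. Rational connectedness of the fibres is used to conclude that $K_X$ is not pseudo-effective over $Z$, so one can run a $K_X$-MMP over $Z$ (with scaling, via [\ref{BCHM}]) and terminate at a Mori fibre space $X'\to T'/Z$. Because $M\sim_\Q 0/Z$ and $M$ is Cartier, the cone theorem [\ref{kollar-mori}, Thm.\ 3.7] guarantees the pushdown $M'$ remains Cartier through the MMP; applied again at the Mori fibre contraction, it gives $M'\sim 0/T'$ and exhibits $M'$ as the pullback of a nef Cartier divisor $N'$ on $T'$. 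Replacing $X$ by a resolution $T$ of $T'$ and $M$ by the pullback of $N'$, the induction continues because the general fibres of $T\to Z$ are dominated by those of $X\to Z$ and hence are still rationally connected. This last point — preserving the rationally connected hypothesis so the induction can run on $\dim T<\dim X$ — is the structural idea your proposal is missing, and it is what turns the vague "vertical defects must descend" into an actual proof.
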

\begin{proof}
Since $M$ is nef and $M\sim_\Q 0$ on the generic fibre of $f$, $M\sim_\Q N$ for some $N$ whose support 
is vertical$/Z$, i.e. its components do not intersect the generic fibre. Thus if $A$ is the pullback of a 
sufficiently ample divisor on $Z$, then $\kappa(A+M)=\dim Z$ and $\kappa(A-M)=\dim Z$. In particular, 
$$
\nu(A+M)=\kappa_\sigma(A+M)\le \kappa_\sigma(A+M+A-M)=\dim Z
$$
which means $A+M$ is a nef and \emph{good} divisor. Therefore, applying  [\ref{kaw-lin-sys-mmodel}, Proposition 2.1], 
 we can find $\phi$ and $\psi$ so that $\phi^* (A+M)\sim_\Q 0/V$, hence 
$\phi^* M\sim_\Q 0/V$. 

Replacing $X$ with 
$W$ and replacing $Z$ with $V$, we can assume $M\sim_\Q 0/Z$.
Since the general fibres of $f$ are rationally connected, $K_X$ is not pseudo-effective over $Z$.
 Running an MMP$/Z$ on $K_X$ with scaling of some ample divisor as in [\ref{BCHM}], we get a Mori fibre space $X'\to T'/Z$.  
Since $M$ is Cartier and $M\sim_\Q 0/Z$, $M'$ is Cartier, 
by the cone theorem [\ref{kollar-mori}, Theorem 3.7],  where $M'$ is the pushdown of $M$. 
Moreover, $M'\sim 0/T'$ again by [\ref{kollar-mori}, Theorem 3.7], hence $M'$ is the pullback of some nef Cartier divisor $N'$ 
on $T'$. Let $T$ be a resolution of $T'$. Then the general fibres of $T\to Z$ are rationally connected as 
they are dominated by the general fibres of $f$. Now replace $X$ with $T$ and replace $M$ with the 
pullback of $N'$ to $T$. Then apply induction on dimension.

\end{proof}

\subsection{Pairs with large boundaries}

\begin{lem}\label{l-large-boundaries}
Let $(X,B)$ be a projective $\Q$-factorial dlt pair of dimension $d$, and let $M$ be a nef Cartier divisor. 
Let $a>2d$ be a real number. 
 Then any MMP on $K_X+B+aM$ is $M$-trivial, i.e. the extremal rays in the process intersect $M$ trivially. 
If $M$ is big, then $K_X+B+aM$ is also big. 
\end{lem}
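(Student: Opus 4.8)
The plan is to use the cone theorem to control each step of the MMP, exploiting that $M$ is nef and Cartier together with the bound $a > 2d$. First I would recall the basic mechanism: in an MMP on $K_X+B+aM$, each step is a $(K_X+B+aM)$-negative extremal contraction, say associated to an extremal ray $R$. Since $X$ is $\Q$-factorial klt (or can be made so) and $K_X+B+aM$ is not nef at that step, the cone theorem gives a rational curve $C$ generating $R$ with $0 < -(K_X+B)\cdot C \le 2d$; more precisely the standard length-of-extremal-ray bound (Kawamata, or [kollar-mori, Theorem 3.7] together with the boundedness of extremal rays) yields $-(K_X+B)\cdot C \le 2\dim X = 2d$. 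Now since $M$ is nef, $M\cdot C \ge 0$, so $M\cdot C$ is a nonnegative integer (as $M$ is Cartier). If $M\cdot C \ge 1$, then
$$
(K_X+B+aM)\cdot C = (K_X+B)\cdot C + a\, M\cdot C \ge -2d + a > 0,
$$
contradicting that $R$ is $(K_X+B+aM)$-negative. Hence $M\cdot C = 0$, i.e. the extremal ray is $M$-trivial. This shows every step of the MMP is $M$-trivial, which is the first assertion.

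Next I would observe that $M$-triviality is preserved along the MMP: if $M\cdot C = 0$ for the contracted ray at a given step, then on the next model the pushforward of $M$ (which remains Cartier by [kollar-mori, Theorem 3.7] applied to the $M$-trivial extremal contraction, using that $M$ is numerically trivial on the contracted fibres) is still nef, and the inductive argument of the previous paragraph applies verbatim. So the whole MMP is $M$-trivial and at each stage we have a well-defined nef Cartier divisor $M_Y$ on the current model $Y$ which is the pushforward of $M$.

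For the bigness statement, suppose now $M$ is big. The MMP on $K_X+B+aM$ terminates (here I would invoke that $X$ is of Fano type, or more simply that we can run such an MMP via [BCHM] since $K_X+B+aM$ can be written as klt plus ample plus effective after perturbing $aM$ into $\epsilon$-ample part, as $M$ is big; alternatively the MMP terminates because $(X,B)$ is klt and we scale an ample divisor). At the end we reach either a minimal model, on which $K_Y+B_Y+aM_Y$ is nef, or a Mori fibre space $Y\to T$. The key point is that bigness of $M_Y$ is preserved: $M$ big and $M$-trivial MMP means each step is also a step that does not decrease the numerical dimension of $M$, and in fact $\vol(M_Y) = \vol(M) > 0$ since divisorial contractions and flips in an $M$-trivial MMP preserve the volume of $M$ (the contracted divisors have $M$ restricting to zero on the covering curves, so $M$ is a pullback locally and volume is unchanged — this is where I would be most careful). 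On a minimal model, $K_Y + B_Y + aM_Y$ nef plus $M_Y$ big and nef gives that $K_Y+B_Y+aM_Y$ is big (sum of a nef and a big nef divisor), hence $K_X+B+aM$ is big since bigness is a birational invariant of the pushforward and an MMP does not decrease volume of $K_X+B+aM$ either. The Mori fibre space case cannot occur: on $Y\to T$ with $\dim Y > \dim T$, the divisor $M_Y$ is $M$-trivial on the fibres of the extremal contraction but those fibres are covered by curves $C$ with $M_Y\cdot C = 0$, so $M_Y$ is numerically a pullback from $T$, contradicting bigness of $M_Y$ when the fibre dimension is positive. Therefore the MMP ends with a minimal model and we conclude.

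The main obstacle I anticipate is the careful bookkeeping that $M$ stays Cartier, nef, and big along the MMP — in particular justifying that an $M$-trivial flip or divisorial contraction preserves Cartierness of the pushforward (via [kollar-mori, Theorem 3.7], which requires the contracted ray to be $M$-trivial, which we have) and preserves $\vol(M)$. The numerical-triviality-on-fibres step ruling out the Mori fibre space is routine once Cartierness is in hand, but the volume-preservation claim needs the observation that in an $M$-trivial MMP, $M$ is relatively numerically trivial over the base of each contraction, hence a relative pullback, so no volume is lost.
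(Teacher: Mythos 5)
Your argument for the $M$-triviality of the MMP is correct and is essentially the paper's: the boundedness of extremal rays together with $M$ being nef Cartier and $a>2d$ forces $M\cdot R=0$ on every contracted ray, the cone theorem preserves the Cartier condition at each step, and the same reasoning applies inductively. Your analysis of the Mori fibre space case is also the paper's: $M_Y$ would be relatively trivial over $T$ by the ray bound, hence a pullback, contradicting bigness. Your observation that $M$ stays nef, big and Cartier under an $M$-trivial MMP (because $M$ is the pullback of its pushforward at each step) is correct and needed.

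However, your final step concluding that $K_Y+B_Y+aM_Y$ is big on the minimal model has a genuine gap. You assert that it is "a sum of a nef and a big nef divisor" without producing the decomposition, and the obvious candidate $(K_Y+B_Y)+aM_Y$ does not work since $K_Y+B_Y$ need not be nef. The paper closes this by applying the extremal-ray bound once more on the minimal model: if $K_Y+B_Y+a'M_Y$ failed to be nef for all $a'<a$ close to $a$, there would be an extremal ray $R$ with $(K_Y+B_Y+aM_Y)\cdot R=0$ and $M_Y\cdot R>0$, whence $(K_Y+B_Y)\cdot R\le -a<-2d$, a contradiction. Thus $K_Y+B_Y+a'M_Y$ is nef for some $a'<a$, and the decomposition
$$
K_Y+B_Y+aM_Y=(K_Y+B_Y+a'M_Y)+(a-a')M_Y
$$
exhibits the divisor as nef plus big nef, hence big. (Equivalently, the same ray bound shows $K_Y+B_Y+2dM_Y$ is already nef on the minimal model, so one may take $a'=2d$ directly.) You should add this last application of the ray bound to complete the argument.
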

\begin{proof}
The fact that any MMP on $K_X+B+aM$ is $M$-trivial follows from boundedness of 
extremal rays [\ref{kawamata-bnd-ext-ray}]. The Cartier condition and the nefness of $M$ is preserved in the 
process by the cone theorem [\ref{kollar-mori}, Theorem 3.7]. 
Now assume $M$ is big. It is enough to show $K_X+aM$ is big. 
If $K_X+aM$ is not big, then $K_X+(a-\delta)M$ is not pseudo-effective for any small $\delta>0$.
Then we can run an MMP on $K_X+(a-\delta)M$ which 
terminates with a Mori fibre space $Y\to T$ [\ref{BCHM}]. By boundedness of extremal rays 
[\ref{kawamata-bnd-ext-ray}], $M_Y\equiv 0/T$ 
which is not possible as $M_Y$ is big. 

\end{proof}

\subsection{Divisors with log discrepancy close to $0$}

\begin{lem}\label{l-ep-to-0}
Let $d\in\N$ and $\Phi\subset [0,1]$ be a DCC set. Then there is $\epsilon>0$ depending only 
on $d$ and $\Phi$ such that if $(X,B)$ is a projective pair and $D$ is a prime divisor on 
birational models of $X$ satisfying 
\begin{itemize}
\item $(X,B)$ is lc of dimension $d$ and $(X,0)$ is klt,

\item $K_X+B\sim_\R 0$ and $B\in \Phi$, and

\item $a(D,X,B)<\epsilon$,\\
\end{itemize}
then $a(D,X,B)=0$.
\end{lem}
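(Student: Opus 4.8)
The plan is to argue by contradiction: suppose there is a sequence of counterexamples $(X_i, B_i)$ with prime divisors $D_i$ over $X_i$ such that $0 < a(D_i, X_i, B_i) < \epsilon_i$ where $\epsilon_i \to 0$. I would like to extract from this a bounded situation where the gap between ``log discrepancy $0$'' and ``log discrepancy bounded below by some fixed positive constant'' becomes visible. The natural tool is boundedness of complements, namely Theorem~\ref{t-bnd-compl-usual} (or its generalised version~\ref{t-bnd-compl}): since $(X_i, B_i)$ is lc, $K_{X_i}+B_i \sim_\R 0$, and $B_i \in \Phi$ a DCC set, one first reduces to the case where $\Phi = \Phi(\mathfrak{R})$ for a finite set $\mathfrak{R}$ of rationals — this is where one uses that $B_i \in \Phi$ with $\Phi$ DCC together with $K_{X_i}+B_i\sim_\R 0$ to force, via a standard argument (e.g. along the lines of the DCC/ACC machinery of Hacon--M$^{\mathrm c}$Kernan--Xu), the coefficients of $B_i$ into a fixed finite set of rationals. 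Here $(X_i,0)$ klt ensures $X_i$ is of Fano type-ish once we observe $-K_{X_i} = B_i \ge 0$ is semiample after an MMP, so the hypotheses of~\ref{t-bnd-compl-usual} can be met.

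Next, with $n = n(d,\mathfrak{R})$ the complement bound, I would argue that $K_{X_i}+B_i$ itself is (up to the sign conventions) already an $n$-complement of itself, or more precisely that there is an $n$-complement $K_{X_i}+B_i^+$ with $B_i^+ \ge B_i$ and $n(K_{X_i}+B_i^+)\sim 0$; since $K_{X_i}+B_i \sim_\R 0$ and $B_i^+ \ge B_i$ we get $B_i^+ = B_i$, so in fact $n(K_{X_i}+B_i)\sim 0$ with $n$ bounded. The key consequence is that the coefficients of $B_i$ lie in $\frac{1}{n}\Z$, and — more importantly — passing to a log resolution $W_i \to X_i$ and writing $K_{W_i}+B_{W_i} = \phi_i^*(K_{X_i}+B_i)$, we get $n(K_{W_i}+B_{W_i})\sim 0$ as well, so \emph{every} log discrepancy $a(D,X_i,B_i) = 1 - \mu_D B_{W_i}$ lies in $\frac{1}{n}\Z$. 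Therefore $a(D_i,X_i,B_i) \in \{0, \tfrac{1}{n}, \tfrac{2}{n},\dots\}$, and choosing $\epsilon = \tfrac{1}{n}$ forces $a(D_i,X_i,B_i) = 0$, the desired contradiction. In fact this shows the cleaner statement that one does not even need a contradiction argument: just take $\epsilon = \tfrac{1}{n(d,\mathfrak{R})}$ directly, provided $\Phi$ has been replaced by a suitable $\Phi(\mathfrak{R})$.

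The main obstacle is the reduction from a general DCC set $\Phi$ to a finite set $\mathfrak{R}$ of rational numbers with $\Phi(\mathfrak{R}) \supseteq$ (the relevant coefficients), so that $\epsilon$ ends up depending only on $d$ and $\Phi$ rather than on the individual pair. This requires knowing that the coefficients of $B$ actually occurring in such lc pairs with $K_X+B\sim_\R 0$ and $B\in\Phi$ form a \emph{finite} set — an ACC-type input. I would invoke the global ACC theorem (Hacon--M$^{\mathrm c}$Kernan--Xu) — or, if one wants to stay within the paper, note that the coefficients of an $n$-complement for bounded $n$ are automatically in the finite set $\frac1n\Z\cap[0,1]$, and run the complement theorem with $\mathfrak{R} = \Phi$ directly if $\Phi$ is finite, handling the general DCC case by first showing only finitely many coefficients can appear. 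A secondary technical point is checking the Fano type hypothesis of~\ref{t-bnd-compl-usual}: since $(X,0)$ is klt and $-K_X = B \ge 0$ with $K_X+B\sim_\R 0$, running an MMP on $-K_X$ (or just noting $(X,(1-t)B)$ is klt with $-(K_X+(1-t)B)\sim_\R tB\ge 0$ nef after a small perturbation and MMP) exhibits $X$ as Fano type, so the theorem applies and delivers the bounded $n$.
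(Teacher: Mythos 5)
Your proposal takes a genuinely different route from the paper, and there are several real gaps in it.

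\textbf{Circularity.} The complement theorem~\ref{t-bnd-compl-usual} in dimension $d$ is proved in Section~10 via an induction whose dimension-$d$ step (Proposition~\ref{p-BAB-exc-to-compl}, through Lemma~\ref{l-compl-non-exc}) invokes Lemma~\ref{l-ep-to-0} in dimension~$d$. So one cannot use~\ref{t-bnd-compl-usual} in dimension~$d$ to prove~\ref{l-ep-to-0} in dimension~$d$ without breaking the paper's induction. This is not a cosmetic point: the whole reason~\ref{l-ep-to-0} lives in the preliminaries and is proved only from the external global ACC theorem is precisely so that it can be deployed inside the inductive argument for complements.

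\textbf{Rationality.} The hypothesis is only that $\Phi \subset [0,1]$ is DCC; it need not consist of rational numbers, so $B$ may have irrational coefficients. Global ACC restricts the occurring coefficients to a finite subset $I_0 \subset \Phi$, but it does not make $I_0$ rational, and hence does not place $B$ in $\Phi(\mathfrak{R})$ for a finite set of rationals $\mathfrak{R}$, which is what the complement theorem demands. Your ``provided $\Phi$ has been replaced by a suitable $\Phi(\mathfrak{R})$'' is exactly the unjustified step.

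\textbf{Fano type.} The pair $(X,B)$ in the statement need not have $X$ of Fano type. For instance, take $X=\PP^1\times E$ with $E$ an elliptic curve and $B=F_1+F_2$ two distinct fibres over points of $E$: then $(X,0)$ is klt, $(X,B)$ is lc, $K_X+B\sim 0$, $B\in\{1\}$, but no $\Gamma\sim_\Q -K_X$ is big, so $X$ is not Fano type and~\ref{t-bnd-compl-usual} is unavailable. Your attempted justification via an MMP on $-K_X\sim_\R B$ does not produce bigness, only effectivity.

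\textbf{Where the paper goes instead.} The paper's proof uses the klt hypothesis on $(X,0)$ for something quite different: it guarantees the existence of a birational extraction $X_i'\to X_i$ of the single divisor $D_i$. Writing $K_{X_i'}+B_i'$ for the crepant pullback, the coefficient of $D_i$ in $B_i'$ is $b_i=1-a(D_i,X_i,B_i)$, which lies in $(1-\epsilon_i,1)$ and hence accumulates at $1$. After passing to a subsequence so the $b_i$ are increasing, $\Phi\cup\{b_i\}$ is still DCC, all the $(X_i',B_i')$ are lc with $K_{X_i'}+B_i'\sim_\R 0$ and $B_i'\in \Phi\cup\{b_i\}$, and the global ACC theorem [HMX2, Theorem~1.5] says only finitely many coefficients can occur — contradiction. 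This is entirely self-contained, needs no rationality, no Fano type, and no complements. You should note that your argument never actually uses $(X,0)$ klt in a valid way, which is a sign the extraction idea is missing.
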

\begin{proof}
If the lemma does not hold, then there exist a decreasing sequence $\epsilon_i$ of numbers approaching $0$ 
and a sequence $(X_i,B_i),D_i$ of pairs and divisors as in the statement such that $0<a(D_i,X_i,B_i)<\epsilon_i$. 
If $D_i$ is a divisor on $X_i$, we let $X_i'\to X_i$ be the identity morphism. If not, then since 
$(X_i,0)$ is klt, there is a birational morphism $X_i'\to X_i$ extracting 
only $D_i$. Let $K_{X_i'}+B_i'$ be the pullback of $K_{X_i}+B_i$, and let 
$b_i=1-a(D_i,X_i,B_i)$ which is  the coefficient of $D_i$ in $B_i'$. Note that $B_i'\in \Phi':=\Phi\cup \{b_i \mid i\in\N\}$.
Replacing the sequence, we can assume $\Phi'$ is a DCC set. 
Now we get a contradiction, by [\ref{HMX2}, Theorem 1.5],  because $\{b_i \mid i\in\N\}$ is not finite. 

\end{proof}

\subsection{Boundary coefficients close to $1$}

\begin{prop}\label{p-B'-to-Theta'}
Let $d,p\in\N$ and let $\Phi\subset [0,1]$ be a DCC set. Then there is $\epsilon\in\R^{>0}$ depending only 
on $d,p,\Phi$ satisfying the following. 
Let $(X',B'+M')$ be a generalised pair of dimension $d$ with data $\phi\colon X\to X'\to Z$ 
and $M$ such that 
\begin{itemize}
\item $B'\in \Phi\cup (1-\epsilon,1]$ and $pM$ is b-Cartier,

\item  $-(K_{X'}+B'+M')$ is a limit of movable$/Z$ $\R$-divisors,

\item there is 
$$
0\le P'\sim_\R -(K_{X'}+B'+M')/Z
$$ 
such that $({X'},B'+P'+M')$ is generalised lc, and 

\item $X'$ is $\Q$-factorial of Fano type$/Z$.\\
\end{itemize}
Let $\Theta'$ be the boundary whose coefficients are 
the same as $B'$ except that we replace each coefficient in $(1-\epsilon,1)$ with $1$. That is, 
$\Theta'=(B')^{\le 1-\epsilon}+\lceil (B')^{>1-\epsilon}\rceil$.
Run an MMP$/Z$ on $-(K_{X'}+\Theta'+M')$ and let $X''$ be the resulting model. Then:\\

\begin{enumerate}
\item $(X',\Theta'+M')$ is generalised lc,
 
\item the MMP does not contract any component of $\rddown{\Theta'}$,

\item $-(K_{X''}+\Theta''+M'')$ is nef$/Z$, and 

\item $({X''},\Theta''+M'')$ is generalised lc.
\end{enumerate}
\end{prop}

\begin{proof}
Note that  $X'$ is $\Q$-factorial of Fano type$/Z$, so we can run MMP on any divisor on $X'$.

(1)
Assume this is not true. Then there exist a decreasing sequence $\epsilon_i$ of numbers approaching $0$, 
and a sequence $(X_i',B_i'+M_i')$ of generalised pairs as in the statement such that if $\Theta_i'$ is the 
divisor derived from $B_i'$ using $\epsilon_i$, then $(X_i',\Theta_i'+M_i')$ is not generalised lc.
There exist boundaries $B_i'\le \Gamma_i'\le \Theta_i'$ with $\Gamma_i'\in \Phi\cup (1-\epsilon_i,1]$ 
and a component $D_i'$ of $\Gamma_i'$ with coefficient $t_i\in (1-\epsilon_i,1)$ such that  
 $t_i$ is the generalised lc threshold of $D_i'$ with respect to $(X_i',\Gamma_i'-t_iD_i'+M_i')$. 
Replacing the sequence we can assume the union of the coefficients of all the $\Gamma_i'$ 
is a DCC set. Then we get a contradiction by the ACC for generalised lc thresholds [\ref{BZh}, Theorem 1.5].

(2) If this is not true, then 
there exist a decreasing sequence $\epsilon_i$ of numbers approaching $0$, 
and a sequence $(X_i',B_i'+M_i')$ of generalised pairs as in the statement such that if $\Theta_i'$ is the 
divisor derived from $B_i'$ using $\epsilon_i$, then the MMP contracts some component $S_i'$ of $\rddown{\Theta_i'}$.
Since all our assumptions are preserved under the MMP, we may assume the first step of the 
MMP is a birational contraction $\pi_i\colon X_i'\to \tilde{X}_i'$ which contracts $S_i'$. 
Let $R_i'$ be the corresponding extremal ray.

As $-(K_{X'}+B'+M')$ is a limit of movable$/Z$ $\R$-divisors, $-(K_{X_i'}+B_i'+M_i')\cdot R_i'\ge 0$. 
Moreover, $S_i'\cdot R_i'<0$ and if $\bar{B}'_i$ is the same as $B_i'$ except that we increase the coefficient of 
$S_i'$ to $1$, then 
$$
B_i'\le \bar{B}'_i\le \Theta_i'~~ \mbox{and} ~~ -(K_{X_i'}+\bar{B}_i'+M_i')\cdot R_i'\ge 0.
$$
Therefore, there exist boundaries $\bar{B}_i'\le \Gamma_i'\lneq \Theta_i'$ with $\Gamma_i'\in \Phi\cup (1-\epsilon_i,1]$ 
such that $-(K_{X_i'}+\Gamma_i'+M_i')\cdot R_i'=0$ and $S_i'$ is a component of $\rddown{\Gamma_i'}$. Moreover, 
there is a component $D_i'$ of $\Gamma_i'$ with coefficient $t_i\in (1-\epsilon_i,1)$ such that  
$D_i'\cdot R_i'>0$. Thus 
$$
0=a(S_i',X_i',\Gamma_i'+M_i')=a(S_i',\tilde{X}_i',\tilde{\Gamma}_i'+\tilde{M}_i')
$$
but 
$$
a(S_i',\tilde{X}_i',\tilde{\Gamma}_i'+\delta_i\tilde{D}_i'+\tilde{M}_i')=-\mu_{S_i'}\pi_i^*\delta_i{D}_i'<0
$$
for any $\delta_i>0$ where $\tilde{\Gamma}_i',\tilde{D}_i',\tilde{M}_i'$ are the pushdowns of 
${\Gamma}_i',{D}_i',{M}_i'$. This means that $t_i$ is the generalised lc 
threshold of $\tilde{D}_i'$ with respect to $(\tilde{X}_i',\tilde{\Gamma}_i'-t_i\tilde{D}_i'+\tilde{M}_i')$. 
Again this contradicts [\ref{BZh}, Theorem 1.5].

(3)
Assume this is not true. Then 
there exist a decreasing sequence $\epsilon_i$ of numbers approaching $0$, 
and a sequence $(X_i',B_i'+M_i')$ of generalised pairs as in the statement such that if $\Theta_i'$ is the 
divisor derived from $B_i'$ using $\epsilon_i$, then the MMP ends with a Mori fibre space, that is, there 
is an extremal non-birational contraction 
$X_i''\to T_i''/Z$ which is $-(K_{X_i''}+\Theta_i''+M_i'')$-negative. Under our assumptions 
 $-(K_{X_i''}+B_i''+M_i'')$ is nef over $T_i''$. 
So there exist boundaries $B_i'\le \Gamma_i'\lneq \Theta_i'$ with $\Gamma_i'\in \Phi\cup (1-\epsilon_i,1]$ 
such that 
$$
-(K_{X_i''}+\Gamma_i''+M_i'')\equiv 0/T_i''.
$$
 Moreover, 
there is a component $D_i'$ of $\Gamma_i'$ with coefficient $t_i\in (1-\epsilon_i,1)$ such that  
$D_i'$ is ample over $T_i''$. In particular, the set of the coefficients of the horizontal$/T_i''$ components 
of all the $\Gamma_i''$ put together is not a finite set but we can assume it to be DCC. 
Thus by restricting to the general fibres of $X_i''\to T_i''$  
we get a contradiction in view of the global ACC for generalised pairs [\ref{BZh}, Theorem 1.6].

(4)
 This follows from (1) since the assumptions of the proposition are stable under the MMP.

\end{proof}


\section{\bf Adjunction}

In this section we discuss various kinds of adjunction that are needed in the subsequent sections. 
In general adjunction is relating the (log) canonical divisors of two varieties that are somehow related.
We will consider adjunction for a prime divisor on a variety (divisorial adjunction) and more 
generally for a non-klt centre on a variety (adjunction on non-klt centres), and for certain 
fibrations (adjunction for fibre spaces).

\subsection{Divisorial adjunction}\label{ss-q-adjunction} 

(1) 
Let $(X',B'+M')$ be a generalised pair with data $X \overset{\phi}\to X'$ and $M$.
Assume that $S'$ is the normalisation of a component of ${B'}$ with coefficient $1$, 
and that $S$ is its birational transform on $X$. 
Replacing $X$ we may assume $\phi$ is a log resolution of $(X',B')$.
Write
$$
K_X+B+M=\phi^*(K_{X'}+B'+M'),
$$
let $B_S=(B-S)|_S$ and pick $M_S\sim_\R M|_S$. We get 
$$
K_S+B_S+M_S\sim_\R (K_X+B+M)|_S.
$$
Let $\psi$ be the induced morphism $S\to S'$ and let $B_{S'}=\psi_*B_S$ and $M_{S'}=\psi_*M_S$.
Then we get
$$
K_{S'}+B_{S'}+M_{S'}\sim_\R (K_{X'}+B'+M')|_{S'}
$$
which we refer to as  \emph{generalised (divisorial) adjunction}. 
Note that  
$$
K_{S}+B_{S}+M_{S}=\psi^*(K_{S'}+B_{S'}+M_{S'}).
$$
When $M=0$, we recover the usual \emph{divisorial adjunction}.

Assume $(X',B'+M')$ is generalised lc. Then the coefficients of $B_{S'}$  belong to $[0,1]$ [\ref{BZh},  Remark 4.8]. 
We then consider $(S',B_{S'}+M_{S'})$ as
a generalised pair which comes with data $S\overset{\psi}\to S'$ and $M_S$.
It is clear from the construction that $(S',B_{S'}+M_{S'})$ is generalised lc.

(2) 
The above generalised adjunction formula is not unique, that is, 
although $B_{S'}$ is uniquely determined depending on both $B'$ and $M$ but 
$M_S$ is not unique: if $S$ is a component of $M$, then in general $M|_S$ is only defined up to 
$\R$-linear equivalence. However, in some situations we can choose $M_S$ so that it satisfies certain properties. 
For example assume $pM$ is b-Cartier for some $p\in\N$. Then we can choose  $M_S$ such that 
$pM_S$ is b-Cartier. Indeed, we can assume $\phi$ is a log resolution of $(X',B')$ and that $pM$ is Cartier, and then 
we can choose $M_S$ such that $pM_S\sim (pM)|_S$. In particular, $pM_S$ is Cartier. Moreover,  
$$
p(K_X+B+M)\sim L
$$
where $L$ is an $\R$-divisor whose support does not contain $S$. This means that 
$p(K_X+B+M)|_S$ is well-defined up to linear equivalence which in turn implies that 
$p(K_{X'}+B'+M')|_{S'}$ is also well-defined up to linear equivalence. Therefore, we get  
$$
p(K_{S'}+B_{S'}+M_{S'})\sim p(K_{X'}+B'+M')|_{S'}.
$$

(3) 
Next we prove inversion of adjunction similar to the one for usual pairs.

\begin{lem}[Generalised inversion of adjunction]\label{l-inv-adjunction}
Let $(X',B'+M')$ be a $\Q$-factorial generalised pair with data $X\overset{\phi}\to X'$ and $M$. 
Assume $S'$ is a component of ${B'}$ with coefficient $1$, and that $(X',S')$ is plt. Let  
$$
K_{S'}+B_{S'}+M_{S'}\sim_\R (K_{X'}+B'+M')|_{S'}
$$
be given by generalised adjunction. If $({S'},B_{S'}+M_{S'})$ is generalised lc, then 
$(X',B'+M')$ is generalised lc near $S'$.
\end{lem}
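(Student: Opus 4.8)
The plan is to reduce the generalised inversion of adjunction statement to the classical inversion of adjunction for usual pairs, exactly along the lines of the proof of the connectedness principle (Lemma~\ref{l-connectedness}) and of part (5) of Subsection~\ref{ss-gpp}. First I would choose $\phi\colon X\to X'$ to be a log resolution of $(X',B')$ and write $K_X+B+M=\phi^*(K_{X'}+B'+M')$, so that $S$, the birational transform of $S'$, is a component of $B$ with coefficient $1$. Since the question is local near $S'$, I would work over a neighbourhood of $S'$ in $X'$. The key point is to absorb the nef divisor $M$ into an honest boundary: after possibly further blowing up, write $M\sim_\R A+C$ over $X'$ with $A$ ample and general and $C\ge 0$, pick a small $\epsilon>0$, choose a general $G\sim_\R M+\epsilon A$ over $X'$, and set $\Delta=B+\epsilon C+G$ (on $X$) with $\Delta'=\phi_*\Delta$. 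Then $K_X+\Delta\sim_\R\phi^*(K_{X'}+\Delta')$, and because $G$ is general we do not disturb the component $S$, so $S$ still appears in $\Delta'$ with coefficient $1$ and $\mathrm{Supp}\,\Delta^{\ge 1}=\mathrm{Supp}\,B^{\ge 1}$ near $S$.

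Next I would need to make $(X',S')$ plt translate into the usual setting: since $(X',S')$ is plt and $X'$ is $\Q$-factorial, for small $\epsilon$ the pair $(X',\Delta')$ still has $S'$ as the only non-klt place lying over $S'$ — i.e. $(X',\Delta')$ is plt along $S'$ after possibly shrinking — because perturbing by a small general ample-ish $\R$-divisor keeps plt-ness. Then usual adjunction gives $K_{S'}+\Delta_{S'}=(K_{X'}+\Delta')|_{S'}$, and I would check $\Delta_{S'}$ agrees with $B_{S'}+\epsilon(\text{stuff})+M_{S'}$-type expression; more precisely that $\Delta_{S'}$ differs from $B_{S'}+M_{S'}$ by a small effective perturbation, so that generalised lc-ness of $(S',B_{S'}+M_{S'})$ together with smallness of $\epsilon$ forces $(S',\Delta_{S'})$ to be lc. Here one must be a little careful that the perturbation on $S'$ is still effective and small; this is where choosing $A$ general and $G$ general, and choosing $\epsilon$ afterwards, pays off.

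Having arranged that $(S',\Delta_{S'})$ is lc, I would invoke the classical inversion of adjunction (Kawakita, or Koll\'ar et al.\ as in [\ref{Kollar-flip-abundance}]): since $(X',S')$ is plt, $S'$ is normal, $K_{X'}+\Delta'$ is $\R$-Cartier, and $(S',\Delta_{S'})$ is lc, we conclude $(X',\Delta')$ is lc near $S'$. Finally, because $\mathrm{Supp}\,\Delta^{\ge 1}=\mathrm{Supp}\,B^{\ge 1}$ near $S$ and the only coefficient-one locus of $\Delta'$ meeting $S'$ is $S'$ itself, lc-ness of $(X',\Delta')$ near $S'$ gives that the coefficients of $B$ over a neighbourhood of $S'$ are all $\le 1$, which is precisely generalised lc-ness of $(X',B'+M')$ near $S'$.

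The main obstacle I anticipate is the bookkeeping in the perturbation step: one has to simultaneously (i) keep $S$ as an honest coefficient-one component so that adjunction behaves, (ii) keep $(X',\Delta')$ plt along $S'$ so that classical inversion of adjunction applies, (iii) ensure the induced boundary $\Delta_{S'}$ on $S'$ is only a small effective perturbation of $B_{S'}+M_{S'}$ so that generalised lc-ness of $(S',B_{S'}+M_{S'})$ survives, and (iv) make sure that generalised lc-ness of $(X',\Delta')$ near $S'$ really does pull back to the bound $\mu_D B\le 1$ for all prime divisors $D$ over a neighbourhood of $S'$ — in other words that no coefficient-one component of $B$ other than $S$ sneaks in near $S'$, which is exactly what the plt hypothesis on $(X',S')$ rules out. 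Everything else is the standard dictionary between generalised pairs and usual pairs already used in Section~\ref{ss-gpp}.
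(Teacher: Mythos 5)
Your overall plan — absorb $M$ into an honest boundary, then quote classical inversion of adjunction for the resulting usual pair — is the same strategy the paper uses, but your point (iii) is not a bookkeeping issue: it is a genuine gap that the proposal does not close. The claim that ``generalised lc-ness of $(S',B_{S'}+M_{S'})$ together with smallness of $\epsilon$ forces $(S',\Delta_{S'})$ to be lc'' is false whenever $(S',B_{S'}+M_{S'})$ is generalised lc but \emph{not} generalised klt. Concretely, if some prime divisor over $S'$ has coefficient exactly $1$ in $B_S$, then adding the effective perturbation $(G+\epsilon C)|_S$ will push that coefficient strictly above $1$ unless the perturbation vanishes there. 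You can make $G$ general so $G|_S$ avoids the components of $B_S$, but you cannot do the same for $\epsilon C$: the divisor $C$ is constrained by $A+C\sim 0/X'$ with $A$ ample (it is essentially forced by Kodaira's lemma applied to a pullback), so you have no freedom to make $C|_S$ avoid the coefficient-one components of $B_S$. Thus $(S',\Delta_{S'})$ can fail to be lc, and the whole argument collapses at the point where you invoke Kawakita.

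The missing ingredient is the reduction to the \emph{generalised klt} case on $S'$, which is exactly what the plt hypothesis on $(X',S')$ is for. The paper argues by contradiction: suppose $(X',B'+M')$ is not generalised lc near $S'$. Set $\Gamma'=(1-\alpha)S'+\alpha B'$ for $\alpha$ close to $1$ and scale the moduli divisor to $\alpha M$. Because $(X',S')$ is plt, the crepant pullback of $K_{X'}+S'$ has coefficients $<1$ except on $S$ itself, so the interpolated boundary on $S$ (namely $(1-\alpha)\cdot(\text{coefficients} <1)+\alpha\cdot(\text{coefficients}\le 1)$) has coefficients strictly less than $1$: the restricted generalised pair $(S',\Gamma_{S'}+\alpha M_{S'})$ is now generalised \emph{klt}. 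Meanwhile the failure of generalised lc near $S'$ survives the scaling for $\alpha$ close to $1$. After this reduction, the perturbation step you propose actually works: $\Delta_S=B_S+(G+\epsilon C)|_S$ still has coefficients $<1$ because $B_S$ now has coefficients $<1$ and $\epsilon C|_S$ is small, so $(S',\Delta_{S'})$ is klt. Then Kawakita gives $(X',\Delta')$ lc near $S'$, contradicting $\Delta\ge B$ and the assumed failure. Without this klt reduction your argument cannot be repaired simply by choosing $A,G,\epsilon$ more cleverly, because a coefficient equal to $1$ is destroyed by any nonzero effective perturbation in that direction.

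A smaller quibble: the decomposition $M\sim_\R A+C$ over $X'$ is not quite the paper's choice and slightly garbles the bookkeeping; what you want is to pick $A\ge 0$ ample on $X$ and $C\ge 0$ with $A+C\sim 0/X'$ (so $\epsilon A+\epsilon C$ is crepant over $X'$), then a general $G\sim_\R \epsilon A+M/X'$, and set $\Delta=B+G+\epsilon C$, which makes $K_X+\Delta$ crepant over $X'$. Also, your worry (ii) about keeping $(X',\Delta')$ plt along $S'$ is not needed: Kawakita's inversion of adjunction for lc-ness only requires $S'$ to be a coefficient-one prime divisor in a usual pair, not plt-ness of $(X',\Delta')$ or even of $(X',S')$. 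The plt hypothesis enters the paper's proof purely through the scaling step above.
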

\begin{proof}
Assume $(X',B'+M')$ is not generalised lc near $S'$.
 We can assume $\phi$ is a log resolution. Write 
$$
K_X+B+M=\phi^*(K_{X'}+B'+M').
$$
By assumption $\phi(B^{>1})$ intersects $S'$.
 Pick $\alpha\in (0,1)$ sufficiently close to $1$,   
let $\Gamma'=(1-\alpha)S'+\alpha B'$, and write 
$$
K_X+\Gamma+\alpha M=\phi^*(K_{X'}+\Gamma'+\alpha M').
$$
Then $({X'},\Gamma'+\alpha M')$ is not generalised lc near $S'$.
On the other hand, since $(X',S')$ is plt and $({S'},B_{S'}+M_{S'})$ is generalised lc, 
$({S'},\Gamma_{S'}+\alpha M_{S'})$ is generalised klt where
$$
K_{S'}+\Gamma_{S'}+\alpha M_{S'}\sim_\R(K_{X'}+\Gamma'+\alpha M')|_{S'}
$$
is generalised adjunction. Thus replacing $B'$ with $\Gamma'$ and $M$ with $\alpha M$, 
we can assume $({S'},B_{S'}+M_{S'})$ is generalised klt. 

Pick an ample divisor $A\ge 0$ and an effective divisor $C\ge 0$ on $X$ such that $A+C\sim 0/X'$ 
and that $S$ is not a component of $C$.
Let $\epsilon>0$ be small and pick a general $0\le G\sim_\R \epsilon A+M/X'$. 
Let $\Delta:=B+G+\epsilon C$. Then $K_X+\Delta\sim_\R 0/X'$, hence 
$K_X+\Delta=\phi^*(K_{X'}+\Delta')$ where $\Delta'$ is the pushdown of $\Delta$. 
In particular, $(X',\Delta')$ is not lc near $S'$ as $\Delta\ge B$. On the other hand, 
by assumption,  $(S,B_{S})$ is sub-klt where $K_{S}+B_{S}=(K_{X}+B)|_{S}$.   
This implies $(S,\Delta_S)$ is also sub-klt by construction of $\Delta$ where $K_S+\Delta_S=(K_X+\Delta)|_S$.
Therefore, letting $K_{S'}+\Delta_{S'}=(K_{X'}+\Delta')|_{S'}$ we see that $(S',\Delta_{S'})$ is klt 
because $K_S+\Delta_S$ is the pullback of $K_{S'}+\Delta_{S'}$.
 This contradicts the usual inversion of 
adjunction [\ref{kawakita}].  

\end{proof}

(4) 
The next result is similar to [\ref{PSh-II}, Proposition 3.9] and [\ref{BZh}, Proposition 4.9].

\begin{lem}\label{l-div-adj-dcc}
Let $p\in \N$ and $\mathfrak{R}\subset [0,1]$ be a finite set of rational numbers.
Then there is a finite set of rational numbers $\mathfrak{S}\subset [0,1]$ 
depending only on $p$ and $\mathfrak{R}$ satisfying the following. 
Assume 
\begin{itemize}

\item $(X',B'+M')$ is generalised lc of dimension $d$ with data $X\overset{\phi} \to X'$ and $M$,

\item $S'$ is the normalisation of a component of $\rddown{B'}$,  

\item $B'\in  \Phi(\mathfrak{R})$ and $pM$ is b-Cartier, and 

\item $(S',B_{S'}+M_{S'})$ is the generalised pair determined by generalised adjunction
$$
K_{S'}+B_{S'}+M_{S'}\sim_\R (K_{X'}+B'+M')|_{S'}.
$$ 
\end{itemize}

Then $B_{S'}\in  \Phi(\mathfrak{S})$. 
\end{lem}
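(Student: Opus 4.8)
The plan is to reduce to the classical computation of divisorial adjunction coefficients in Prokhorov--Shokurov, extended to the generalised setting. First I would pass to a $\Q$-factorial generalised dlt model of $(X',B'+M')$ using [\ref{BZh}, Remark 4.5 (1)] and replace $S'$ by a component of $\rddown{B'}$ whose normalisation dominates the original $S'$; since generalised adjunction is compatible with crepant pullbacks (as recorded in \ref{ss-q-adjunction}), the boundary $B_{S'}$ on the original surface is a pushdown of the one upstairs, so it suffices to prove the statement after this replacement. Then I may assume $(X',S')$ is plt near $S'$, so generalised inversion of adjunction (\ref{l-inv-adjunction}) is available and the local structure near $S'$ is as clean as possible.

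The core computation is this: fix the log resolution $\phi\colon X\to X'$ with $K_X+B+M=\phi^*(K_{X'}+B'+M')$, and for a prime divisor $D$ on $S'$ compute $\mu_D B_{S'}$. Blowing up, one has a divisor $V$ over $X'$ with centre $D$ (or rather with image hitting $D$ generically), sitting over a generic point of $D$ on $S'$; the coefficient $\mu_D B_{S'}$ is governed by the standard formula
$$
\mu_D B_{S'} = \frac{m-1}{m} + \frac{\mu_V(B-S) + \mu_V M/\phi^*(\text{something})}{m} + \cdots
$$
more precisely by the ``different'' formula: locally over the generic point of $D$, one writes $K_S + B_S + M_S = (K_X+B+M)|_S$ and reads off $\mu_D B_S$ in terms of (a) the ramification index $m$ of $S\to S'$ along $D$, which is a positive integer, (b) the coefficients $\mu_{V_j}(B-S)$ of the components of $B$ meeting $S$ over $D$, which lie in $\Phi(\mathfrak{R})$, hence are of the form $1-r_j/m_j$ with $r_j\in\mathfrak{R}$, and (c) the contribution of $M$, which is where the hypothesis $pM$ Cartier enters: on a suitable model $M$ has coefficients in $\frac1p\Z$, so its restricted contribution lands in $\frac1{pm}\Z$. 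Combining, $\mu_D B_{S'}$ has the shape $1 - \frac{r}{pmm'}$ for some $r$ in a finite set determined by $\mathfrak{R}$ and $p$ and some bounded integers $m,m'$; absorbing the bounded denominators one gets $\mu_D B_{S'}\in\Phi(\mathfrak{S})$ for the finite set $\mathfrak{S}=\{\,r/q : q\mid p\cdot(\text{bounded set of integers}),\ \ldots\,\}\subset[0,1]$ depending only on $p$ and $\mathfrak{R}$. The point is that the only denominators appearing are $p$ (from $M$) and the various ramification indices, and one shows — exactly as in the usual case — that these ramification indices can be taken from a finite set once $\mathfrak{R}$ is finite, because $1-\sum$(hyperstandard numbers) being $\le 1$ forces the relevant integers to be bounded.

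The main obstacle will be controlling the $M$-contribution cleanly: unlike the usual divisorial adjunction, $M_{S'}$ is not a ``boundary'' part but a nef part, and one must check that when we restrict $M$ to $S$ and push down, the resulting correction to the coefficient of $D$ in $B_{S'}$ still lands in $\frac1p\Z$ after accounting for the ramification — i.e. that $pm\cdot(\text{that correction})\in\Z$ where $m$ is the ramification index. This is essentially [\ref{PSh-II}, Proposition 3.9] with $M$ carried along, and the key input is that we may choose $\phi$ so that $pM$ is Cartier on $X$ and $S$ is a divisor with $M_S=M|_S$, so $pM_S$ is Cartier on $S$; the pushforward to $S'$ then only introduces the ramification denominator. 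Once that bookkeeping is done, the finiteness of $\mathfrak{S}$ is automatic from finiteness of $\mathfrak{R}$, boundedness of the ramification indices, and the fixed integer $p$, and the proof concludes by taking $\mathfrak{S}$ to be the (finite) set of all values $1-\frac{r}{q}$ that can arise, which depends only on $p$ and $\mathfrak{R}$ as required.
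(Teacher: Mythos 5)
Your plan diverges from the paper in one place where it actually breaks: the assertion that the ramification\slash Cartier index along $D$ is bounded. It is not, and the lemma does not need it to be. Already with $B'=M'=0$ and $\mathfrak{R}=\{0,1\}$, if $X'$ has a cyclic quotient singularity $\frac{1}{n}(1,a)$ along the generic point of $D$ and $S'$ passes through it, the coefficient of $D$ in the different is $1-\frac{1}{n}$, and $n$ is unbounded. The claim that ``$1-\sum$(hyperstandard numbers) being $\le 1$ forces the relevant integers to be bounded'' is true of the \emph{multiplicities} $\alpha_i$ of the components of $B'$ hitting the singularity and of the ``inner'' indices $m_i$ in the hyperstandard expressions $b_i = 1 - r_i/m_i$, but it says nothing about the index $l$ of the underlying singularity. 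So your proposed conclusion, that $\mu_D B_{S'}$ takes the form $1-\frac{r}{pmm'}$ with bounded denominator — i.e.\ takes only finitely many values — is stronger than the lemma and is false.

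The correct shape of the argument (and the one in the paper) is to write, after reducing to $\dim X'=2$ by hypersurface sections,
$$
\mu_V B_{S'}=1-\frac{1}{l}+\sum_{i}\frac{b_i\alpha_i}{l}+\frac{\beta}{pl}=1-\frac{s}{l},
\qquad s:=1-\sum_i b_i\alpha_i-\frac{\beta}{p},
$$
where $l$ is the Cartier index along $V$ and is arbitrary. The hypothesis $pM$ Cartier is absorbed by treating $\frac{1}{p}$ as one more hyperstandard coefficient (add $\frac{p-1}{p}$ to $\mathfrak R$). Then $s>0$ forces $\sum b_i\alpha_i<1$, which bounds the $\alpha_i$ and, via a short case analysis ($t=1$, or $t>1$ with all $m_i=1$, or one $m_i>1$ which must then satisfy $\alpha_i=1$ and $m_i$ bounded), shows $s=\frac{s'}{m_1}$ with $m_1$ bounded and $s'$ ranging over a finite set determined by $\mathfrak R$ and $p$. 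One then reads off $\mu_V B_{S'}=1-\frac{s'}{m_1 l}\in \Phi(\mathfrak S)$ with $\mathfrak S$ the finite set of possible $s'$, \emph{without} any bound on $l$: the definition of $\Phi(\mathfrak S)$ already permits an arbitrary denominator. The step you should replace in your write-up is the bounding of the ambient Cartier index; keep the reduction to dimension two, the formula, and the trick of folding $\frac{\beta}{p}$ into the hyperstandard sum, and then run the numerator-finiteness argument instead.

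Two smaller remarks: passing to a $\Q$-factorial generalised dlt model is harmless but not needed once you restrict to the generic point of $V$ on $S'$ (where $(X',B')$ is automatically plt as $\mu_V B_{S'}<1$); and the reduction to $\dim X'=2$ via general hypersurface sections (as in [\ref{BZh}, Remark 4.8]) should be stated explicitly, since the closed-form coefficient formula is a surface computation.
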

\begin{proof}
By taking hypersurface sections, we can reduce the lemma to the case when $X'$ is a surface. 
In this case $K_{X'}+B'$ is automatically $\R$-Cartier and $(X',B')$ is lc [\ref{BZh}, Remark 4.8]. 
Let $V$ be a closed point on $S'$ and assume $\mu_VB_{S'}<1$. In particular, if we use usual adjunction 
to write $K_{S'}+\tilde{B}_{S'}=(K_{X'}+B')|_{S'}$, then $\mu_V\tilde{B}_{S'}\le \mu_VB_{S'}<1$.
Then by inversion of adjunction on surfaces [\ref{Shokurov-log-flips}, Corollary 3.12], 
$(X',B')$ is plt near the image of $V$. Thus shrinking $X'$ we can assume $S'\to X'$ 
maps $S'$ isomorphically onto its image, and that $X'$ is $\Q$-factorial since $\dim X'=2$. 

Assume $B'=\sum_{i=1}^{n} b_iB_i'$ where $B_i'$ are irreducible components. By 
[\ref{Shokurov-log-flips}, Proposition 3.9 and Corollary 3.10], 
there is $l\in \N$ depending only on $(X',S')$ such that the Cartier index near $V$ of any Weil divisor on $X'$ 
divides $l$ and that 
$$
\mu_V\tilde{B}_{S'}=1-\frac{1}{l}+\sum_{i=1}^{n} \frac{b_i\alpha_i}{l}
$$ 
where $\alpha_i\in \Z^{\ge 0}$. 

Replacing $\phi$ we can assume $X,S$ are smooth. Write $\phi^*M'=M+E$. Since $M$ is nef$/X'$, 
$E$ is effective. Since $pM$ is b-Cartier 
and $X$ is smooth, $pM$ is Cartier, so $pM'$ is integral and $lpM'$ is Cartier. Thus $lpE$ is Cartier. 
Now by definition $B_{S'}=\tilde{B}_{S'}+E_{S'}$ where $E_{S'}$ is the pushdown of $E|_S$. 
In particular, $lpE_{S'}$ is integral. Therefore, 
$$
\mu_VB_{S'}=1-\frac{1}{l}+\sum_{i=1}^{n} \frac{b_i\alpha_i}{l}+\frac{\beta}{pl}
$$ 
for some $\beta \in \Z^{\ge 0}$. 

Expanding $\mathfrak{R}$ we can assume $\frac{p-1}{p}\in \mathfrak{R}$, hence 
$\frac{1}{p}\in \Phi(\mathfrak{R})$. Put $\alpha_{n+1}:=\beta$ and $b_{n+1}:=\frac{1}{p}$. 
So we can write  
$$
\mu_VB_{S'}=1-\frac{1}{l}+\sum_{i=1}^{n+1} \frac{b_i\alpha_i}{l}.
$$ 
For each $i$ there is $r_i\in \mathfrak{R}$ 
and $m_i\in \N$ such that $b_i=1-\frac{r_i}{m_i}$. Let $s:=1-\sum_{i=1}^{n+1} b_i{\alpha_i}$. Then
$\mu_VB_{S'}=1-\frac{s}{l}$ and $s> 0$.
Removing the zero terms and re-arranging the indexes we can assume $s=1-\sum_{i=1}^{t} b_i{\alpha_i}$, and  
that $b_i\alpha_i>0$ for every $i$ otherwise $\mu_VB_{S'}=1-\frac{1}{l}$ is just a 
standard coefficient. 
Note that since $s>0$, we have $\sum_{i=1}^{t} b_i{\alpha_i}<1$, hence 
$\alpha_i$ are bounded. Moreover, $b_i\ge 1-\frac{1}{m_i}$ which means 
$b_i\ge \frac{1}{2}$ if $m_i>1$. 

Now assume $t=1$. Then either $m_1=1$, 
or $m_1>1$ and $\alpha_1=1$, hence in any case $s=\frac{s'}{m_1}$ where there are only finitely many possibilities 
for $s'$. Thus 
$$
\mu_VB_{S'}=1-\frac{s}{l}=1-\frac{s'}{m_1l}\in \Phi(\mathfrak{S})
$$ 
if we choose $\mathfrak{S}$ so that 
it contains all such $s'$. We can then assume $t>1$. If $m_i=1$ for every $i$, then there are finitely many possibilities 
for 
$$
s=1-\sum_{i=1}^{t} (1-r_i){\alpha_i}
$$ 
and so we can choose $\mathfrak{S}$ to contain all such $s$.
But if some $m_i>1$, say  $m_1>1$, then $\alpha_1=1$ and $m_i=1$ for $i>1$. Then 
$$
s=\frac{r_1}{m_1}-\sum_{i=2}^{t} (1-r_i){\alpha_i},
$$
 hence $m_1$ is bounded, so $s=\frac{s'}{m_1}$ 
for some $s'$ for which there are only finitely many possibilities. As before taking 
$\mathfrak{S}$ so that it contains all such $s'$, we get $\mu_VB_{S'}\in \Phi(\mathfrak{S})$.

\end{proof}

\subsection{Adjunction for fibre spaces}\label{ss-adj-fib-spaces}
(1)
Let $(X,B)$ be a projective sub-pair and let $f\colon X\to Z$ be a contraction with $\dim Z>0$ 
such that $(X,B)$ is sub-lc near the 
generic fibre of $f$, and $K_X+B\sim_\R 0/Z$. 
Below we recall a construction based on  [\ref{kaw-subadjuntion}] giving 
a kind of canonical bundle formula which we refer 
to as \emph{adjunction for fibre spaces} [\ref{ambro-adj}][\ref{PSh-II}, \S 7].

For each prime divisor $D$ on $Z$ we let 
$t_D$ be the lc threshold of $f^*D$ with respect to $(X,B)$ over the generic point of $D$, that is, 
$t_D$ is the largest number so that $(X,B+t_Df^*D)$ is sub-lc over the generic point of $D$. 
Of course $f^*D$ may not be well-defined everywhere but at least it is defined over the 
smooth locus of $Z$, in particular, near the generic point of $D$, and that is all we need. 
Next let $b_D=1-t_D$, and then define $B_Z=\sum b_DD$ where the sum runs over all the 
prime divisors on $Z$. 

By assumption, $K_X+B\sim_\R f^*L_Z$ for some $\R$-Cartier 
$\R$-divisor $L_Z$ on $Z$. Letting $M_Z=L_Z-(K_Z+B_Z)$ we get 
$$
K_X+B\sim_\R f^*(K_Z+B_Z+M_Z).
$$
We call $B_Z$ the \emph{discriminant part} and ${M_Z}$ the \emph{moduli part} of adjunction. 
Obviously $B_Z$ is uniquely determined but $M_Z$  is determined only up to $\R$-linear equivalence 
because it depends on the choice of $L_Z$.

Now let $\phi\colon X'\to X$ and $\psi\colon Z'\to Z$ be birational morphisms from normal 
projective varieties and assume the induced map $f'\colon X'\bir Z'$ is a morphism. 
Let $K_{X'}+B'=\phi^*(K_X+B)$. Similar to above we can define a discriminant divisor $B_{Z'}$, and 
taking $L_{Z'}=\psi^*L_Z$ gives a moduli divisor $M_{Z'}$ so that 
$$
K_{X'}+B'\sim_\R f'^*(K_{Z'}+B_{Z'}+M_{Z'})
$$ 
and $B_Z=\psi_*B_{Z'}$ and $M_Z=\psi_*M_{Z'}$.

(2) 
We want to show $M_Z$ depends only on $(X,B)$ near the generic fibre of $f$, even birationally. 
We make this more precise. 
In addition to the assumptions of (1), suppose we are given another projective sub-pair $(\overline{X},\overline{B})$ 
and a contraction $\overline{X}\to Z$ such that $K_{\overline{X}}+\overline{B}\sim_\R 0/Z$. 
Moreover, suppose we have a birational map $X\bir \overline{X}/Z$, and suppose there is 
a common resolution of $X$ and $\overline{X}$ on which the pullback of $K_X+B$ and $K_{\overline{X}}+\overline{B}$ 
are equal near the generic fibre over $Z$. 
As in (1) we can define the discriminant and moduli parts 
$\overline{B}_{Z'}$ and $\overline{M}_{Z'}$ of adjunction on the birational model $Z'$ of $Z$, for the pair $(\overline{X},\overline{B})$ 
and the contraction $\overline{X}\to Z$.

\begin{lem}\label{l-mod-part-generically}
Under the above notation and assumptions, $M_{Z'}\sim_\R \overline{M}_{Z'}$.
\end{lem}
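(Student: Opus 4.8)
The plan is to pass to a common smooth model over $Z'$, express both moduli parts there, and reduce the comparison to a statement about divisors supported over small subsets of $Z'$. \emph{Reduction.} By the construction in \ref{ss-adj-fib-spaces}(1), replacing $X'$ and $Z'$ by higher birational models leaves the $\sim_\R$-classes of $M_{Z'}$ and $\overline{M}_{Z'}$ unchanged (they merely get pushed forward), and $\sim_\R 0/Z$ agrees with $\sim_\R 0/Z'$ since $Z'\to Z$ is birational. Using the hypothesis, I may therefore assume there is a contraction $f\colon W\to Z'$ from a smooth projective variety, together with morphisms $W\to X$ and $W\to\overline{X}$, such that the pullbacks $K_W+B_W$ of $K_X+B$ and $K_W+\overline{B}_W$ of $K_{\overline{X}}+\overline{B}$ satisfy $B_W=\overline{B}_W$ over a non-empty open $U\subseteq Z'$, and both are $\sim_\R 0/Z'$. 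Computing adjunction for $f$ from the model $W$ produces discriminant divisors $B_{Z'},\overline{B}_{Z'}$ on $Z'$ together with relations $f^*M_{Z'}\sim_\R(K_W+B_W)-f^*(K_{Z'}+B_{Z'})$, and similarly for $\overline{M}_{Z'}$.

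\emph{Setting up the comparison.} Since $f$ is a contraction, $f^*$ is injective on $\R$-Cartier divisor classes modulo $\sim_\R$ (by the projection formula $f_*f^*\mathcal{O}_{Z'}(L)\simeq\mathcal{O}_{Z'}(L)$, descending from $\Q$ to $\R$), so it suffices to show $f^*(M_{Z'}-\overline{M}_{Z'})\sim_\R 0$. Subtracting the two relations gives
\[
f^*(M_{Z'}-\overline{M}_{Z'})\sim_\R N-f^*(B_{Z'}-\overline{B}_{Z'}),\qquad N:=B_W-\overline{B}_W.
\]
Because $B_W=\overline{B}_W$ over $U$, the divisor $N$ has coefficient zero along every prime divisor of $W$ dominating $Z'$, so $N$ is vertical over $Z'$; and $N=(K_W+B_W)-(K_W+\overline{B}_W)\sim_\R 0/Z'$.

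\emph{The codimension-one analysis.} This is the heart of the argument. Fix a prime divisor $P$ on $Z'$ with generic point $\eta$. Localising at $\eta$, the $\R$-divisor $N$ is supported on the components of $f^*P$ and is $\R$-linearly trivial over $\mathcal{O}_{Z',\eta}$; since the generic fibre of $f$ is a geometrically integral proper $k(Z')$-variety, a rational function on $W$ whose divisor over $\eta$ is supported on $f^*P$ is, up to a unit, pulled back from $Z'$, and it follows that over $\eta$ one has $N=c_P\,f^*P$ for a unique $c_P\in\R$. Then $B_W=\overline{B}_W+c_P\,f^*P$ over $\eta$, so the lc thresholds of $f^*P$ with respect to $(W,B_W)$ and to $(W,\overline{B}_W)$ over $\eta$ differ by $c_P$; hence $\mu_PB_{Z'}-\mu_P\overline{B}_{Z'}=c_P$. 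Consequently $E:=N-f^*(B_{Z'}-\overline{B}_{Z'})$ has coefficient $0$ along every component of every $f^*P$, so $E$ is vertical over $Z'$ with $\Supp E$ mapping into a closed subset of codimension $\ge 2$, and still $E\sim_\R 0/Z'$.

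\emph{Conclusion.} It remains to use the standard fact from the theory of the canonical bundle formula that an $\R$-Cartier divisor $E$ on $W$ with $E\sim_\R 0/Z'$ and $\Supp E$ lying over codimension $\ge 2$ is $\R$-linearly trivial: restricting to $f^{-1}(C)$ for a general complete intersection curve $C\subset Z'$ avoiding the image of $\Supp E$ forces $E|_{f^{-1}(C)}=0$, hence the corresponding class downstairs is trivial, and varying $C$ together with the negativity lemma on a birational model of $Z'$ gives $E\sim_\R 0$ (cf. [\ref{kaw-subadjuntion}][\ref{ambro-adj}][\ref{PSh-II}, \S 7]). Therefore $f^*(M_{Z'}-\overline{M}_{Z'})\sim_\R 0$, and so $M_{Z'}\sim_\R\overline{M}_{Z'}$. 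I expect the codimension-one step to be the only genuine obstacle — identifying $N$ with $c_P f^*P$ over $\eta$ and checking that the discriminant supplies exactly the matching correction — while the reduction and the vanishing of the codimension-$\ge 2$ remainder are essentially formal.
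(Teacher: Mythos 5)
Your proof is correct and follows essentially the same route as the paper's: reduce to a common resolution over a smooth model of the base, observe that the difference of boundaries is vertical and $\R$-linearly trivial over the base, and then absorb that difference into the discriminant so that the moduli parts agree. The paper compresses your codimension-one computation and the very-exceptional vanishing into the single assertion that $\overline B - B = f^*P$ for some divisor $P$ on the base, and then reads off $\overline B_Z = B_Z + P$ from the definition of the discriminant; your version makes that assertion explicit by first determining the coefficients $c_P$ over each $\eta_P$ via the negativity lemma, matching them against the change in the discriminant, and then killing the remaining codimension-$\ge 2$ error, which is the same content in a slightly different order.
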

\begin{proof}
Replacing both $X$ and $\overline{X}$ with a common resolution over $Z'$, replacing $K_X+B$ and 
$K_{\overline{X}}+\overline{B}$ with their crepant pullbacks, and replacing $Z$ with $Z'$ we can assume 
$\overline{X}=X$, $Z'=Z$, and that $\overline{B}=B$ near the generic fibre. Then $\overline{B}-B$ 
is vertical over $Z$ and $\overline{B}-B\sim_\R 0/Z$. So $\overline{B}-B=f^*P$ for some $P$ on $Z$.
Therefore, by definition of the discriminant part, $\overline{B}_{Z}=B_{Z}+P$ 
from which we get  $\overline{M}_{Z}\sim_\R M_{Z}$ because 
$$
K_Z+B_Z+M_Z+P\sim_\R K_Z+\overline{B}_{Z}+\overline{M}_{Z}.
$$
\end{proof}

(3) When $(X,B)$ is lc over the generic point of $Z$  the moduli part has nice properties. 

\begin{thm}\label{t-adj-fib-spaces}
Under the notation and assumptions of $(1)$, suppose $(X,B)$ is lc near the generic fibre of $f$. 
Let $Z'$ be a sufficiently high resolution of $Z$. Then 

$(\rm i)$ $M_{Z'}$ is pseudo-effective, and 

$(\rm ii)$ if $B$ is a $\Q$-divisor, then 
$M_{Z'}$ is nef and for any birational morphism $Z''\to Z'$ from a normal projective 
variety, $M_{Z''}$ is the pullback of $M_{Z'}$.
\end{thm}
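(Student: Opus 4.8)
The plan is to prove (ii) first --- the substantive point being the semipositivity of the moduli part --- and then to deduce the pseudo-effectivity in (i) from it by perturbing to rational coefficients. Throughout I would use Lemma \ref{l-mod-part-generically}: since the $\R$-linear equivalence class of $M_{Z'}$ depends only on $(X,B)$ over the generic point of $Z$, one is free to replace $(X,B)$ by any birational model over $Z$ agreeing with it near the generic fibre. So first I would choose $\phi\colon X'\to X$ and $\psi\colon Z'\to Z$ making $f'\colon X'\to Z'$ a morphism with $\Supp B'\cup\exc(\phi)$ a simple normal crossing divisor and $f'$ log smooth over $\eta_{Z'}$; if convenient one may also pass to a dlt model / run a relative MMP over $Z'$ so that over $\eta_{Z'}$ the pair is dlt with $K_{X'}+B'\sim_\R 0$. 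Then $M_{Z'}$ is the moduli part of an honest lc-trivial fibration and the $b$-divisor formalism of \S\ref{ss-adj-fib-spaces} applies.

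For (ii), with $B$ a $\Q$-divisor, $(X',B')$ together with $f'$ is an lc-trivial fibration, and I would simply invoke the b-nefness of its moduli $b$-divisor: on a sufficiently high $Z'$ the divisor $M_{Z'}$ is nef, and for every further birational $Z''\to Z'$ it is the pullback of $M_{Z'}$. This is Ambro's theorem [\ref{ambro-adj}] (see also [\ref{kaw-subadjuntion}] and [\ref{PSh-II}, \S 7]); its proof proceeds through Kawamata-type Hodge-theoretic positivity of the relevant direct image sheaves, which I would quote rather than reprove. This already gives all of (ii).

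For (i), I would first show that $B$ lies in a \emph{rational} polytope $\mathcal{P}$ of boundaries $\Delta$ on $X$ with $K_X+\Delta\sim_\R 0/Z$ and $(X,\Delta)$ lc near the generic fibre of $f$ --- this is the standard construction of polytopes of $\R$-divisors, using that $\sim_\R 0/Z$, being lc near $\eta_Z$, and having coefficients in $[0,1]$ are all affine-rational conditions --- and write $B=\sum_j s_j\Delta_j$ as a convex combination of the $\Q$-vertices $\Delta_j$. I would then fix a single $Z'$ high enough to serve (ii) for all of the finitely many $\Delta_j$ at once and (working on one common log resolution of $X$ over $Z'$) choose the representatives $L_{Z'}$, hence the moduli divisors $M_{Z'}(\Delta_j)$ and $M_{Z'}(B)$, compatibly, so that $L_{Z'}(B)=\sum_j s_j L_{Z'}(\Delta_j)$. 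For each prime divisor $D$ on $Z'$ the function $\Delta\mapsto t_D(\Delta)$ (the lc threshold of $f^*D$ with respect to $(X,\Delta)$ over $\eta_D$) is a minimum of affine functions of $\Delta$, hence concave, so the discriminant satisfies $B_{Z'}(B)\le\sum_j s_j B_{Z'}(\Delta_j)$; subtracting, $M_{Z'}(B)-\sum_j s_j M_{Z'}(\Delta_j)=\sum_j s_j B_{Z'}(\Delta_j)-B_{Z'}(B)\ge 0$. Each $M_{Z'}(\Delta_j)$ is nef by Step 2, so $\sum_j s_j M_{Z'}(\Delta_j)$ is pseudo-effective, and adding the effective difference preserves pseudo-effectivity; thus $M_{Z'}(B)$ is pseudo-effective, which is (i) (and any higher model only makes it nef in the $\Q$-case and, by the same argument, pseudo-effective in general).

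The hard part is Step 2 --- the Hodge-theoretic semipositivity of the moduli part of an lc-trivial fibration --- which I would quote. On my own side the points needing genuine care are the reduction in Step 1 to a model satisfying the hypotheses under which that theorem is available in the lc (not just klt) case, and, in Step 3, verifying that $\mathcal{P}$ is really a rational polytope (so the $\Delta_j$ are $\Q$-divisors), together with the concavity of the discriminant and the uniform choice of $Z'$.
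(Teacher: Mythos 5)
Your proposal is correct and follows essentially the same strategy as the paper: reduce to a dlt model near the generic fibre, quote the Hodge-theoretic semipositivity for (ii), and deduce (i) by writing $B$ as a convex combination of $\Q$-boundaries and using the concavity of the lc threshold $t_D$ to compare discriminants. One imprecision worth flagging: for (ii) you quote ``Ambro's theorem,'' but Ambro's result [\ref{ambro-lc-trivial}] covers the klt-trivial case, and since $(X,B)$ is only lc near the generic fibre the paper's reduction (passing first to a dlt model whose lc centres are horizontal, and then contracting via [\ref{B-lc-flips}, Theorem 1.4] to replace $Z$ by the ample model $Y$ of $K_X+\Gamma$) is designed precisely so that the lc extension of Fujino--Gongyo [\ref{FG-lc-trivial}] applies; your ``if convenient, pass to a dlt model'' should be promoted to a mandatory step and the citation adjusted accordingly.
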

\begin{proof}
Let  $\Gamma$ be the divisor obtained from $B$ by  
removing components that are vertical$/Z$. Since $(X,B)$ is lc over the generic point of $Z$, 
we have $\Gamma\ge 0$ as any component of $B$ with negative coefficient is vertical$/Z$, and 
over the generic point of $Z$ we have $\Gamma=B$.
Let $\phi\colon W\to X$ be a log resolution of $(X,B)$ and let $\Gamma_W$ be the sum of the horizontal$/Z$ part of the 
reduced exceptional divisor of $\phi$
and the birational transform of $\Gamma$. We can choose $W$ so that every lc centre of $(W,\Gamma_W)$ is 
horizontal$/Z$.
Running an MMP on $K_W+\Gamma_W$ over $X$ we 
reach a model $X'$ so that $(X',\Gamma')$ is a $\Q$-factorial dlt model of $(X,B)$ over the generic 
point of $Z$. Let $K_{X'}+B'$ be the pullback of $K_X+B$. 
Replacing $(X,B)$ with $(X',B')$ and replacing $(X,\Gamma)$ with $(X',\Gamma')$
 we can assume $(X,\Gamma)$ is $\Q$-factorial dlt with every lc centre horizontal$/Z$, 
and that every component of ${\Gamma}$ is horizontal$/Z$. 

We prove (ii) first. 
By [\ref{B-lc-flips}, Theorem 1.4]
we can replace $X$ with a model on which $K_X+\Gamma$ is semi-ample over $Z$, hence 
defining a contraction $X\to Y/Z$. Since $K_X+\Gamma\sim_\Q 0/Y$, by Lemma \ref{l-mod-part-generically}, we can 
replace $Z$ with $Y$ and replace $B$ with $\Gamma$, hence assume $(X,B)$ is $\Q$-factorial dlt.  
The claim then follows from [\ref{FG-lc-trivial}] (this relies on 
[\ref{ambro-lc-trivial}] in the klt case).

Now we prove (i). By standard arguments we can find $\Q$-divisors $B^i$ and numbers $\alpha_i\in [0,1]$ 
with $\sum \alpha_i=1$ such that $B=\sum \alpha_iB^i$, $(X,B^i)$ is lc over the generic point 
of $Z$, and $K_X+B^i\sim_\Q 0/Z$.
Let $B^i_{Z}$ (resp. $B^i_{Z'}$) and $M^i_{Z}$ (resp. $M^i_{Z'}$) be the discriminant and moduli parts on $Z$ (resp. $Z')$ 
defined for $(X,B^i)$ over $Z$.  
Let $D$ be a prime divisor on $Z$ and let $t_i$ be the lc threshold of $f^*D$ with respect to 
$(X,B^i)$ over the generic point of $D$. Then $(X,B+(\sum \alpha_it_i)f^*D)$ is lc 
over the generic point of $D$, hence $t\ge \sum \alpha_it_i$ where $t$ is the lc 
threshold of $f^*D$ with respect to $(X,B)$ over the generic point of $D$.
Then 
$$
1-t\le 1-\sum \alpha_it_i=\sum \alpha_i-\sum \alpha_it_i=\sum \alpha_i(1-t_i)
$$ 
from which we deduce $B_{Z}\le \sum \alpha_iB^i_{Z}$. 
 Similarly we can prove $B_{Z'}\le \sum \alpha_iB^i_{Z'}$. Then from 
$$
K_{Z'}+B_{Z'}+M_{Z'}\sim_\R \sum \alpha_i(K_{Z'}+B_{Z'}^i+M_{Z'}^i)
$$ 
we deduce that $M_{Z'}\sim_\R \sum \alpha_iM^i_{Z'}+P$ 
for some $P\ge 0$. 
Therefore, $M_{Z'}$ is pseudo-effective as each $M^i_{Z'}$ is nef.
  
\end{proof}

(4) Next we relate the singularities of $(X,B)$ and $(Z,B_Z+M_Z)$ in a rough sense.

\begin{lem}\label{l-adj-fib-eps-lc}
Let $\epsilon\in \R$. Under the notation and assumptions of $(1)$, suppose there is a prime divisor $S$ on 
some birational model of $X$ such that the log discrepancy $a(S,X,B)\le \epsilon$ 
and that $S$ is vertical over $Z$. Then there is a resolution $Z'\to Z$ and a component $T$ of 
$B_{Z'}$ with coefficient $\ge 1-\epsilon$. 
\end{lem}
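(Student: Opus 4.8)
The plan is to reduce to a model on which $S$ is a prime divisor on $X$ whose image on $Z$ is a prime divisor $T$, and then read off the coefficient of $T$ in the discriminant directly from its definition, using additivity of log discrepancies.

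First I would replace $X$ by a log resolution $\phi\colon X'\to X$ on which the birational transform of $S$ is a prime divisor and replace $K_X+B$ by its crepant pullback $K_{X'}+B'=\phi^*(K_X+B)$; all the hypotheses of (1) are preserved (the pair is still sub-lc near the generic fibre and $K_{X'}+B'\sim_\R 0/Z$), and by the construction recalled in the second paragraph of (1), on any resolution $\psi\colon Z'\to Z$ the discriminant parts satisfy $B_Z=\psi_*B_{Z'}$, so it is harmless to work with $(X',B')$. Since $S$ is vertical, its image on $Z$ is a proper closed subset; after finitely many further blow-ups of (models of) $X$ and $Z$, which again only refine the construction, I may assume $f\colon X\to Z$ is a morphism of smooth varieties, $S$ is a prime divisor on $X$, and $T:=f(S)$ is a prime divisor on $Z$. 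It then suffices to produce the coefficient of $T$ in this discriminant, since we are free to pass to the resolution $Z'$ of the original base obtained in this process.

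By definition the coefficient of $T$ in the discriminant is $b_T=1-t_T$, where $t_T$ is the lc threshold of $f^*T$ with respect to $(X,B)$ over the generic point of $T$. For every $t<t_T$ the pair $(X,B+tf^*T)$ is sub-lc over the generic point of $T$, and since the centre of $S$ on $Z$ is exactly $T$ this forces
$$
0\le a(S,X,B+tf^*T)=a(S,X,B)-t\cdot\mu_S(f^*T).
$$
As $S\subseteq f^{-1}(T)$ dominates $T$, the integer $\mu_S(f^*T)$ is at least $1$, so letting $t\to t_T$ gives $t_T\le a(S,X,B)\le\epsilon$ (if $a(S,X,B)<0$ one already has $t_T\le a(S,X,B)<0$). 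Hence $b_T=1-t_T\ge 1-\epsilon$, which is the asserted component of $B_{Z'}$.

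I expect the only genuinely technical point to be the reduction in the second step to the case that $f(S)$ is a prime divisor: a priori $\overline{f(S)}$ may have codimension $\ge 2$ in $Z$, and one has to blow up the base along it (resolving, and correspondingly blowing up a model of $X$ so that the fibre-product map becomes a morphism) enough times to arrange that the strict transform of $S$ dominates an exceptional prime divisor over $\overline{f(S)}$. Once that is set up, the remaining steps — identifying $b_T$ with $1-t_T$, the additivity $a(S,X,B+tf^*T)=a(S,X,B)-t\,\mu_S(f^*T)$, and $\mu_S(f^*T)\ge 1$ — are routine.
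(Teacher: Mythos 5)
Your proof is correct and follows essentially the same route as the paper: reduce to a model where $S$ is a divisor on $X$ dominating a prime divisor $T$ on $Z$, then compare the lc threshold of $f^*T$ over the generic point of $T$ with $a(S,X,B)$, using $\mu_S(f^*T)\ge 1$. One small slip is the parenthetical claim that $a(S,X,B)<0$ gives $t_T\le a(S,X,B)$: from $t_T\,\mu_S(f^*T)\le a(S,X,B)$ and $\mu_S(f^*T)\ge 1$ one only gets $t_T\le a(S,X,B)/\mu_S(f^*T)$, which is $\ge a(S,X,B)$ when the latter is negative; nevertheless $t_T<0$ still follows, so the desired bound $t_T\le\epsilon$ holds whenever $\epsilon\ge 0$, which covers every use of the lemma in the paper (and the paper's own one-line proof has the same implicit restriction).
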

\begin{proof}
Pick resolutions $X'\to X$ and $Z'\to Z$ so that the induced map $f'\colon X'\bir Z'$ is a morphism 
and so that $S$ is a prime divisor on $X'$ and its image on $Z'$ is a prime divisor $T$. 
Let $K_{X'}+B'$ be the pullback of $K_X+B$. Since $a(S,X,B)\le \epsilon$, 
the coefficient of $S$ in $B'$ is at least $1-\epsilon$. Thus the lc threshold of $f'^*T$ with 
respect to $(X',B')$ over the generic point of $T$ is at most $\epsilon$. Therefore, 
the coefficient of $T$ in $B_{Z'}$ is at least $1-\epsilon$.

\end{proof}

\subsection{Adjunction on non-klt centres}
In this subsection we recall a kind of adjunction introduced in {[\ref{HMX2}, Theorem 4.2]} similar 
to the so-called sub-adjunction formula [\ref{kaw-subadjuntion}], and then  
prove some new results in this direction. 

\begin{constr}\label{constr-adjunction-non-klt-centre}
Assume the following setting:
\begin{itemize}
\item $(X,B)$ is a projective klt pair, 

\item $G\subset X$ is a subvariety with normalisation $F$,

\item $X$ is $\Q$-factorial near the generic point of $G$,

\item $\Delta\ge 0$ is an $\R$-Cartier divisor on $X$, and 

\item  $(X, B + \Delta)$ is lc near the generic point of $G$, 
and there is a unique non-klt place of this pair whose centre is $G$.
\end{itemize}\

We will define an $\R$-divisor $\Theta_F$ on $F$ with coefficients in $[0,1]$. This then gives an 
 adjucntion formula
$$
K_F + \Theta_F+P_F\sim_\R (K_X + B + \Delta)|_F
$$
where in general $P_F$ is determined only up to $\R$-linear equivalence. 

Let $\Gamma$ be the sum of $(B+\Delta)^{<1}$ and the support of $(B+\Delta)^{\ge 1}$. 
Put $N=B+\Delta-\Gamma$ which is supported in $\rddown{\Gamma}$.
Let $\phi\colon W\to X$ be a log resolution of $(X,B+\Delta)$ and let $\Gamma_W$ be the sum of the reduced exceptional 
divisor of $\phi$ and the birational transform of $\Gamma$. Let 
$$
N_W=\phi^*(K_X+B+\Delta)-(K_W+\Gamma_W).
$$
Then $\phi_*N_W=N\ge 0$ and $N_W$ is supported in $\rddown{\Gamma_W}$. 
Now run an MMP$/X$ on $K_W+\Gamma_W$ with scaling of some 
ample divisor $H$. We reach a model $Y$ on which $K_Y+\Gamma_Y$ is a limit of movable$/X$ $\R$-divisors 
(\ref{ss-MMP}). 
Applying the general negativity lemma (cf. [\ref{B-lc-flips}, Lemma 3.3]), we deduce $N_Y\ge 0$. 
In particular, if $U\subseteq X$ is the set of points where $(X,B+\Delta)$ is lc, then 
$N_Y=0$ over $U$ and $(Y,\Gamma_Y)$ is a $\Q$-factorial dlt model of $(X,B+\Delta)$ over $U$. 
By assumption, $(X,B+\Delta)$ is lc but not klt at the generic point of $G$. By 
Lemma \ref{l-unique-non-klt-place-connected-fibre}, no non-klt centre of the pair contains $G$ apart from $G$ 
itself, hence we can assume there is a unique
component $S$ of $\rddown{\Gamma_Y}$ mapping onto $G$. 
Moreover,  $G$ is not inside the image of $N_Y$.

Let $h\colon S\to F$ be the morphism induced by $S\to G$. By 
Lemma \ref{l-unique-non-klt-place-connected-fibre}, $h$ is a contraction.
By divisorial adjunction we can write 
$$
K_S+\Gamma_S+N_S=(K_Y+\Gamma_Y+N_Y)|_S\sim_\R 0/F
$$
where $N_S=N_Y|_S$ is vertical over $F$. If $S$ is exceptional over $X$, then
let $\Sigma_Y$ be the sum of the exceptional$/X$ divisors on $Y$ plus 
the birational transform of $B$. Otherwise let $\Sigma_Y$ 
be the sum of the exceptional$/X$ divisors on $Y$ plus 
the birational transform of $B$ plus $(1-\mu_GB)S$. In any case, $S$ is a 
component of $\rddown{\Sigma_Y}$ and $\Sigma_Y\le \Gamma_Y$.
Applying adjunction again we get 
$K_S+\Sigma_S=(K_Y+\Sigma_Y)|_S$. Obviously $\Sigma_S\le \Gamma_S$. 

Now we define $\Theta_F$: 
for each prime divisor $D$ on $F$, let $t$ be the lc threshold of $h^*D$ with respect to 
$(S,\Sigma_S)$ over the generic point of $D$, and then let $\mu_D\Theta_F:=1-t$. 
Note that $h^*D$ is defined only over the generic 
point of $D$ as $D$ may not be $\Q$-Cartier. Same applies to similar definitions below, e.g. 
see proof of \ref{l-subadjunction-integral-div}.

\end{constr}

\begin{thm}[{[\ref{HMX2}, Theorem 4.2]}]\label{t-subadjunction}
Let $d\in\N$ and let $\Phi$ be a subset of $[0, 1]$ which contains $1$. 
Let $X,B,\Delta,G,F,\Theta_F,P_F$ be as in Construction 
\ref{constr-adjunction-non-klt-centre} and assume $\dim X=d$ and $B\in \Phi$.
Then the coefficients of $\Theta_F$ belong to 
$$
\Psi:=\{a \, | \, 1-a \in {\rm LCT}_{d-1}(D(\Phi))\} \cup \{1\}
$$
and $P_F$ is pseudo-effective.
 
Now suppose in addition that $G$ is a general member of a covering family of subvarieties of $X$.
Let $\psi \colon F' \to F$ be a log resolution of $(F,\Theta_F)$, and let $\Theta_{F'}$ be the sum
of the birational transform of $\Theta_F$ and the reduced exceptional divisor of $\psi$. Then
$$
K_{F'} + \Theta_{F'} \ge (K_X + B) |_{F'}.
$$
\end{thm}

For convenience we explain some of the notation in the theorem and also prove the claim of pseudo-effectivity of 
$P_F$ in the next paragraph. The set $D(\Phi)$ is a set associated to $\Phi$ with the properties: 
$D(\Phi)$ is DCC iff $\Phi$ is DCC 
and the coefficients of $\Sigma_S$ in Construction \ref{constr-adjunction-non-klt-centre} 
belong to $D(\Phi)$ (see [\ref{HMX2}, 3.4] and [\ref{PSh-II}, Proposition 3.9]). 
The set ${\rm LCT}_{d-1}(D(\Phi))$ stands for the set of all lc thresholds of integral effective divisors 
with respect to pairs $(S,\Gamma)$ of dimension $d-1$ such that $\Gamma\in D(\Phi)$.
In particular, if $\Phi$ is DCC, then $\Psi$ is also DCC [\ref{HMX2}, Theorem 1.1].

Now recall that $P_F$ is defined by the relation  
$$
K_F+\Theta_F+P_F\sim_\R (K_X+B+\Delta)|_F.
$$
Using the notation of \ref{constr-adjunction-non-klt-centre}, we have 
$$
K_S+\Gamma_S+N_S\sim_\R h^*(K_F+\Theta_F+P_F).
$$
Let $\Delta_F$ and $R_F$ be the discriminant and the moduli parts of adjunction for $(S,\Gamma_S+N_S)$ over $F$.
Then $\Delta_F+R_F\sim_\R \Theta_F+P_F$. 
Since $\Sigma_S\le \Gamma_S+N_S$, we have $\Theta_F\le \Delta_F$, hence  $P_F-R_F\sim_\R \Delta_F-\Theta_F\ge 0$.  
By Theorem \ref{t-adj-fib-spaces}, $R_F$ is pseudo-effective which implies $P_F$ is pseudo-effective too.

In the rest of this subsection we prove further results regarding the above adjunction. 
The proof of \ref{l-sub-bnd-on-gen-subvar} contains a proof of the last claim of \ref{t-subadjunction} (when 
$G$ is a member of a covering family) based on the proof of [\ref{HMX2}, Theorem 4.2] but with more details.

\begin{lem}\label{l-subadjunction-integral-div}
Let $(X,B), \Delta, G,F,$ and $\Theta_F,$ be as in Construction \ref{constr-adjunction-non-klt-centre}. 
Let $M\ge 0$ be a $\Q$-Cartier $\Q$-divisor on $X$ with coefficients $\ge 1$  
and such that $G\nsubseteq \Supp M$. Then for every component $D$ of $M_F:=M|_F$ we have $\mu_D(\Theta_F+M_F)\ge 1$. 
\end{lem}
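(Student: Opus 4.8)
The plan is to recall, with more details, the construction of $\Theta_F$ from Theorem \ref{t-subadjunction} and then to run it a second time with the divisor $\Delta$ replaced by $\Delta+M$; along the components of $M_F$ the two outputs will differ by exactly $M_F$, and the desired bound will fall out of the arithmetic of log canonical thresholds.

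First I would set up the construction. Take a log resolution $h\colon Y\to X$ of $(X,B+\Delta+M)$ on which the unique non-klt place $S$ of $(X,B+\Delta)$ with centre $G$ appears as a prime divisor, and write $K_Y+\Gamma_Y=h^*(K_X+B+\Delta)$, so $S$ has coefficient $1$ in $\Gamma_Y$. Since $Y$ is smooth, divisorial adjunction to $S$ gives $K_S+\Delta_S=(K_Y+\Gamma_Y)|_S$ with $\Delta_S=(\Gamma_Y-S)|_S$ and no further correction; as part of the construction (and as in the proof of \ref{t-adj-fib-spaces}) we may assume, after replacing $Y$ by a suitable model, that $\Delta_S\ge 0$. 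Let $g\colon S\to F$ be the induced contraction onto the normalisation of $G$. Then $\Theta_F$ is the discriminant part and $P_F$ the (pseudo-effective) moduli part of the canonical bundle formula for $g$ and $(S,\Delta_S)$; for a prime divisor $D$ on $F$ with generic point $\eta_D$ one has $\mu_D\Theta_F=1-t_D$, where $t_D=\lct_{\eta_D}(g^*D;(S,\Delta_S))$. In particular $\mu_D\Theta_F\ge 0$ and $t_D\le 1$.

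Next, fix a component $D$ of $M_F$ and let $D_G$ be its image on $G$. Because $G\nsubseteq\Supp M$, the divisor $M$ vanishes near the generic point of $G$, and one checks easily that $S$ is then also the unique non-klt place of $(X,B+\Delta+M)$ with centre $G$; hence running the construction for $(X,B)$ with boundary $\Delta+M$ uses the same $h$, $S$ and $g$ and replaces $\Delta_S$ by $\Delta_S+M_S$, where $M_S:=(h^*M)|_S$. Since restriction commutes with pull-back through the normalisation $F\to G$ we get $M_S=g^*M_F$, so over $\eta_D$ one has $M_S=\mu_D M_F\cdot g^*D$ and therefore $\lct_{\eta_D}(g^*D;(S,\Delta_S+M_S))=t_D-\mu_D M_F$. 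Consequently
$$
\mu_D(\Theta_F+M_F)=(1-t_D)+\mu_D M_F=1-\lct_{\eta_D}(g^*D;(S,\Delta_S+M_S)),
$$
so it is enough to show $\lct_{\eta_D}(g^*D;(S,\Delta_S+M_S))\le 0$, i.e. that $(S,\Delta_S+M_S)$ has a non-klt place whose centre on $F$ is $D$.

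Finally I would prove this last assertion, which is the heart of the matter. If $\mu_D M_F\ge 1$ it is immediate (then $\lct_{\eta_D}(g^*D;(S,\Delta_S+M_S))=t_D-\mu_D M_F\le 0$), so assume $\mu_D M_F<1$. As $D_G$ is a component of $M|_G$, it is contained in some component $M_j$ of $M$ of coefficient $m_j\ge 1$. The key step is to choose $h$ so that the strict transform $M_j^Y$ — which is smooth and meets $S$ transversally, $Y$ being a log resolution — meets $S$ in a reduced divisor one of whose components, say $E$, has image $D$ on $F$. Granting this, $M_S=(h^*M)|_S\ge m_j\,(M_j^Y|_S)\ge m_j E$, and since $\Delta_S\ge 0$ the coefficient of $E$ in $\Delta_S+M_S$ is at least $m_j\ge 1$, so $a(E,S,\Delta_S+M_S)\le 0$ while the centre of $E$ on $F$ is $D$; this is exactly what is needed, and hence $\mu_D(\Theta_F+M_F)\ge 1$.

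The main obstacle is precisely this last step: producing the component $E$ of $M_j^Y|_S$ lying over $D$ while keeping $\Delta_S$ effective. One must choose the log resolution carefully — blowing up further if necessary — so that the strict transform of a suitable component of $M$ through the generic point of $D_G$ restricts to $S$ in a divisor dominating $D_G$, and so that the model realising $\Delta_S\ge 0$ is compatible with this choice; this is the local analysis near $\eta_{D_G}$ that is hidden in following [\ref{HMX2}].
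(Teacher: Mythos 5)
Your overall strategy---rerun the construction of $\Theta_F$ with $\Delta$ replaced by $\Delta+M$, observe that the lc threshold over $\eta_D$ drops by exactly $\mu_DM_F$, then show the new threshold is $\le 0$---is indeed the strategy the paper uses. But there are two concrete problems.

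First, you have the construction of $\Theta_F$ slightly but consequentially wrong. You define $\Theta_F$ as the discriminant of $(S,\Delta_S)$ where $\Delta_S$ is the full crepant restriction of $K_X+B+\Delta$. In the construction the paper recalls from [HMX2], $\Theta_F$ is the discriminant not of the crepant restriction $\Gamma_S+N_S$ but of the smaller boundary $\Sigma_S$ coming from $\Sigma_Y$, which is by definition the sum of the $\phi$-exceptional divisors (with coefficient~$1$) and the strict transform of $B$ only. This matters because $\Sigma_S\le\Gamma_S+N_S$, so the paper's $\Theta_F$ can be strictly smaller componentwise than yours; proving $\mu_D(\Theta_F^{\rm yours}+M_F)\ge 1$ does \emph{not} imply the lemma, since the required inequality is for the smaller $\Theta_F$. (The point of using $\Sigma_S$ rather than the crepant restriction is precisely that it makes the coefficient set of $\Theta_F$ depend only on $\Phi$ and not on $\Delta$.)

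Second, and more importantly, your final step---producing a non-klt place of $(S,\Delta_S+M_S)$ over $D$---has a genuine gap, which you yourself flag. You try to produce it from the strict transform $M_j^Y$ of a component of $M$ through $\eta_{D_G}$, but there is no reason the strict transform meets $S$ over $D$: the components of $M_S=(h^*M)|_S$ lying over $D$ may well be cut out by $h$-exceptional divisors of $h^*M$, and these can have arbitrarily small coefficient in $h^*M$. With your $\Delta_S$ (the crepant restriction), an exceptional $T$ meeting $S$ has coefficient $1-a(T,X,B+\Delta)$, which can be close to $0$, so you get no lower bound on $\mu_E(\Delta_S+M_S)$. The paper avoids this by fiat: since $\Sigma_Y$ assigns coefficient $1$ to \emph{every} $\phi$-exceptional divisor and $M$ has coefficients $\ge 1$, every component of $M_Y$ has coefficient $\ge 1$ in $\Sigma_Y+M_Y$, whether it is exceptional or a strict transform. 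Hence $\Supp M_Y\subseteq\Supp\rddown{\Sigma_Y+M_Y-S}$ uniformly, and restricting gives $\Supp M_S\subseteq\rddown{\Sigma_S+M_S}$; combined with $h^*M_F=M_S$, every component of $h^*D$ over $\eta_D$ lies in $\rddown{\Sigma_S+M_S}$, so the threshold $t'\le 0$. That uniform inclusion, which your strict-transform argument does not deliver, is the missing ingredient.
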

\begin{proof}
We will use the notation introduced in Construction \ref{constr-adjunction-non-klt-centre}.
Let $\Sigma_Y':=\Sigma_Y+M_Y$ and $\Sigma_S'=\Sigma_S+M_S$ where $M_Y=M|_Y$ and $M_S=M_Y|_S$. 
We define $\Theta_F'$ similar to $\Theta_F$. That is, for each prime divisor $D$ on $F$, 
let $t'$ be the lc threshold of $h^*D$ with respect to 
$(S,\Sigma_S')$ over the generic point of $D$, and then let $\mu_D\Theta_F':=1-t'$. 
It is easy to see $t'+\mu_DM_F=t$. This means $\Theta_F'=\Theta_F+M_F$. On the other hand, 
each component $L$ of $M_Y$ is either exceptional or non-exceptional over $X$: in the former 
case $L$ is a component of $\rddown{\Sigma_Y}$ as $\rddown{\Sigma_Y}$ contains all the exceptional 
divisors, and in the latter case $L$ is a component of $\rddown{M_Y}$ as the coefficients of $M$ are at least $1$.
Combining this with the fact that $S$ is not a component of $M_Y$, we get 
$$
\Supp M_Y\subseteq \Supp \rddown{\Sigma_Y+M_Y-S}=\Supp \rddown{\Sigma_Y'-S}.
$$ 
Therefore, 
$$
\Supp M_S\subseteq (\Supp \rddown{\Sigma_Y'-S})|_S \subseteq \Supp \rddown{\Sigma_S'}
$$ 
where the second inclusion follows from the following property of adjunction: if $T$ is a reduced divisor on $Y$ not 
containing $S$ and if $K_S+T_S=(K_S+S+T)|_S$ is given by adjunction, then $\Supp T|_S\subseteq \Supp \rddown{T_S}$. 

Now as $h^*M_F=M_S$,  
if $D$ is a component of $M_F$, then every component of $h^*D$ mapping onto 
$D$ is a component of $\rddown{\Sigma_S'}$. Thus $t'\le 0$ where as above $t'$ is the  
lc threshold of $h^*D$ with respect to 
$(S,\Sigma_S')$ over the generic point of $D$. Therefore, 
$$
\mu_D(\Theta_F+M_F)=\mu_D\Theta_F'=1-t'\ge 1.
$$
\end{proof}

\begin{lem}\label{l-sub-bnd-on-gen-subvar}
Let $(X,B), \Delta, G,F,$ and $\Theta_F$ be as in Construction \ref{constr-adjunction-non-klt-centre}. 
Assume that $G$ is a general member of a covering family of subvarieties, and 
assume $(X,B)$ is $\epsilon$-lc for some $\epsilon> 0$. 
Then there is a sub-boundary $B_F$ on $F$ such that $K_{F}+B_{F}=(K_X+B)|_{F}$,  
that $(F,B_F)$ is sub-$\epsilon$-lc, and $B_{F}\le \Theta_{F}$.
\end{lem}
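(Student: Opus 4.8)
The plan is to take $B_F := (K_X+B)|_F-K_F$ as the only possible candidate and to verify the two non-routine assertions: that $(F,B_F)$ is sub-$\epsilon$-lc (which then automatically forces $B_F$ to be a sub-boundary, its coefficients being $<1$), and that $B_F\le\Theta_F$.

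For the inequality $B_F\le\Theta_F$ I would simply quote the second (``moreover'') half of Theorem \ref{t-subadjunction}. Choose a log resolution $\psi\colon F'\to F$ of $(F,\Theta_F)$ which is at the same time a log resolution of $(F,B_F)$, let $\Theta_{F'}$ be the sum of the birational transform of $\Theta_F$ and the reduced exceptional divisor of $\psi$, and define $B_{F'}$ by $K_{F'}+B_{F'}=\psi^*(K_F+B_F)=(K_X+B)|_{F'}$. Theorem \ref{t-subadjunction} gives $K_{F'}+\Theta_{F'}\ge (K_X+B)|_{F'}$, hence $\Theta_{F'}\ge B_{F'}$; pushing forward by $\psi$ and using $\psi_*\Theta_{F'}=\Theta_F$ and $\psi_*B_{F'}=B_F$ yields $\Theta_F\ge B_F$.

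For the sub-$\epsilon$-lc statement I would pass to a generically finite cover. Since $G$ is a general member of a covering family, there is a contraction $V\to T$ together with a surjection $g\colon V\to X$ embedding general fibres of $V\to T$ into $X$, with $G$ the image of such a fibre. Applying Lemma \ref{l-hypersurface-section-family-subvar} repeatedly --- replacing $T$ by a general hypersurface section and $V$ by its preimage for as long as $V\to X$ stays surjective --- and then replacing $V$ by a resolution, I may assume $V$ is smooth, $g$ is generically finite, and the general fibre of $V\to T$ is smooth and maps birationally onto $G$; thus $F$ is dominated by such a general fibre. Writing $K_V+B_V=g^*(K_X+B)$, the Hurwitz formula gives $B_V=g^*B-R$ with $R\ge0$ the ramification divisor, and for every prime divisor $E$ over $V$ one has $a(E,V,B_V)\ge a(g(E),X,B)\ge\epsilon$ since $(X,B)$ is $\epsilon$-lc; so $(V,B_V)$ is sub-$\epsilon$-lc. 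Now take a log resolution $\rho\colon V'\to V$ of $(V,B_V)$; using generic smoothness I would arrange that $V'\to T$ is a morphism whose general fibre $F'$ is a log resolution of $(F,B_F)$ with $(K_{V'}+B_{V'})|_{F'}=K_{F'}+B_{F'}$ equal to the crepant pullback of $K_F+B_F$, where $K_{V'}+B_{V'}=\rho^*(K_V+B_V)$. Since $(V,B_V)$ is sub-$\epsilon$-lc, the coefficients of $B_{V'}$, hence of $B_{F'}$, are $\le 1-\epsilon$; as this property can be checked on a single log resolution, $(F,B_F)$ is sub-$\epsilon$-lc.

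The step I expect to be the main obstacle is checking that the divisor $(K_X+B)|_F$ (and $(K_X+B)|_{F'}$) appearing in Theorem \ref{t-subadjunction} really agrees with the restriction of $g^*(K_X+B)$ to the general fibre used in the previous paragraph, so that no hidden moduli contribution has to be reconciled; once this identification is pinned down, both the moreover inequality and the sub-$\epsilon$-lc verification proceed as above. A secondary technical nuisance is organising $\rho$ so that it simultaneously restricts to a log resolution of $(F,B_F)$ on a general fibre of $V'\to T$ and may be taken compatible with the resolution $\psi$ of the second paragraph.
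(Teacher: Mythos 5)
Your argument for the sub-$\epsilon$-lc part follows the same route as the paper: cut down the base of the covering family using Lemma~\ref{l-hypersurface-section-family-subvar} until the total space maps generically finitely onto $X$, pull back $K_X+B$ and use Riemann--Hurwitz (and [\ref{kollar-mori}, Proposition 5.20]) to see the cover carries a sub-$\epsilon$-lc structure, and then restrict to the fibre $F'$. The one divergence is that you then take an additional log resolution $V'\to V$ and restrict it to a general fibre, which forces you to justify that this restriction is a log resolution of $(F,B_F)$ compatible with crepant pullback. This is precisely the delicate point, and it is what Steps~1--2 of the paper's proof are arranged to handle in advance: the paper fixes a Cartier divisor $P\ge 0$ whose support contains $\Supp B$ and $\Sing X$ and arranges $Q'=\Supp g'^*P$ to be relatively simple normal crossing over the base, so that $\Supp B_{W'}^{\ge 0}$ is already SNC when restricted to $F'$ and no further resolution is needed. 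Since $G$ (hence the base point $t'$) is fixed before any resolution, passing to a new log resolution $V'\to V$ afterwards and invoking ``general fibre'' needs care; the paper avoids this by building the SNC data into the family before $G$ is chosen.

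For $B_F\le\Theta_F$ your route is genuinely different and, as written, has a gap. You propose to quote the second half of Theorem~\ref{t-subadjunction}, declare $B_{F'}$ by the equality $\psi^*(K_F+B_F)=(K_X+B)|_{F'}$, and push forward. But ``$(K_X+B)|_{F'}$'' in that theorem is not a self-contained object --- it is a specific divisor defined inside the proof of [\ref{HMX2}, Theorem 4.2] via a construction analogous to, but not stated to be the same as, the cover construction you use in your third paragraph. Your proof therefore rests on the unargued identification of two a priori different constructions of $B_{F'}$, which you yourself flag as the ``main obstacle'' but never resolve. The paper does not take this shortcut at all: Steps~5--7 of its proof establish $B_F\le\Theta_F$ directly for its own construction of $B_F$, by comparing the lc threshold $t$ of $L_S$ on the plt model $(S,\Sigma_S)$ over $F$ with the lc threshold $s$ of $L=cP$ on $X$ near $\nu(\eta_C)$, and then proving $t\le s$ via an intricate connectedness/chain-of-curves argument on the fibre of $\pi\colon Y\to X$. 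That direct argument is considerably more involved than a citation, and the reason it is there is exactly to avoid the matching problem your proposal leaves open. So the first two of your three claims (existence of $B_F$, sub-$\epsilon$-lc) are on solid ground modulo the technical organisation you acknowledge, but the third ($B_F\le\Theta_F$) currently stands on a consistency claim that is neither obvious nor verified.
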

\begin{proof}
We first give a short summary of the proof. We consider the covering family to which $G$ belongs, and then 
derive another family $W'\to R'$ such that the induced morphism  $W'\to X$ is generically finite.
Next we let $K_{W'}+B_{W'}$ be the pullback of $K_X+B$, and restricting to the fibre $F'$ of $W'\to R'$ 
corresponding to $G$, we get $K_{F'}+B_{F'}=(K_{W'}+B_{W'})|_{F'}$. Pushing down $B_{F'}$ to $F$ 
gives $B_F$. Afterwards we show $(F,B_F)$ is sub-$\epsilon$-lc. The rest of the proof is devoted to showing 
$B_F\le \Theta_F$ by relating the coefficients of $\Theta_F$ to lc thresholds of certain divisors on $X$.\\ 

\emph{Step 1.}
In this step we consider the covering family to which $G$ belongs.
There is a contraction $\tilde{f}\colon \tilde{V}\to \tilde{T}$ of projective varieties 
such that $G$ is  a general fibre of $\tilde{f}$, and there is a surjective morphism $\tilde{V}\to X$ whose restriction  
to each fibre of $\tilde{f}$ over a closed point is a closed immersion (see \ref{ss-cov-fam-subvar}). 
Taking normalisations of $\tilde{V}$ and $\tilde{T}$, we get a 
contraction ${f}\colon {V}\to {T}$ of normal projective varieties 
such that the fibre of $f$ corresponding to $G$ is just $F$ the normalisation of $G$.
Taking resolutions of $V$ and $T$ we get a contraction $f'\colon V'\to T'$ of smooth 
projective varieties. Letting $F'$ be the fibre of $f'$ corresponding to $F$, 
we see that the induced morphism ${F}'\to X$ is birational onto its image. Moreover, we can assume
 there is a Cartier divisor $P\ge 0$ on $X$ whose support contains $\Supp B$ and the 
singular locus of $X$ and such that $Q':=\Supp g'^*P$ is relatively simple normal crossing over some open subset of $T'$ 
where $g'$ is the induced map $V'\to X$.\\

\emph{Step 2.} 
In this step we consider another covering family which is generically finite over $X$.
Fixing $G$, by construction, $f'$ is smooth over $t'=f'(F')$, $g'$ is smooth over $g'(\eta_{F'})$, 
and $g'(\eta_{F'})$ is smooth on $X$.   
If $g'$ is generically finite, then let $W':=V'$ and $R':=T'$. If not,  
applying Lemma \ref{l-hypersurface-section-family-subvar}, 
there is a general smooth hypersurface section $H'$ of $T'$ passing through $t'$ 
so that if $U'=f'^*H'$, then $U'$ and $H'$ are smooth, $U'\to H'$ is smooth over $t'$, and 
$U'\to X$ is surjective and smooth over $g'(\eta_{F'})$. Repeating this we get a 
smooth subvariety $R'$ of $T'$ passing through $t'$ so that the induced family $W'\to R'$ 
is smooth over $t'$, $W'$ is smooth, $W'\to X$ is surjective and generically finite and \'etale over $g'(\eta_{F'})$.
Let $Q_{W'}=Q'|_{W'}$. Then by construction, $Q_{W'}|_{F'}=Q'|_{F'}$ 
is reduced and simple normal crossing. Therefore, near $F'$ the divisor $Q_{W'}$ is reduced and 
any prime divisor $C'$ on $F'$ is contained in at most one component of $Q_{W'}$.\\

\emph{Step 3.}
In this step we define a sub-boundary $B_{F'}$ whose pushdown to $F$ gives $B_{F}$.
Let $K_{W'}+B_{W'}$ be the pullback of $K_X+B$ to $W'$. Here $K_{W'}$ and $B_{W'}$ are uniquely determined 
as Weil divisors (we assume we have already 
fixed a choice of $K_X$). Let $W'\to W\to X$ be the Stein factorisation of $W'\to X$. By the Riemann-Hurwitz formula, 
each coefficient of $B_W$ is $\le 1-\epsilon$ where $K_W+B_W$ is the pullback of $K_X+B$. Thus 
$(W,B_W)$ is sub-$\epsilon$-lc by [\ref{kollar-mori}, Proposition 5.20], hence $(W',B_{W'})$ is 
also sub-$\epsilon$-lc. 
On the other hand, by our choice of $P$, any component of $B_{W'}$ with positive coefficient is mapped into $P$, hence 
it is a component of $Q_{W'}$. 
Moreover, since $G$ was chosen general, it is not inside $\Supp K_X\cup \Supp P$. 
Thus since $W'\to X$ 
is \'etale over $g'(\eta_{F'})$, $F'$ is not inside $\Supp B_{W'}$ nor inside $\Supp K_{W'}$. 
Defining $B_{F'}=B_{W'}|_{F'}$ we get $K_{F'}+B_{F'}=(K_{W'}+B_{W'})|_{F'}$ where $K_{F'}=K_{W'}|_{F'}$ 
follows from the fact that $W'\to R'$ is smooth near $F'$ (note that $K_{F'}$ is determined as a Weil divisor). 
Let $K_F+B_F$ be the pushdown of $K_{F'}+B_{F'}$ which satisfies $K_F+B_F=(K_X+B)|_F$.\\

\emph{Step 4.}
In this step we show $(F,B_F)$ is sub-$\epsilon$-lc. It is enough to  
show $(F',B_{F'})$ is sub-$\epsilon$-lc. This in turn follows if show that 
$(F',A_{F'})$ is $\epsilon$-lc where $A_{W'}:=B_{W'}^{\ge 0}$ and $A_{F'}=A_{W'}|_{F'}$. 
By  the previous step, $\Supp A_{W'}\subseteq \Supp Q_{W'}$, hence  
$$
\Supp A_{F'}=\Supp (A_{W'}|_{F'})\subseteq \Supp (Q_{W'}|_{F'})=\Supp (Q'|_{F'}).
$$ 
So $\Supp A_{F'}$  is simple normal crossing  
as $\Supp (Q'|_{F'})$ is simple normal crossing. Therefore, it is enough to show each coefficient of 
$A_{F'}$ is $\le 1-\epsilon$. 

Let $C'$ be a component of $A_{F'}$ with positive coefficient. 
Then there is a component $D'$ of $A_{W'}$ with positive coefficient so that $C'$ is a component of $D'|_{F'}$. 
Since $D'$ is a component of $Q_{W'}$, by the last sentence of Step 2, $D'$ is uniquely determined  
and $D'|_{F'}$ is reduced. Then 
$\mu_{D'}B_{W'}=\mu_{D'}A_{W'}=\mu_{C'}A_{F'}$. But $\mu_{D'}B_{W'}\le 1-\epsilon$ because 
$(W',B_{W'})$ is sub-$\epsilon$-lc, hence $\mu_{C'}A_{F'}\le 1-\epsilon$.\\ 

\emph{Step 5.}
In this step we compute the coefficients of $\Theta_F$ in terms of lc thresholds of certain divisors.
Assume $C$ is a prime divisor on $F$ with $\mu_CB_F>0$. Let $C'$ on $F'$ be the birational transform of $C$. 
Then $\mu_{C'}B_{F'}>0$. Thus there is a unique component $D'$ of $B_{W'}^{>0}$ with positive coefficient 
such that $C'$ is a component of $D'|_{F'}$. In particular, $\mu_{C'}B_{F'}\le \mu_{C'}A_{F'}=\mu_{D'}B_{W'}$ 
as in the previous step (note that we do not claim $\mu_{C'}B_{F'}=\mu_{D'}B_{W'}$ because we have not ruled out 
the possibility that another component of $B_{W'}$ with negative coefficient contains $C'$).  
Let $L=cP$ and let $L_{W'}=L|_{W'}$ and $L_{F'}=L|_{F'}$ where $c$ is the number so that 
 $\mu_{D'}L_{W'}=1$. 
Then $\mu_{C'}L_{F'}=\mu_{D'}L_{W'}=1$.
 
We use the notation introduced in Construction \ref{constr-adjunction-non-klt-centre}. 
Recall that we constructed a birational morphism $Y\to X$ which we denote by $\pi$, a boundary $\Sigma_Y$, 
 a component $S$ of $\rddown{\Sigma_Y}$ mapping onto $G$, and $\Sigma_S$ defined by 
 $K_S+\Sigma_S=(K_Y+\Sigma_Y)|_S$. Let $L_S=L|_S$ and 
let $t$ be the lc threshold of $L_S$ with respect to $(S,\Sigma_S)$ over the generic point $\eta_C$ of $C$. 
By construction, $\mu_C(L|_F)=\mu_{C'}L_{F'}=1$, so under $h\colon S\to F$ the divisor
$L_S$ is equal to $h^*C$ over $\eta_C$.
Therefore, $\mu_C\Theta_F=1-t$.\\  

\emph{Step 6.}
In this step we consider the lc threshold of $L$ on $X$. 
Let $s$ be the lc threshold of $L$ with respect to 
$(X,B)$ near $\nu(\eta_C)$ where $\nu$ denotes $F\to G$. 
Let $I$ be the minimal non-klt centre of $(X,B+sL)$ which contains  
$\nu(\eta_C)$. Since $G\nsubseteq \Supp L$,  we have $G\nsubseteq I$. 
Let $L_Y=\pi^*L$ and write $K_Y+B_Y=\pi^*(K_X+B)$. 
Let $I_Y$ be a non-klt centre of $(Y,B_Y+sL_Y)$ which maps onto  
$I$. Since  $\Sigma_Y\ge B_Y$,  
$I_Y$ is also a non-klt centre of $(Y,\Sigma_Y+sL_Y)$. Note that $I_Y\neq S$.\\

\emph{Step 7.}
In this step and the next step we assume $X$ is $\Q$-factorial. In this step we show $t\le s$. 
This follows if we show that some non-klt centre of $(S,\Sigma_S+sL_S)$ maps onto $C$.
 Let $\Pi$ be the fibre of $\pi$ over 
a general closed point of $\nu(C)$ and let $H$ be the corresponding fibre of $S\to G$. 
Then $\Pi$ is connected, and since $X$ is $\Q$-factorial, $\Pi$ is inside the union of the
exceptional divisors of $\pi$, hence $\Pi\subset \rddown{\Sigma_Y}$. Moreover, 
 $I_Y$ intersects $\Pi$.

Let $E$ be the connected component of $H$ which maps into $C$ under $S\to F$. If a component $R$ of 
$\rddown{\Sigma_Y}-S$ intersects $E$, then $R\cap S$ gives a non-klt centre of 
$(S,\Sigma_S+sL_S)$ mapping onto $C$ (note that $(\rddown{\Sigma_Y}-S)\cap S$ is vertical over $F$). 
We can then assume $\rddown{\Sigma_Y}-S$ does not intersect $E$.
Then $E=H=\Pi:$ indeed otherwise since $\Pi$ is connected, there is a curve 
$Z\subseteq \Pi$ such that $Z\nsubseteq E$ but $Z$ intersects $E$; this is not possible 
because $Z$ cannot be inside $\rddown{\Sigma_Y}-S$ as $E$ does not intersect $\rddown{\Sigma_Y}-S$, 
so $Z$ is inside $S$, hence it is inside $E$ as it intersects $E$, a contradiction.  

Now $I_Y$ intersects $E=\Pi$ by the first paragraph of this step, so by inversion of adjunction,
 $I_Y\cap S$ produces a 
non-klt centre of $(S,\Sigma_S+sL_S)$ intersecting $E$, and so the centre maps onto $C$ as required.
Thus we have proved $t\le s$.\\

\emph{Step 8.}
In this step we show $B_F\le \Theta_F$.
Since $(X,B+sL)$ is lc near  $\nu(\eta_C)$, the sub-pair $(W',B_{W'}+sL_{W'})$ is sub-lc over 
$\nu(\eta_C)$ which implies 
$$
\mu_{D'}B_{W'}+s=\mu_{D'}(B_{W'}+sL_{W'})\le 1
$$ 
where $D'$ is as in Step 5. Therefore, 
$$
\mu_CB_F+t\le \mu_{C'}B_{F'}+s\le \mu_{D'}B_{W'}+s\le 1
$$
where we use the inequality $ \mu_{C'}B_{F'}\le \mu_{D'}B_{W'}$ observed in Step 5.
Therefore, we get $\mu_CB_F\le 1-t=\mu_C\Theta_F$.\\

\emph{Step 9.}
In this final step we take care of the non-$\Q$-factorial case.
Assume $X$ is not $\Q$-factorial and let $\overline{X}\to X$ be a small $\Q$-factorialisation. 
Let $\overline{B},  \overline{\Delta}, \overline{G}$, etc, be the birational transforms of $B, \Delta, G$, etc. 
We can assume $Y\to X$ and $W'\to X$ both factor through $\overline{X}\to X$. 
Let $\overline{F}$ be the normalisation of $\overline{G}$. Then we have induced morphisms $S\to \overline{F}\to F$ 
where $\overline{F}\to F$ is birational.   
We can define $\Theta_{\overline{F}}$ whose pushdown to $F$ is just $\Theta_F$. 
Also we can write $K_{\overline{F}}+B_{\overline{F}}=(K_X+B)|_{\overline{F}}$ where the pushdown of 
$B_{\overline{F}}$ is just $B_F$. Thus it is enough to show $B_{\overline{F}}\le \Theta_{\overline{F}}$. 
Now apply the arguments of Steps 7 and 8 on ${\overline{X}}$.

\end{proof}

\subsection{Lifting sections from non-klt centres}
In this subsection we show that under suitable assumptions we can lift sections from a non-klt centre. This is 
a key ingredient of the proof of \ref{p-eff-bir-tau} in the next section. First we prove a lemma.

\begin{lem}\label{l-subadjunction-non-epsilon-lc}
Let $(X,B), \Delta, G,F, \Theta_F, P_F, S, \Gamma_S,N_S$ be as in Construction \ref{constr-adjunction-non-klt-centre}. 
Assume $P_F$ is big. Then 

$(1)$ if there is a prime divisor $D$ on birational models of $S$ 
such that $a(D,S,\Gamma_S+N_S)<\epsilon$ and that the centre of $D$ on $S$ is vertical over $F$ where 
$\epsilon\ge 0$, then we can choose $P_F\ge 0$ so that 
$(F,\Theta_F+P_F)$ is not $\epsilon$-lc;

$(2)$ if $(X,B+\Delta)$ has a non-klt centre intersecting $G$ but not equal to $G$, then 
for each $\delta>0$ we can choose $P_F\ge 0$ so that $(F,\Theta_F+P_F)$ is not $\delta$-lc.
\end{lem}
\begin{proof}
We use other notation introduced in Construction \ref{constr-adjunction-non-klt-centre}. 
Let $\Delta_Y:=\Gamma_Y+N_Y$ and let $\Delta_S:=\Gamma_S+N_S$.

(1)
 Applying Lemma \ref{l-adj-fib-eps-lc}, there is a high
resolution $F'\to F$ so that some component $T$ of $\Delta_{F'}$ has coefficient larger than $1-\epsilon$ 
where $\Delta_{F'}, R_{F'}$ are the discriminant and moduli divisors on $F'$ associated to $(S,\Delta_S)$ 
over $F$ (we will use $R_{F'}$ below).

Pick a sufficiently small number $t>0$. Since $P_F$ is big, we can assume $P_F=A_F+C_F$ 
where $A_F\ge 0$ is ample and $C_F\ge 0$.  Let $\Omega_F=\Theta_F+C_F$ and let 
$K_{F'}+\Omega_{F'}$ be the pullback of $K_F+\Omega_F$. Then the coefficient of $T$ in  
$t\Omega_{F'}+(1-t)\Delta_{F'}$
is more than $1-\epsilon$. As the moduli part $R_{F'}$ is pseudo-effective (\ref{t-adj-fib-spaces}), we can find 
$$
0\le J_{F'}\sim_\R tA_{F'}+(1-t)R_{F'}
$$ 
where $A_{F'}$ is the pullback of $A_F$. Therefore, 
$$
(F',t\Omega_{F'}+(1-t)\Delta_{F'}+J_{F'})
$$
is not sub-$\epsilon$-lc. Moreover, as $K_{F'}+\Delta_{F'}+R_{F'}$ is the pullback of 
$K_F+\Delta_F+R_F$ and as  $K_{F'}+\Omega_{F'}+A_{F'}$ is the pullback of 
$K_F+\Omega_F+A_F$, we see that 
$$
K_{F'}+t\Omega_{F'}+(1-t)\Delta_{F'}+J_{F'}\sim_\R t(K_{F'}+\Omega_{F'}+A_{F'})+(1-t)(K_{F'}+\Delta_{F'}+R_{F'})
$$
is $\R$-linearly trivial over $F$, hence it is the pullback of 
$$
K_{F}+t\Omega_{F}+(1-t)\Delta_{F}+J_{F}
$$
where $J_F$ is the pushdown of $J_{F'}$. Therefore, 
$$
(F,t\Omega_{F}+(1-t)\Delta_{F}+J_{F})
$$
is not $\epsilon$-lc.

On the other hand,  we have  
$$
t\Omega_{F}+(1-t)\Delta_{F}+J_{F}\sim_\R 
t\Theta_F+tC_F+(1-t)\Delta_{F}+tA_{F}+(1-t)R_{F}
$$
$$
= t\Theta_F+tP_F+(1-t)\Delta_{F}+(1-t)R_{F}\sim_\R \Theta_F+P_F.
$$
Moreover, by construction, $\Delta_F\ge \Theta_F$ because $\Delta_Y\ge \Gamma_Y\ge \Sigma_Y$ 
which gives $\Delta_S\ge \Sigma_S$ (here $\Sigma_Y$ and $\Sigma_S$ 
are as in \ref{constr-adjunction-non-klt-centre} which are used to define $\Theta_F$), hence 
$$
t\Omega_F+(1-t)\Delta_F\ge t\Theta_F+(1-t)\Delta_F\ge \Theta_F.
$$ 
Thus if we change $P_F$ to
$$
t\Omega_{F}+(1-t)\Delta_{F}+J_{F}-\Theta_F,
$$ 
then $P_F$ is effective and 
$
(F,\Theta_F+P_F)
$
is not $\epsilon$-lc.

(2) 
By assumption, some non-klt centre $H\neq G$
of $(X,B+\Delta)$ intersects $G$, hence there is a non-klt centre $Z\neq S$ of $(Y,\Delta_Y)$  
intersecting $S$ and mapping onto $H$, by the connectedness principle applied to $(Y,\Delta_Y)$ near fibres of 
$\pi$ over points in $H\cap G$. Thus some component of $\rddown{\Delta_Y-S}$ intersects $S$ 
as the non-klt locus of $(Y,\Delta_Y)$ is equal to $\rddown{\Delta_Y}=\rddown{\Gamma_Y}$ because 
$(Y,\Gamma_Y)$ is dlt and $\Supp N_Y\subseteq \rddown{\Gamma_Y}$. 
This in turn gives a component of 
$\rddown{\Delta_S}$ which is vertical over $F$ as there is a unique non-klt place of $(Y,\Delta_Y)$
with centre $G$. Applying (1) we can choose 
$P_F\ge 0$ in its $\R$-linear equivalence class such that 
the pair $(F,\Theta_F+P_F)$ is not $\delta$-lc.

\end{proof}

\begin{prop}\label{p-lift-section-lcc}
Let $d,r\in \N$ and $\epsilon\in \R^{>0}$. Then there is $l\in \N$ depending only on 
$d,r, \epsilon$ satisfying the following.
Let $(X,B), \Delta, G,F, \Theta_F,$ and $P_F$ be as in Construction \ref{constr-adjunction-non-klt-centre}, 
and assume $G$ is a general member of a 
covering family of subvarieties. Assume in addition that 
\begin{itemize}
\item $X$ is Fano of dimension $d$ and $B=0$,

\item $\Delta\sim_\Q -(n+1)K_X$ for some $n\in\N$,

\item $h^0(-nrK_X|_F)\neq 0$, and 

\item $P_F$ is big and for any choice of $P_F\ge 0$ in its $\R$-linear equivalence class 
the pair $(F,\Theta_F+P_F)$ is $\epsilon$-lc.\\
\end{itemize}
Then $h^0(-lnrK_X)\neq 0$.  

\end{prop}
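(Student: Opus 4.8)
The plan is to lift a section from the non-klt centre $G$ (equivalently, from $F$) up to $X$, after first replacing the given section on $F$ by a high power of it. Since $h^0(-rnK_X|_F)\neq 0$ we may fix $0\le \Lambda_F\sim -rnK_X|_F$; then $l\Lambda_F\sim -lrnK_X|_F\ge 0$, so $h^0(-lrnK_X|_F)\neq 0$ for every $l\in\N$. Writing $m=lrn$, it suffices to show that for some $l$ depending only on $d,r,\epsilon$ a nonzero section of a sheaf on $F$ dominating $\mathcal O_F(l\Lambda_F)$ lies in the image of the natural restriction of $H^0(X,\mathcal O_X(-mK_X))$ to $G$, since such a section cannot vanish identically.

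To set up the restriction, recall $K_X+\Delta\sim_\Q -nK_X$, that $(X,\Delta)$ is lc with a unique non-klt place with centre $G$, and that $(X,0)$ is klt. As in the proof of Lemma \ref{l-subadjunction-integral-div}, pass to a birational model $\pi\colon Y\to X$ carrying a boundary $\Sigma_Y$ and the unique component $S$ of $\rddown{\Sigma_Y}$ dominating $G$, with induced contraction $h\colon S\to F$; by the proof of Theorem \ref{t-subadjunction}, $K_S+\Gamma_S+N_S\sim_\Q h^*(K_F+\Theta_F+P_F)\sim_\Q h^*(-nK_X|_F)$ with $N_S$ vertical over $F$. Take an integral divisor $L_Y$ equal to $\pi^*(-mK_X)$ plus an effective $\pi$-exceptional $\Q$-divisor, rounded up so that $\pi_*\mathcal O_Y(L_Y)=\mathcal O_X(-mK_X)$ and so that $L_Y-S$ is, up to $\Q$-linear equivalence, $K_Y$ plus a fractional simple normal crossing divisor plus a nef and big $\Q$-divisor --- the nef and big part being numerically the pullback of $-(m-n)K_X$ together with an effective exceptional correction, which is nef and big as soon as $m>n$, automatic once $lr\ge 2$. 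Then Kawamata--Viehweg vanishing gives $H^1(Y,\mathcal O_Y(L_Y-S))=0$, so the exact sequence
$$0\to \mathcal O_Y(L_Y-S)\to \mathcal O_Y(L_Y)\to \mathcal O_S(L_Y|_S)\to 0$$
yields a surjection $H^0(X,\mathcal O_X(-mK_X))=H^0(Y,\mathcal O_Y(L_Y))\twoheadrightarrow H^0(S,\mathcal O_S(L_Y|_S))$; it is therefore enough to produce a nonzero section of $\mathcal O_S(L_Y|_S)$.

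The remaining step is to analyse $L_Y|_S$ along $h\colon S\to F$ and to see that the pullback of $l\Lambda_F$ gives such a section for $l$ large. Using $-mK_X|_F\sim_\Q lr(K_F+\Theta_F+P_F)$ one rewrites $L_Y|_S$ as $\Q$-linearly equivalent to $K_S+(lr-1)h^*(-nK_X|_F)$ plus an effective correction coming from $\Gamma_S$, $N_S$ and the exceptional adjustment. Here I would use the hypothesis to choose a general effective representative $0\le P_F\sim_\Q P_F$ with $(F,\Theta_F+P_F)$ still $\epsilon$-lc; the point of the $\epsilon$-lc bound, valid for \emph{every} choice of $P_F$, is to keep the multiplicities of the discriminant and moduli data along $F$ bounded away from $1$ (so in particular $\rddown{\Theta_F+P_F}=0$), which, together with nefness of the moduli part (Theorem \ref{t-adj-fib-spaces}) and relative Kawamata--Viehweg and Koll\'ar vanishing for $h$, makes it possible to compare $L_Y|_S$ with $h^*(l\Lambda_F)$ effectively and to conclude that for $l$ sufficiently large --- depending only on $d,r,\epsilon$, using boundedness of the covering family of the centres $G$ --- the pullback of the section $l\Lambda_F$ extends to a nonzero section of $\mathcal O_S(L_Y|_S)$. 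Combined with the surjection above, this gives $h^0(-lrnK_X)\neq 0$.

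The main obstacle is precisely this last step: turning the $\Q$-linear comparison of $L_Y|_S$ and $h^*(l\Lambda_F)$ into an honest inequality of effective divisors and, crucially, extracting a threshold for $l$ that is uniform in $X$, in $n$, and in the member $G$ of the covering family, and depends only on $d$, $r$ and $\epsilon$. This is exactly where the uniform $\epsilon$-lc control of $(F,\Theta_F+P_F)$ over all choices of $P_F$, the bigness of $P_F$, and the boundedness of the family of the $G$'s must be combined; the rest is standard vanishing-theorem and adjunction bookkeeping.
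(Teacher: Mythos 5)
Your proposal has the right basic architecture — pass to the birational model $Y\to X$ with the plt-type divisor $S$ over $G$, apply Kawamata--Viehweg vanishing to get a surjection $H^0(X,-mK_X)\twoheadrightarrow H^0(S,\cdot)$, then recognise the target as (containing) pullbacks of sections from $F$ — and this is indeed the skeleton of the paper's argument. However, the two technical points that actually make the proof run are absent, and they are exactly the steps you flag at the end as ``the main obstacle.''

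First, the paper does not settle for a $\Q$-linear comparison of $L_Y|_S$ with $h^*(l\Lambda_F)$ that one then has to promote to an honest inequality of effective divisors. Instead it arranges, for the integral divisor $\lceil lE - \rddown{\Gamma_Y}+S-N_Y\rceil$ (with $E=\pi^*(-rnK_X)$), an \emph{equality} $\lceil lE\rceil|_S = lE|_S = h^*(-lnrK_X|_F)$, so there is nothing to control on the $F$ side beyond the hypothesis $h^0(-rnK_X|_F)\ne 0$. For this equality to hold one needs $\rddown{\Gamma_Y}-S$ and $N_Y$ to be disjoint from $S$, i.e.\ one needs $G$ to be an \emph{isolated} non-klt centre of $(X,\Delta)$ and $(Y,\Delta_Y)$ to be plt near $S$. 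The paper extracts this from the uniform $\epsilon$-lc hypothesis on $(F,\Theta_F+P_F)$ (Steps~1--2, via Lemma~\ref{l-adj-fib-eps-lc} and the connectedness principle): a non-isolated non-klt centre would produce a vertical non-klt place of $(S,\Delta_S)$ and hence a discriminant coefficient $\ge 1$ on some resolution of $F$, violating $\epsilon$-lc for a suitable choice of $P_F$. Your proposal asserts the boundaries are ``bounded away from $1$'' but does not derive the isolation, which is what makes the restriction exact sequence produce exactly $\mathcal O_S(\lceil lE\rceil|_S)$ as cokernel.

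Second, and decisively, the uniform $l$ comes from bounding the Cartier index of $E=\pi^*(-rnK_X)$ along prime divisors of $S$ (Step~3 of the paper): if the Cartier index $p$ of $K_Y+S$ at the generic point of a vertical divisor $V\subset S$ were $>1/\epsilon$, then $\mu_V\Delta_S\ge 1-1/p > 1-\epsilon$ and again Lemma~\ref{l-adj-fib-eps-lc} would give a discriminant coefficient contradicting the $\epsilon$-lc hypothesis. This forces $p\le 1/\epsilon$, and by [Sh92, Prop.~3.9] the Cartier index of $E$ at $V$ divides $p$; one can then take $l$ to be a fixed multiple of all such indices, so $\lceil lE\rceil$ is Cartier in codimension one along $S$ and Lemma~\ref{l-restriction-sequence} applies. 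Without this bound on the Cartier index you have no mechanism for choosing $l$ uniformly in $X,n,G$; your suggestion of ``boundedness of the covering family of centres'' does not substitute, since the covering family is attached to a single $X$ and the argument must be uniform over an unbounded family of Fanos. So the proposal, as written, stops just short of the content of the proof: the vanishing-and-restriction bookkeeping is standard, but the isolation of $G$ and the bounded Cartier index — both consequences of the hypothesis that $(F,\Theta_F+P_F)$ is $\epsilon$-lc for \emph{every} choice of $P_F$ — are where the proposition is actually proved.
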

\begin{proof}
We give a short description of the proof. The idea is to show that $G$ is an isolated non-klt centre. 
Moreover, if $\pi\colon Y\to X$ and $S,\Gamma_Y, N_Y$ are as in Construction \ref{constr-adjunction-non-klt-centre}, then 
$(Y,\Gamma_Y+N_Y)$ is plt near $S$. Next we pull back a section in $H^0(-nrK_X|_F)$ to $S$ and 
in turn lift the section to $H^0(\lceil -lnr\pi^*K_X\rceil )$ on $Y$ using vanishing theorems 
where $l$ is a bounded natural number. Finally 
we push the section down to $X$ to deduce $h^0(-lrnK_X)\neq 0$.\\

\emph{Step 1}.
 In this step we show $G$ is an isolated non-klt centre.
We use the notation of Construction \ref{constr-adjunction-non-klt-centre}. 
Remember that $(X,\Delta)$ is lc near the generic point of $G$. 
Also recall that $\rddown{\Gamma_Y}$ has a unique component $S$ mapping onto $G$.
Letting $\Delta_Y=\Gamma_Y+N_Y$ 
we have $K_Y+\Delta_Y=\pi^*(K_X+\Delta)$ where $\pi$ denotes $Y\to X$. 
Let $K_S+\Delta_S=(K_Y+\Delta_Y)|_S$.

 Assume $G$ is not an isolated non-klt centre. Then some non-klt centre $H\neq G$
of $(X,\Delta)$ intersects $G$. Applying Lemma \ref{l-subadjunction-non-epsilon-lc}, we can choose 
$P_F\ge 0$ in its $\R$-linear equivalence class such that 
the pair $(F,\Theta_F+P_F)$ is not $\epsilon$-lc, a contradiction. In particular,  
$(Y,\Delta_Y)$ is plt near $S$ and that no component of $\rddown{\Delta_Y-S}$ intersects $S$.\\

\emph{Step 2.}
In this step we show $E:=\pi^*(-nrK_X)$ is an integral divisor near $S$, and that it has bounded Cartier index near 
codimension one points on $S$.  
Since $G$ is a general member of a covering family of subvarieties of $X$, the generic 
point of $G$ is in the smooth locus of $X$, hence $K_X$ is Cartier near this point. Thus the coefficient of 
$S$ in $E$ is integral, hence any component of $E$ whose coefficient is not integral (if there is any) should be 
an exceptional divisor of $\pi$ other than $S$. But such exceptional divisors do not intersect $S$, 
by Step 1. Therefore, $E$ is integral near $S$. 

Let $V$ be a prime divisor on $S$. 
If $V$ is horizontal over $G$, then $E$ is Cartier near the generic point of $V$. 
Assume $V$ is vertical over $G$. We want to show the Cartier index of $E$ near the generic 
point of $V$ is bounded. Let $p$ be the Cartier index of $K_Y+S$ near the generic point of $V$. 
Then $\mu_V\Delta_S\ge 1-\frac{1}{p}$  and the Cartier index of $E$ near the generic point of $V$ 
divides $p$ [\ref{Shokurov-log-flips}, Proposition 3.9]. 
Therefore, $\frac{1}{p}\ge \epsilon$ otherwise $\mu_V\Delta_S>1-\epsilon$, hence $(S,\Delta_S)$ is not $\epsilon$-lc, so 
by Lemma \ref{l-subadjunction-non-epsilon-lc},
we can choose $P_F\ge 0$ so that $(F,\Theta_F+P_F)$ is not $\epsilon$-lc, a contradiction. 
This means that $p$ is bounded depending only on 
$\epsilon$, hence the Cartier index of $E$ near the generic point of $V$ is also bounded.\\
 
\emph{Step 3}.
In this step we use Kawamata-Viehweg vanishing to show $h^1(\lceil lE -\rddown{\Gamma_Y}-N_Y\rceil)=0$ 
where $l\in\N$ is at least $2$. Indeed pick $2\le l\in \N$ and define $L$ by the relation 
$$
\lceil lE -\rddown{\Gamma_Y}-N_Y\rceil=lE -\rddown{\Gamma_Y}-N_Y+L.
$$ 
Each component of $L$ is either an exceptional$/X$ component of $E$ or is a component of $N_Y$, and
in either case the component cannot be $S$.  
Then $N_Y+L$ is supported in $\rddown{\Gamma_Y}-S$ 
which is disjoint from $S$ by Step 1.  

On the other hand, the $\Q$-divisor 
$$
 I:=lE-(K_Y+\Delta_Y)=\pi^*(-lnrK_X)-\pi^*(K_X+\Delta)
 $$
 $$
 \sim_\Q\pi^*(-lnrK_X) +\pi^*(nK_X)=-(lnr-n)\pi^*K_X
$$ 
is nef and big. Now we can write  
$$
\lceil lE -\rddown{\Gamma_Y}-N_Y\rceil=lE -\rddown{\Gamma_Y}-N_Y+L
$$
$$
\sim_\Q K_Y+\Delta_Y+I-\rddown{\Gamma_Y}-N_Y+L=K_Y+\Gamma_Y-\rddown{\Gamma_Y}+L+I.
$$ 
Moreover, $(Y,\Gamma_Y-\rddown{\Gamma_Y}+L)$ is klt because $(Y,\Gamma_Y)$ 
is dlt and because $L$ is supported in $\rddown{\Gamma_Y}$ with coefficients less than $1$. 
Thus by the Kawamata-Viehweg vanishing theorem (cf. [\ref{kollar-mori}, Theorems 2.70 and Corollary 5.27]), 
we get 
$$
h^1(\lceil lE -\rddown{\Gamma_Y}-N_Y\rceil)=0.
$$\

\emph{Step 4.}
In this step we lift sections of $\lceil lE\rceil|_S$ to $Y$ from which we 
produce sections of $-lnrK_X$, for some bounded number $l$.
Let $U\subseteq X$ be the largest open set 
on which $\lceil lE -\rddown{\Gamma_Y}+S-N_Y\rceil$ is Cartier. By Step 2, we can choose a 
bounded  $l\ge 2$ so that no codimension two component of $X\setminus U$ is contained 
in $S$ where we use the fact that $\rddown{\Gamma_Y}-S+N_Y$ does not intersect $S$ and that 
$E$ is integral near $S$. Thus the codimension of $S\setminus (S\cap U)$ in $S$ is at least $2$. 
 Therefore, by Lemma \ref{l-restriction-sequence}, we have the exact sequence 
$$
0\to \mathcal{O}_Y(\lceil lE -\rddown{\Gamma_Y}-N_Y\rceil)\to \mathcal{O}_Y(\lceil lE -\rddown{\Gamma_Y}+S-N_Y\rceil) 
\to \mathcal{O}_S(\lceil lE\rceil|_S) \to 0.
$$  
Note that to apply the lemma we need $S$ to be $\Q$-Cartier which is the case as $Y$ is $\Q$-factorial by construction.

On the other hand,
$$
\lceil lE\rceil|_S=lE|_S=h^*(-lnrK_X|_F)
$$  
where $h$ denotes $S\to F$, and by assumption, $h^0(-lnrK_X|_F)\neq 0$. Thus  
$h^0(\lceil lE \rceil|_S)\neq 0$. Therefore, the above exact sequence combined with the vanishing at the end of 
Step 3 imply
$$
h^0(\lceil lE -\rddown{\Gamma_Y}+S-N_Y\rceil)\neq 0.
$$ 
This in turn gives $h^0(\lceil lE \rceil)\neq 0$ as $-\rddown{\Gamma_Y}+S-N_Y\le 0$, hence we get $h^0(-lnrK_X)\neq 0$.
  
\end{proof}

\section{\bf Effective birationality}

In this section we prove Theorem \ref{t-eff-bir-e-lc} under certain extra assumptions. 
These special cases are crucial for the proof of all the main results of this paper. 
One case is when we have an effective $\R$-divisor $B$ whose coefficients are bounded from below 
and $K_X+B\sim_\R 0$ (\ref{p-eff-bir-delta-2}), e.g. in practice when $X$ is non-exceptional we can find such $B$ using induction 
and complements. Another case is when singularities of $X$ are canonical or close to being 
canonical (\ref{p-eff-bir-tau}). Before we get to these results we make some technical preparations.

\subsection{Singularities in bounded families}
The next result roughly says that effective divisors with  ``degree" bounded from above cannot have too small lc thresholds, 
under appropriate assumptions. A far more general form of this is proved in [\ref{B-BAB}, Theorem 1.6] which is 
one of the key ingredients of the proof of BAB.

\begin{prop}\label{p-non-term-places}
Let $\epsilon\in \R^{>0}$ and let $\mathcal{P}$ be a bounded set of couples. 
Then there is $\delta\in \R^{>0}$ depending only on $\epsilon,\mathcal{P}$ satisfying 
the following. Let $(X,B)$ be a projective pair and let $T$ be a reduced divisor on $X$.
Assume 
\begin{itemize}
\item $(X,B)$ is $\epsilon$-lc and $(X,\Supp B+T)\in \mathcal{P}$, 

\item $L\ge 0$ is an $\R$-Cartier $\R$-divisor on $X$,  

\item $L\sim_\R N$ for some $\R$-divisor $N$ supported in $T$, and 

\item the absolute value of each coefficient of $N$ is at most $\delta$.\\
\end{itemize}
Then $(X,B+L)$ is klt. 
\end{prop}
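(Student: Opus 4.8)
The plan is to pass to a resolution of $X$ living in a bounded family, transport the klt condition there, and then bound the multiplicities of the pulled-back $L$ using its $\R$-linear equivalence to the small divisor $N$.

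First I would produce a controlled resolution. By the definition of boundedness in \ref{ss-bnd-couples} there are finitely many projective morphisms $V^i\to T^i$ with reduced divisors $C^i$ on $V^i$ such that every $(X,\Supp B+T)\in\mathcal P$ is isomorphic to $(V^i_t,C^i_t)$ for some $i$ and some closed point $t$. Taking a log resolution of each $(V^i,C^i)$ and running Noetherian induction on the $T^i$ (using generic smoothness to ensure, over a dense open subset, that the resolution restricts fibrewise to a log resolution with smooth fibres, and then replacing $T^i$ by the closed complement), we may assume, after replacing the families, that there are morphisms $W^i\to V^i$ over $T^i$ which over every closed point $t$ give a log resolution $W^i_t\to V^i_t$ of $(V^i_t,C^i_t)$ with $W^i_t$ smooth; fix also a relatively very ample divisor $A^i$ on $W^i$. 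Thus for the given $(X,B)$ and $T$ we get a log resolution $\psi\colon W\to X$ with $W$ smooth and a very ample divisor $A_W$ on $W$, where $W$, $\psi$, $A_W$, and the relevant strict transforms and $\psi$-exceptional divisors all range in a bounded family depending only on $\mathcal P$; in particular there is a constant $c$ depending only on $\mathcal P$ with $\sum_D A_W^{d-1}\cdot\psi^*D\le c$, the sum over all prime components $D$ of $T$. Writing $K_W+B_W=\psi^*(K_X+B)$, the sub-pair $(W,B_W+\psi^*L)$ is crepant to $(X,B+L)$, so since $B+L\ge 0$ it is enough to show $(W,B_W+\psi^*L)$ is sub-klt.

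Next I would bound $\mult_p\psi^*L$ for $p\in W$. Since $L\sim_\R N$ with $N$ supported in $T$ and all coefficients of absolute value $\le\delta$, we have $\psi^*L\equiv\psi^*N$, so
$$
A_W^{d-1}\cdot\psi^*L=A_W^{d-1}\cdot\psi^*N\le \delta\sum_D A_W^{d-1}\cdot\psi^*D\le \delta c .
$$
As $A_W$ is very ample and $\psi^*L\ge 0$, the multiplicity of $\psi^*L$ at any point is at most its $A_W$-degree, hence $\mult_p\psi^*L\le\delta c$ for all $p$. Choosing $\delta<\epsilon/c$, which depends only on $\epsilon$ and $\mathcal P$, gives $\mult_p\psi^*L<\epsilon$ for every $p\in W$.

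Finally I would conclude by convexity on the smooth variety $W$. Since $(X,B)$ is $\epsilon$-lc and $(W,B_W)$ is crepant to it, $(W,B_W)$ is sub-$\epsilon$-lc, so the coefficient of any prime divisor over $W$ in the crepant pullback of $B_W$ is $\le 1-\epsilon$; hence $(W,\tfrac1{1-\epsilon}B_W)$ is sub-lc. On the other hand $W$ is smooth and $\mult_p\big(\tfrac{1+\eta}{\epsilon}\psi^*L\big)<1$ for all $p$ once $\eta>0$ is small, so $(W,\tfrac{1+\eta}{\epsilon}\psi^*L)$ is lc (the standard fact that on a smooth variety an effective divisor of multiplicity $<1$ everywhere gives an lc pair, valid for $\R$-divisors by approximation). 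Then
$$
B_W+(1+\eta)\psi^*L=(1-\epsilon)\cdot\tfrac1{1-\epsilon}B_W+\epsilon\cdot\tfrac{1+\eta}{\epsilon}\psi^*L
$$
exhibits $(W,B_W+(1+\eta)\psi^*L)$ as governed by a convex combination of sub-lc boundaries, hence it is sub-lc; letting $\eta\to 0$ shows $(W,B_W+\psi^*L)$ is sub-klt, and therefore $(X,B+L)$ is klt.

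The main obstacle I expect is the first step: constructing a single bounded family of resolutions that simultaneously resolves every member of $\mathcal P$ (the relative-log-resolution and Noetherian-induction argument), together with checking that all the auxiliary intersection numbers $A_W^{d-1}\cdot\psi^*D$ stay uniformly bounded. Once $W$ is smooth and in a bounded family, the rest is elementary.
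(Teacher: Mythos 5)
Your argument is essentially correct, but it takes a genuinely different route from the paper's. The paper's proof first reduces to the case where $(X,\Supp B+T)$ is log smooth (via a bounded family of log resolutions), then argues by induction on dimension: a putative non-klt place $D$ of $(X,B+L)$ is shown to be exceptional by an intersection-number bound on the coefficients of $L$, and then one restricts either to a general hyperplane section $H\in|rT|$ (if the centre of $D$ avoids $\Supp B$) or to a component $S$ of $B$ via adjunction, and applies the inductive hypothesis. You instead bound $\mult_p\psi^*L$ directly on the resolution via $A_W^{d-1}\cdot\psi^*L$, invoke the standard estimate $\lct_p\ge 1/\mult_p$ on smooth varieties, and close with a convexity argument for log discrepancies. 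This trades the paper's explicit dimension induction for the (standard, but also inductively proved) multiplicity–lct inequality; the resulting argument is shorter once that inequality is black-boxed.

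Two points you should tighten. First, the quantity $\psi^*D$ for a prime component $D$ of $T$ need not be well-defined since $D$ is not assumed $\R$-Cartier; note that $N$ itself is $\R$-Cartier because $L$ is and $L-N$ is principal. Replace the estimate $A_W^{d-1}\cdot\psi^*N\le\delta\sum_D A_W^{d-1}\cdot\psi^*D$ by $A_W^{d-1}\cdot\psi^*N=\psi_*(A_W^{d-1})\cdot N$ with $\psi_*(A_W^{d-1})$ realised (by generality of the hypersurface sections) as a $1$-cycle inside the smooth locus of $X$ avoiding $\exc(\psi)$, so that all local intersection numbers with the components of $T$ make sense and are bounded by $\mathcal P$; equivalently, replace $\psi^*D$ by the strict transform $\tilde D$, whose $A_W$-degree is bounded. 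Second, the step ``coefficients of $B_W$ are $\le 1-\epsilon$, hence $(W,\tfrac1{1-\epsilon}B_W)$ is sub-lc'' is not a formal consequence of sub-$\epsilon$-lc for arbitrary sub-pairs — it fails without the normal crossings hypothesis because scaling the boundary does not commute with adding exceptional discrepancies. It is true here precisely because $\psi$ is a log resolution, so $(W,\Supp B_W)$ is SNC, and for SNC sub-pairs sub-lc is equivalent to all coefficients being $\le 1$; you should say this explicitly.
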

\begin{proof}
We may assume all the members of $\mathcal{P}$ have the same dimension, say $d$. 
We prove the proposition by induction on $d$. Let $(X,B),T,L,N$ be as in the statement for some $\delta$, 
and assume $(X,B+L)$ is not klt. 
First assume $d=1$. Then $\deg L\ge \epsilon$, hence 
$\deg N\ge \epsilon$. This is not possible if $\delta$ is small enough 
because $\deg N\le \delta \deg T$ and $\deg T$ is bounded. 
So we can assume $d\ge 2$. 

We can find a log resolution 
$\phi\colon W\to X$ of $(X,B+T)$ such that if we write  
$$ 
K_W+B_W=\phi^*(K_X+B)+E 
$$
where $B_W\ge 0$ and $E\ge 0$ have no common components, and let $T_W$ be the sum of 
the birational transform of $T$ and the reduced exceptional divisor $\phi$, 
then $(W,B_W)$ is $\epsilon$-lc and $(W,\Supp B_W+T_W)$ belongs to a bounded set of couples $\mathcal{S}$ 
determined by some presentation of $\mathcal{P}$ (as in \ref{ss-bnd-couples}). 
Let $L_W=\phi^*L$ and $N_W=\phi^*N$. Then there is $m\in\N$ depending only on $\mathcal{P}, \mathcal{S}$ 
so that the absolute value of each 
coefficient of $N_W$ is $\le m\delta$. 
Therefore, we can replace $\mathcal{P}$ with $\mathcal{S}$ and replace 
$(X,B),T,L,N,\delta$ with $(W,B_W),T_W,L_W,N_W,m\delta$, 
hence assume from now on that $(X,\Supp B+T)$ is log smooth. 

We will argue that perhaps after replacing $\mathcal{P}$ and modifying $T,N$ we can assume that 
 $B$ and $T$ have no common components and $T$ is very ample. Write 
 $T=\sum_1^q T_i$ where $T_i$ are irreducible components. Since $\mathcal{P}$ is bounded, 
there is a reduced  
 divisor $\Gamma=\sum_1^{2q}\Gamma_i$ on $X$  such that $\Gamma_j$  
are distinct prime very ample divisors which are not components of $B$, and 
$T_i\sim \Gamma_{2i}-\Gamma_{2i-1}$. Moreover, we can assume the  couples $(X,\Supp B+\Gamma)$ 
belong to a bounded family. Using 
the linear equivalences $T_i\sim \Gamma_{2i}-\Gamma_{2i-1}$ we can modify $N$ so that it is now supported in $\Gamma$ but still 
the absolute value of each of its coefficients is at most $\delta$. Now replacing 
$T$ with $\Gamma$ and replacing $\mathcal{P}$ accordingly we can assume 
  $B$ and $T$ have no common components and that $T$ is very ample. 

Now assume $\delta$ is sufficiently small.
Let $D$ be a prime divisor on birational models of $X$ such that $a(D,X,B+L)\le 0$. 
We show $D$ is not a divisor on $X$. Since $T$ is very ample and $T^d$ is bounded, from 
$T^{d-1}\cdot L=T^{d-1}\cdot N\le \delta T^d$ we deduce that if $l$ is a coefficient of $L$, 
then $l\le \delta T^d$. This shows that $D$ cannot be a divisor on $X$ as 
we can assume $\mu_D(B+L)\le 1-\epsilon+\delta T^d<1$. 
Therefore, $D$ is exceptional$/X$.

Let $V$ be the centre of $D$ on $X$. 
First assume $V$ is not inside $\Supp B$. In this case we can remove $B$ and assume $B=0$. 
Let $H$ be a general member of $|rT|$ 
intersecting $V$ where $r$ is sufficiently large depending on $\mathcal{P}$. Then $H$ is irreducible and smooth, 
$(X,H)$ is plt but $(X,H+L)$ is not plt near any component of $V\cap H$. This implies 
$(H,L_H)$ is not klt near any component of $V\cap H$ where $L_H=L|_H$ [\ref{kollar-mori}, Theorem 5.50]. 
The divisor $T|_H$ is reduced unless $\dim V=2$ and $V$ is the intersection point of 
two components of $T$ in which case the coefficients of $T|_H$ are at most $2$. 
Letting $N_H=N|_H$, we see that  
$N_H$ is supported in $T_H:=\Supp T|_H$ with absolute value of coefficients $\le 2\delta$. 
 By construction, $(H,T_H)$ belongs to 
a bounded set of couples $\mathcal{Q}$. So applying induction we get a contradiction as 
$\delta$ can be chosen according to $\mathcal{Q}$. 

Now assume $V$ is inside some component $S$ of $B$. 
Let $\Delta=B+(1-b)S$ where $b=\mu_SB$. Then $(X,\Delta)$ is plt and $\rddown{\Delta}=S$. Moreover, 
since $l:=\mu_SL\le \delta T^d$ (as observed above),
 we can assume $l=\mu_SL< \epsilon \le 1-b$, hence $B+L\le \Delta+L-lS$. 
Thus $V$ is a non-klt centre of $(X,\Delta+L-lS)$, and since $\mu_S(\Delta+L-lS)=1$, 
$S$ is also a non-klt centre of $(X,\Delta+L-lS)$. Thus 
$(X,\Delta+L-lS)$ is not plt near $V$. 

Let $K_S+\Delta_S=(K_S+\Delta)|_S$ and $L_S=(L-lS)|_S$.
Then  $(S,\Delta_S)$ is $\epsilon$-lc but $(S,\Delta_S+L_S)$ is not klt. 
Perhaps after adding some components to $T$, we can assume $S\sim S'$ where 
$S'$ is supported in $T$ and with bounded absolute value of coefficients. 
By construction, $L_S\sim_\R N_S:=(N-lS')|_S$ is supported on $T_S:=T|_S$ and the absolute value of 
each coefficient of $N_S$ is at most $n\delta$ where $n\in \N$ depends only on $\mathcal{P}$. 
Moreover, $(S,\Supp \Delta_S+T_S)$ belongs to a bounded set of couples $\mathcal{R}$.  
Now applying induction on induction we again get a contradiction.

\end{proof}

\subsection{Log birational boundedness of certain pairs}

In various places in this paper we use the next statement to find a bounded birational model 
$\overline{X}$ of a given variety $X$, 
e.g. proofs of \ref{p-bnd-sing-on-non-klt-centre}, \ref{p-eff-bir-delta-2}, \ref{p-eff-bir-tau}. This allows us to translate problems 
about $X$ to problems about $\overline{X}$ which are then more tractable. 

\begin{prop}\label{p-log-bir-bnd-cert-pairs}
Let $d,v\in\N$ and let $\epsilon \in \R^{>0}$. 
Then there exist a number $c \in \R^{>0}$ and a 
bounded set of couples $\mathcal{P}$ depending only on $d,v,\epsilon$  satisfying the following.
Assume 
\begin{itemize}
\item $X$ is a normal projective variety of dimension $d$, 

\item $B\ge 0$ is an $\R$-divisor with coefficients in $\{0\}\cup [\epsilon,\infty)$, 

\item $M\ge 0$ is a nef $\Q$-divisor such that $|M|$ defines a birational map,

\item $M-(K_X+B)$ is pseudo-effective,

\item $\vol(M)< v$, and

\item if $D$ is a component of $M$, then $\mu_D(B+M)\ge 1$.\\
\end{itemize}

Then there is a projective log smooth couple $(\overline{X},{\Sigma}_{\overline{X}})\in \mathcal{P}$ 
and a birational map $\overline{X}\bir X$ such that 
\begin{enumerate}
\item  $\Supp {\Sigma}_{\overline{X}}$ contains the exceptional 
divisor of  $\overline{X}\bir X$ and the birational transform of $\Supp (B+{M})$;

\item if $X'\to X$ and $X'\to \overline{X}$ is a common resolution and 
$M_{\overline{X}}$ is the pushdown of $M_{X'}:=M|_{X'}$, then each coefficient of $M_{\overline{X}}$ is at most $c$;

\item there is a resolution $W\to X$ such that $M_W:=M|_W\sim A_W+R_W$ where $A_W$ is 
the movable part of $|M_W|$, $|A_W|$ is base point free, and if $X'\to X$ factors through $W$, then 
$A_{X'}:=A_W|_{X'}\sim 0/\overline{X}$.

\end{enumerate}
\end{prop}

\begin{proof}

First we give a short summary of the proof. Since 
$|M|$ defines a birational map, there is a resolution $\phi\colon W\to X$ such that 
$\phi^*M$ decomposes as the sum of a base point free movable part $A_W$ and fixed part $R_W$.
Since  $\vol(M)< v$,  
the contraction defined by $A_W$ gives a bounded birational model 
$\overline{X}$. To find ${\Sigma}_{\overline{X}}$ as in the statement, the idea is to argue that 
$\vol(K_X+\Sigma+2(2d+1)A)$ is bounded from above where $A$ is the pushdown of $A_W$ and 
$\Sigma$ is the support of $B+M$ union a divisor derived from a multiple of $A_W$. Applying  
[\ref{HMX}, Lemma 3.2] and [\ref{HMX}, Lemma 2.4.2(4)] would then produce the required 
$(\overline{X},{\Sigma}_{\overline{X}})$ after taking an appropriate resolution.\\

\emph{Step 1.}
In this step we introduce some basic notation.
Since $|M|$ defines a birational map, $M$ is big.
Moreover, by Lemma \ref{l-mov-part-lin-system}, there is a log resolution $\phi\colon W\to X$ of $(X,\Supp (B+M))$ such that 
$$
M_W:=\phi^*M\sim A_{W}+R_{W}
$$ 
where $A_W$ is the movable part of $|M_W|$, $|A_W|$ is based point free defining a birational contraction, 
and $R_W\ge 0$ is the fixed part. We can assume $A_W$ is general 
in the linear system $|A_W|$. We denote the pushdown of $A_{W},R_{W}$ to $X$ by $A,R$ respectively. 
Note that $R_{W}$ is only a $\Q$-divisor but $R$ is integral.\\ 

\emph{Step 2.}
In this step we define an auxiliary boundary $\Omega_W$ on $W$.  Decreasing $\epsilon$ 
we can assume $\epsilon\in(0,1)$.
Let $H_W\in |6dA_W|$ be general. Let $D$ be a prime divisor on $W$. Then let the coefficient of $D$ 
in $\Omega_W$ be  
$$ 
\mu_D\Omega_W:= \left\{
  \begin{array}{l l}
    1-\epsilon & \quad \text{if $D$ is exceptional$/X$,}\\
    1-\epsilon & \quad \text{if $D$ is a component of $M_W$},\\
   \epsilon & \quad \text{if $D$ is a component of $B^\sim$ but not of $M_W$},\\
    \frac{1}{2}& \quad \text{if $D=H_W$},\\
    0 & \quad \text{otherwise}
  \end{array} \right.
$$   
where $B^\sim$ is the birational transform of $B$.
The pair $(W,\Omega_W)$ is log smooth, and by Lemma \ref{l-large-boundaries}, 
$K_W+\Omega_W$ is big.\\

\emph{Step 3.}
The aim of this step is to show that $\vol(K_W+\Omega_W)$ is bounded from above. 
Since $M-(K_X+B)$ is pseudo-effective and since $\vol(M)<v$, 
$$
\vol(K_X+B+5dM)<\vol(6dM)<(6d)^dv
$$
hence the left hand side is bounded  from above. 
Now we claim
$$
\vol(K_X+\Omega)\le \vol(K_{X}+B+5dM)
$$ 
where $\Omega$ is the pushdown of $\Omega_{W}$. This follows if we 
show $B+5dM-\Omega$ is big.  

Let $D$ be a component of $\Omega$. Then either $D$ is a component of $M$ or a 
component of $B$ or $D=H$ the pushdown of $H_W$. In the first case,   
$$
\mu_D\Omega= 1-\epsilon<1\le \mu_D(B+M)
$$ 
where the inequality $1\le \mu_D(B+M)$ holds by assumption. 
If $D$ is as in the second case but not the 
first case, then 
$$
\epsilon=\mu_D\Omega\le \mu_DB\le \mu_D(B+M).
$$ 
So we get 
$$
B+M+\frac{1}{2} H-\Omega\ge 0.
$$ 
On the other hand,  $4dM-3dA$ is big, hence 
$$
4dM-\frac{1}{2}H\sim_\Q 4dM-3dA
$$  
is also big. This combined with the previous sentence implies the bigness of $B+5dM-\Omega$.  

We have shown that $\vol(K_{X}+\Omega)$ is bounded from above.
Thus since 
$$
\vol(K_W+\Omega_W)\le \vol(K_{X}+\Omega)
$$ 
we get the required boundedness of $\vol(K_{W}+\Omega_W)$.\\

\emph{Step 4.}
In this step we show the existence of $\mathcal{P}$ and $(\overline{X}, \Sigma_{\overline{X}})$ 
satisfying (1).  
To do this we need to show that $(W,\Omega_W)$ is log birationally bounded.
Let $\Sigma_{W}:=\Supp \Omega_{W}$. First we show  
$$
\vol(K_{W}+\Sigma_{W}+2(2d+1)A_{W})
$$ 
is bounded from above. Since $K_{W}+\Omega_{W}$ is 
big and since the coefficients of $\Omega_{W}$ belong to $\{\epsilon, \frac{1}{2}, 1-\epsilon\}$, 
there is $\alpha\in (0,1)$ depending only on $d$ and $\epsilon$ 
such that $K_{W}+\alpha \Omega_{W}$ is big [\ref{HMX2}, Lemma 7.3]. By definition of $\Omega_W$,
taking a large but bounded number $p$, we get  
$$
\vol(K_{W}+\Sigma_{W}+2(2d+1)A_{W})\le \vol(K_{W}+\Omega_{W}+p(1-\alpha) \Omega_{W})
$$
$$
\le \vol(K_{W}+\Omega_{W}+p(K_{W}+\alpha \Omega_{W})+p(1-\alpha) \Omega_{W})
$$
$$
\le \vol((1+p)(K_{W}+ \Omega_{W}))
$$
which shows the left hand side volume is bounded from above. 

By construction, $|A_W|$ is base point free defining a birational contraction. 
Thus by [\ref{HMX}, Lemma 3.2],
$\Sigma_W\cdot A_W^{d-1}$ is bounded from above.
Therefore, $(W,\Omega_W)$ is log birationally bounded by [\ref{HMX}, Lemma 2.4.2(4)] as the volume of 
$A_W$ is bounded. 

If $W\to \tilde{X}$ is the contraction defined by $A_{W}$, then 
$(\tilde{X}, \Sigma_{\tilde{X}})$ is log bounded where $\Sigma_{\tilde{X}}$ is the pushdown of 
$\Sigma_{W}$, and $\Sigma_{\tilde{X}}$ contains the exceptional divisor of 
$\tilde{X}\bir X$ and the birational transform of $B+M$. 
Thus there is a log resolution $\overline{X}\to \tilde{X}$ of $(\tilde{X}, \Sigma_{\tilde{X}})$ 
such that if $\Sigma_{\overline{X}}$ is the sum of the reduced exceptional divisor 
of $\overline{X}\to \tilde{X}$ and the birational 
transform of $\Sigma_{\tilde{X}}$, then $(\overline{X},\Sigma_{\overline{X}})$ is log smooth and belongs to 
a fixed bounded set of couples $\mathcal{P}$ depending only on $d,v,\epsilon$. Moreover, 
$\Sigma_{\overline{X}}$ contains the exceptional divisor of $\overline{X}\bir X$ and the birational 
transform of $B+M$.\\

\emph{Step 5.}
In this step we prove  (2) and (3). 
Take a common resolution $X'\to W$ and $X'\to \overline{X}$. Let $A_{X'},R_{X'},H_{X'}$ be the pullbacks of 
$A_{W},R_W, H_{W}$, and $A_{\overline{X}}, R_{\overline{X}}, H_{\overline{X}}$ be their pushdown to $\overline{X}$.
By construction, 
$$
H_{X'}\sim 6dA_{X'},  ~~A_{X'}\sim 0/\overline{X}, 
~~\mbox{and}~~\Sigma_{\overline{X}}\ge \frac{1}{2}H_{\overline{X}}.
$$ 
In particular,  there is a number $b\in\N$ depending only on $\mathcal{P}$ such that we can 
pick an ample Cartier divisor $C_{\overline{X}}$ 
 so that 
$bH_{\overline{X}}-C_{\overline{X}}$ is big. Then $bH_{{X'}}-C_{{X'}}$ is also big where 
$C_{X'}$ is the pullback of $C_{\overline{X}}$.
Thus  if $M_{X'}$ is the pullback of $M_W$ and if $M_{\overline{X}}$ is the pushdown of $M_{X'}$, then we have 
$$
M_{\overline{X}}\cdot C_{\overline{X}}^{d-1}=M_{X'}\cdot C_{X'}^{d-1}\le 
\vol(M_{X'}+C_{X'})\le \vol(M_{X'}+bH_{X'})\le \vol((1+6bd)M_{X'})
$$
where the first inequality uses the fact that $M_{X'}, C_{X'}$ are both nef. 
Therefore, $M_{\overline{X}}\cdot C_{\overline{X}}^{d-1}$ is bounded 
from above which implies the coefficients of $M_{\overline{X}}$ are bounded from above by some 
fixed number $c$. That is, (2) holds. Note that we have assumed that $X'\to X$ factors through $W\to X$ 
but (2) holds even if $X'\to X$ does not factor through $W\to X$ because 
$M_{\overline{X}}$ does not depend on the choice of the common resolution.

Finally, (3) holds as by construction $M_W\sim A_W+R_W$ where $A_W$ is the movable 
part of $|M_W|$, $|A_W|$ is base point free, and $A_{X'}\sim 0/\overline{X}$.

\end{proof}

\subsection{Boundedness of singularities on non-klt centres}

The next result is about boundedness of singularities on the normalisation of a non-klt centre 
in the context of adjunction as in \ref{constr-adjunction-non-klt-centre}. This is key to the proofs of 
\ref{p-eff-bir-delta-1}, \ref{p-eff-bir-tau}, \ref{p-bnd-vol-good-boundary}.

\begin{prop}\label{p-bnd-sing-on-non-klt-centre}
Let $d,v\in\N$ and $\epsilon, \epsilon'\in \R^{>0}$ with $\epsilon'<\epsilon<\frac{1}{2}$. Then there exists $t \in \R^{>0}$ 
depending only on $d,v,\epsilon,\epsilon'$ satisfying the following. Assume $X,C,M,\Delta,G,F,\Theta_F,P_F$ 
are as follows:
\begin{itemize}
\item $(X,C)$ is a projective $\epsilon$-lc pair of dimension $d$,

\item $C$ is $\R$-Cartier with coefficients in $\{0\}\cup [\epsilon, 1-\epsilon]$, 

\item $M$ is an ample integral divisor and $|M|$ defines a birational map,

\item $0\le \Delta\sim_\R \alpha M$ for some $0<\alpha<t$, 

\item  $K_X+C+\Delta$ is ample and $M-(K_X+ C+\Delta)$ is big,

\item $G$ is a member of a covering family of subvarieties of $X$, with normalisation $F$, 

\item there is a unique non-klt place of $(X,\Delta)$ whose centre is $G$, 

\item the adjunction formula
$$
K_F+\Theta_F+P_F\sim_\R (K_X+\Delta)|_F
$$
is as in \ref{constr-adjunction-non-klt-centre} assuming $P_F\ge 0$, and 

\item $\vol(M|_F)< v$.\\
\end{itemize}
Then  for any $0\le L_F\sim_\R M|_F$, the pair 
$$
(F,C|_F+\Theta_F+P_F+tL_F)
$$ 
is $\epsilon'$-lc.
\end{prop}
\begin{proof}

We first give a summary of the proof. Note that the adjunction formula in the statement is as in Construction 
 \ref{constr-adjunction-non-klt-centre} but with $B=0$. After looking at the adjunction formula more closely 
and letting $C_F:=C|_F$ and $M_F:=M|_F$, and using \ref{p-log-bir-bnd-cert-pairs}, 
we will find a log bounded birational model $(\overline{F},\Sigma_{\overline{F}})$ of $(F, \Supp (\Theta_F+C_F+M_F))$. 
We then argue that  
$$
K_F+C_F+\Theta_F+P_F+tL_F
$$ 
is ample, hence its singularities cannot be worse than singularities 
of its ``crepant pullback" to $\overline{F}$. At the end we apply \ref{p-non-term-places} to control singularities 
on $\overline{F}$.\\

\emph{Step 1.}
In this step we introduce some basic notation.
We will assume $\dim G>0$ otherwise the statement is vacuous.  Since $|M|$ defines a birational map, 
we can assume $M\ge 0$. 
Moreover, changing $M$ up to linear equivalence, by Lemma \ref{l-mov-part-lin-system}, 
there is a log resolution $\phi\colon W\to X$ of 
$(X,\Supp (C+M))$ such that we can write 
$$
M_W:=\phi^*M=A_{W}+R_{W}
$$ 
where $A_W$ is the movable part of $|M_W|$, $|A_W|$ is based point free defining a birational contraction, 
and $R_W\ge 0$ is the fixed part.  
We denote the pushdown of $A_{W},R_{W}$ to $X$ by $A,R$ respectively. 
Note that $R_{W}$ is only a $\Q$-divisor but $R$ is integral.\\

\emph{Step 2.}
In this step we have a closer look at the adjunction formula given in the statement,  and the related divisors. 
First note that since $G$ is a general member of a covering family (as in \ref{ss-cov-fam-subvar}), 
it is not contained in $\Supp (C+M)$, and $X$ is $\Q$-factorial near the generic point of $G$.
By Theorem \ref{t-subadjunction} (here we take $B=0$) and 
the ACC for lc thresholds [\ref{HMX2}, Theorem 1.1], the
coefficients of $\Theta_{F}$ are in a fixed DCC set $\Psi$ depending only on $d$.  

Since both $K_X+C$ and $C$ are $\R$-Cartier, $K_X$ is $\Q$-Cartier.
By Lemma \ref{l-sub-bnd-on-gen-subvar} 
(again here $B=0$), we can write $K_F+\Lambda_F=K_X|_F$ where $(F,\Lambda_{F})$ is sub-klt and 
$\Lambda_{F}\le \Theta_{F}$. On the other hand, since $G$ is not contained in $\Supp C$, 
the unique non-klt place of $(X,\Delta)$ whose centre is $G$ is also a  unique non-klt place of 
$(X,C+\Delta)$ whose centre is $G$. Thus 
applying Lemma \ref{l-sub-bnd-on-gen-subvar} 
once more (this time by taking $B=C$), we can write $K_F+\tilde{C}_F=(K_X+C)|_F$ where 
$(F,\tilde{C}_{F})$ is sub-$\epsilon$-lc. Note that  
$$
\tilde{C}_F=\Lambda_F+C|_F\le \Theta_F+C|_F.
$$\

\emph{Step 3.} 
Let $C_F:=C|_F$ and $M_F:=M|_F$.
In this step we show 
$$
(F, \Supp (\Theta_F+C_F+M_F))
$$ 
is log birationally bounded using Proposition \ref{p-log-bir-bnd-cert-pairs}.
Since $G$ is a general member of a covering family, we can choose a log resolution $F'\to F$ of the above pair  
such that we have an induced morphism $F'\to W$ and that $|A_{F'}|$ defines a birational contraction 
where $A_{F'}:=A_W|_{F'}$. Thus $|A_{F}|$ defines a birational map where $A_F$ is the 
pushdown of $A_{F'}$. This in turn implies $|M_F|$ defines a birational map because $A_F\le M_F$. 
Moreover,  
$$
K_F+{C}_F+\Theta_F+P_F\sim_\R (K_X+C+\Delta)|_F
$$ 
is ample, and by the generality of $G$, 
$$
M_F-(K_F+{C}_F+\Theta_F+P_F)\sim_\R (M-(K_X+C+\Delta))|_F
$$
is big which in turn implies 
$$
M_F-(K_F+{C}_F+\Theta_F)
$$
is big as well. 

On the other hand, 
by Lemma \ref{l-subadjunction-integral-div}, $\mu_D(\Theta_F+ M_F)\ge 1$ 
for any component $D$ of $M_F$. Applying the lemma once more, 
$\mu_D(\Theta_F+\frac{1}{\epsilon} C_F)\ge 1$ 
for any component $D$ of $C_F$ because  each non-zero coefficient of $\frac{1}{\epsilon} C$ 
is at least $1$.  In particular,  replacing $\epsilon$ with the minimum of $\Psi^{>0}\cup \{\epsilon\}$, we can assume 
the coefficients of $\Theta_F+C_F$ belong to $\{0\}\cup[\epsilon,\infty)$.

Now applying Proposition \ref{p-log-bir-bnd-cert-pairs} to $F, B_F:=\Theta_F+C_F, M_F$, 
there is a bounded set of couples $\mathcal{P}$ 
depending only on $d,v,\epsilon$ such that there is a projective log smooth couple 
$(\overline{F},{\Sigma}_{\overline{F}})\in \mathcal{P}$ 
and a birational map $\overline{F}\bir F$ satisfying: 
\begin{itemize}
\item  ${\Sigma}_{\overline{F}}$ contains the exceptional 
divisor of  $\overline{F}\bir F$ and the birational transform of $\Supp (\Theta_F+C_F+M_F)$, and 

\item if $f\colon F'\to F$ and $g\colon F'\to \overline{F}$ is a common resolution and 
$M_{\overline{F}}$ is the pushdown of $M_F|_{F'}$, then each coefficient of $M_{\overline{F}}$ is at most $c$.\\ 
\end{itemize}

\emph{Step 4.} 
In this step we compare log divisors on $F$ and $\overline{F}$.
 First define $ {\Gamma}_{\overline{F}}:=(1-\epsilon) {\Sigma}_{\overline{F}}$.
Let $K_{F'}+\tilde{C}_{F'}$ be the pullback of $K_{F}+\tilde{C}_{F}$ and let $K_{\overline{F}}+\tilde{C}_{\overline{F}}$
be the pushdown of $K_{F'}+\tilde{C}_{F'}$ to $\overline{F}$. We claim that $\tilde{C}_{\overline{F}}\le {\Gamma}_{\overline{F}}$. 
If $\tilde{C}_{\overline{F}}\le 0$, then the claim holds trivially. 
Assume  $\tilde{C}_{\overline{F}}$ has a component $D$ with positive coefficient. Then $D$ 
is either exceptional$/F$ or is a component of the  birational transform of $\tilde{C}_{F}$ with positive coefficient. 
In the former case, $D$ is a component of ${\Sigma}_{\overline{F}}$ because  ${\Sigma}_{\overline{F}}$
contains the exceptional divisor of  $\overline{F}\bir F$. In the latter case,  $D$ is a component of 
the birational transform of $\Theta_F+C_F$ because $\tilde{C}_F\le \Theta_F+C_F$ by Step 2, hence again 
$D$ is a component of ${\Sigma}_{\overline{F}}$ as it contains 
the birational transform of $\Supp (\Theta_F+C_F+M_F)$. Moreover, since $(F,\tilde{C}_{F})$ is sub-$\epsilon$-lc, 
the coefficient of $D$ in $\tilde{C}_{\overline{F}}$ is at most $1-\epsilon$, hence 
$\mu_D\tilde{C}_{\overline{F}}\le \mu_D{\Gamma}_{\overline{F}}$. We have then proved the claim 
$\tilde{C}_{\overline{F}}\le {\Gamma}_{\overline{F}}$.\\

\emph{Step 5.} 
In this step we define a divisor $I_F$ and compare singularities on $F$ and $\overline{F}$. 
Let $I_F:=\Theta_F+P_F-\Lambda_F$. By Step 2, $I_F\ge 0$. Pick $0\le L_F\sim_\R M_F$ and  
assume $t>0$. Let  $I_{\overline{F}}$ and $L_{\overline{F}}$ be the pushdowns of $I_F|_{F'}$ and 
$L_F|_{F'}$ to $\overline{F}$. Then 
$$
I_{{F}}+tL_{{F}}=\Theta_F+P_F-\Lambda_F+tL_F=K_F+\Theta_F+P_F-K_F-\Lambda_F+tL_F 
$$
$$
\sim_\R  (K_X+\Delta)|_F-K_X|_F+t M_{{F}}
\sim_\R \Delta|_F+t M_{{F}}\sim_\R (\alpha+t)M_F.
$$
Thus we get 
$$
I_{\overline{F}}+tL_{\overline{F}}\sim_\R (\alpha+t)M_{\overline{F}}.
$$

On the other hand, our assumptions ensure that 
$$
K_F+\tilde{C}_F+I_F+tL_F=K_F+C_F+\Lambda_F+\Theta_F+P_F-\Lambda_F+tL_F
$$
$$
=K_F+C_F+\Theta_F+P_F+tL_F\sim_\R (K_X+C+\Delta+tM)|_F
$$ 
is ample. Therefore, 
$$
f^*(K_F+\tilde{C}_F+I_F+tL_F)\le g^*(K_{\overline{F}}+\tilde{C}_{\overline{F}}+I_{\overline{F}}+tL_{\overline{F}})
$$
which implies that 
$$
(F,\tilde{C}_F+I_F+tL_F)
$$
is sub-$\epsilon'$-lc if 
 $$
 ({\overline{F}},\tilde{C}_{\overline{F}}+I_{\overline{F}}+tL_{\overline{F}})
 $$ 
 is sub-$\epsilon'$-lc.\\

\emph{Step 6.} 
In this step we finish the proof using Proposition \ref{p-non-term-places}. 
Pick $l\in\N$ such that ${(l-1)\epsilon}>l\epsilon'$.
Since the coefficients of $M_{\overline{F}}$ are bounded from above by $c$, by Step 3, 
applying Proposition \ref{p-non-term-places}, we deduce that 
$$
({\overline{F}},{\Gamma}_{\overline{F}}+lI_{\overline{F}}+ltL_{\overline{F}})
$$
is klt if $\alpha+t$ is sufficiently small depending only on $\mathcal{P},\epsilon,c$, recalling that 
$I_{\overline{F}}+tL_{\overline{F}}\sim_\R (\alpha+t)M_{\overline{F}}$ by the previous step. In particular, this holds 
assuming  $t>0$ is sufficiently small as $\alpha+t<2t$. From now on we assume $t$ is sufficiently small. Thus 
$$
({\overline{F}},{\Gamma}_{\overline{F}}+I_{\overline{F}}+tL_{\overline{F}})
$$ 
is ${\epsilon'}$-lc because for any prime divisor $D$ on birational models of $F$ we have 
$$
a(D,{\overline{F}},{\Gamma}_{\overline{F}}+I_{\overline{F}}+tL_{\overline{F}})
$$
$$
=\left(\frac{l-1}{l}\right)a(D,{\overline{F}},{\Gamma}_{\overline{F}})
+\frac{1}{l}a(D,{\overline{F}},{\Gamma}_{\overline{F}}+lI_{\overline{F}}+ltL_{\overline{F}})\ge \left(\frac{l-1}{l}\right)\epsilon>\epsilon'.
$$
This then 
 implies that 
 $$
 ({\overline{F}},\tilde{C}_{\overline{F}}+I_{\overline{F}}+tL_{\overline{F}})
 $$ 
 is sub-$\epsilon'$-lc as $\tilde{C}_{\overline{F}}\le \Gamma_{\overline{F}}$ by Step 4.
Therefore, by Step 5, 
$$
(F,\tilde{C}_F+I_F+tL_F)
$$
is also sub-$\epsilon'$-lc. In other words,
$$
(F,C_F+\Theta_F+P_F+tL_F)
$$ 
is  $\epsilon'$-lc.

\end{proof}

\subsection{Effective birationality for Fano varieties with good $\Q$-complements}

Our next result is an attempt to relate effective birationality on Fano varieties $X$ and the volume of $-K_X$.
This is  crucial for the proofs of \ref{p-eff-bir-delta-2} and \ref{p-eff-bir-tau}.

\begin{prop}\label{p-eff-bir-delta-1}
Let $d\in\N$ and $\epsilon,\delta \in \R^{>0}$. Then there exists a number $p\in\N$ depending only on $d,\epsilon,$ and $\delta$ 
satisfying the following. Assume 
\begin{itemize}
\item $X$ is an $\epsilon$-lc Fano variety of dimension $d$, 

\item $m\in \N$ is the smallest number such that $|-mK_X|$ defines a birational map,

\item  $n\in \N$ is a number such that $\vol(-nK_X)> (2d)^d$, and 

\item $nK_X+N\sim_\Q 0$ for some $\Q$-divisor $N\ge 0$ with coefficients $\ge \delta$.\\
\end{itemize}
Then $\frac{m}{n}<p$. 
\end{prop}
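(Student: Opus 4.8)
The plan is to argue by induction on $d$, the case $d=1$ being immediate (then $X\cong\PP^1$, and $m/n\le 1<2=:v$). So assume $d\ge 2$ and that the analogue holds in dimensions $<d$ (in the paper this is part of a simultaneous induction with \ref{p-eff-bir-delta-2} and \ref{p-eff-bir-tau}; for the sketch I treat it as a single inductive hypothesis). First I would feed the volume hypothesis into the non-klt centre machinery of \ref{ss-non-klt-centres}(2): with $D=-nK_X$ (ample, $\vol(D)>(2d)^d$) and the ample divisor $A=-nK_X$, for a general pair of closed points $x,y\in X$ one obtains $0\le\Delta\sim_\Q D+(d-1)A\sim_\Q -dnK_X$ and a subvariety $G\ni x$ such that $(X,\Delta)$ has a unique non-klt place whose centre is $G$, is lc near $x$, is not lc at $y$, and moreover either $\dim G=0$ or $\vol(-nK_X|_G)\le d^d$; all such $G$ lie in a bounded family.

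If $\dim G=0$ for the general choice of $x,y$ (after possibly switching $x$ and $y$), then $-(d+1)nK_X$ is potentially birational, since $\Delta\sim_\Q -dnK_X=(1-\tfrac1{d+1})(-(d+1)nK_X)$ witnesses this at $x$ and $y$. Hence $|K_X+\lceil -(d+1)nK_X\rceil|$ defines a birational map, and comparing $K_X+\lceil -(d+1)nK_X\rceil$ with a suitable multiple of the ample divisor $-K_X$ gives $m\le c(d)\,n$; so the proposition holds in this case with $v=c(d)$.

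The substantial case is $\dim G\ge 1$ with $\vol(-nK_X|_G)\le d^d$. Let $\nu\colon F\to G\hookrightarrow X$ be the normalisation. Theorem \ref{t-subadjunction} (adjunction on the non-klt centre, with $B=0$) gives $K_F+\Theta_F+P_F\sim_\Q (K_X+\Delta)|_F\sim_\Q -(dn-1)K_X|_F$, with $P_F$ pseudo-effective and the coefficients of $\Theta_F$ in a fixed DCC set depending only on $d$; since $G$ is a general member of a covering family and $X$ is $\epsilon$-lc, Lemma \ref{l-sub-bnd-on-gen-subvar} also yields a sub-$\epsilon$-lc $(F,B_F)$ with $K_F+B_F=K_X|_F$ and $B_F\le\Theta_F$. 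The good complement descends: as $G$ is general, $G\nsubseteq\Supp N$, so $N|_F$ is defined; it is effective, non-zero (because $-nK_X|_F$ is ample and $\dim F\ge 1$), satisfies $N|_F\sim_\Q -nK_X|_F$ with coefficients $\ge\delta$, and by Lemma \ref{l-subadjunction-integral-div} applied with $M=\tfrac1\delta N$ every component of $N|_F$ occurs in $\Theta_F+\tfrac1\delta N|_F$ with coefficient $\ge 1$. Thus on $F$ we are in a situation of the same type in dimension $\dim F<d$: controlled singularities, anticanonical-type volume bounded above, and a good complement whose coefficients are bounded below by some $\delta'=\delta'(d,\delta)>0$ (here one uses that a nonzero element of a DCC set is bounded away from $0$). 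Applying the inductive hypothesis on $F$, I would conclude $|-m'nK_X|_F|$ is birational for some $m'=m'(d,\epsilon,\delta)$; equivalently, for a general $x\in F$ there is $0\le\Gamma_F\sim_\Q -m'nK_X|_F$ making $\{x\}$ a non-klt centre with a unique non-klt place while $(F,\Gamma_F)$ fails to be lc at a second general point. The final step is to lift back to $X$: combining $\Gamma_F$, $\Delta$, and a small effective representative of $P_F$ and using the restriction exact sequence \ref{l-restriction-sequence} together with Kawamata--Viehweg vanishing, exactly as in the proof of \ref{p-lift-section-lcc}, produces $0\le\Gamma\sim_\Q -m''nK_X$ on $X$ that is lc near $x$ with unique non-klt centre $\{x\}$ and not lc at $y$. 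Then $-(m''n+1)K_X$ is potentially birational and, as above, $m\le c'(d,\epsilon,\delta)\,n$.

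I expect the hard part to be twofold. First, making the induction honest: the hypotheses of the proposition are not literally inherited by $(F,\Theta_F)$, which is a pair rather than a Fano variety, so one must work with the right common generalisation (this is why the paper proves \ref{p-eff-bir-delta-1}, \ref{p-eff-bir-delta-2}, \ref{p-eff-bir-tau} together), and one must check that the volume upper bound and the coefficient lower bound remain uniform down the recursion, which rests on the DCC/ACC inputs \ref{l-div-adj-dcc}, \ref{l-subadjunction-integral-div} and the global ACC of [\ref{BZh}]. Second, the lifting step: since $P_F$ is only pseudo-effective in general, one must first reduce to the case $P_F$ big and then control the Cartier indices along the vertical divisors on the chosen birational model (as in Steps 3--4 of \ref{p-lift-section-lcc}) so that the restriction sequence and the vanishing theorem apply, the $\epsilon$-lc hypothesis being what keeps those indices bounded. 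The bookkeeping for switching $x$ and $y$ and for passing from potential birationality of $-m''nK_X$ (with a round-up) to an actual bound on $m$ is routine but needs care.
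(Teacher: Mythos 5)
Your proposal takes a genuinely different route from the paper, and it has two gaps that are not merely "bookkeeping."

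First, the paper does \emph{not} prove Proposition \ref{p-eff-bir-delta-1} by induction on dimension. It argues by contradiction with a sequence $(X_i,m_i,n_i,N_i)$ with $m_i/n_i\to\infty$, restricts to non-klt centres $F_i$, and then shows that suitable models $(\overline{F}_i,\Gamma_{\overline{F}_i})$ (together with the pushdown $M_{\overline{F}_i}$ of $A_{W_i}+R_{W_i}$) form a log bounded family with bounded coefficients. The hypotheses $\delta$ and $N_i$ enter only at the very end: $J_{F_i}=\tfrac1\delta N_i|_{F_i}$ forces $(\overline{F}_i,\Gamma_{\overline{F}_i}+L_{\overline{F}_i})$ to be non-klt, where $L_{\overline{F}_i}=I_{\overline{F}_i}+J_{\overline{F}_i}\sim_\R\bigl(\tfrac{n_i+1}{m_i}+\tfrac{n_i}{m_i\delta}\bigr)M_{\overline{F}_i}$; since $m_i/n_i\to\infty$ those coefficients go to $0$, contradicting Proposition \ref{p-non-term-places} (singularities in bounded families). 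Your plan replaces this boundedness-plus-small-perturbation mechanism with a recursion: apply an effective-birationality statement to the lower-dimensional object $F$. But $(F,\Theta_F)$ is not a Fano variety, $-K_X|_F\ne -K_F$, and $P_F$ is a priori only pseudo-effective; so the inductive hypothesis you invoke is not the statement of \ref{p-eff-bir-delta-1} but a substantially stronger effective-birationality statement for pairs equipped with an auxiliary polarisation. You flag this as "making the induction honest," but it is actually the entire content of the proposition: the machinery in Steps 5--9 of the paper's proof (the auxiliary boundaries $\Omega_{F_i'},\Gamma_{F_i'}$, the volume estimates, and the log birational boundedness via [\ref{HMX}]) exists precisely to avoid having to formulate and prove such a generalisation. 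There is no indication in the paper that \ref{p-eff-bir-delta-1}, \ref{p-eff-bir-delta-2}, \ref{p-eff-bir-tau} are proved by a simultaneous induction on dimension; rather \ref{p-eff-bir-delta-2} and \ref{p-eff-bir-tau} quote \ref{p-eff-bir-delta-1} directly.

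Second, your lifting step does not work as written. Proposition \ref{p-lift-section-lcc} lifts \emph{non-vanishing} $h^0(-rnK_X|_F)\neq0$ to $h^0(-lnrK_X)\neq0$ under the strong hypothesis that $(F,\Theta_F+P_F)$ is $\epsilon$-lc for \emph{every} effective representative of $P_F$ (this is what makes $G$ an isolated non-klt centre and keeps the relevant Cartier indices bounded, Steps 1--3 of its proof). It does not lift "birational on $F$" to "potentially birational on $X$," and it does not lift a pair $(F,\Gamma_F)$ with prescribed non-klt centre $\{x\}$ to a pair $(X,\Gamma)$ with prescribed non-klt centre $\{x\}$; producing an isolated non-klt point on $X$ from data on $F$ would require an inversion-of-adjunction argument with uniform control that you have not supplied. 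Moreover the hypothesis "$(F,\Theta_F+P_F)$ is $\epsilon$-lc for all $P_F$" is exactly what fails in the non-klt situation the covering family construction tends to produce (compare Lemma \ref{l-subadjunction-non-klt}), so you cannot simply assume it holds. The case $\dim G=0$ in your sketch is fine and matches the paper's Step 2, but the positive-dimensional case, which is the whole difficulty, is not reduced by your argument.
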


\begin{proof}
The idea of the proof is to apply the frequently used method of showing birationality 
of a linear system by creating non-klt centres. If the centres we create happen to be 
zero dimensional, we are ready immediately. Otherwise the centres are positive dimensional 
and we can cut them and decrease their dimension unless the volume of the restriction of 
$-nK_X$ to these centres is too small in which case we will get a contradiction 
by applying \ref{p-bnd-sing-on-non-klt-centre}.\\

\emph{Step 1.}
If the proposition does not hold, then there is a sequence $X_i,m_i,n_i,N_i$ of 
Fano varieties, numbers, and divisors as in the statement such that the 
numbers $\frac{m_i}{n_i}$ form a strictly increasing sequence approaching $\infty$. 
At the end of the proof we will use Proposition \ref{p-bnd-sing-on-non-klt-centre} to derive a contradiction.\\ 

\emph{Step 2.}
In this step we fix $i$ and create a family of non-klt centres on $X_i$. 
 Applying \ref{ss-non-klt-centres} (2),   
there is a  covering family of subvarieties of $X_i$ (as in \ref{ss-cov-fam-subvar})
such that for any two general closed points $x_i,y_i\in X_i$ we can choose a  member  
 $G_i$ of the family and choose a $\Q$-divisor 
$0\le \Delta_i\sim_\Q -(n_i+1)K_{X_i}$  so that $(X_i,\Delta_i)$ is lc near $x_i$ with a unique 
non-klt place whose centre contains $x_i$, that centre is $G_i$, and $(X_i,\Delta_i)$ is not klt near $y_i$. 
Note that since $x_i,y_i$ are general, we can assume $G_i$ is a general member of 
the family. Recall from \ref{ss-cov-fam-subvar} that this means the family is given by finitely many 
morphisms $V^j\to T^j$ of projective varieties with accompanying surjective morphisms $V^j\to X$ 
and that each $G_i$ is a general fibre of one of these morphisms. Moreover, we can assume 
the points of $T^j$ corresponding to such $G_i$ are dense in $T^j$. Let $d_i:=\max\{\dim V^j-\dim T^j\}$.

If $d_i=0$, that is, if $\dim G_i=0$ for all the $G_i$, then $-2(n_i+1)K_{X_i}$ is potentially birational, 
hence $|K_{X_i}-2(n_i+1)K_{X_i}|$ defines a 
birational map [\ref{HMX}, Lemma 2.3.4] which means $m_i\le 2n_i+1$ giving a contradiction as 
we can assume $m_i/n_i\gg 0$. Thus we can assume $d_i>0$, so  $\dim G_i>0$ for all the $G_i$  
appearing as general fibres of $V^j\to T^j$ for some $j$.

Define $l_i\in\N$ to be the smallest number so that $\vol(-l_iK_{X_i}|_{G_i})>d^d$ for all 
the $G_i$ with positive dimension. 
Then we can assume there is $j$ so that if $G_i$ is a general fibre of 
$V^j\to T^j$, then $G_i$ is positive dimensional and $\vol(-(l_i-1)K_{X_i}|_{G_i})\le d^d$. \\
 
\emph{Step 3.} 
In this step we reduce the problem to the case when $\vol(-m_iK_{X_i}|_{G_i})$ is bounded from above. 
Assume $\frac{l_i}{n_i}$ is bounded from above by some natural number $a$.
Then after replacing $n_i$ with $dan_i$ and applying the second paragraph of \ref{ss-non-klt-centres} (2), 
for each $i$, we can replace the positive dimensional $G_i$ with new ones of smaller dimension, and 
replace the family accordingly, hence decrease the number $d_i$.  
Repeating the process we get to the situation in which we can assume $\frac{l_i}{n_i}$ is an 
increasing sequence approaching $\infty$ otherwise we get the case $d_i=0$ which yields a contradiction 
as in Step 2. On the other hand, if $\frac{m_i}{l_i}$ is not bounded from above, then we can assume 
$\frac{m_i}{l_i}$ is an increasing sequence approaching $\infty$, hence 
we can replace $n_i$ with $l_i$ in which case $\frac{l_i}{n_i}$ is bounded and we 
can argue as before. So we can assume $\frac{m_i}{l_i}$ is bounded from above.

In order to get a contradiction in the following steps it suffices to consider only those $G_i$  
which are positive dimensional and $\vol(-(l_i-1)K_{X_i}|_{G_i})\le d^d$. By Step 2, there is 
a sub-family of such $G_i$ appearing as general fibres of some $V^j\to T^j$.  
\emph{From now on when we mention $G_i$ we assume it is positive dimensional and it 
satisfies the  inequality just stated}.
  In particular,  
$$
\vol(-m_iK_{X_i}|_{G_i})=\left(\frac{m_i}{l_i-1}\right)^d\vol(-(l_i-1)K_{X_i}|_{G_i})\le \left(\frac{m_i}{l_i-1}\right)^dd^d
$$ 
is bounded from above, so $\vol(-m_iK_{X_i}|_{G_i})<v$ for some fixed number $v$ independent of $i$.\\

\emph{Step 4.} 
Let $F_i$ be the normalisation of $G_i$. In this step we look at adjunction by restricting to $F_i$.
Since $G_i$ is a general member of a covering family, $X_i$ is $\Q$-factorial near the generic point 
of $G_i$. By Construction \ref{constr-adjunction-non-klt-centre} and Theorem \ref{t-subadjunction} (by taking 
$B=0$ and $\Delta=\Delta_i$), we can write 
$$
K_{F_i}+\Theta_{F_i}+P_{F_i}\sim_\R (K_{X_i}+\Delta_i)|_{F_i}
$$
where $P_{F_i}$ is pseudo-effective. Pick $0\le Q_i\sim_\Q -n_iK_{X_i}$ not containing $x_i$. By definition of 
$\Theta_{F_i}$, adding $Q_i$ to 
$\Delta_i$ does not change $\Theta_{F_i}$ but changes $P_{F_i}$ to $P_{F_i}+Q_i|_{F_i}$. Thus replacing 
$n_i$ with $2n_i$ and changing $P_{F_i}$ up to $\R$-linear equivalence we can assume $P_{F_i}$ is effective 
and big.\\

\emph{Step 5.}
In this step we get a contradiction by applying Proposition \ref{p-bnd-sing-on-non-klt-centre}. 
By construction, $K_{X_i}+\Delta_i\sim_\Q -n_iK_{X_i}$ is ample. 
Let $M_i:=-m_iK_{X_i}$ and $M_{F_i}:=M_i|_{F_i}$. Then we can assume 
$M_i-(K_{X_i}+\Delta_i)\sim_\Q -(m_i-n_i)K_{X_i}$ is also ample. Moreover, $\Delta_i\sim_\Q \frac{n_i+1}{m_i}M_i$.
On the other hand, 
since $G_i$ is general, it is not contained in $\Supp N_i$, so defining 
$N_{F_i}:=N_i|_{F_i}$ we get the effective divisor $L_{F_i}:=\frac{m_i}{n_i}N_{F_i}$. Since $N_i\sim_\Q -n_iK_{X_i}$, 
we get $L_{F_i}\sim_\R M_{F_i}$. 

Now let $t$ be the number given by Proposition \ref{p-bnd-sing-on-non-klt-centre} 
for the data $d,v,\epsilon, \epsilon'=\frac{\epsilon}{2}$ where we can assume $\epsilon<\frac{1}{2}$ 
by decreasing it if necessary. We can assume $\frac{n_i+1}{m_i}<t$ for every $i$.
Applying the proposition to $X_i,M_{i},\Delta_i, G_i,F_i,\Theta_{F_i},P_{F_i}$ (here we take $C_i=0$), we deduce that  
$$
(F_i, \Theta_{F_i}+P_{F_i}+tL_{F_i})
$$ 
is $\frac{\epsilon}{2}$-lc for every $i$.

Let $D$ be a component of $J_{F_i}:=\frac{1}{\delta}N_{F_i}$. Since each coefficient of $\frac{1}{\delta}N_i$ is 
at least $1$,  by Lemma \ref{l-subadjunction-integral-div},  
$$
\mu_D(\Theta_{F_i}+P_{F_i}+J_{F_i})\ge \mu_D(\Theta_{F_i}+J_{F_i})\ge 1.
$$
By the previous paragraph, $\mu_D\Theta_{F_i}\le 1-\frac{\epsilon}{2}$, hence $ \mu_DJ_{F_i}\ge \frac{\epsilon}{2}$. But then 
$$
\mu_DtL_{F_i}=\left(\frac{t\delta m_i}{n_i}\right)\mu_DJ_{F_i}=\frac{t\delta\epsilon m_i}{2n_i}\gg 0
$$ 
when $i$ is large. This contradicts the $\frac{\epsilon}{2}$-lc property in the previous paragraph.

\end{proof}

\begin{prop}\label{p-eff-bir-delta-2}
Let $d\in\N$ and $\epsilon,\delta \in \R^{>0}$. Then there exists a number $m\in\N$ depending 
only on $d,\epsilon$, and $\delta$ 
satisfying the following. Assume $X$ is an $\epsilon$-lc Fano variety of dimension $d$ such that 
$K_X+B\sim_\Q 0$ for some $\Q$-divisor $B\ge 0$ with coefficients $\ge \delta$. 
Then $|-mK_X|$ defines a birational map. 
\end{prop}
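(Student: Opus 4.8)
The plan is to derive the statement from Proposition \ref{p-eff-bir-delta-1}. Let $m$ be the smallest natural number with $|-mK_X|$ birational; this exists because $-K_X$ is big. Proposition \ref{p-eff-bir-delta-1} bounds $\tfrac{m}{n}$ as soon as $n\in\N$ satisfies $\vol(-nK_X)>(2d)^d$ and $nK_X+N\sim_\Q 0$ for some $N\ge 0$ with coefficients $\ge\delta$; the second requirement is automatic with $N:=nB$, whose coefficients are $\ge n\delta\ge\delta$. Since $\vol(-nK_X)=n^d\vol(-K_X)$, the whole proposition reduces to producing an $n$ bounded in terms of $d,\epsilon,\delta$ with $\vol(-nK_X)>(2d)^d$, i.e. to a lower bound
$$
\vol(-K_X)\ \ge\ c
$$
with $c=c(d,\epsilon,\delta)>0$. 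Granting it, take the least $n$ with $n^dc>(2d)^d$; then $\vol(-nK_X)>(2d)^d$, so Proposition \ref{p-eff-bir-delta-1} gives $\tfrac{m}{n}<v$ for the associated $v=v(d,\epsilon,\delta)$, and hence $m<vn$ depends only on $d,\epsilon,\delta$, which is the assertion.

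It remains to prove the lower bound on $\vol(-K_X)$, and this is the real point; I would argue by induction on $d$, the case $d=1$ being trivial ($X\simeq\PP^1$). Suppose the bound fails in dimension $d$: there is a sequence $X_i$ of $\epsilon$-lc Fano varieties of dimension $d$ with $-K_{X_i}\sim_\Q B_i\ge 0$, the coefficients of $B_i$ at least $\delta$, and $\vol(-K_{X_i})\to 0$. Let $n_i$ be the least natural number with $\vol(-n_iK_{X_i})>(2d)^d$; then $n_i\to\infty$, and by minimality $\vol(-n_iK_{X_i})$ stays bounded above. Running the construction of \ref{ss-non-klt-centres}(2) with $D=-n_iK_{X_i}$ and ample effective divisor $A=-K_{X_i}$ (together with the iteration described there, which only replaces $n_i$ by a bounded multiple of itself), we obtain, for general closed points $x_i,y_i\in X_i$, a bounded covering family of subvarieties with general member $G_i$ and a $\Q$-divisor $0\le\Delta_i\sim_\Q -(n_i+c_0)K_{X_i}$, with $c_0$ depending only on $d$, such that $(X_i,\Delta_i)$ is lc near $x_i$ with a unique non-klt place whose centre is $G_i$, is not lc at $y_i$, and $\vol(-K_{X_i}|_{G_i})\le d^d$ when $\dim G_i>0$. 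When $\dim G_i=0$ the divisor $-c_1n_iK_{X_i}$ (with $c_1$ depending only on $d$) is potentially birational, so $|-c_1n_iK_{X_i}|$ is birational, and combined with the usual arguments this feeds back into the inductive scheme; the essential case is $\dim G_i>0$, where we let $F_i$ be the normalisation of $G_i$.

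On $G_i$ I would apply adjunction: by Theorem \ref{t-subadjunction} (the coefficients of $\Theta_{F_i}$ lying in a fixed DCC set by the ACC for lc thresholds) there are a boundary $\Theta_{F_i}$ and a pseudo-effective $P_{F_i}$ with $K_{F_i}+\Theta_{F_i}+P_{F_i}\sim_\Q (K_{X_i}+\Delta_i)|_{F_i}$; after replacing $n_i$ by a bounded multiple one may assume $P_{F_i}$ is big, and by Lemma \ref{l-sub-bnd-on-gen-subvar} (as $G_i$ is a general member of a covering family and $X_i$ is $\epsilon$-lc) that $(F_i,\Theta_{F_i}+P_{F_i})$ is $\epsilon$-lc. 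Passing to a model of $F_i$ on which the moduli part of adjunction for fibre spaces is nef and pulled back (Theorem \ref{t-adj-fib-spaces}) and running a minimal model program, these data present a lower-dimensional $\epsilon$-lc weak Fano $\widetilde F_i$ equipped with $K_{\widetilde F_i}+B_{\widetilde F_i}\sim_\Q 0$, where, by Lemma \ref{l-subadjunction-integral-div} applied to $\tfrac1\delta N_i|_{F_i}$ with $N_i=n_iB_i$, the coefficients of $B_{\widetilde F_i}$ are bounded below by some $\delta'=\delta'(d,\epsilon,\delta)>0$. By the inductive hypothesis $\vol(-K_{\widetilde F_i})$ is bounded below, hence, by effective birationality in dimension $<d$ (available from the same induction), $h^0(-rn_iK_{X_i}|_{F_i})\neq 0$ for some $r$ bounded in terms of $d,\epsilon,\delta$; then Proposition \ref{p-lift-section-lcc} lifts this to $h^0(-ln_irK_{X_i})\neq 0$ with $l$ bounded, and performing this uniformly for general $x_i$ (with the standard separation of a pair of general points) yields $|-m'K_{X_i}|$ birational for some $m'$ bounded in terms of $d,\epsilon,\delta$, so $\vol(-K_{X_i})\ge (m')^{-d}$, contradicting $\vol(-K_{X_i})\to 0$. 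The main obstacle is precisely this last step: guaranteeing that the fibre $F_i$ (or its model $\widetilde F_i$) genuinely falls under the hypotheses of the proposition in dimension $<d$ — in particular that the induced boundary coefficients do not degenerate to $0$ — which is exactly where the finiteness built into Lemma \ref{l-subadjunction-integral-div} and the ACC for lc thresholds are indispensable.
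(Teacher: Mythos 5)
Your first paragraph is a correct reduction, but it obscures that the paper's proof never establishes the lower bound $\vol(-K_X)\ge c$ directly; it instead bounds $\vol(-mK_X)$ from \emph{above}. Having chosen $n$ minimal with $\vol(-nK_X)>(2d)^d$ (so $n$ is \emph{not} assumed bounded), Proposition~\ref{p-eff-bir-delta-1} bounds $m/n$, and then
$$
\vol(-mK_X)=\Bigl(\tfrac{m}{n-1}\Bigr)^d\vol(-(n-1)K_X)\le \Bigl(\tfrac{m}{n-1}\Bigr)^d(2d)^d
$$
is bounded even though $m$ and $n$ might a priori both be huge. With $\vol(-mK_X)$ bounded, the resolution on which $-mK_X$ pulls back to a free part $A_{W}$ plus fixed part $R_{W}$ gives a log birationally bounded model $(\overline W,\Sigma_{\overline W})$, onto which the big boundary $B$ with coefficients $\ge\delta$ is transported: $L_{\overline W}:=\frac1\delta B|_{\overline W}$ has $(\overline W,\Gamma_{\overline W}+L_{\overline W})$ non-klt yet $L_{\overline W}\sim_\R \frac{1}{\delta m}(A_{\overline W}+R_{\overline W})$, whose coefficients tend to $0$ as $m\to\infty$, contradicting Proposition~\ref{p-non-term-places}. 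This is a \emph{global} argument (resolution, log birational boundedness, singularity estimate in a bounded family); no induction on $d$ and no further subadjunction onto non-klt centres is needed — all of that heavy lifting is already inside Proposition~\ref{p-eff-bir-delta-1}.

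Your proposed inductive proof of the volume lower bound essentially re-runs the subadjunction machinery a second time and has genuine gaps. First, Lemma~\ref{l-sub-bnd-on-gen-subvar} does \emph{not} say $(F_i,\Theta_{F_i}+P_{F_i})$ is $\epsilon$-lc. It gives a sub-boundary $B_{F_i}$ on $F_i$ with $K_{F_i}+B_{F_i}=K_{X_i}|_{F_i}$, $(F_i,B_{F_i})$ sub-$\epsilon$-lc, and $B_{F_i}\le\Theta_{F_i}$; passing from $B_{F_i}$ to $\Theta_{F_i}+P_{F_i}$ adds the positive contribution of $\Delta_i|_{F_i}$, so the $\epsilon$-lc property of the latter pair is by no means automatic. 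In the paper, the analogous claim (Step~2 of Proposition~\ref{p-eff-bir-tau}) is proved by a nontrivial contradiction argument via Proposition~\ref{p-non-term-places}, and crucially uses that the ambient mld's $\epsilon_i$ approach $1$, so that $\epsilon_i-\epsilon'$ is bounded below; with a fixed $\epsilon$ that argument does not run. Second, there is no reason why $F_i$, or a log minimal model $\widetilde F_i$ of $(F_i,\Sigma_{F_i})$, should be a \emph{weak Fano variety}, let alone an $\epsilon$-lc one in the absolute sense required by the inductive hypothesis; the paper never asserts anything of this sort about $F_i$. Third, Lemma~\ref{l-subadjunction-integral-div} only controls coefficients of $\Theta_{F}+M_F$ along components of $M_F$; it does not produce a boundary $B_{\widetilde F_i}$ on $\widetilde F_i$ with $K_{\widetilde F_i}+B_{\widetilde F_i}\sim_\Q 0$ and all nonzero coefficients bounded below by a $\delta'(d,\epsilon,\delta)$. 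And Proposition~\ref{p-lift-section-lcc} requires exactly the $\epsilon$-lc statement for \emph{every} choice of $P_{F}$ in its $\Q$-linear equivalence class, which you have not justified. The obstacle you flag at the end is therefore the whole proof: the inductive hypothesis cannot be met by the restricted data, and the missing steps are the ones that make the argument.

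In short: correct reduction, but the paper closes the argument by bounding $\vol(-mK_X)$ from above and appealing to boundedness plus Proposition~\ref{p-non-term-places}, not by a new induction; the induction you sketch misquotes Lemma~\ref{l-sub-bnd-on-gen-subvar} and leaves unestablished that the restricted problem lies in the inductive class.
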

\begin{proof}
\emph{Step 1}.
In this step we set up the notation and bound certain volumes.
If the proposition is not true, then there is a sequence of Fano varieties $X_i$ and $\Q$-divisors $B_i$ 
satisfying the assumptions of the proposition but such that if $m_i\in\N$ is the smallest 
number so that $|-m_iK_{X_i}|$ defines a birational map, then the $m_i$ form a strictly increasing 
sequence approaching $\infty$. Let $n_i\in\N$ be the smallest number so that $\vol(-n_iK_{X_i})>(2d)^d$. 
Obviously the coefficients of $N_i:=n_iB_i$ are $\ge \delta$, and $n_iK_{X_i}+N_i\sim_\Q 0$.
Thus by Proposition \ref{p-eff-bir-delta-1}, there is a number $p\in\N$ independent of $i$ such that 
$\frac{m_i}{n_i}<p$. In particular, we can assume $n_i>1$.
Therefore, $\vol(-m_iK_{X_i})$ is bounded from above because 
$$
\vol(-m_iK_{X_i})=\left(\frac{m_i}{n_i-1}\right)^d\vol(-(n_i-1)K_{X_i})\le \left(\frac{m_i}{n_i-1}\right)^d(2d)^d.
$$\ 

\emph{Step 2}.
In this step we find a bounded birational model of $X$.
Let $M_i$ be a general element of $|-m_iK_{X_i}|$. 
We show $(X_i,\Supp(B_i+M_i))$ is log birationally bounded.
After replacing $\epsilon$ with $\min\{\epsilon,\delta\}$, we can assume the coefficients of 
$B_i$ belong to $\{0\}\cup [\epsilon,\infty)$. Applying Proposition \ref{p-log-bir-bnd-cert-pairs} 
to $X_i,B_i,M_i$, 
 there is a bounded set of couples $\mathcal{P}$ and a number $c\in\R^{>0}$ 
such that for each $i$ there is a projective log smooth couple $(\overline{X}_i,{\Sigma}_{\overline{X}_i})\in \mathcal{P}$ 
and a birational map $\overline{X}_i\bir X_i$ such that 
\begin{itemize}
\item  $\Supp {\Sigma}_{\overline{X}_i}$ contains the exceptional 
divisor of  $\overline{X}_i\bir X_i$ and the birational transform of $\Supp (B_i+{M}_i)$, and 

\item if $W_i\to X_i$ and $W_i\to \overline{X}_i$ is a common resolution and 
$M_{\overline{X}_i}$ is the pushdown of $M|_{W_i}$, then each coefficient of $M_{\overline{X}_i}$ is at most $c$. 
\end{itemize}\

\emph{Step 3.}
In this step we derive a contradiction using Proposition \ref{p-non-term-places}.
Let $K_{W_i}+\Lambda_{W_i}$ be the 
pullback of $K_{X_i}$ and let $K_{\overline{X}_i}+\Lambda_{\overline{X}_i}$ be its pushdown on $\overline{X}_i$.
The crepant pullback of 
$$
K_{X_i}+\Delta_i:=K_{X_i}+\frac{1}{m_i}M_{i}\sim_\Q 0
$$ 
to $\overline{X}_i$ is  
$$
K_{\overline{X}_i}+\Delta_{\overline{X}_i}:=
K_{\overline{X}_i}+\Lambda_{\overline{X}_i}+\frac{1}{m_i}M_{\overline{X}_i}\sim_\Q 0.
$$ 
Note that the coefficients of $\Lambda_{\overline{X}_i}$ are at most $1-\epsilon$ as $X_i$ is $\epsilon$-lc, 
and the support of 
$\Delta_{\overline{X}_i}$ is contained in $\Sigma_{\overline{X}_i}$.

By Step 2, if $m_i$ is sufficiently large, then the coefficients of 
$\frac{1}{m_i}M_{\overline{X}_i}$ are sufficiently small.
Therefore,  letting  $\Gamma_{\overline{X}_i}:=(1-\frac{\epsilon}{2})\Sigma_{\overline{X}_i}$ 
we have $\Delta_{\overline{X}_i}\le \Gamma_{\overline{X}_i}$ for $i\gg 0$. 
Now  let $L_{i}:=\frac{1}{\delta}B_i$ which has coefficients $\ge 1$. Then 
$({X_i},\Delta_i+L_{i})$ 
is not klt and   
$K_{X_i}+\Delta_i+L_{i}$ 
is ample. Therefore, if $L_{\overline{X}_i}$ is the pushdown of $L_{i}|_{X_i}$, then 
$(\overline{X}_i,\Delta_{\overline{X}_i}+L_{\overline{X}_i})$
is not sub-klt which in turn implies $(\overline{X}_i,\Gamma_{\overline{X}_i}+L_{\overline{X}_i})$ is not klt. 
This contradicts Proposition \ref{p-non-term-places} because 
$L_{\overline{X}_i}\sim_\R \frac{1}{\delta m_i}M_{\overline{X}_i}$.
    
\end{proof}

\subsection{Effective birationality for nearly canonical Fano varieties}

The next result treats one of the main special cases of \ref{t-eff-bir-e-lc} when $X$ has canonical or 
nearly canonical singularities. This is particularly useful when $X$  is exceptional and we cannot 
create deep singularities using divisors $0\le D\sim_\Q -K_X$, e.g. end of proof of \ref{l-from-compl-to-BAB-exc-usual} 
which is an inductive treatment of \ref{t-BAB-exc}.

\begin{prop}\label{p-eff-bir-tau}
Let $d\in \N$. Then there exist numbers $\tau\in (0,1)$ and $m\in\N$ depending only on $d$ 
satisfying the following. 
If $X$ is a $\tau$-lc Fano variety of dimension $d$,  
then $|-mK_{X}|$ defines a birational map.
\end{prop}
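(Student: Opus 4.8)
The plan is to reduce to Proposition \ref{p-eff-bir-delta-2}. The main point is to show: if $X$ is a $\tau$-lc Fano variety of dimension $d$ with $\tau$ sufficiently close to $1$ (depending only on $d$), then $X$ admits a $\Q$-complement $K_X+B^+\sim_\Q 0$ with $(X,B^+)$ lc and with the coefficients of $B^+$ lying in a DCC set depending only on $d$. Granting this, $(X,B^+)$ is a log Calabi--Yau pair of dimension $d$ with DCC boundary coefficients, so by the global ACC [\ref{HMX2}, Theorem 1.5] these coefficients lie in a finite set; hence $n(K_X+B^+)\sim 0$ for some $n=n(d)$ and, as $B^+\neq 0$ ($X$ being Fano), every coefficient of $B^+$ is at least $\tfrac1n$. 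Then $B:=B^+$, $\epsilon:=\tau$ and $\delta:=\tfrac1n$ satisfy the hypotheses of \ref{p-eff-bir-delta-2}, which yields the desired $m$.

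To build the complement I would induct on $d$; the case $d=1$ is clear since $X=\PP^1$. For the inductive step, let $n$ be minimal with $\vol(-nK_X)>(2d)^d$ and run the construction of \ref{ss-non-klt-centres}(2) for $D\sim_\Q-(n+1)K_X$ together with an auxiliary ample divisor proportional to $-K_X$ used to cut down dimensions: for a general $x\in X$ this produces a general member $G$ of a bounded covering family of subvarieties together with $0\le\Delta\sim_\Q-(n+1)K_X$ (after a bounded adjustment of $n$) such that $(X,\Delta)$ has a unique non-klt place over $x$ with centre $G$, and either $\dim G=0$ or $\dim G>0$ with the relevant volumes on $G$ bounded. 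If $\dim G=0$, a complement on $X$ is obtained by lifting the trivial complement from the point, in the style of Prokhorov--Shokurov. If $\dim G=e>0$, let $F$ be the normalisation of $G$: Theorem \ref{t-subadjunction} gives $(K_X+\Delta)|_F\sim_\Q K_F+\Theta_F+P_F$ with $\Theta_F$ in a DCC set depending only on $d$ and $P_F$ pseudo-effective (and big after enlarging the auxiliary divisor), while Lemma \ref{l-sub-bnd-on-gen-subvar}, using that $X$ is $\tau$-lc and $G$ is a general member of a covering family, yields $K_X|_F=K_F+\Lambda_F$ with $(F,\Lambda_F)$ sub-$\tau$-lc and $\Lambda_F\le\Theta_F$. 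Thus $(F,\Lambda_F)$ is again nearly canonical, of dimension $<d$; by the induction hypothesis it has a bounded complement, equivalently a section of a bounded multiple of $-K_X|_F$, which we then lift to $X$ by the section-lifting Proposition \ref{p-lift-section-lcc}, producing a $\Q$-complement on $X$ whose coefficients are controlled by the adjunction formulas of Section 3.

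The step I expect to be the main obstacle is the descent to the non-klt centre. One has to check that for $\tau$ close enough to $1$ the restricted pair $(F,\Theta_F+P_F)$ is $\epsilon$-lc with $\epsilon$ depending only on $d$ — here the nearly canonical hypothesis on $X$ must be combined with the ACC for lc thresholds [\ref{HMX2}, Theorem 1.1] and Lemma \ref{l-sub-bnd-on-gen-subvar} — so that \ref{p-lift-section-lcc} genuinely applies; and the induction really has to be run in the log (indeed generalised) setting, since $(F,\Lambda_F)$ is a pair rather than a Fano variety, with its complement coefficients kept in a DCC set depending only on $d$ throughout. Together with cleanly disposing of the zero-dimensional-centre case, this is where the real work lies; the concluding appeal to global ACC and to \ref{p-eff-bir-delta-2} is then formal.
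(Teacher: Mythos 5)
Your overall strategy -- produce a $\Q$-complement $K_X+B^+\sim_\Q 0$ with $(X,B^+)$ lc and coefficients in a DCC set depending only on $d$, then invoke the global ACC [\ref{HMX2}, Theorem 1.5] and Proposition \ref{p-eff-bir-delta-2} -- is genuinely different from the paper's, and the reduction step you call ``formal'' is indeed sound. But the paper never constructs a complement here at all: it argues by contradiction, taking a sequence $X_i$ with $\epsilon_i\to 1$ and $m_i\to\infty$, first bounding $m_i/n_i$ (running Steps 1--9 of \ref{p-eff-bir-delta-1} plus the $\kappa_\sigma$ dichotomy and \ref{p-lift-section-lcc} to get a \emph{section} of $|-n_irK_{X_i}|$ with bounded $r$, which feeds into \ref{p-eff-bir-delta-1}), and then deriving a contradiction from boundedness via the pseudo-effective threshold Lemma \ref{l-p-eff-thr-bnd-fam}. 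The key point is that \ref{p-eff-bir-delta-1} only requires some $N\geq 0$ with $nK_X+N\sim_\Q 0$ and coefficients bounded below, with no lc requirement; it does not need a complement.

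The concrete gap in your proposal is the coefficient control in the complement construction. The non-klt centre $G$ is cut out by $\Delta\sim_\Q -(n+1)K_X$ where $n$ is the least integer with $\vol(-nK_X)>(2d)^d$, and this $n$ is \emph{not} bounded over the class of $\tau$-lc Fano varieties (the volume $\vol(-K_X)$ can be arbitrarily small \emph{a priori}). Proposition \ref{p-lift-section-lcc} then only produces a section of $|-lnrK_X|$ for bounded $l,r$ but unbounded $lnr$; the resulting boundary $\frac{1}{lnr}M$ has positive coefficients as small as $\frac{1}{lnr}$, which do not lie in any DCC set independent of $X$, so the global ACC cannot be applied. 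The ``adjunction formulas of Section~3'' control $\Theta_F$ but not the coefficients you need on $X$. The zero-dimensional-centre case has the same defect: potential birationality gives that $|-(2n+1)K_X|$ is birational with $n$ unbounded, which in the paper's compactness argument contradicts $m_i/n_i\to\infty$ but in your framework produces no complement. Finally, note that the statement you wish to prove -- existence of a $\Q$-complement with coefficients in a fixed DCC set for $\tau$-lc Fano -- is essentially Theorem \ref{t-eff-non-van-fano} restricted to a subclass, and in the paper's logical architecture that theorem sits \emph{downstream} of \ref{p-eff-bir-tau} (via \ref{t-bnd-compl-usual}, \ref{t-BAB-exc}, \ref{l-from-compl-to-BAB-exc-usual}); so one has to be careful that any such complement construction in dimension $d$ does not secretly re-use \ref{p-eff-bir-tau} in dimension $d$. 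The paper avoids this by not constructing complements in this proof at all.
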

\begin{proof}
We first give a short summary of the proof. Steps 1-4 are quite similar to those of the proof of \ref{p-eff-bir-delta-1}. 
We take $n\in\N$ so that $\vol(-nK_X)>(2d)^d$ and much of the proof is spent on showing $\frac{m}{n}$ is bounded from above 
arguing by contradiction.
We create a covering family of non-klt centres $G$ on $X$. Again the 
difficult case is when the centres are positive dimensional. We argue 
that singularities on the normalisation $F$ of $G$ cannot be too bad. Next we find a smooth and bounded birational 
model $\overline{F}$ of $F$ and reduce to the situation when $\kappa(K_{\overline{F}})=0$. 
This in turn allows us to reduce to the case when $h^0(-rK_X|_F)>0$ for some bounded $r\in \N$. Next we use \ref{p-lift-section-lcc} 
to lift sections and produce some $N\ge 0$  with 
coefficients bounded from below and satisfying $nK_X+N\sim_\Q 0$. At this point we apply 
\ref{p-eff-bir-delta-1} to deduce that $\frac{m}{n}$ is bounded from above. 
At the last step we work on a bounded birational model $\overline{X}$ of $X$ 
and apply \ref{l-p-eff-thr-bnd-fam} to show $m$ is bounded from above.\\

\emph{Step 1}.
In this step we setup basic notation. 
If the proposition does not hold, then there is a sequence $X_i$ of Fano varieties 
of dimension $d$ and an increasing sequence $\epsilon_i$ of numbers in $(0,1)$ approaching 
$1$ such that $X_i$ is $\epsilon_i$-lc and if $m_i\in\N$ is the smallest 
number such that $|-m_iK_{X_i}|$ defines a birational map, then the $m_i$ form 
an increasing sequence approaching $\infty$. 
Let $n_i\in\N$ be a number so that $\vol(-n_iK_{X_i})>(2d)^d$. 
First we want to show $\frac{m_i}{n_i}$ is bounded from above. 
Assume this is not the case, so we can assume the  
$\frac{m_i}{n_i}$ form an increasing sequence approaching $\infty$. 
We will derive a contradiction by the end of Step 9. Finally in Step 10 we prove $m_i$ is bounded which is again 
a contradiction.\\

\emph{Step 2.}
In this step we fix $i$ and create a covering family of non-klt centres on $X_i$. 
Applying \ref{ss-non-klt-centres} (2),   
there is a  covering family of subvarieties of $X_i$ (as in \ref{ss-cov-fam-subvar})
such that for any two general closed points $x_i,y_i\in X_i$ we can choose a  member  
 $G_i$ of the family and choose a $\Q$-divisor 
$0\le \Delta_i\sim_\Q -(n_i+1)K_{X_i}$  so that $(X_i,\Delta_i)$ is lc near $x_i$ with a unique 
non-klt place whose centre contains $x_i$, that centre is $G_i$, and $(X_i,\Delta_i)$ is not klt near $y_i$. 
Note that since $x_i,y_i$ are general, we can assume $G_i$ is a general member of 
the family. Recall from \ref{ss-cov-fam-subvar} that this means the family is given by finitely many 
morphisms $V^j\to T^j$ of projective varieties with accompanying surjective morphisms $V^j\to X$ 
and that each $G_i$ is a general fibre of one of these morphisms. Moreover, we can assume 
the points of $T^j$ corresponding to such $G_i$ are dense in $T^j$. Let $d_i:=\max\{\dim V^j-\dim T^j\}$.

If $d_i=0$, that is, if $\dim G_i=0$ for all the $G_i$, then $-2(n_i+1)K_{X_i}$ is potentially birational, 
hence $|K_{X_i}-2(n_i+1)K_{X_i}|$ defines a 
birational map [\ref{HMX}, Lemma 2.3.4] which means $m_i\le 2n_i+1$ giving a contradiction as 
we can assume $m_i/n_i\gg 0$. Thus we can assume $d_i>0$, hence $\dim G_i>0$ for all the $G_i$  
appearing as general fibres of $V^j\to T^j$ for some $j$. 

Define $l_i\in\N$ to be the smallest number so that $\vol(-l_iK_{X_i}|_{G_i})>d^d$ for all 
the $G_i$ with positive dimension. 
Then we can assume there is $j$ so that if $G_i$ is a general fibre of 
$V^j\to T^j$, then $G_i$ is positive dimensional and $\vol(-(l_i-1)K_{X_i}|_{G_i})\le d^d$. \\
 
\emph{Step 3.} 
In this step we reduce the problem to the case when $\vol(-m_iK_{X_i}|_{G_i})$ is bounded from above. 
Assume $\frac{l_i}{n_i}$ is bounded from above by some natural number $a$.
Then after replacing $n_i$ with $dan_i$ and applying the second paragraph of \ref{ss-non-klt-centres} (2), 
for each $i$, we can replace the positive dimensional $G_i$ with new ones of smaller dimension, and 
replace the family accordingly, hence decrease the number $d_i$.  
Repeating the process we get to the situation in which we can assume $\frac{l_i}{n_i}$ is an 
increasing sequence approaching $\infty$ otherwise we get the case $d_i=0$ which yields a contradiction 
as in Step 2. On the other hand, if $\frac{m_i}{l_i}$ is not bounded from above, then we can assume 
$\frac{m_i}{l_i}$ is an increasing sequence approaching $\infty$, hence 
we can replace $n_i$ with $l_i$ in which case $\frac{l_i}{n_i}$ is bounded and we 
can argue as before. So we can assume $\frac{m_i}{l_i}$ is bounded from above.

In order to get a contradiction in the following steps it suffices to consider only those $G_i$  
which are positive dimensional and $\vol(-(l_i-1)K_{X_i}|_{G_i})\le d^d$. By Step 2, there is 
a sub-family of such $G_i$ appearing as general fibres of some $V^j\to T^j$.  
\emph{From now on when we mention $G_i$ we assume it is positive dimensional and it 
satisfies the  inequality just stated}.
  In particular,  
$$
\vol(-m_iK_{X_i}|_{G_i})=\left(\frac{m_i}{l_i-1}\right)^d\vol(-(l_i-1)K_{X_i}|_{G_i})\le \left(\frac{m_i}{l_i-1}\right)^dd^d
$$ 
is bounded from above, so $\vol(-m_iK_{X_i}|_{G_i})<v$ for some fixed number $v$.\\

\emph{Step 4.} 
Let $F_i$ be the normalisation of $G_i$. In this step we look at adjunction by restricting to $F_i$.
Since $G_i$ is a general member of a covering family, $X_i$ is $\Q$-factorial near the generic point 
of $G_i$. By Construction \ref{constr-adjunction-non-klt-centre} and Theorem \ref{t-subadjunction} (by taking 
$B=0$ and $\Delta=\Delta_i$), we can write 
$$
K_{F_i}+\Delta_{F_i}:=K_{F_i}+\Theta_{F_i}+P_{F_i}\sim_\R (K_{X_i}+\Delta_i)|_{F_i}
$$
where $P_{F_i}$ is pseudo-effective. Pick  $0\le Q_i\sim_\Q -n_iK_{X_i}$ not containing $x_i$. By definition of $\Theta_{F_i}$, 
adding $Q_i$ to $\Delta_i$ does not change $\Theta_{F_i}$ but changes $P_{F_i}$ to $P_{F_i}+Q_i|_{F_i}$. Thus replacing 
$n_i$ with $2n_i$ and changing $P_{F_i}$ up to $\R$-linear equivalence we can assume 
$P_{F_i}$ is effective and big.\\

\emph{Step 5.}
In this step we reduce to the situation in which $(F_i,\Delta_{F_i})$ is ${\epsilon}'$-lc 
for some $\epsilon'>0$ and that $\Theta_{F_i}=0$,  for every $i$.  
By construction,  $K_{X_i}+\Delta_i\sim_\Q -n_iK_{X_i}$ is ample. 
Assume $0\le M_i\sim -m_iK_{X_i}$ and $M_{F_i}:=M_i|_{F_i}$. Then we can assume 
$M_i-(K_{X_i}+\Delta_i)\sim_\Q -(m_i-n_i)K_{X_i}$ is also ample. 
Moreover, $\Delta_i\sim_\Q \frac{n_i+1}{m_i}M_i$.
Pick $0\le L_{F_i}\sim_\R M_{F_i}$. 

Let $\epsilon'<\epsilon$ be positive real numbers such that $\epsilon<{\epsilon_i}$ for every $i$.
Now let $t$ be the number given by Proposition \ref{p-bnd-sing-on-non-klt-centre} 
for the data $d,v,\epsilon, \epsilon'$. We can assume $\frac{n_i+1}{m_i}<t$ for every $i$.
Applying the proposition to $X_i,M_i,\Delta_i,G_i,F_i,\Theta_{F_i},P_{F_i}$ (here we take $C_i=0$), we deduce that  
$(F_i, \Delta_{F_i}+tL_{F_i})$ is ${\epsilon}'$-lc for every $i$, hence $(F_i, \Delta_{F_i})$ is 
${\epsilon}'$-lc for every $i$.

By Theorem \ref{t-subadjunction} and by the ACC for lc thresholds [\ref{HMX2}, Theorem 1.1], the 
coefficients of $\Theta_{F_i}$ belong to some fixed DCC set $\Psi$. 
We can assume $\epsilon_i$ is sufficiently close to $1$ and so we can also choose $\epsilon'$
to be close to $1$. This ensures that $\Theta_{F_i}=0$ by the $\epsilon'$-lc property of $(F_i, \Delta_{F_i})$  
and the fact that the coefficients of $\Theta_{F_i}$ are in the DCC set $\Psi$. \\

\emph{Step 6}.  
In this step we find a bounded birational model of $F_i$. 
By Lemma \ref{l-subadjunction-integral-div}, $\mu_DM_{F_i}=\mu_D(\Theta_{F_i}+M_{F_i})\ge 1$, for 
every component $D$ of $M_{F_i}$. Moreover, $M_{F_i}-K_{F_i}$ is big because 
$M_{F_i}-(K_{F_i}+\Delta_{F_i})$ is ample and $P_{F_i}$ is big. In addition, 
$|M_{F_i}|$ defines a birational map because $|M_i|$ defines a birational map and 
$G_i$ is a general member of a covering family of subvarieties.

Now applying 
Proposition \ref{p-log-bir-bnd-cert-pairs} (by taking $X=F_i, B=\Theta_{F_i}=0, M=M_{F_i}$), 
there is a bounded set of couples $\mathcal{P}$ independent of $i$ such that for each $i$, 
we can find a projective log smooth couple
 $({\overline{F}_i},{\Sigma}_{\overline{F}_i})\in\mathcal{P}$ and a birational map $\overline{F}_i\bir F_i$ such that 
\begin{itemize}
\item  $\Supp {\Sigma}_{\overline{F}_i}$ contains the exceptional 
divisors of  $\overline{F}_i\bir F_i$ and the birational transform of $\Supp M_{F_i}$;

\item if $F_i'\to F_i$ and $F_i'\to \overline{F}_i$ is a common resolution and 
$M_{\overline{F}_i}$ is the pushdown of $M_{F_i'}:=M|_{F_i'}$, then each coefficient of $M_{\overline{F}_i}$ is at most $c$;

\item $M_{F_i'}\sim A_{F_i'}+R_{F_i'}$ where $A_{F_i'}$ is big, $|A_{F_i'}|$ is base point free, $R_{F_i'}\ge 0$, 
and $A_{F_i'}\sim 0/\overline{F}_i$.
\end{itemize}

In addition we can assume $A_{F_i'}$ is reduced and that $A_{\overline{F}_i}\le {\Sigma}_{\overline{F}_i}$ 
where $A_{\overline{F}_i}$ is the pushdown of $A_{F_i'}$.\\ 

\emph{Step 7}.  
In this step we reduce to the situation in which $K_{\overline{F}_i}$ is pseudo-effective. 
By Lemma \ref{l-sub-bnd-on-gen-subvar}, we can write $K_{F_i}+\Lambda_{F_i}=K_{X_i}|_{F_i}$ 
where $\Lambda_{F_i}\le \Theta_{F_i}=0$ and $({F_i}+\Lambda_{F_i})$ is sub-$\epsilon_i$-lc.
 Let $K_{F_i'}+\Lambda_{F_i'}$ 
and $M_{F_i'}$ be the pullbacks of $K_{F_i}+\Lambda_{F_i}$ and $M_{F_i}$ respectively, and in turn 
$K_{\overline{F}_i}+\Lambda_{\overline{F}_i}$ and $M_{\overline{F}_i}$ be their pushdowns to $\overline{F}_i$. 
From $K_{X_i}+\frac{1}{m_i}M_i\sim_\Q 0$ we get 
$$
K_{\overline{F}_i}+\Lambda_{\overline{F}_i}+\frac{1}{m_i}M_{\overline{F}_i}\sim_\Q 0.
$$
Moreover, any component of $\Lambda_{\overline{F}_i}$ with positive coefficient is exceptional$/F_i$, 
hence a component of $\Sigma_{\overline{F}_i}$, and its coefficient in $\Lambda_{\overline{F}_i}$ is at most $1-\epsilon_i$. 
So the coefficients of $(\Lambda_{\overline{F}_i}+\frac{1}{m_i}M_{\overline{F}_i})^{\ge 0}$ get arbitrarily small 
as $i$ gets large. 
As $({\overline{F}_i},\Sigma_{\overline{F}_i})$ is log bounded and 
$$
\Supp \left(\Lambda_{\overline{F}_i}+\frac{1}{m_i}M_{\overline{F}_i}\right)^{\ge 0}\subseteq \Sigma_{\overline{F}_i},
$$ 
we can assume $K_{\overline{F}_i}$ is pseudo-effective for every $i$, by 
Lemma \ref{l-p-eff-thr-bnd-fam}.\\

\emph{Step 8.}
In this step we reduce to the situation in which $\kappa_\sigma(K_{\overline{F}_i})=0$ for every $i$. 
First assume $\kappa_\sigma(K_{\overline{F}_i})>0$ for every $i$. 
Perhaps after adding to $\Sigma_{\overline{F}_i}$ and replacing $\mathcal{P}$ accordingly, 
we can assume that there is a very ample divisor $0\le H_{\overline{F}_i}\le \Sigma_{\overline{F}_i}$, 
for each $i$. Now for each number $q\in\N$ there is a number $p\in\N$ 
such that $\vol(pK_{\overline{F}_i}+H_{\overline{F}_i})>q$ for every $i$, by Lemma \ref{l-vol-kappa(K)>0}.

Since $A_{\overline{F}_i}$ is big and $A_{\overline{F}_i}\le \Sigma_{\overline{F}_i}$, 
we can assume that there is $l\in\N$ such that 
$lA_{\overline{F}_i}-H_{\overline{F}_i}$ is big for each $i$.
Thus $\vol(pK_{\overline{F}_i}+lA_{\overline{F}_i})>q$ which implies $\vol(pK_{{F}_i'}+lA_{{F}_i'})>q$ 
and this in turn gives $\vol(pK_{{F}_i}+lA_{{F}_i})>q$ where $A_{F_i}$ is the pushdown of $A_{F_i'}$. 
Therefore, both sides of the inequality
$$
\vol\left(\frac{m_i}{n_i}(K_{{F}_i}+\Delta_{{F}_i})+lA_{{F}_i}\right)\ge 
\vol\left(\frac{m_i}{n_i}K_{{F}_i}+lA_{{F}_i}\right)
$$
go to $\infty$ as $i$ goes to $\infty$. 
But 
$$
\vol\left(\frac{m_i}{n_i}(K_{{F}_i}+\Delta_{{F}_i})+lA_{{F}_i}\right)=\vol\left(\frac{m_i}{n_i}(-n_iK_{{X}_i})|_{F_i}+lA_{{F}_i}\right)
$$
$$
\le \vol((-m_iK_{{X}_i}+lM_{i})|_{F_i})= \vol((-m_i(1+l) K_{X_i})|_{F_i})
$$
and the right hand side is bounded from above, a contradiction. 
Thus from now on we can assume $\kappa_\sigma(K_{\overline{F}_i})=0$ for every $i$.\\

\emph{Step 9}.
In this step we get a contradiction for the assumption that the sequence $\frac{m_i}{n_i}$ approaches $\infty$.
Since $\kappa_\sigma(K_{\overline{F}_i})=0$, there is $r\in \N$ such that $h^0(rK_{\overline{F}_i})\neq 0$ 
for every $i$, by Lemma \ref{l-non-van-kappa(K)=0}. Then  $h^0(rK_{{F}_i})\neq 0$ for every $i$, hence $rK_{F_i}\sim T_{F_i}$ 
for some integral divisor $T_{F_i}\ge 0$. First assume $T_{F_i}\neq 0$ for every $i$. 
Then 
$$
L_{F_i}:=\frac{m_i}{n_i} {\left( \Delta_{F_i}+\frac{1}{r}T_{F_i} \right)} \sim_\Q \frac{m_i}{n_i} ( K_{F_i}+\Delta_{F_i} )
\sim_\R  \frac{m_i}{n_i}( K_{X_i}+\Delta_i )|_{F_i}
$$
$$
\sim_\Q \frac{m_i}{n_i}(-n_iK_{X_i})|_{F_i}\sim_\Q M_i|_{F_i}=M_{F_i}
$$
and $L_{F_i}\ge \frac{m_i}{rn_i}T_{F_i}$. In particular, 
 $({F_i},\Delta_{F_i}+t L_{F_i})$ is not klt for any $i\gg 0$ where $t$ is as in Step 5, a contradiction. 
 
Now we can assume $T_{F_i}=0$ for every $i$. 
Then  
$$
h^0(-rK_{X_i}|_{{F}_i})=h^0(-r(K_{{F}_i}+\Lambda_{F_i}))=h^0(-r\Lambda_{F_i})\neq 0
$$ 
for every $i$ because $\Lambda_{F_i}\le 0$ by Step 7.
Thus by Step 5 and Proposition \ref{p-lift-section-lcc}, perhaps after replacing $r$ with a multiple,  
$h^0(-n_irK_{X_i})\neq 0$ for every $i$. Then $n_iK_{X_i}+N_i\sim_\Q 0$ for some $N_i\ge 0$ with coefficients 
$\ge \frac{1}{r}$. We can then apply Proposition \ref{p-eff-bir-delta-1} to deduce 
that $\frac{m_i}{n_i}$ is bounded from above, a contradiction.\\

\emph{Step 10.}
In this final step we get a contradiction for the assumption that $m_i$ is not bounded.
At this point we let $n_i\in\N$ be the smallest number so that $\vol(-n_iK_X)>(2d)^d$. 
We can assume $n_i>1$ for every $i$ since $\frac{m_i}{n_i}$ is bounded from above.
The volume $\vol(-m_iK_{X_i})$ is 
bounded from above because   
$$
\vol(-m_iK_{X_i})=\left(\frac{m_i}{n_i-1}\right)^d\vol(-(n_i-1)K_{X_i})\le \left(\frac{m_i}{n_i-1}\right)^d(2d)^d.
$$ 
Then by Proposition \ref{p-log-bir-bnd-cert-pairs}, 
there is a bounded set of couples $\mathcal{Q}$ and a number $c'\in\R^{>0}$ 
such that for each $i$ there is a projective log smooth couple $(\overline{X}_i,{\Sigma}_{\overline{X}_i})\in \mathcal{Q}$ 
and a birational map $\overline{X}_i\bir X_i$ such that 
\begin{itemize}
\item  $\Supp {\Sigma}_{\overline{X}_i}$ contains the exceptional 
divisor of  $\overline{X}_i\bir X_i$ and the birational transform of $\Supp ({M}_i)$, and 

\item if $W_i\to X_i$ and $W_i\to \overline{X}_i$ is a common resolution and 
$M_{\overline{X}_i}$ is the pushdown of $M_i|_{W_i}$, then each coefficient of $M_{\overline{X}_i}$ is at most $c'$. 
\end{itemize}

Let $K_{W_i}+\Lambda_{W_i}$ be the 
pullback of $K_{X_i}$ and let $K_{\overline{X}_i}+\Lambda_{\overline{X}_i}$ be its pushdown on $\overline{X}_i$.
The crepant pullback of $K_{X_i}+\frac{1}{m_i}M_{i}$ to $\overline{X}_i$ is  
$$
K_{\overline{X}_i}+\Lambda_{\overline{X}_i}+\frac{1}{m_i}M_{\overline{X}_i}\sim_\Q 0.
$$ 
Note that the coefficients of $\Lambda_{\overline{X}_i}$ are at most $1-\epsilon_i$ which are either 
negative or approach $0$ as $i$ goes to $\infty$. So if $i$ is sufficiently large, 
then the coefficients of 
$(\Lambda_{\overline{X}_i}+\frac{1}{m_i}M_{\overline{X}_i})^{\ge 0}$ are sufficiently small.
Thus  $K_{\overline{X}_i}$ is pseudo-effective for every $i\gg 0$, by Lemma 
\ref{l-p-eff-thr-bnd-fam}. This is a contradiction 
because $K_{W_i}$ is not pseudo-effective as $K_{X_i}$ is not pseudo-effective, hence $K_{\overline{X}_i}$ is not 
pseudo-effective for any $i$. Therefore, $m_i$ is bounded as required.

\end{proof}


\section{\bf Proof of Theorem \ref{t-BAB-good-boundary}}

In this section we prove Theorem \ref{t-BAB-good-boundary}. The main difficulty is to bound the anti-canonical volume 
which we tackle now before going into the proof of the theorem.

\begin{prop}\label{p-bnd-vol-good-boundary}
Let $d\in\N$ and $\epsilon,\delta \in \R^{>0}$. 
Then there is a number $v$ depending only on $d,\epsilon$, and $\delta$ such that for any  
$X$ as in Theorem \ref{t-BAB-good-boundary} we have $\vol(-K_X)<v$. 
\end{prop}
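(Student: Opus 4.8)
The plan is to argue by contradiction, exploiting a large anti‑canonical volume to build a covering family of non‑klt centres and then descending to such a centre by adjunction. Suppose the statement fails, so there is a sequence $(X_i,B_i)$ as in Theorem \ref{t-BAB-good-boundary} with $\vol(-K_{X_i})\to\infty$. First I would make a few harmless reductions. Since $B_i$ is big and $K_{X_i}+B_i\sim_\R 0$, perturbing $B_i$ inside the rational affine space of divisors $R$ with $K_{X_i}+R\sim_\Q 0$ we may assume $B_i$ is a $\Q$-divisor with $K_{X_i}+B_i\sim_\Q 0$, after replacing $\epsilon,\delta$ by $\tfrac\epsilon2,\tfrac\delta2$; for a small enough perturbation bigness and the $\epsilon$-lc property survive. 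Moreover $(X_i,(1-t)B_i)$ is klt with $-(K_{X_i}+(1-t)B_i)\sim_\Q t(-K_{X_i})$ big for $0<t<1$, so $X_i$ is of Fano type, and running an MMP on $-K_{X_i}$ followed by passing to the ample model is crepant, preserves $\vol(-K_{X_i})$ and all the hypotheses; so we may assume $X_i$ is an $\epsilon$-lc Fano variety, and in particular $\vol(-K_{X_i})>(2d)^d$ for $i\gg0$.

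Next I would apply \ref{ss-non-klt-centres}(2) with $D=-K_{X_i}$ and a small ample $\Q$-divisor $A_i$ proportional to $-K_{X_i}$ to produce, for general closed points $x_i,y_i\in X_i$, a member $G_i$ of a fixed bounded covering family, with normalisation $F_i$, and a divisor $0\le\Delta_i\sim_\Q -cK_{X_i}$ with bounded $c$, such that $(X_i,\Delta_i)$ is lc near $x_i$ with a unique non-klt place centred at $G_i\ni x_i$, is not lc at $y_i$, and $\dim G_i=0$ or $\vol(-K_{X_i}|_{G_i})$ is bounded above. Since $\tfrac1\delta B_i\ge 0$ has coefficients $\ge1$, $B_i\sim_\Q -K_{X_i}$, and the support of the big divisor $B_i$ meets every curve, I would — after adding a small multiple of $B_i$ to $\Delta_i$ and enlarging $c$ — also arrange that $(X_i,\Delta_i)$ has a non-klt centre meeting $G_i$ whenever $\dim G_i>0$. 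When $\dim G_i=0$ the configuration forces a bounded multiple $-c'K_{X_i}$ to be potentially birational; here I would invoke the effective birationality Proposition \ref{p-eff-bir-delta-2} together with a tie-break as in Lemma \ref{l-unique-lc-place}, applied to $\Delta_i$ and $\tfrac1\delta B_i$, to manufacture for general $x_i$ a positive-dimensional non-klt centre, reducing to $\dim G_i>0$.

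Assuming $\dim G_i>0$, I would run adjunction to $F_i$: by Theorem \ref{t-subadjunction} we get a boundary $\Theta_{F_i}$ with coefficients in a fixed DCC set and a pseudo-effective $P_{F_i}$ with $K_{F_i}+\Theta_{F_i}+P_{F_i}\sim_\Q(K_{X_i}+\Delta_i)|_{F_i}$; by Lemma \ref{l-sub-bnd-on-gen-subvar} a sub-$\epsilon$-lc pair $(F_i,\Lambda_{F_i})$ with $K_{F_i}+\Lambda_{F_i}=K_{X_i}|_{F_i}$ and $\Lambda_{F_i}\le\Theta_{F_i}$; and by Lemma \ref{l-subadjunction-non-klt}, using the non-klt centre meeting $G_i$, a choice of $P_{F_i}\ge0$ in its class with $(F_i,\Theta_{F_i}+P_{F_i})$ not $\epsilon$-lc. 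On the other hand $F_i$ ranges in a bounded family, and with the parameters chosen so that $\Delta_i\sim_\Q(1+o(1))(-K_{X_i})$ the restrictions $\Delta_i|_{F_i}$, hence $\Theta_{F_i}+P_{F_i}$, have bounded volume; feeding $\Lambda_{F_i}\le\Theta_{F_i}$, the sub-$\epsilon$-lc property, and the fact that $P_{F_i}$ is, up to bounded scaling, $\sim_\Q$ a small multiple of $-K_{X_i}|_{F_i}$ into Proposition \ref{p-non-term-places} on a log resolution of a bounded model of $F_i$, produces a pair that is both $\epsilon$-lc and not $\epsilon$-lc — the desired contradiction.

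The main obstacle I expect is the bookkeeping in the last step: one must pick the auxiliary ample divisor $A_i$ (equivalently the slack in $c$) so that simultaneously $\Delta_i$ stays $\sim_\Q$ a bounded multiple of $-K_{X_i}$, the centre $G_i$ is still cut down to a bounded family with $\vol(-K_{X_i}|_{G_i})$ bounded, and the non-klt centre coming from $B_i$ keeps meeting $G_i$; these requirements pull against one another. Resolving this tension — rather than the adjunction mechanics, which are routine given Theorem \ref{t-subadjunction} and Lemmas \ref{l-sub-bnd-on-gen-subvar}, \ref{l-subadjunction-non-klt} — is where the real work lies, and it is ultimately settled by playing the present volume bound off against the effective birationality estimates of Section 4.
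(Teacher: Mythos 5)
Your overall architecture mirrors the paper's proof of this proposition: argue by contradiction, reduce to $\epsilon$-lc Fano $X_i$, produce a covering family of non-klt centres $G_i$ via \ref{ss-non-klt-centres}(2), run adjunction on the normalisation $F_i$ (Theorem \ref{t-subadjunction}, Lemmas \ref{l-sub-bnd-on-gen-subvar} and \ref{l-subadjunction-non-klt}), exploit boundedness of the $F_i$, and close with Proposition \ref{p-non-term-places}. However, there is a substantive gap in the scaling of the auxiliary divisor, and it cannot be fixed by ``playing the volume bound off against effective birationality'' as you suggest.

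You take $D=-K_{X_i}$ and then $\Delta_i\sim_\Q(1+o(1))(-K_{X_i})$. This is the wrong normalisation. The paper instead exploits $\vol(-K_{X_i})\to\infty$ to choose a sequence $a_i\in\Q^{>0}$ \emph{decreasing to zero} with $\vol(-a_iK_{X_i})>(2d)^d$, and builds $\Delta_i\sim_\Q -a_iK_{X_i}$. The reason this matters is the final application of Proposition \ref{p-non-term-places}. Writing $K_{F_i}+\Lambda_{F_i}=K_{X_i}|_{F_i}$ and $I_{F_i}=\Delta_{F_i}-\Lambda_{F_i}\sim_\Q\Delta_i|_{F_i}$, one needs $I$ (and the auxiliary $B_i|_{F_i}$) to be $\R$-linearly equivalent, on a log-bounded model, to a divisor supported in a fixed reduced divisor with coefficients tending to zero. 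With $\Delta_i\sim_\Q -a_iK_{X_i}$ and $a_i\to 0$, the pushdown of $I_{F_i}\sim_\Q a_iB_i|_{F_i}$ has coefficients $a_i\cdot(\text{bounded})\to 0$, so \ref{p-non-term-places} yields that $(\overline F_i,\Gamma_{\overline F_i}+(1+c)I_{\overline F_i})$ is klt, contradicting the non-klt property obtained from \ref{l-subadjunction-non-klt}. With your scaling $c=1+o(1)$, the divisor $I_{F_i}$ is $\sim_\Q$ a \emph{bounded} (not vanishing) multiple of $B_i|_{F_i}$; its coefficients on the bounded model stay bounded away from zero and Proposition \ref{p-non-term-places} has no purchase, so no contradiction ensues. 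There is also no obvious substitute: the whole point of the hypothesis $\vol(-K_{X_i})\to\infty$ is precisely that it lets one produce covering non-klt centres with $\Delta_i$ of vanishingly small anti-canonical degree.

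Two smaller remarks. First, your handling of the case $\dim G_i=0$ (via effective birationality and tie-breaking) is unnecessary: since $-(K_{X_i}+\Delta_i)$ is ample, the connectedness principle directly rules out zero-dimensional $G_i$, and likewise directly supplies a non-klt centre of $(X_i,\Delta_i)$ meeting $G_i$ but different from it, without having to fold $B_i$ into $\Delta_i$ as you propose. Second, you assert that a single application of \ref{ss-non-klt-centres}(2) gives $\vol(-K_{X_i}|_{G_i})$ bounded above, but that bound degenerates as your auxiliary ample divisor $A_i$ shrinks; the paper obtains the required upper bound by a separate iterative argument (introducing the threshold $b_i$ and showing it is bounded away from zero by repeatedly cutting down the dimension of $G_i$), which you would also need.
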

\begin{proof}
We first give a short summary of the proof. Using MMP we reduce to the case when $X$ is Fano. 
If $\vol(-K_X)$ is very large, then there is a very small $a>0$ such that $\vol(-aK_{X})>(2d)^d$. 
We will show that this leads to a contradiction by using arguments somewhat similar to the 
proof of \ref{p-eff-bir-delta-1}. Using divisors $0\le \Delta\sim_\Q -aK_X$ we create a 
covering family of non-klt centres $G$ of the pairs $(X,\Delta)$ such that the pair has other 
non-klt centres apart from $G$. The difficult case is  when $\dim G>0$ just as in the proof of \ref{p-eff-bir-delta-1}.  
Using the fact that $(X,\Delta)$ has other non-klt centres apart from $G$, 
we can create bad singularities on the normalisation $F$ of $G$. Finally making use of $B$ 
and applying \ref{p-bnd-sing-on-non-klt-centre} we derive a contradiction.\\

\emph{Step 1.}
In this step we reduce the problem to the situation in which $X$ is Fano, and 
introduce some notation.
If the statement is not true, then there is a sequence of pairs $(X_i,B_i)$ 
satisfying the properties listed in Theorem \ref{t-BAB-good-boundary} such that 
 $\vol(-K_{X_i})$ is an increasing sequence approaching $\infty$.
Taking a $\Q$-factorialisation we can assume $X_i$ is $\Q$-factorial.  
Since $B_i$ is big, $X_i$ is of Fano type.
Run an MMP on $-K_{X_i}\sim_\R B_i$ and let $X_i'$ be the resulting model. Since $B_i$ is big,  
$-K_{X_i'}$ is nef and big.  
Thus if $X_i'\to X_i''$ is the contraction defined by $-K_{X_i'}$, then $X_i''$ is  Fano. 
Moreover, $\vol(-K_{X_i''})=\vol(-K_{X_i})$, and  since $K_{X_i}+B_i\sim_\R 0$, $(X_i'',B_i'')$ is $\epsilon$-lc. 
Thus replacing $(X_i,B_i)$ with $(X_i'',B_i'')$, we can assume $X_i$ is Fano. Moreover, modifying $B_i$, 
we can assume it is a $\Q$-boundary.

By Proposition \ref{p-eff-bir-delta-2}, there is $m\in\N$ such that 
$|-mK_{X_i}|$ defines a birational map for every $i$. On the other hand, 
since  $\vol(-K_{X_i})$ is an increasing sequence approaching $\infty$, 
there is a strictly decreasing sequence $a_i\in \Q^{>0}$ approaching $0$ so that $\vol(-a_iK_{X_i})>(2d)^d$ 
for each $i$.\\ 

\emph{Step 2.} 
In this step we fix $i$ and create a covering family of non-klt centres on $X_i$. 
Applying \ref{ss-non-klt-centres} (2), 
 there exists a covering family of subvarieties of $X_i$  
such that for each pair of general closed points $x_i,y_i\in X_i$ 
there exist a general member $G_i$ of the family and a $\Q$-divisor 
$0\le \Delta_i\sim_\Q -a_iK_{X_i}$ such that $(X_i,\Delta_i)$ is lc at $x_i$ with a unique non-klt place 
whose centre contains $x_i$, that centre is $G_i$, and $(X_i,\Delta_i)$ is not klt at $y_i$. 
Moreover, perhaps after replacing $a_i$ with $3a_i$ and adding to $\Delta_i$ we can assume that 
 $(X_i,\Delta_i)$ is not lc at some fixed point of $X_i$: indeed applying \ref{ss-non-klt-centres} (2), 
there is $0\le \Xi_i\sim_\Q -a_iK_{X_i}$ such that $(X_i,\Xi_i)$ is not klt at some point $\xi_i\in X_i$; 
here $\Xi_i,\xi_i$ are fixed; then by adding $2\Xi_i$ to $\Delta_i$ we can assume $(X_i,\Delta_i)$ 
is not lc at $\xi_i$; note that since $x_i$ is general we are of course assuming $x_i\neq \xi_i$.

Recall from \ref{ss-cov-fam-subvar} that the $G_i$ in the last paragraph 
are among the general fibres of finitely many morphisms 
$V^j\to T^j$, and we can assume for each $j$ the points on $T^j$ corresponding to the $G_i$ are dense. 
We can assume $K_{X_i}+\Delta_i$ is anti-ample, so by the connectedness principle, $\dim G_i>0$.\\

 \emph{Step 3.} 
In this step we reduce the problem to the situation in which  $\vol(-mK_{X_i}|_{G_i})$ is bounded from above. 
For each $i$, let $b_i\in\Q$ be the smallest number so that $\vol(-b_i K_{X_i}|_{G_i})\ge d^d+1$ 
for all the $G_i$ in Step 2; equality holds on a subfamily of the $G_i$ which are 
general fibres of one of the morphisms $V^j\to T^j$.

Assume $b_i$ is not bounded from below, so we can assume the $b_i$ form a 
strictly decreasing sequence approaching $0$. 
Applying the second paragraph of \ref{ss-non-klt-centres} (2), replacing $a_i$ with $a_i+(d-1)b_i$  
and replacing the $\Delta_i$, we 
can replace each $G_i$ with one having smaller dimension. Introducing new $b_i$ as above and repeating 
the process leads us to the 
case when $b_i$ is bounded from below otherwise we get the case $\dim G_i=0$ which is not possible 
as mentioned above. 

In order to get a contradiction, it is enough, for each $i$, to consider a sub-family of the $G_i$ 
satisfying $\vol(-b_i K_{X_i}|_{G_i})=d^d+1$ and which are general fibres of one of the morphisms $V^j\to T^j$. 
\emph{From now on when we mention $G_i$ we mean one of those.}
In particular, $\vol(-mK_{X_i}|_{G_i})$ is bounded from above as $b_i$ is bounded from below.\\

\emph{Step 4.}
In this step we consider adjunction on non-klt centres.
For each $i$ pick a general $G_i$ as in the last paragraph, and let $F_i$ be its normalisation. 
In particular, $X_i$ is $\Q$-factorial near the generic point of $G_i$, and we can find a 
resolution $F_i'\to F_i$  so that we get an induced morphism $F_i'\to W_i$. 
Pick $0\le M_i\sim -mK_{X_i}$. Then $G_i$ is not contained in $\Supp M_i$ 
by the generality of $G_i$.

By Construction \ref{constr-adjunction-non-klt-centre} and Theorem \ref{t-subadjunction} 
(by taking $B=0$ and $\Delta=\Delta_i$) and the ACC for 
lc thresholds [\ref{HMX2}, Theorem 1.1], 
there is a $\Q$-boundary $\Theta_{F_i}$ with  
coefficients in a fixed DCC set $\Psi$ depending only on $d$ such that we can write 
$$
(K_{X_i}+\Delta_i)|_{F_i}\sim_\R K_{F_i}+\Delta_{F_i}:=K_{F_i}+\Theta_{F_i}+P_{F_i}
$$
where $P_{F_i}$ is pseudo-effective. Since $x_i$ is general, $x_i\notin \Supp M_i$. 
By definition of $\Theta_{F_i}$, adding $\lambda_i M_i$ to $\Delta_i$ does not change $\Theta_{F_i}$ 
but changes $P_{F_i}$ to $P_{F_i}+\lambda_i M_i|_{F_i}$ where $\lambda_i$ is a sufficiently 
small positive rational number. Thus replacing 
$a_i$ and changing $P_{F_i}$ up to $\R$-linear equivalence we can assume 
$P_{F_i}$ is effective and big.\\

\emph{Step 5.}
In this step we use Proposition \ref{p-bnd-sing-on-non-klt-centre} to derive a contradiction.
Pick $\epsilon'\in(0,\epsilon)$. By the connectedness 
principle, the non-klt locus of $(X_i,\Delta_i)$ is connected. Since $(X_i,\Delta_i)$ is not lc at some point $\xi_i$
by Step 2, the pair has a non-klt centre  intersecting $G_i$ but not equal to $G_i$. 
By Lemma \ref{l-subadjunction-non-epsilon-lc}, 
we can choose $P_{F_i}\ge 0$ so that $(F_i,\Delta_{F_i})$ is not $\epsilon'$-lc. 

On the other hand, by construction, both $K_{X_i}+B_i+\Delta_i\sim_\Q \Delta_i$ and 
$$
M_i-(K_{X_i}+B_i+\Delta_i)\sim_\Q M_i-\Delta_i\sim_\Q -(m-a_i)K_{X_i}
$$ 
are ample, $\Delta_i\sim_\Q \frac{a_i}{m}M_i$, and $\vol(M_i|_{F_i})$ 
is bounded from above. Moreover, decreasing $\epsilon$ we can assume $\epsilon\le \delta$, hence 
the coefficients of $B_i$ are $\ge \epsilon$. Therefore,  by Proposition \ref{p-bnd-sing-on-non-klt-centre}, 
$(F_i,B_i|_{F_i}+\Delta_{F_i})$ is $\epsilon'$-lc which contradicts the previous paragraph.

\end{proof}

\begin{proof}(of Theorem \ref{t-BAB-good-boundary})
Let $X'\to X$ be a small $\Q$-factorialisation. 
Run an MMP on $-K_{X'}$ and let $X''$ be the resulting model. Since $B$ is big, 
$-K_{X''}$ is nef and big, that is, $X''$ is a weak Fano. 
Assume $X''\to X'''$ is the contraction defined by $-K_{X''}$, hence $X'''$ is Fano. 
By Lemma \ref{l-bnd-Fano-change-model}, it is enough to show that 
such $X'''$ form a bounded family.
Therefore, replacing $X$ with $X''$ we can assume $X$ is Fano.

Pick $\epsilon'\in (0,\epsilon)$. Let $\Delta=(1+t)B$ for some $t>0$ so that 
$(X,\Delta)$ is $\epsilon'$-lc. By [\ref{HMX2}, Theorem 1.6], it is enough to show 
$(X,\Delta)$ is log birationally bounded which is equivalent to showing $(X,B)$ is log birationally bounded.

By Proposition \ref{p-eff-bir-delta-2}, there is $m\in\N$ depending only on $d,\epsilon,\delta$ such that 
$|-mK_X|$ defines a birational map.  Moreover, by Proposition \ref{p-bnd-vol-good-boundary}, 
$\vol(-mK_X)$ is bounded from above. Therefore, applying Proposition \ref{p-log-bir-bnd-cert-pairs} 
by taking some $0\le M\sim -mK_X$, we deduce that  $(X,\Supp B)$ is log birationally bounded as required.

\end{proof}


\section{\bf Boundedness of complements}

In this section we develop the theory of complements for generalised pairs following 
[\ref{shokurov-surf-comp}][\ref{PSh-I}][\ref{PSh-II}]. We prove various inductive statements 
before we come to the main result of this section (\ref{p-BAB-exc-to-compl}). Let $(X',B'+M')$ be as 
in Theorem \ref{t-bnd-compl}. Such pairs are of two types: non-exceptional and exceptional. 
The main point of this section is that we can treat the non-exceptional ones inductively by creating 
generalised non-klt centres or by using fibrations. 
The exceptional case is treated in the next section where the focus will be on proving that $X'$ is bounded. 

Assume $(X',B'+M')$ is non-exceptional. The main ideas in constructing a bounded complement 
for $K_{X'}+B'+M'$ are essentially as follows. 
By creating deep singularities and then modifying the pair we can assume 
 $(X',B'+M')$ is not generalised klt and $B'\in\mathfrak{R}$. If $K_{X'}+B'+M'\sim_\Q 0$ and $M'\sim_\Q 0$, 
we show the Cartier index of $K_{X'}+B'$ is bounded which implies we have a bounded complement 
(as in the proof of \ref{l-compl-non-exc}).  
We then can assume either  $K_{X'}+B'+M'\not\sim_\Q 0$ or $M'\not\sim_\Q 0$. 

Let $X'\to V'$ be the contraction defined by $-(K_{X'}+B'+M')$. If $M'$ is not big over $V'$, 
 then we pull back a complement from the base of some fibration $X'\to T'$ derived from $X'\to V'$
(as in the proof of \ref{p-bnd-compl-non-big}). Thus we assume $M'$ is big over $V'$. We then modify the setting 
and find a boundary $\Gamma'$ and number $\alpha\in (0,1)$ such that $(X',\Gamma'+\alpha M')$ is 
generalised plt with $S':=\rddown{\Gamma'}$ irreducible and $-(K_{X'}+\Gamma'+\alpha M')$ is ample 
(as in the  proof of \ref{p-bnd-compl-non-klt}). 
Using this plt pair and the ampleness mentioned we can apply  Kawamata-Viehweg vanishing 
on some resolution of $X'$ to lift a complement from $S'$ to $X'$ (\ref{p-bnd-compl-plt}).

Carrying out all the steps above and making sure that the required inductive assumptions are satisfied 
(e.g. \ref{l-fib-adj-dcc}) involves a lot of technicalities.

\subsection{General remarks}\label{ss-compl-remarks}
Let $(X',B'+M')$ be a projective generalised pair with data $\phi\colon X\to X'$ and $M$.

(1)
Assume there is ${B'}^+\ge B'$ such that $(X',{B'}^++M')$ is generalised lc, $nM$ is b-Cartier, and 
$n(K_{X'}+{B'}^++M')\sim 0$ for some $n\in\N$. We show $K_{X'}+{B'}^++M'$ is an $n$-complement 
of $K_{X'}+{B'}+M'$. 
Writing $B'=T'+\Delta'$ where $T'=\rddown{B'}$, we need to show  
$$
n{B'}^+\ge nT'+\rddown{(n+1)\Delta'}.
$$ 
Note that $T'$ and $\Delta'$ have no common components.

Let $D'$ be a prime divisor and 
$b'$ and ${b'}^+$ be its coefficients in ${B'}$ and ${B'}^+$. If ${b'}^+=1$, then either $b'=1$ 
in which case $n{b'}^+=nb'$, or $b'<1$ in which case $n{b'}^+=n\ge \rddown{(n+1)b'}$.
So assume ${b'}^+<1$, say $b'^+=\frac{i}{n}$. Then 
$$
nb'^+=i=\rddown{(n+1)b'^+}\ge \rddown{(n+1)b'}.
$$\

(2)
Assume $X'\bir X''$ is a birational map to a normal projective variety. Replacing $X$ we can assume 
the induced map $\psi\colon X\bir X''$ is a morphism. Let $M''=\psi_*M$ and assume 
$$
\phi^*(K_{X'}+B'+M')+P=\psi^*(K_{X''}+B''+M'')
$$
for some $P\ge 0$ and $B''\ge 0$. 
Suppose $K_{X''}+B''+M''$ has an $n$-complement $K_{X''}+{B''}^++M''$ with ${B''}^+\ge B''$. 
We claim  $K_{X'}+B'+M'$ also has an $n$-complement $K_{X'}+{B'}^++M'$ with ${B'}^+\ge B'$. 
Let $C''={B''}^+-B''$ and let ${B'}^+=B'+\phi_*(P+\psi^*C'')$. Then 
$$
K_{X'}+{B'}^++M'=K_{X'}+B'+M'+\phi_*P+n\phi_*\psi^*C''
$$
$$
=\phi_*\psi^*(K_{X''}+B''+M'')+\phi_*\psi^*C''=
\phi_*\psi^*(K_{X''}+{B''}^++M'') 
$$
which implies that 
$$
n(K_{X'}+{B'}^++M')\sim 0 ~~~\mbox{and}~~~
\phi^*(K_{X'}+B'^++M')=\psi^*(K_{X''}+B''^++M'').
$$
In particular, $(X',{B'}^++M')$ is generalised lc. Now apply (1).

(3) Assume $X'\bir X''$ is a partial MMP on $-(K_{X'}+B'+M')$ and $B'',M''$ are the pushdowns of $B',M'$. 
Then there is $P\ge 0$ as in (2). Thus if $K_{X''}+B''+M''$ has an $n$-complement 
$K_{X''}+{B''}^++M''$ with ${B''}^+\ge B''$, then  
 $K_{X'}+B'+M'$ has an $n$-complement $K_{X'}+{B'}^++M'$ with ${B'}^+\ge B'$.

\subsection{Hyperstandard coefficients under adjunction for fibre spaces}

To construct complements we sometimes come across fibrations $X\to Z$ along which a given log 
divisor $K_X+B$ is trivial. Using adjunction for fibre spaces (\ref{ss-adj-fib-spaces}) we can 
write $K_X+B$ as the pullback of $K_Z+B_Z+M_Z$ where $B_Z$ and $M_Z$ are the discriminant and 
moduli divisors. In order to apply induction we need to be able to control the coefficients of 
$B_Z$ and $M_Z$ in terms of the coefficients of $B$. We do this in the next proposition.  
The existence of $\mathfrak{S}$  is similar to [\ref{PSh-II}, Lemma 9.3(i)]. 

\begin{prop}\label{l-fib-adj-dcc}
Let $d\in\N$ and $\mathfrak{R}\subset [0,1]$ be a finite set of rational numbers. 
Assume Theorem \ref{t-bnd-compl-usual-local} holds in dimension $d$.
Then there exist $q\in \N$ and  a finite set of rational numbers $\mathfrak{S}\subset [0,1]$
depending only on $d,\mathfrak{R}$ satisfying the following. 
Assume $(X,B)$ is a pair and $f\colon X\to Z$ a contraction such that 
\begin{itemize}
\item $(X,B)$ is projective lc of dimension $d$, and $\dim Z>0$, 

\item $K_{X}+B\sim_\Q 0/Z$ and $B\in \Phi(\mathfrak{R})$,

\item $X$ is of Fano type over some non-empty open subset $U\subseteq Z$, and 

\item the generic point of each non-klt centre of $(X,B)$ maps into $U$.\\
\end{itemize}
Then we can write 
$$
q(K_X+B)\sim qf^*(K_Z+B_Z+M_Z)
$$
where $B_Z$ and $M_Z$ are the discriminant and moduli parts of adjunction (as in \ref{ss-adj-fib-spaces}),  
$B_Z\in \Phi(\mathfrak{S})$, and  for any high resolution $Z'\to Z$ the moduli divisor $qM_{Z'}$ is nef Cartier. 
\end{prop}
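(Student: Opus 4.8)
The plan is to analyze the discriminant and moduli parts of adjunction for fibre spaces separately, using Theorem \ref{t-bnd-compl-usual-local} in dimension $d$ to control the discriminant, and the structure theory from Theorem \ref{t-adj-fib-spaces} together with a boundedness argument to control the moduli part. First I would reduce to the situation where $B$ is a $\Q$-divisor, which holds automatically since $\mathfrak{R}$ consists of rationals and $\Phi(\mathfrak{R})$ is a set of rationals, so the hypothesis $K_X+B\sim_\Q 0/Z$ makes sense and we may fix a positive integer that clears denominators of $B$.

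The heart of the argument is the discriminant part. Fix a prime divisor $D$ on $Z$ (or on a high resolution $Z'$), and let $t_D$ be the lc threshold of $f^*D$ with respect to $(X,B)$ over the generic point $\eta_D$ of $D$. I would localize $Z$ near $\eta_D$: after shrinking, $D$ becomes principal near $\eta_D$, say $D = \operatorname{div}(z)$ locally, and one can run the standard reduction (as in [\ref{PSh-I}, \S3] or [\ref{PSh-II}, \S7]) replacing the base by a curve germ through $\eta_D$ — concretely, cutting $Z$ by general hyperplanes through a general point of $D$ to reduce to $\dim Z = 1$, with the total space of dimension $d$. The fibre $X$ over this curve germ is then Fano type over the base (using Lemma \ref{l-FT-over-curve}, whose hypotheses match: $X$ is Fano type over the open set $U$, $K_X+B\sim_\Q 0/Z$, and every non-klt centre maps into $U$), and $-(K_X+B+t_Df^*D)$ is trivial hence nef over the base. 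Applying Theorem \ref{t-bnd-compl-usual-local} in dimension $d$ with the finite set $\mathfrak{R}$ (enlarged to include $1-t$ type values as needed), we obtain an $n$-complement $K_X+B^+$ over the point, with $n$ divisible by $I(\mathfrak{R})$ and depending only on $d,\mathfrak{R}$; this forces $b_D = 1-t_D$ to lie in a controlled set. More precisely, the complement tells us $t_D$ is bounded below away from $0$ when positive, and the coefficient $b_D$ has the shape $1-\frac{s}{m}$ with $s$ from a finite set — one argues as in Lemma \ref{l-div-adj-dcc}, tracking that the Cartier index entering the denominator is bounded by $n$. This produces the finite set $\mathfrak{S}$ with $B_Z\in\Phi(\mathfrak{S})$.

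For the moduli part: by Theorem \ref{t-adj-fib-spaces}(ii), since $B$ is a $\Q$-divisor, $M_{Z'}$ is nef for $Z'$ a sufficiently high resolution and is pulled back from $Z'$ to any higher model, so it suffices to find a uniform $q$ making $qM_{Z'}$ Cartier. I would pass to the $\Q$-factorial dlt model and semiample reduction $X\to Y/Z$ as in the proof of Theorem \ref{t-adj-fib-spaces}, so that the moduli part descends from a genuine moduli-of-fibres situation; the fibres $(X_g, B_g)$ over general points are klt Calabi–Yau pairs of dimension $< d$ with $B_g\in\Phi(\mathfrak{R})$, hence by the ACC-type finiteness results (global ACC, [\ref{BZh}, Theorem 1.5] or [\ref{HMX2}, Theorem 1.5]) they have bounded Cartier index in a suitable sense, and the index theorem for the moduli part (cf. [\ref{PSh-II}, Lemma 9.3] or Fujino–Gongyo effective bounds) gives a $q$ depending only on $d,\mathfrak{R}$ with $qM_{Z'}$ Cartier. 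Finally I would take a common multiple of this $q$, of $n$, and of the denominator clearing $B$ to get a single $q$ with $q(K_X+B)\sim qf^*(K_Z+B_Z+M_Z)$ and $qM_{Z'}$ nef Cartier.

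The main obstacle I expect is making the moduli-part Cartier-index bound effective and uniform: while nefness of $M_{Z'}$ is standard, pinning down $q$ depending only on $d$ and $\mathfrak{R}$ requires bounding the monodromy/ the Cartier indices of the fibre pairs, and this is exactly the kind of input where one must invoke the boundedness of complements in lower dimension (or global ACC) rather than soft arguments — the interplay between the hypothesis "Theorem \ref{t-bnd-compl-usual-local} holds in dimension $d$" and the lower-dimensional fibre structure is the delicate point, and one has to be careful that the non-klt centres mapping into $U$ hypothesis is genuinely used to ensure the fibre pairs over $U$ are klt so the moduli theory applies cleanly.
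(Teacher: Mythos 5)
Your treatment of the discriminant part is in the right spirit --- reduce to a curve base via hyperplane sections, apply the relative complement theorem in dimension $d$, and track coefficients --- though the mechanism producing the finite set $\mathfrak{S}$ is left vague where the paper is precise. The paper observes that the $q$-complement $K_X+B^+$ over a point $z$ of the curve forces $B^+=B+tf^*z$ with $t$ the lc threshold, and that for a component $S$ of $f^*z$ with $\mu_S B=1-r/l$ and $m=\mu_S f^*z$ one has $t=(b^+-\mu_S B)/m$; since $qb^+\in\Z$ and $b^+\le (q-1)/q$ when $b^+<1$, this forces $l\le q$, whence finitely many possibilities for the resulting value $1-t$. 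You gesture at this but do not supply it.

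The real gap is in the moduli part, where your proposed route does not go through. You assert that global ACC ([\ref{HMX2}, Theorem 1.5] or [\ref{BZh}, Theorem 1.5]) bounds the Cartier index of the klt Calabi--Yau fibre pairs: it does not, since pairs with coefficients in a fixed finite set can have unbounded Cartier index. And even granting a bounded fibre index, there is no ``index theorem for the moduli part'' available to cite --- [\ref{PSh-II}, Lemma 9.3] concerns the discriminant, and [\ref{FG-lc-trivial}] gives nefness of $M_{Z'}$ but no effective Cartier index; the index of the moduli b-divisor encodes monodromy data of the family, and controlling it effectively is essentially the content of open conjectures on b-semiampleness. The paper avoids all of this by exploiting the complement structure directly. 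The observations you miss are: (i) a $q$-complement over a general $z\in U$ already forces $q(K_X+B)\sim 0$ over the generic point of $Z$, which produces the stated linear (not merely $\Q$-linear) equivalence $q(K_X+B)\sim qf^*(K_Z+B_Z+M_Z)$; (ii) over a curve base, the relative complement construction at each point $z$ produces $\Theta=B+\sum t_z f^*z$ with $q(K_X+\Theta)\sim 0/Z$ and \emph{reduced} discriminant $\Theta_Z$, whence $q(K_Z+\Theta_Z+M_Z)$ is Cartier and $qM_Z$ is integral; and (iii) for a high resolution $Z'$, one builds a dlt model of $(X,B)$ that is Fano type over the relevant locus, runs an MMP to obtain an lc-trivial fibration over some $Z''\to Z'$ with the same moduli part, and applies the curve-base integrality result there. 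You need the complement, not fibre-index bounds.
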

\begin{proof}
Here is a short summary of the proof. Using \ref{t-bnd-compl-usual-local} we find $q$ and make a specific 
choice of $M_Z$ as a Weil divisor. Next we aim to show the existence of $\mathfrak{S}$ and 
that $qM_Z$ is integral. By taking hyperplane sections on $Z$ we 
reduce this aim to the case when $Z$ is a curve. We then pick a closed point $z\in Z$ and use \ref{t-bnd-compl-usual-local} 
once more to create a $q$-complement $K_X+B^+$ over $z$ so that $B^+\ge B$ and  $(X,B^+)$ has a non-klt centre mapping to $z$,  
and this implies that the coefficient of $z$ in $B_Z$ belongs to special set of the form $\Phi(\mathfrak{S})$ which in turn 
implies that $qM_Z$ is integral. Finally, we go back to the general case of $Z$ and apply the 
previous arguments to a high enough resolution $Z'\to Z$ to show that $qM_{Z'}$ is Cartier (nefness is guaranteed 
by \ref{t-adj-fib-spaces}).\\

\emph{Step 1.} 
In this step we find $q$ and make a choice of $M_Z$.
Let $q=n$ be the number given by Theorem \ref{t-bnd-compl-usual-local} which depends only on $d,\mathfrak{R}$. 
Then there is a $q$-complement 
$K_X+B^+$ of $K_X+B$ over some 
point $z\in U$ with $B^+\ge B$. Since over $z$ we have $K_X+B\sim_\Q 0$ and $q(K_X+B^+)\sim 0$, 
and since $B^+\ge B$, we have 
$B^+=B$ near the generic fibre of $f$. Therefore, $q(K_X+B)\sim 0$ over the generic point of $Z$, 
hence there is a rational function $\alpha$ on $X$ such that $qL:=q(K_X+B)+\Div(\alpha)$ is zero  
over the generic point of $Z$. In particular, $q(K_X+B)\sim qL$ and $L$ is 
vertical$/Z$. Since $L\sim_\Q 0/Z$, $L=f^*L_Z$ for some $L_Z$ on $Z$. Let 
$M_Z:=L_Z-(K_Z+B_Z)$ where $B_Z$ is the discriminant part of adjunction for $(X,B)$ over $Z$. Thus  
$$
q(K_X+B)\sim qL=qf^*L_Z=qf^*(K_Z+B_Z+M_Z)
$$
and $M_Z$ is the moduli part of adjunction for $(X,B)$ over $Z$. Note that $M_Z$ is not unique: 
it depends on the choice of $\alpha$ and $K_Z$.\\  

\emph{Step 2.} 
Our aim until the end of Step 4 is to show the existence of $\mathfrak{S}$ and 
to show $qM_Z$ is integral. In this step we reduce this aim to the case $\dim Z=1$.  
Assume $\dim Z>1$.
Let $H$ be a general hyperplane section of $Z$ and $G$ its pullback to $X$. 
Let $K_G+B_G=(K_X+B+G)|_G$. Since $G$ is a general member of a free linear system, 
each non-klt centre of $(G,B_G)$ is a component of the intersection
of a non-klt centre of $(X,B)$ with $G$, hence its generic point maps into $U\cap H$. 
Moreover, $G$ is of Fano type over $U\cap H$. Let $B_H$ be the discriminant part of adjunction 
for $(G,B_G)$ over $H$. 

Let $g$ be the induced map $G\to H$.  Let $D$ be a prime divisor on $Z$ and let 
$C$ be a component of $D\cap H$. Let $t$ be the lc threshold of $f^*D$ with respect to 
$(X,B)$ over the generic point of $D$. Then there is a non-klt centre of $(X,B+f^*D)$ 
mapping onto $D$. This centre is also a non-klt centre of  $(X,B+G+f^*D)$, hence 
intersecting it with $G$ gives a non-klt centre of $(G,B_G+g^*C)$ mapping onto $C$, 
by inversion of adjunction [\ref{kawakita}]. Thus $t$ is the lc threshold of $g^*C$ with respect to $(G,B_G)$. 
Therefore,  $\mu_DB_Z=\mu_CB_H$ (see [\ref{B-sing-fano-fib}, proof of Lemma 3.2] for more details).
 
By Lemma \ref{l-div-adj-dcc}, there is a finite set of rational numbers 
$\mathfrak{T}\subset [0,1]$ depending only on $\mathfrak{R}$ such that 
$B_G\in \Phi(\mathfrak{T})$. 
So applying induction on dimension, there is 
a finite set of rational numbers 
$\mathfrak{S}\subset [0,1]$ depending only on $d-1,\mathfrak{T}$ hence depending only on 
$d, \mathfrak{R}$ such that $B_H\in \Phi(\mathfrak{S})$. Therefore, 
$B_Z\in \Phi(\mathfrak{S})$.

Pick a general $H'\sim H$ and 
let $K_H=(K_Z+H')|_H$: note that although $K_Z$ may not be $\Q$-Cartier but the restriction 
is well-defined as $H$ is a general hyperplane section, and $K_H$ is determined as a Weil divisor. Letting 
$M_H:=(L_Z+H')|_H-(K_H+B_H)$, we have    
$$
q(K_G+B_G)\sim q(L+G)|_G\sim qg^*(L_Z+H')|_H \sim qg^*(K_H+B_H+M_H)
$$
hence $M_H$ is the moduli part of $(G,B_G)$ over $H$. Moreover, $B_H+M_H=(B_Z+M_Z)|_H$, hence 
$
\mu_C(B_H+M_H)=\mu_D(B_Z+M_Z)
$
which implies $\mu_CM_H=\mu_DM_Z$ as $\mu_CB_H=\mu_DB_Z$. 
Therefore, $q\mu_DM_Z$ is integral iff $q\mu_CM_H$ is integral.   
So repeating the process we reduce the problem to the case $\dim Z=1$.\\

\emph{Step 3.}
In this  step we  prove existence of $\mathfrak{S}$. By Step 2 we can assume $Z$ is a curve. 
By Lemma \ref{l-FT-over-curve}, $X$ is of Fano type 
over $Z$. Pick a closed point $z\in Z$. 
Let $t$ be the lc threshold of $f^*z$ with respect to $(X,B)$. Let $\Gamma=B+tf^*z$ 
and let $({X}',{\Gamma}')$ be a $\Q$-factorial dlt model of $(X,\Gamma)$ so that 
$\rddown{{\Gamma}'}$ has a component mapping to $z$. Note that $K_{X'}+\Gamma'\sim_\Q 0/Z$. 
Then there is a boundary ${B}'\le \Gamma'$ such that $B'\in \Phi(\mathfrak{R})$, 
$\rddown{B'}$ has a component mapping to $z$, and $B^\sim\le B'$ where $B^\sim$ is the birational transform of $B$.  
Now $X'$ is of Fano type over $Z$  
and $-(K_{X'}+B')\sim_\Q \Gamma'-B'/Z$.
Run an MMP on $-(K_{X'}+B')$ over $Z$ and let $X''$ be the resulting model. 
Then $X''$ is of Fano type over $Z$, $B''\in \Phi(\mathfrak{R})$,  
and $-(K_{X''}+B'')$ is nef over $Z$. Moreover, $(X'',B'')$ is lc, as $(X'',\Gamma'')$ is lc 
and $B''\le \Gamma''$.

 By our choice of $q$ which comes from Theorem \ref{t-bnd-compl-usual-local}, 
$K_{X''}+B''$ has a $q$-complement $K_{X''}+B''^+$ over $z$ with $B''^+\ge B''$. 
Thus by  \ref{ss-compl-remarks}(3) (in the relative setting), there is a 
$q$-complement $K_{X'}+B'^+$ of $K_{X'}+B'$ over $z$ with $B'^+\ge B'$.
Pushing $K_{X'}+B'^+$ down to $X$ gives a 
$q$-complement $K_X+B^+$ of $K_X+B$ over $z$ with $B^+\ge B$ such that $(X,B^+)$ 
has a non-klt centre mapping to $z$.  
Now  $B^+-B\sim_\Q 0$ over $z$, hence $B^+-B$ is vertical over $Z$. Thus
over $z$, the divisor $B^+-B$ is just a multiple of the fibre $f^*z$. 
Therefore, $B^+=B+tf^*z$ over $z$ as $(X,B^+)$ 
has a non-klt centre mapping to $z$.

Recall that the coefficient of $z$ in $B_Z$ is $1-t$.
Pick a component $S$ of $f^*z$ and let $b$ and $b^+$ be its coefficients in $B$ and $B^+$. 
If $m$ is its coefficient in $f^*z$, then $b^+=b+tm$, hence $t=\frac{b^+-b}{m}$. 
Now $b=1-\frac{r}{l}$ for some $r\in \mathfrak{R}$ and $l\in \N$, so 
$t=\frac{s}{m}$ where  $s=b^+-1+\frac{r}{l}$. If $b^+=1$, then $t=\frac{r}{lm}$ and $\mu_zB_Z\in\Phi(\mathfrak{R})$. 
If $b^+<1$, then as $r\le 1$ and as $qb^+$ is integral we get 
$$
1-\frac{1}{l}\le b=1-\frac{r}{l}\le b^+\le 1-\frac{1}{q},
$$ 
so $l\le q$, hence in this case there are 
finitely many possibilities for $s$. Therefore, in any case $\mu_zB_Z\in\Phi(\mathfrak{S})$ for some 
fixed finite set $\mathfrak{S}\subset [0,1]$ of rational numbers.\\ 

\emph{Step 4.}
In this step we show $qM_Z$ is integral. By Step 2, we can assume $Z$ is a curve. 
By Step 1, $q(K_X+B)\sim 0$ over some non-empty open set $V\subseteq Z$ 
such that $\Supp B_Z\subseteq Z\setminus V$.
Let 
$$
\Theta=B+\sum_{z\in Z\setminus V} t_zf^*z
$$ 
where  
$t_z$ is the lc threshold of $f^*z$ with respect to $(X,B)$. 
If $\Theta_Z$ is the discriminant part 
of adjunction for $(X,\Theta)$ over $Z$, then 
$$
\Theta_Z=B_Z+\sum_{z\in Z\setminus V} t_zz,
$$
 hence 
$\Theta_Z$ is a reduced divisor. Moreover, by Step 3, $K_X+\Theta$ is a $q$-complement 
of $K_X+B$ over each $z\in Z\setminus V$, 
 hence $q(K_X+\Theta)\sim 0/Z$, by Lemma \ref{l-relatively-trivial-divisors}. Therefore, since 
$$
q(K_X+\Theta)=q(K_X+B)+q(\Theta-B)
$$
$$
\sim qf^*(K_Z+B_Z+M_Z)+qf^*(\Theta_Z-B_Z)=qf^*(K_Z+\Theta_Z+M_Z)
$$
we deduce $q(K_Z+\Theta_Z+M_Z)$ is Cartier. This implies $qM_Z$ is integral 
as $K_Z+\Theta_Z$ is integral.\\

\emph{Step 5.}
In this step we construct a birational model $X''$ of $X'$ using MMP. 
From now on we consider the general case of $Z$, that is, when it is not necessarily a curve.
Let $X'\to X$ be a log resolution of $(X,B)$ so that $X'\bir Z'$ is a morphism where $Z'\to Z$ is a 
high resolution. 
Let $U_0\subseteq U$ be a non-empty open set over which $Z'\to Z$ is an isomorphism. Let 
$\Delta'$ be the sum of the birational transform of $B$ and the reduced exceptional divisor of 
$X'\to X$ but with all the components mapping outside $U_0$ removed. We can assume the 
generic point of any non-klt centre of $(X',\Delta')$ maps into $U_0$. 
Run an MMP on $K_{X'}+\Delta'$ over $Z'\times_ZX$ with scaling of some ample divisor. 
By [\ref{B-lc-flips}, Theorem 1.9], the MMP terminates over $U_0'\subset Z'$, the inverse image of $U_0$. 
In fact we reach a model $X''$ such that over $U_0'$ the pair $(X'',\Delta'')$ is a $\Q$-factorial 
dlt model of $(X,B)$, hence $K_{X''}+\Delta''\sim_\Q 0$ over $U_0'$ and $X''$ is of Fano type over $U_0'$. 
Now by [\ref{B-lc-flips}, Theorem 1.4][\ref{HMX2}, Theorem 1.1], we can run an MMP$/Z'$ on $K_{X''}+\Delta''$ which 
terminates with a good minimal model 
over $Z'$ because the generic point of every non-klt centre of $(X'',\Delta'')$ is mapped into $U_0'$. 
Abusing notation, we denote the minimal model again by $X''$ which is of Fano type over $U_0'$.\\

\emph{Step 6.}
We are now ready to show that $qM_{Z'}$ is nef Cartier where $Z'\to Z$ is a high resolution. 
The nefness follows from Theorem \ref{t-adj-fib-spaces}, so 
we just need to show $qM_{Z'}$ is integral. 
We will use the construction of the previous step.  
Let $f''\colon X''\to Z''/Z'$ be the contraction defined by $K_{X''}+\Delta''$. 
By construction, on a common resolution $W$ of $X$ and $X''$, the pullbacks of $K_X+B$ 
and $K_{X''}+\Delta''$ are equal over $U_0''\subset Z''$, the inverse image of $U_0$. 
Let $K_{X''}+B''$ and $L''$ be the pushdown 
to $X''$ of the pullback of $K_X+B$ and $L$ to $W$, respectively, where $L$ is as in Step 1. Let $P''=\Delta''-B''$ 
which is vertical and $\sim_\Q 0$ over $Z''$, hence it is the pullback of some $\Q$-divisor $P_{Z''}$ on $Z''$.   
Denote by $\Delta_{Z''}$ the discriminant part of adjunction on ${Z''}$ defined for $(X'',\Delta'')$ over $Z''$. 
Then $\Delta_{Z''}=B_{Z''}+P_{Z''}$ where $B_{Z''}$ is the discriminant part 
of adjunction on $Z''$ defined for $(X,B)$ over $Z$. Moreover,  
$$
q(K_{X''}+\Delta'')=q(K_{X''}+B''+P'')\sim q(L''+P'')=qf''^*(L_{Z''}+P_{Z''})
$$
$$
=qf''^*(K_{Z''}+\Delta_{Z''}+M_{Z''})
$$
where $L_{Z''}$ is the pullback of $L_Z$ in Step 1, and $M_{Z''}=L_{Z''}-(K_{Z''}+B_{Z''})$
is the moduli part of both $(X'',\Delta'')$ over $Z''$ and $(X,B)$ over $Z$.
Now by Steps 2-4, $qM_{Z''}$ is an integral divisor, hence  $qM_{Z'}$ is integral as well which means 
it is Cartier as $Z'$ is smooth. 

\end{proof}

\subsection{Pulling back complements from the base of a fibration}

In this subsection we consider complements when a suitable fibration is present. 

\begin{prop}\label{p-bnd-compl-non-big}
Assume Theorem \ref{t-bnd-compl} holds in dimension $\le d-1$ and Theorem \ref{t-bnd-compl-usual-local} 
holds in dimension $d$. 
Then Theorem \ref{t-bnd-compl} holds in dimension $d$ for those $(X',B'+M')$ such that 
 there is a contraction $X'\to V'$ so that $K_{X'}+B'+M'\sim_\Q 0/V'$, $\dim V'>0$, and  
$M'$ is not big$/V'$.
\end{prop}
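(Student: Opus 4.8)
The idea is to run adjunction for fibre spaces along $f'\colon X'\to V'$, descend the problem to a generalised polarised pair on $V'$ of dimension $\le d-1$, apply Theorem \ref{t-bnd-compl} in that dimension, and pull the resulting complement back to $X'$ using the general remarks of \ref{ss-compl-remarks}.

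First the reductions. By \ref{ss-compl-remarks}(3) we may replace $(X',B'+M')$ by the pushdown of any crepant model with non-negative total discrepancy; since $X'$ is Fano type we may in particular pass to a $\Q$-factorial generalised dlt model and assume $\phi$ is a log resolution, and after a further common model we may assume the induced map $X\bir V'$ is a morphism. Since $M'$ is not big over $V'$, the general fibre of $f'$ is positive-dimensional (over a base to which $X'$ maps birationally every divisor is big over the base), so $\dim V'\le d-1$. As $K_{X'}+B'+M'\sim_\Q 0/V'$, we may write
$$
K_{X'}+B'+M'\sim_\Q f'^*(K_{V'}+B_{V'}+M_{V'}),
$$
where $B_{V'}$ is the discriminant part and $M_{V'}$ the moduli part of generalised adjunction for $f'$, the moduli divisor absorbing both the classical moduli contribution and the pushdown of $M$.

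The heart of the matter is to see that $(V',B_{V'}+M_{V'})$ is again a generalised polarised pair to which induction applies. That it is generalised lc, that $V'$ is Fano type, and that $-(K_{V'}+B_{V'}+M_{V'})$ is nef all follow from the corresponding facts upstairs by pushing forward along the contraction and by inversion of adjunction for fibre spaces. That $B_{V'}\in\Phi(\mathfrak{S})$ for a finite set $\mathfrak{S}$ of rationals depending only on $d,p,\mathfrak{R}$ is obtained exactly as in the proof of Proposition \ref{l-fib-adj-dcc}: a $q$-complement of $K_{X'}+B'+M'$ over a general point of $V'$, which exists by Theorem \ref{t-bnd-compl-usual-local} in dimension $d$ (reducing to the curve base via hyperplane sections, and using Lemma \ref{l-FT-over-curve} and a dlt model over $V'$ there), pins down the lc thresholds computing the coefficients of $B_{V'}$ and forces hyperstandard values. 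The remaining — and hardest — point is that $M_{V'}$ is b-nef with bounded Cartier index: on a sufficiently high resolution $V''\to V'$ the divisor $M_{V''}$ should be nef and $p'M_{V''}$ Cartier for some $p'$ depending only on $d,p,\mathfrak{R}$. This is where the hypothesis that $M'$ is \emph{not big over} $V'$ is essential, as it places the fibration within the scope of the generalised canonical bundle formula of [\ref{BZh}] together with Theorem \ref{t-adj-fib-spaces}, so that the moduli part of $f'$ behaves like that of a klt-trivial fibration and its denominators are controlled; without this hypothesis the moduli part could fail to have bounded index.

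Granting this, $(V',B_{V'}+M_{V'})$ satisfies the hypotheses of Theorem \ref{t-bnd-compl} in dimension $\le d-1$ for the data $p'$ and $\mathfrak{S}$, so there is an $n_0$-complement $K_{V'}+B_{V'}^++M_{V'}$ with $B_{V'}^+\ge B_{V'}$, where $n_0$ is divisible by $p'I(\mathfrak{S})$ and bounded in terms of $d,p,\mathfrak{R}$; replacing $n_0$ by $n:=\lcm(n_0,pI(\mathfrak{R}))$, which remains a complement by the $mn$-complement property, we may assume $pI(\mathfrak{R})\mid n$. Let $B'^+$ be determined by $K_{X'}+B'^++M'=f'^*(K_{V'}+B_{V'}^++M_{V'})$, as in \ref{ss-compl-remarks}(2) applied to $f'$. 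Then $B'^+\ge B'$ (the increase $B_{V'}^+-B_{V'}\ge 0$ only enlarges the coefficients of fibre components of $B'$, leaving the horizontal part untouched, as in Step 3 of the proof of Proposition \ref{l-fib-adj-dcc}), $n(K_{X'}+B'^++M')\sim 0$, $nM$ is Cartier, and $(X',B'^++M')$ is generalised lc because it is the crepant pullback of the generalised lc pair $(V',B_{V'}^++M_{V'})$. Hence $K_{X'}+B'^++M'$ is the desired $n$-complement by \ref{ss-compl-remarks}(1), and it is an $mn$-complement for every $m\in\N$ by the same reasoning. The main obstacle throughout is the control of $M_{V'}$ — its b-nefness and, above all, the boundedness of its Cartier index — which is precisely where the generalised canonical bundle formula and the assumption that $M'$ is not big over $V'$ are used.
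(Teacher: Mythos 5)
There is a genuine gap, and it lies exactly where you hedge. Your plan is to run a ``generalised canonical bundle formula'' directly along $f'\colon X'\to V'$, absorbing the pushdown of $M$ into the moduli part $M_{V'}$, and then to invoke [\ref{BZh}] together with Theorem \ref{t-adj-fib-spaces} to claim $M_{V'}$ is b-nef with bounded Cartier index. But the usual adjunction for fibre spaces (\ref{ss-adj-fib-spaces}, Theorem \ref{t-adj-fib-spaces}) requires $K_X+B\sim_\Q 0$ over the base; here only $K_{X'}+B'+M'\sim_\Q 0/V'$, and $M'$ is not $\sim_\Q 0/V'$ in general. Neither the paper nor [\ref{BZh}] develops a canonical bundle formula for generalised pairs in which a nef $M$ is fed into the discriminant/moduli decomposition, so the claim that the moduli part has bounded Cartier index ``because of the generalised canonical bundle formula'' is unsupported. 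The sentence attributing this control to the hypothesis that $M'$ is not big over $V'$ is also wrong: that hypothesis does not place you in the scope of any canonical bundle formula.

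What the hypothesis actually buys — and what your argument is missing — is the reduction to a new base over which $M'$ becomes $\Q$-linearly trivial. After passing to a $\Q$-factorial generalised dlt model one runs an MMP on $M'$ over $V'$ to make $M'$ semi-ample$/V'$; since $M'$ is not big$/V'$, the resulting contraction $f'\colon X'\to T'/V'$ has $\dim X'>\dim T'$, and $M'\sim_\Q 0/T'$. Then (i) $K_{X'}+B'\sim_\Q 0/T'$, so the \emph{usual} adjunction for fibre spaces applies to $(X',B')$ over $T'$, and Proposition \ref{l-fib-adj-dcc} (which uses Theorem \ref{t-bnd-compl-usual-local} in dimension $d$) gives $q$ with $B_{T'}\in\Phi(\mathfrak{S})$ and $qP_T$ nef Cartier on any high resolution $T\to T'$; and (ii) since $M\sim_\Q 0$ on the generic fibre of $X\to T$ and the fibres are rationally connected, Lemma \ref{l-nef-div-descent} descends $pM$ as $pf^*M_T$ with $pM_T$ nef Cartier. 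These two pieces are assembled into a generalised pair $(T',B_{T'}+P_{T'}+M_{T'})$ with nef part $P_T+M_T$ of Cartier index dividing $q$, to which the inductive hypothesis applies. Your proposal skips the MMP on $M'$, the factoring through $T'$, and Lemma \ref{l-nef-div-descent} entirely, and these are precisely what replace the nonexistent ``generalised canonical bundle formula'' you rely on. The later steps of your argument (pulling the complement back via \ref{ss-compl-remarks} and checking generalised lc) are fine in outline, but the construction of the generalised pair downstairs needs to be done as above.
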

\begin{proof}
First we give a short summary of the proof. Modifying the setting we can assume $X'\to V'$ 
factors through a fibration $f'\colon X'\to T'$ such that $M'\sim_\Q 0/T'$. Applying adjunction and Proposition \ref{l-fib-adj-dcc} 
we can write 
$$
q(K_{X'}+B')\sim qf'^*(K_{T'}+B_{T'}+P_{T'})
$$
with $B_{T'}\in \Phi(\mathfrak{S})$ and $qP_T$ is nef Cartier where $q,\mathfrak{S}$ are fixed and $T$ is a 
resolution of $T'$. In addition we make sure $qM$ is linearly equivalent to the pullback of some nef Cartier divisor $qM_T$, 
and that $qM'$ is linearly equivalent to the pullback of $qM_{T'}$ where $M_{T'}$ is the pushdown of $M_T$. 
Next we show $({T'},B_{T'}+P_{T'}+M_{T'})$ is generalised lc, and we construct a bounded complement for 
$K_{T'}+B_{T'}+P_{T'}+M_{T'}$ and pull it back 
to a complement of $K_{X'}+B'+M'$.\\

\emph{Step 1}.
In this step we reduce the problem to the situation in which we have a contraction $X'\to T'/V'$ 
with $M'\sim_\Q 0/T'$. 
Replacing $(X',B'+M')$ with a $\Q$-factorial generalised dlt model as in \ref{ss-gpp}(3), 
we can assume $X'$ is $\Q$-factorial. 
Since $M'$ is not big$/V'$, $X'\to V'$ is not birational. 
After running an MMP$/V'$ on $M'$ and applying \ref{ss-compl-remarks}(2), 
we can assume $M'$ is semi-ample$/V'$. Note that we can run such MMP as $X'$ is of Fano type. 
So $X'\to V'$ factors through a 
contraction $f'\colon X'\to T'$ such that $\dim X'>\dim T'$ and $M'\sim_\Q 0/T'$.\\

\emph{Step 2}.
In this step we consider adjunction over $T'$.
By construction, $K_{X'}+B'\sim_\Q 0/T'$.
Thus by Proposition \ref{l-fib-adj-dcc} (which needs Theorem \ref{t-bnd-compl-usual-local} in dimension $d$) 
there exist $q\in \N$ and a finite set of rational numbers $\mathfrak{S}\subset [0,1]$ 
depending only on $d,\mathfrak{R}$ such that 
$$
q(K_{X'}+B')\sim qf'^*(K_{T'}+B_{T'}+P_{T'})
$$
where $B_{T'}$ and $P_{T'}$ are the discriminant 
and moduli divisors of adjunction for fibre spaces applied 
to $({X'},B')$ over $T'$, and such that $B_{T'}\in\Phi(\mathfrak{S})$ and $qP_T$ is nef Cartier 
for any high resolution $T\to T'$. We can assume $q$ is divisible by $p$.\\ 

\emph{Step 3}.
In this step we show that perhaps after replacing $X$, $pM$ is linearly the pullback of some Cartier divisor $pM_T$ on a 
resolution $T$ of $T'$. 
Pick a sufficiently high log resolution $\psi\colon T\to T'$ of $(T',B_T')$ so that the moduli part $P_T$ is nef and it satisfies 
the pullback property of Theorem \ref{t-adj-fib-spaces} (ii).
We consider $(T',B_{T'}+P_{T'})$ as a generalised pair with data $\psi\colon T\to T'$
and $P_T$. Since  $(X',B')$ is lc, the coefficients of the discriminant divisor $B_T$ on $T$ 
are at most $1$, hence $(T',B_{T'}+P_{T'})$ is generalised lc. Replace $X$ so that the induced map 
$f\colon X\bir T$ is a morphism.
Since $M$ is nef, $\phi^*M'=M+E$ for some exceptional$/X'$ 
and effective $\Q$-divisor $E$. Since $M'\sim_\Q 0/T'$, $E$ is vertical$/T'$, so there is a non-empty open subset  
of $T'$ over which $E=0$ and $M\sim_\Q 0$. Since $X'$ is of Fano type, the general 
fibres of $f'$ are also of Fano type, hence they are rationally connected  
which in turn implies the general fibres of $f$ are rationally connected [\ref{HM-rat-connected}][\ref{Zhang-rat-connected}].  
Thus perhaps after replacing $X$ and $T$ and applying Lemma \ref{l-nef-div-descent}, 
 $pM\sim pf^*M_T$ for a $\Q$-divisor 
$M_T$ on $T$ so that $pM_T$ is nef Cartier.\\ 

\emph{Step 4.}
In this step we show that  $qM'\sim qf'^*M_{T'}$ where $M_{T'}$ is the pushdown of $M_T$. 
Since $E$ is vertical and $\sim_\Q 0$ over $T$, $E=f^*E_T$ for some 
effective $\Q$-divisor $E_T$. Moreover, since $E$ is exceptional$/X'$, we 
deduce $E_T$ is exceptional over $T'$: otherwise $E_T$ has a component $D$ whose pushdown 
$D'$ on $T'$ is not zero; but then there is some prime divisor $C'$ on $X'$ mapping onto $D'$, and since 
$E=f^*E_T$, we get a component $C$ of $E$ mapping onto $C'$ contradicting the fact that 
$E$ is exceptional over $X'$. Therefore, $M_{T'}:=\psi_*M_T$ is $\Q$-Cartier as $M_T+E_T\sim_\Q 0/T'$. 
 By construction, $q(P_T+M_T)$ is nef Cartier. Moreover, 
$$
q\phi^*M'=q(M+E)\sim qf^*(M_T+E_T)=qf^*\psi^*M_{T'}=q\phi^*f'^*M_{T'}
$$ 
 which implies  $qM'\sim qf'^*M_{T'}$. 
\\ 

\emph{Step 5}.
Now we consider 
$(T',B_{T'}+P_{T'}+M_{T'})$ as a generalised pair 
with data $\psi\colon T\to T'$ and $P_T+M_T$.
We show it is generalised lc. We can assume 
$(T,B_{T}+E_T)$ is log smooth where as before $B_T$ is the discriminant divisor on $T$ 
defined for $(X,B)$ over $T'$. 
By construction
$$
K_T+B_T+E_T+P_T+M_T=\psi^*(K_{T'}+B_{T'}+P_{T'}+M_{T'}).
$$
So it is enough to show  
$(T,B_{T}+E_T)$ is sub-lc which in turn is  
equivalent to saying that every coefficient of $B_T+E_T$ is $\le 1$. 
Let $K_{X}+B$ be the pullback of $K_{X'}+B'$. Let 
$D$ be a prime divisor on $T$. By definition of the discriminant divisor, 
$\mu_DB_T=1-t_D$ where $t_D$ is the largest number so that $(X,B+t_Df^*D)$ is sub-lc over the 
generic point of $D$. Since $(X',B'+M')$ is generalised lc, $(X,B+E)$ is sub-lc, and since $E=f^*E_T$, we have 
$\mu_DE_T\le t_D$ which implies $\mu_DB_T+\mu_DE_T\le 1$ as required.\\ 

\emph{Step 6}.
To summarise we have proved:  $(T',B_{T'}+P_{T'}+M_{T'})$ is generalised lc, 
$B_{T'}\in \Phi(\mathfrak{S})$, $q(P_T+M_T)$ is Cartier, and 
$$
q(K_{X'}+B'+M')\sim qf'^*(K_{T'}+B_{T'}+P_{T'}+M_{T'})
$$ 
is anti-nef. Moreover,  $T'$ is of Fano type as $X'$ 
is of Fano type, by Lemma \ref{l-FT-base-contraction}. 

We will construct a complement on $T'$ and pull it back to $X'$.
Since we are assuming Theorem \ref{t-bnd-compl} in dimension $\le d-1$, 
$K_{T'}+B_{T'}+P_{T'}+M_{T'}$ has an $n$-complement
$K_{T'}+B_{T'}^++P_{T'}+M_{T'}$ for some $n$ divisible by $q$ and 
depending only on $\dim T',q, \mathfrak{S}$ such that $G_{T'}:=B^+_{T'}-B_{T'}\ge 0$. 
So $n$ depends only on $d,p,\mathfrak{R}$. 
Denote the pullback of $G_{T'}$ to $T,X,X'$ by $G_T,G,G'$, respectively.
Let ${B'}^+=B'+G'$. Then 
$$
n(K_{X'}+{B'}^++M')=n(K_{X'}+{B'}+M'+G')
$$
$$
\sim nf'^*(K_{T'}+B_{T'}+P_{T'}+M_{T'}+G_{T'})=nf'^*(K_{T'}+B_{T'}^++P_{T'}+M_{T'})\sim 0.
$$ 
 Thus by \ref{ss-compl-remarks}(1), $K_{X'}+{B'}^++M'$ is an 
$n$-complement of $K_{X'}+B'+M'$ if we show  $({X'},{B'}^++M')$ is generalised lc.\\ 

\emph{Step 7.}
In this final step we show that indeed $({X'},{B'}^++M')$ is generalised lc. The pair is clearly generalised 
lc over the generic point of $T'$ because by construction $B'^+=B'$ over the generic point of $T'$.
Let $C$ be a prime divisor on some birational model of $X'$. We want to show that 
$a(C,{X'},{B'}^++M')\ge 0$. This is true if $C$ is not vertical over $T'$,  by 
the previous sentence. Assume then that $C$ is vertical over $T'$.
Replacing $X,T$ we can assume $C$ is a divisor on $X$ and that 
its image on $T$ is a divisor, say $D$. The pullback of $K_{X'}+{B'}^++M'$ to $X$ is 
$K_X+B+E+G+M$. It is enough to show $\mu_C(B+E+G)\le 1$. 
 Since  $({T'},B_{T'}^++P_{T'}+M_{T'})$ is generalised lc, and since 
$$
K_T+B_T+E_T+G_T+P_T+M_T=K_T+B_T+P_T+G_T+E_T+M_T
$$
$$
=\psi^*(K_{T'}+B_{T'}+P_{T'}+G_{T'}+M_{T'})=
\psi^*(K_{T'}+B_{T'}^++P_{T'}+M_{T'})
$$ 
we have  $\mu_D(B_T+E_T+G_T)\le 1$.  
Letting $t_D$ be as in Step 5 we get $\mu_D(E_T+G_T)\le t_D$ which implies 
$(X,B+E+G)$ is sub-lc over the generic point of 
$D$. Therefore, $\mu_C(B+E+G)\le 1$ as required.

\end{proof}

\subsection{Lifting complements from a non-klt centre}

Until the end of this subsection we essentially give an inductive treatment of \ref{t-bnd-compl} when there are 
non-klt centres around from which we can lift complements. First we consider a key inductive statement.

\begin{prop}\label{p-bnd-compl-plt}
Assume Theorem \ref{t-bnd-compl} holds in dimension $d-1$. Then Theorem \ref{t-bnd-compl} 
holds in dimension $d$ for those $(X',B'+M')$ such that 
\begin{itemize}
\item $B'\in \mathfrak{R}$,

\item $({X'},\Gamma'+\alpha M')$ is $\Q$-factorial generalised plt for some $\Gamma'$ and $\alpha \in (0,1)$,

\item $-(K_{X'}+\Gamma'+\alpha M')$ is ample, and

\item $S'=\rddown{\Gamma'}$ is irreducible and it is a component of $\rddown{B'}$.
\end{itemize}
\end{prop}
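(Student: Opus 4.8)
The plan is the classical one for lifting complements from a plt centre: restrict to $S'$ by generalised divisorial adjunction, apply Theorem~\ref{t-bnd-compl} in dimension $d-1$ to get a complement on $S'$, lift it back to $X'$ by Kawamata--Viehweg vanishing, and verify the generalised lc property by inversion of adjunction.

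First I would replace $X$ so that $\phi$ is a log resolution, write $K_X+B+M=\phi^*(K_{X'}+B'+M')$, and let $S$ be the strict transform of $S'$ (which is normal, since $(X',S')$ is plt). Since $S'$ is a component of $\rddown{B'}$, generalised adjunction (\ref{ss-q-adjunction}) gives $K_{S'}+B_{S'}+M_{S'}=(K_{X'}+B'+M')|_{S'}$, with data $S\to S'$ and $M_S=M|_S$, and $(S',B_{S'}+M_{S'})$ is generalised lc of dimension $d-1$ because $(X',B'+M')$ is. I would then check the hypotheses of Theorem~\ref{t-bnd-compl} in dimension $d-1$: $pM_S$ is Cartier as $pM$ is; $-(K_{S'}+B_{S'}+M_{S'})$ is nef, being the restriction of the nef divisor $-(K_{X'}+B'+M')$; $B_{S'}\in\Phi(\mathfrak{S})$ for a finite set $\mathfrak{S}\subset[0,1]$ depending only on $p,\mathfrak{R}$ by Lemma~\ref{l-div-adj-dcc} (using $B'\in\mathfrak{R}\subset\Phi(\mathfrak{R})$), and, enlarging $\mathfrak{S}$, we may take $\mathfrak{R}\subseteq\mathfrak{S}$; and $S'$ is Fano type, which follows from \ref{ss-gpp}(5) applied to the generalised pair $(S',\Gamma_{S'}+\alpha M_{S'})$ obtained from $(X',\Gamma'+\alpha M')$ by generalised adjunction along $S'$ — this pair is generalised klt since $(X',\Gamma'+\alpha M')$ is generalised plt with $\rddown{\Gamma'}=S'$, and $-(K_{S'}+\Gamma_{S'}+\alpha M_{S'})$ is ample. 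Thus there is an $n$-complement $K_{S'}+B_{S'}^++M_{S'}$ of $K_{S'}+B_{S'}+M_{S'}$ with $B_{S'}^+\ge B_{S'}$, for some $n$ divisible by $pI(\mathfrak{S})$ (hence by $pI(\mathfrak{R})$) and depending only on $d,p,\mathfrak{R}$.

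Next I would lift this complement. Put $T'=\rddown{B'}$ and $\Delta'=B'-T'$; since $B'\in\mathfrak{R}$ and $I(\mathfrak{R})\mid n$, one has $\rddown{(n+1)\Delta'}=n\Delta'$, so the relevant divisor on $X'$ is simply $L':=-n(K_{X'}+B'+M')$, with $L'|_{S'}=-n(K_{S'}+B_{S'}+M_{S'})$. The complement on $S'$ produces an effective integral divisor in $|L'|_{S'}-F_{S'}|$, where $F_{S'}=\rddown{(n+1)\Delta_{S'}}-n\Delta_{S'}\ge0$ and $\Delta_{S'}$ is the fractional part of $B_{S'}$; adding $F_{S'}$, the divisor $n(B_{S'}^+-B_{S'})$ lies in $|L'|_{S'}|$. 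To lift a section to $|L'|$ I would run the Kawamata--Viehweg argument of Steps~3--4 of the proof of Proposition~\ref{p-lift-section-lcc}: on the resolution, subtracting $\rddown{B_Y}$ turns $\phi^*L'$ into $K_Y+(\text{klt})+(\text{nef})$, the missing bigness is supplied by the ample divisor $-(K_{X'}+\Gamma'+\alpha M')$ after a small perturbation, and the plt structure (in particular $\rddown{\Gamma'}=S'$ irreducible) bounds the Cartier index along the finitely many divisors vertical over $S'$ where a correction is needed, so a bounded auxiliary twist suffices; the restriction sequence (Lemma~\ref{l-restriction-sequence}) then yields $D'\in|L'|$ with $D'|_{S'}=n(B_{S'}^+-B_{S'})$. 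Setting $B'^+:=B'+\tfrac1nD'$ gives $B'^+\ge B'$, $n(K_{X'}+B'^++M')\sim0$, and $(K_{X'}+B'^++M')|_{S'}=K_{S'}+B_{S'}^++M_{S'}$.

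Finally, $(X',B'^++M')$ is generalised lc near $S'$ by generalised inversion of adjunction (Lemma~\ref{l-inv-adjunction}), since $(X',S')$ is plt and $(S',B_{S'}^++M_{S'})$ is generalised lc. For the global statement I would perturb: for small $\epsilon>0$ the generalised pair with boundary $(1-\epsilon)B'^++\epsilon\Gamma'$ and nef part $(1-\epsilon(1-\alpha))M$ is generalised lc near $S'$, has $S'$ as a generalised non-klt centre, and has generalised log canonical divisor anti-ample (a convex combination of the $\Q$-trivial $K_{X'}+B'^++M'$ and the anti-ample $K_{X'}+\Gamma'+\alpha M'$); hence a generalised non-klt centre of $(X',B'^++M')$ disjoint from $S'$ would force one for this perturbed pair as well, contradicting the connectedness principle (Lemma~\ref{l-connectedness}). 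Then \ref{ss-compl-remarks}(1) shows $K_{X'}+B'^++M'$ is the desired $n$-complement. I expect the lifting step to be the main obstacle: making the vanishing work requires controlling Cartier indices along divisors vertical over $S'$, tracking the round-down corrections from the $\Phi(\mathfrak{S})$-coefficients on $S'$, and extracting bigness from $-(K_{X'}+\Gamma'+\alpha M')$ while keeping the auxiliary pair klt away from $S'$.
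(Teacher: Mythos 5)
Your first and last steps agree with the paper's proof: you restrict to $S'$ by generalised adjunction, invoke Lemma~\ref{l-div-adj-dcc} and~\ref{ss-gpp}(5) to check the hypotheses of Theorem~\ref{t-bnd-compl} in dimension $d-1$, and at the end you pass from ``generalised lc near $S'$'' (via Lemma~\ref{l-inv-adjunction}) to ``generalised lc everywhere'' via the same convex combination $a{B'}^++(1-a)\Gamma'$ and the connectedness principle. That part is correct and is the paper's argument. You also correctly observe that since $B'\in\mathfrak{R}$ and $I(\mathfrak{R})\mid n$, the relevant linear system on $X'$ collapses to $|-n(K_{X'}+B'+M')|$.

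The gap is in the middle lifting step, and it is not merely a matter of missing detail: the \emph{route} you propose would not close. You model the lifting on Steps~3--4 of the proof of Proposition~\ref{p-lift-section-lcc}, which works on the singular dlt model $Y$ and therefore needs Lemma~\ref{l-restriction-sequence} and a bound on the Cartier index of the relevant divisor along the components of $S$ vertical over the centre. In \ref{p-lift-section-lcc} that bound comes from the hypothesis that $(F,\Theta_F+P_F)$ is $\epsilon$-lc for \emph{every} choice of $P_F$, via Lemma~\ref{l-adj-fib-eps-lc}. There is no such $\epsilon$-lc input in Proposition~\ref{p-bnd-compl-plt}: the pair $(X',\Gamma'+\alpha M')$ is only assumed generalised plt, and plt alone does \emph{not} bound the Cartier index of $K_{X'}+S'$ along $S'$. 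So the statement ``the plt structure bounds the Cartier index, so a bounded auxiliary twist suffices'' is unjustified, and the rest of that paragraph (restriction sequence, ``$D'\in|L'|$ with $D'|_{S'}=n(B_{S'}^+-B_{S'})$'') rests on it.

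The paper sidesteps the issue entirely by doing the Kawamata--Viehweg argument on the \emph{smooth} log resolution $X$, where no Cartier index questions arise and the ordinary restriction exact sequence applies. The technical work this relocates is the construction of an integral divisor $P$ with $0\le\mu_D P\le1$, exceptional over $X'$, such that
\[
\Lambda:=\Gamma+n\Delta-\rddown{(n+1)\Delta}+P
\]
is a boundary with $(X,\Lambda)$ plt and $\rddown{\Lambda}=S$. One then writes
\[
L+P=K_X+\Lambda+A+\alpha M+nN,\qquad A=\phi^*\bigl(-(K_{X'}+\Gamma'+\alpha M')\bigr),
\]
so $L+P-S=K_X+(\Lambda-S)+(\text{nef and big})$ and $H^1(L+P-S)=0$ is immediate. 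Two points that your sketch omits but that are essential for this to yield a complement: (i) $P$ must be exceptional over $X'$, which uses that $B'\in\mathfrak{R}$ and $I(\mathfrak{R})\mid n$, so that pushing $L+P$ down to $X'$ recovers exactly $-n(K_{X'}+B'+M')$; and (ii) the divisor $(L+P)|_S$ shifted by the complement data on $S$, namely $G_S=nR_S+n\Delta_S-\rddown{(n+1)\Delta_S}+P_S$, has to be shown effective, which requires a coefficient-by-coefficient estimate using $\mu_C(n\Delta_S-\rddown{(n+1)\Delta_S})>-1$ and integrality. Neither of these is addressed in your proposal, and both are where the bookkeeping actually lives.
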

\begin{proof}
We say a few words before going into the proof. The idea is to construct a complement  on $S'$ and then 
lift it to $X'$ using Kawamata-Viehweg vanishing theorem. Applying induction gives the required 
complement for $(K_{X'}+B'+M')|_{S'}$. However, we face
technical issues if we try to lift the complement directly to $X'$. Instead we assume $X\to X'$ is a resolution 
and extend information from $S$, the birational transform of $S'$, to $X$ and then push down to $X'$. 
This makes the notation a bit more complicated than desired. In the end we get something 
like $K_{X'}+B'^++M'$ which is the complement we need  except that 
we need to further argue that the singularities of $(X',B'^++M')$ are good away from $S'$ by using the connectedness 
principle.\\

\emph{Step 1}.
 In this step we consider adjunction  and  complements on $S'$.
We can assume the given map $\phi\colon X\to X'$ is a log resolution of $(X',B'+\Gamma')$ and that the induced map 
$\psi\colon S\bir S'$ is a morphism where $S$ is the birational transform of $S'$. Moreover, we can assume 
$pM$ is Cartier. Then, by \ref{ss-q-adjunction}(2), we have the generalised adjunction 
$$
K_{S'}+B_{S'}+M_{S'}\sim_\Q (K_{X'}+B'+M')|_{S'}
$$
such that $pM_S$ is Cartier and 
$$
p(K_{S'}+B_{S'}+M_{S'})\sim p(K_{X'}+B'+M')|_{S'}.
$$

By Lemma \ref{l-div-adj-dcc}, $B_{S'}\in \Phi(\mathfrak{S})$ for some finite set of rational numbers 
$\mathfrak{S}\subset [0,1]$ which only depends on $p,\mathfrak{R}$. 
Restricting $K_{X'}+\Gamma'+\alpha M'$ to $S'$ shows that $S'$ is of Fano type, by \ref{ss-gpp}(6). 
 Thus  
by Theorem \ref{t-bnd-compl} in dimension $d-1$, there is $n\in \N$ divisible by $p$ 
which depends only on $d-1, p, \mathfrak{S}$ such that $K_{S'}+B_{S'}+M_{S'}$ has 
an $n$-complement $K_{S'}+B_{S'}^++M_{S'}$ with $B_{S'}^+\ge B_{S'}$. 
Then $n$ depends only on $d,p, \mathfrak{R}$ and replacing it with $nI(\mathfrak{R})$ we can assume it is divisible by 
$I(\mathfrak{R})$. In particular, $nB'$ is integral as $B'\in \mathfrak{R}$.
We will show there is an $n$-complement $K_{X'}+{B}'^++M'$ of $K_{X'}+B'+M'$ with ${B'}^+\ge B'$.\\

\emph{Step 2}.
In this step we introduce basic notation. Write 
$$
N:=-(K_X+B+M):=-\phi^*(K_{X'}+B'+M')
$$
and let $T=\rddown{B^{\ge 0}}$ and $\Delta=B-T$. Define  
$$
L:=-nK_{X}-nT-\rddown{(n+1){\Delta}}-nM
$$
which is an integral divisor. Note that 
$$
L=n\Delta-\rddown{(n+1){\Delta}}+nN.
$$
Now write 
$$
K_X+\Gamma+\alpha M=\phi^*(K_{X'}+\Gamma'+\alpha M').
$$ 
Replacing $\Gamma'$ with $(1-a)\Gamma'+aB'$ and replacing $\alpha M$ with 
$((1-a)\alpha+a)M$ for some $a\in(0,1)$ sufficiently 
close to $1$, we can assume $\alpha$ is sufficiently close to $1$ and $B-\Gamma$ has sufficiently small 
(positive or negative) coefficients.\\ 

\emph{Step 3}.
In this step we define a divisor $P$ and study its properties.
Let $P$ be the unique integral divisor so that 
$$
\Lambda:=\Gamma+{n{\Delta}}-\rddown{(n+1){\Delta}}+P
$$ 
is a boundary, $(X,\Lambda)$ is plt, and $\rddown{\Lambda}=S$ (in particular, we are assuming $\Lambda\ge 0$). 
More precisely, we let $\mu_SP=0$ and for each prime divisor $D\neq S$, we let 
$$
\mu_DP=-\mu_D\rddown{\Gamma+{n{\Delta}}-\rddown{(n+1){\Delta}}}
$$
which satisfies 
$$
\mu_DP=-\mu_D\rddown{\Gamma-\Delta+\langle (n+1)\Delta\rangle}
$$ 
where $\langle (n+1)\Delta\rangle$ is the fractional part of $(n+1)\Delta$.
This implies $0\le \mu_DP\le 1$ for any prime divisor $D$: indeed we can assume $D\neq S$; if $D$ is a component of $T$, then 
$D$ is not a component of $\Delta$ but $\mu_D\Gamma\in(0,1)$, hence $\mu_DP=0$; if $D$ is not a 
component of $T$, then $\mu_D(\Gamma-\Delta)=\mu_D(\Gamma-B)$ is sufficiently small, hence $0\le \mu_DP\le 1$. 

We show $P$ is exceptional$/X'$. 
Assume $D$ is a component of $P$ which is not exceptional$/X'$. Then $D\neq S$, and since $nB'$ is integral,  
$\mu_{D}n\Delta$ is integral, hence  
$\mu_{D}\rddown{(n+1)\Delta}=\mu_Dn\Delta$ which implies 
$\mu_{D}P=-\mu_D\rddown{\Gamma}=0$, a contradiction.\\

\emph{Step 4.}
In this step we use Kawamata-Viehweg vanishing to lift sections from $S$ to $X$.
Let 
$$
A'=-(K_{X'}+\Gamma'+\alpha M')
$$
and let $A=\phi^*A'$.
Then 
$$
L+P= {n{\Delta}}-\rddown{(n+1){\Delta}}+ nN+P
$$
$$
=K_X+\Gamma+\alpha M+A+{n{\Delta}}-\rddown{(n+1){\Delta}}+ nN+P
$$
$$
=K_X+\Lambda+A+\alpha M+nN.  
$$
Since $A+\alpha M+nN$ is nef and big and $(X,\Lambda-S)$ is klt, 
$h^1(L+P-S)=0$ by the Kawamata-Viehweg vanishing theorem, hence  
$$
H^0(L+P)\to H^0((L+P)|_S)
$$
is surjective.\\

\emph{Step 5.}
In this step we define several divisors.
Let $R_{S'}:=B_{S'}^+-B_{S'}$ which satisfies 
$$
-n(K_{S'}+B_{S'}+M_{S'})\sim nR_{S'}\ge 0.
$$
Letting $R_S$ be the pullback of $R_{S'}$, we get 
$$
-n(K_{S}+B_{S}+M_{S}):=-n\psi^*(K_{S'}+B_{S'}+M_{S'})\sim nR_{S}.
$$
Then 
$$
nN|_S=-n(K_X+B+M)|_S \sim -n(K_S+B_S+M_S) \sim  nR_S\ge 0
$$
where the first linear equivalence follows from Step 1 as $n$ is divisible by $p$.

By construction, 
$$
(L+P)|_S=({n{\Delta}}-\rddown{(n+1){\Delta}}+ nN+P)|_S
$$
$$
\sim G_S:=nR_S+{n{\Delta_S}}-\rddown{(n+1){\Delta_S}}+P_S
$$
where $\Delta_S=\Delta|_S$ and $P_S=P|_S$.\\

\emph{Step 6.}
In this step we show $G_S\ge 0$ and that it lifts to some effective divisor $G$ on $X$.
Assume $C$ is a component of $G_S$ with negative coefficient. Then 
 there is a component $D$ of ${n{\Delta}}-\rddown{(n+1){\Delta}}$ with negative coefficient such that $C$ is a 
component of $D|_S$. But 
$$
\mu_C ({n{\Delta_S}}-\rddown{(n+1){\Delta_S}})=\mu_C (-\Delta_S+\langle(n+1)\Delta_S\rangle)\ge 
-\mu_C \Delta_S=-\mu_D \Delta>-1
$$ 
which gives $\mu_CG_S>-1$ and this in turn implies $\mu_CG_S\ge 0$ because $G_S$ is integral, a contradiction. 
Therefore $G_S\ge 0$, and by Step 4, $L+P\sim G$ for some effective divisor $G$ whose support does not contain $S$ 
and $G|_S=G_S$.\\ 

\emph{Step 7.}
In this step we introduce $B'^+$.
By the previous step and the fact that $P$ is exceptional$/X'$, we have     
$$
-nK_{X'}-nT'-\rddown{(n+1)\Delta'}-nM'=L'=L'+P'\sim G'\ge 0
$$
where $L'$ is the pushdown of $L$, etc.
Since $nB'$ is integral, $\rddown{(n+1)\Delta'}= n\Delta'$, so  
$$
-n(K_{X'}+B'+M')=-nK_{X'}-nT'-{n\Delta'}-nM'
=L'\sim nR':=G'\ge 0.
$$
Let ${B'}^+=B'+R'$. Then $n(K_{X'}+{B'}^++M')\sim 0$.\\ 

\emph{Step 8.}
It is enough to show that $(X',{B'}^++M')$ is generalised lc because then $K_{X'}+{B'}^++M'$ is 
an $n$-complement of $K_{X'}+{B'}+M'$, by \ref{ss-compl-remarks}(1).
First we want to show  $R'|_{S'}=R_{S'}$.
Since  
$$
nR:=G-P+\rddown{(n+1)\Delta}- n\Delta\sim L+\rddown{(n+1)\Delta}- n\Delta=nN\sim_\Q 0/X'
$$
and since $\rddown{(n+1)\Delta'}- n\Delta'=0$ as  
$n\Delta'$ is integral, we get  $\phi_*nR=G'=nR'$ and that $R$ is the pullback of $R'$. 
Now 
$$
nR_S=G_S-P_S+\rddown{(n+1)\Delta_S}- n\Delta_S
$$
$$
=(G-P+\rddown{(n+1)\Delta}- n\Delta)|_S=nR|_S
$$
which means $R_S=R|_S$, hence $R_{S'}=R'|_{S'}$ as required. 

The previous paragraph implies 
$$
n(K_{S'}+B_{S'}^++M_{S'})\sim n(K_{X'}+{B'}^++M')|_{S'}
$$
which gives the generalised adjunction 
$$
K_{S'}+B_{S'}^++M_{S'}\sim_\Q (K_{X'}+{B'}^++M')|_{S'}.
$$
 By generalised inversion of adjunction (\ref{l-inv-adjunction}), 
$(X',{B'}^++M')$ is generalised lc near $S'$. Let 
$$
\Omega':=a{B'}^++(1-a)\Gamma' ~~\mbox{and}~~ F=(a+(1-a)\alpha) M
$$ 
for some $a\in(0,1)$ close to $1$.
If $(X',{B'}^++M')$ is not generalised lc away from $S'$, 
then $(X',\Omega'+F')$ 
is also not generalised lc away from $S'$. But then 
$$
-(K_{X'}+\Omega'+F')=-a(K_{X'}+{B'}^++M')-(1-a)(K_{X'}+\Gamma'+\alpha M')
$$
is ample and the generalised 
non-klt locus of $(X',\Omega'+F')$  has at least two disjoint components one of which is $S'$. This contradicts the 
connectedness principle (\ref{l-connectedness}). Thus  $(X',{B'}^++M')$ is generalised lc.

\end{proof}

\begin{prop}\label{p-bnd-compl-non-klt}
Assume Theorem \ref{t-bnd-compl} holds in dimension $\le d-1$ and Theorem \ref{t-bnd-compl-usual-local} 
holds in dimension $d$. Then Theorem \ref{t-bnd-compl} holds 
in dimension $d$ for those $(X',B'+M')$ such that 
\begin{itemize}
\item $B'\in \mathfrak{R}$, 

\item  $(X',B'+M')$ is not generalised klt, and 

\item  either $K_{X'}+B'+M'\not\sim_\Q 0$ or $M'\not\sim_\Q 0$.
\end{itemize}
\end{prop}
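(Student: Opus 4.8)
The plan is to reduce the statement to the two cases already handled in this section, namely Proposition \ref{p-bnd-compl-non-big} and Proposition \ref{p-bnd-compl-plt}, by first extracting a divisorial generalised non-klt place and then splitting according to the positivity of $-(K_{X'}+B'+M')$.

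\textbf{Step 1 (dlt model).} First I would replace $(X',B'+M')$ by a $\Q$-factorial generalised dlt model and invoke \ref{ss-compl-remarks}(2): the pullback of $K_{X'}+B'+M'$ is crepant, so an $n$-complement upstairs descends. After this replacement $X'$ is $\Q$-factorial, the pair is generalised dlt, and all the hypotheses persist: $B'\in\mathfrak{R}$ (using $0,1\in\mathfrak{R}$, the new exceptional divisors getting coefficient $1$), the pair is not generalised klt, ``$K_{X'}+B'+M'\not\sim_\Q 0$ or $M'\not\sim_\Q 0$'' still holds by crepancy (for the $M'$ alternative, push forward), $X'$ is Fano type, and $-(K_{X'}+B'+M')$ is nef. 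Now $\rddown{B'}$ has an irreducible (hence normal) component $S'$, and it suffices to build the complement on this model.

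\textbf{Step 2 (a fibration, when one is available).} Since $X'$ is of Fano type, the nef $\Q$-divisor $-(K_{X'}+B'+M')$ is semiample; let $g\colon X'\to V'$ be the associated contraction, so $K_{X'}+B'+M'\sim_\Q 0/V'$. If $\dim V'>0$ and $M'$ is not big$/V'$, we are precisely in the situation of Proposition \ref{p-bnd-compl-non-big}, whose hypotheses (Theorem \ref{t-bnd-compl} in dimension $\le d-1$ and Theorem \ref{t-bnd-compl-usual-local} in dimension $d$) are granted; this yields the desired $n$-complement. If $-(K_{X'}+B'+M')\sim_\Q 0$, so $V'$ is a point, then the hypothesis forces $M'\not\sim_\Q 0$, and here I would exploit the positivity of the nef divisor $M$ itself: descending $M$ along the rationally connected fibres provided by the Fano type assumption (Lemma \ref{l-nef-div-descent}) produces either a non-trivial fibration over which $K_{X'}+B'+M'\sim_\Q 0$ and $M'$ descends without being big (again reducing to Proposition \ref{p-bnd-compl-non-big}, after the discriminant/moduli bookkeeping of Proposition \ref{l-fib-adj-dcc}), or else shows $M'$ is big, in which case we pass to Step 3.

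\textbf{Step 3 (the plt reduction).} In the remaining cases — $g$ birational (so $-(K_{X'}+B'+M')$ is nef and big), or $\dim V'>0$ with $M'$ big$/V'$ — the plan is to manufacture the data of Proposition \ref{p-bnd-compl-plt}: after possibly restricting to a general fibre of $g$ to descend to the projective (over a point) setting, use the available bigness to write $-(K_{X'}+B'+M')\sim_\Q A'+E'$ with $A'$ ample and $E'\ge 0$, and take $\Gamma'$ to be $S'$ plus a sufficiently small positive combination of $B'-S'$, $A'$ and $E'$ so that $\rddown{\Gamma'}=S'$, the pair $(X',\Gamma'+\alpha M')$ is $\Q$-factorial generalised plt for $\alpha\in(0,1)$ close to $1$, and $-(K_{X'}+\Gamma'+\alpha M')$ is ample. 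As $B'\in\mathfrak{R}$ and $S'$ is a component of $\rddown{B'}$ and of $\rddown{\Gamma'}$, Proposition \ref{p-bnd-compl-plt} (needing Theorem \ref{t-bnd-compl} only in dimension $d-1$) produces an $n$-complement $K_{X'}+{B'}^++M'$ with ${B'}^+\ge B'$. In all cases $n$ is divisible by $pI(\mathfrak{R})$ and depends only on $d,p,\mathfrak{R}$, and transporting it back through Step 1 (and through $g$ and any descent) by \ref{ss-compl-remarks}(1)--(3) completes the proof.

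\textbf{Main obstacle.} The delicate point is Step 3: one must obtain $-(K_{X'}+\Gamma'+\alpha M')$ genuinely \emph{ample}, not merely nef and big, while keeping $S'$ the unique component of $\rddown{\Gamma'}$ and the pair generalised plt — in particular handling the possibility $S'\subseteq\Supp E'$ and, when $M'$ is big$/V'$ with $\dim V'>0$, arranging the descent to the absolute setting. This is exactly where the hypothesis ``$K_{X'}+B'+M'\not\sim_\Q 0$ or $M'\not\sim_\Q 0$'' is indispensable: it rules out the degenerate configuration in which no nontrivial fibration is available and $-(K_{X'}+B'+M')$ carries no bigness to spend on the perturbation.
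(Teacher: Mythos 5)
Your overall plan — pass to a $\Q$-factorial generalised dlt model and then reduce either to Proposition \ref{p-bnd-compl-non-big} (when the nef part is not big over the relevant base) or to Proposition \ref{p-bnd-compl-plt} (by manufacturing a generalised plt pair with $-(K_{X'}+\Gamma'+\alpha M')$ ample) — is indeed the skeleton of the paper's argument, and Steps 1 and 2 are essentially correct in spirit, though the invocation of Lemma \ref{l-nef-div-descent} in Step 2 is misplaced: that lemma is used \emph{inside} Proposition \ref{p-bnd-compl-non-big}, not to decide whether $M'$ is big. The paper simply runs an MMP on $M'$ over $Z'$ (the base of the contraction defined by $-(K_{X'}+B'+M')$) and lets $X'\to V'$ be the contraction defined by the resulting semiample $M'$; the hypothesis $K_{X'}+B'+M'\not\sim_\Q 0$ or $M'\not\sim_\Q 0$ is exactly what guarantees $\dim V'>0$, so that the non-big case falls into Proposition \ref{p-bnd-compl-non-big}.

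The genuine gap is Step 3. You write $-(K_{X'}+B'+M')\sim_\Q A'+E'$ with $A'$ ample, $E'\ge 0$, and propose to take $\Gamma'=S'+$ (a small positive combination) so that $(X',\Gamma'+\alpha M')$ is plt with $-(K_{X'}+\Gamma'+\alpha M')$ ample. This only works when $\Supp E'$ contains no generalised non-klt centre of $(X',B'+\alpha M')$: in that case one can add $\delta E'$ to the boundary without destroying the lc property, absorb the resulting excess into the ample $A'$, and perturb $B'-S'$ to drop its coefficients below $1$. But if a component of $E'$ passes through a non-klt centre other than (or in addition to) $S'$, adding any positive multiple of $E'$ immediately breaks log canonicity there, and no perturbation of the boundary will save the plt property. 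You name this as ``the delicate point'' in your closing paragraph, but naming it is not resolving it — this is where most of the work of the proposition actually lies. The paper handles it by introducing an auxiliary generalised klt pair $(X',\tilde{B}'+\alpha M')$ with $-(K_{X'}+\tilde{B}'+\alpha M')$ nef and big, taking the generalised lc threshold $t$ of $E'+B'-\tilde{B}'$ with respect to that klt pair, forming $\Omega'=\tilde{B}'+t(E'+B'-\tilde{B}')$, and checking (by a direct computation) that $-(K_{X'}+\Omega'+\alpha M')$ is still ample. Even then there is a further wrinkle: $\rddown{\Omega'}$ may be zero, in which case the plt divisor has to be extracted on a higher model $X''\to X'$ (the non-klt places are exceptional), and one must verify ampleness on $X''$ by writing $-(K_{X''}+\Gamma''+\alpha M'')$ as a convex combination of a relatively ample divisor and the pullback of the ample $-(K_{X'}+\Omega'+\alpha M')$. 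None of this machinery appears in your proposal, so as written the argument does not close.
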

\begin{proof}
We say a few words before going into the proof. We reduce to the case when $X'$ is $\Q$-factorial and 
$(X',B')$ is non-klt, 
 find a contraction $X'\to Z'$, and reduce to the case when $M'$ is nef and big over $Z'$ 
 via  \ref{p-bnd-compl-non-big}. Next we find $\alpha\in(0,1)$ such that 
 $-(K_{X'}+B'+\alpha M')\sim_\Q A'+G'$ is nef and big, $A'$ is ample, and $G'\ge 0$.  
If $\Supp G'$ does not contains non-klt centres of $(X',B'+\alpha M')$, then 
we can easily find a boundary $\Gamma'$ so that we can apply \ref{p-bnd-compl-plt}.
Otherwise we face some technicalities but we modify our pair so that in the end we can 
apply \ref{p-bnd-compl-plt} again.\\

\emph{Step 1.} 
In this step we reduce the problem to the case when $X'$ is $\Q$-factorial and 
$(X',B')$ is non-klt, find a contraction $X'\to Z'$, and reduce to the case when $M'$ is nef and big over $Z'$. 
Taking a $\Q$-factorial generalised dlt model of $(X',B'+M')$ we can assume $X'$ is $\Q$-factorial 
and that $(X',B')$ is not klt.
Let $X'\to Z'$ be the contraction defined by $-(K_{X'}+B'+M')$. 
Running an MMP on $M'$ over $Z'$ and replacing $X'$ with 
the resulting model we can assume $M'$ is nef$/Z'$. Note that since the MMP is an MMP on $-(K_{X'}+B')$, 
the non-klt property of $(X',B')$ is preserved. Moreover, if $M'\not\sim_\Q 0$, then this is also preserved 
by the MMP because $M'\sim_\Q 0$ implies $M\sim_\Q0$ as $M$ is nef.

Let $X'\to V'/Z'$ be the contraction defined by $M'$.   
If $\dim Z'>0$, then $\dim V'>0$. If $\dim Z'=0$, then $K_{X'}+B'+M'\sim_\Q 0$, hence $M'\not\sim_\Q 0$ 
so again $\dim V'>0$. 
 In particular, if $M'$ is not big over $Z'$, then we can apply 
Proposition \ref{p-bnd-compl-non-big}.
From now on we can assume $M'$ is nef and big over $Z'$.\\ 

\emph{Step 2.}
In this step we introduce numbers $\alpha,\beta$.
Since $M'$ is nef and big over $Z'$,
$$
-(K_{X'}+B'+\alpha M')=-(K_{X'}+B'+M')+(1-\alpha)M'
$$ 
is globally nef and big for some rational number $\alpha<1$ close to $1$ which will be fixed throughout the proof. 
The contraction defined by 
$-(K_{X'}+B'+\alpha M')$ is nothing but $X'\to V'$ which is birational. After running an MMP on ${B'}$ 
over $V'$ we can assume $B'$ is nef over $V'$, hence 
$$
-(K_{X'}+\beta {B'}+\alpha M')=-(K_{X'}+B'+\alpha M')+(1-\beta)B'
$$ 
is also globally nef and big for any rational number $\beta\in (\alpha,1)$ sufficiently close $1$. 
Note that since the latter MMP is $K_{X'}+B'$-trivial, the non-klt property of $(X',B')$ is again preserved.\\

\emph{Step 3.}
In this step we modify $X',B',M'$ and look at generalised non-klt centres of  $(X',B'+\alpha M')$.
 Since $X'$ is of Fano type and $\Q$-factorial, $(X',0)$ is klt. Thus since $(X',B'+M')$ is generalised lc, 
$({X'},\beta {B'}+\alpha M')$ is generalised klt. Let $(X'',B'')$ be a $\Q$-factorial dlt model of $(X',B')$, 
and let $M''$ be the pullback of $M'$: note that $M''$ is the pushdown of $M$, assuming $X\bir X''$ 
is a morphism, otherwise $(X',B',M')$ would not be generalised lc. Writing the pullback of 
$K_{X'}+\beta {B'}$ as $K_{X''}+\tilde{B}''$,  
perhaps after increasing $\beta$, we can assume the coefficients of $B''-\tilde{B''}$ are sufficiently small.
Replacing $X',B',M'$ with $X'',B'',M''$ and renaming $\tilde{B}''$ to $\tilde{B}'$, we have: 
$({X'},\tilde{B'}+\alpha M')$ is generalised klt, $-(K_{X'}+\tilde{B'}+\alpha M')$ is nef and big, 
and the coefficients of $B'-\tilde{B'}$ are sufficiently small. 
Moreover,  every generalised non-klt centre of $(X',B'+\alpha M')$ 
is a non-klt centre of $(X',B')$: if $D$ is a prime divisor on birational models of $X'$  
such that $a(D,X',{B'}+\alpha M')=0$,  then 
$$
0=a(D,X',{B'}+\alpha M')=\alpha a(D,X',{B'}+M')+(1-\alpha)a(D,X',{B'}),
$$
hence $a(D,X',B')=0$.\\

\emph{Step 4.}
In this step under some assumptions we introduce a boundary $\Gamma'$ and apply Proposition \ref{p-bnd-compl-plt}.
Write 
$$
-(K_{X'}+B'+\alpha M')\sim_\Q A'+G'
$$
where $A',G'\ge 0$ are $\Q$-divisors and $A'$ is ample. 
First assume that $\Supp G'$ does not contain any generalised non-klt centre of $({X'},B'+\alpha M')$. 
Then, for some small $\delta>0$,
$$
-(K_{X'}+B'+\alpha M'+\delta G')\sim_\Q (1-\delta)\left(\frac{\delta}{1-\delta} A'+A'+G'\right)
$$ 
is ample and $({X'},B'+\delta G'+\alpha M')$ is a generalised lc pair whose generalised non-klt 
locus is equal to the generalised non-klt locus of $({X'},B'+\alpha M')$  
which is in turn equal to 
the non-klt locus of $(X',B')$. In particular, $({X'},B'+\delta G')$ is dlt as $(X',B')$ is dlt.

Pick a component $S'$ of $\rddown{B'}$ and let 
$\Gamma'=S'+a(B'-S'+\delta G')$ for some $a<1$ close to $1$. 
Then $(X',\Gamma'+\alpha M')$ is generalised plt, $\rddown{\Gamma'}=S'$, 
and $-(K_{X'}+\Gamma'+\alpha M')$ is ample. 
Now apply Proposition \ref{p-bnd-compl-plt}. Thus from now on we assume that $\Supp G'$ 
contains some generalised non-klt centre of $({X'},B'+\alpha M')$. \\ 

\emph{Step 5.}
In this step we define a boundary $\Omega'$ and study some of its properties.
Let $t$ be the generalised lc threshold of $G'+B'-\tilde{B'}$ with respect to $({X'},\tilde{B'}+\alpha M')$ 
and let 
$$
\Omega':=\tilde{B'}+t(G'+B'-\tilde{B'}).
$$ 
As $({X'},\tilde{B'}+\alpha M')$ is generalised klt, $t>0$.  We can assume the given 
morphism $\phi\colon X\to X'$ is a log resolution of $({X'},{B'}+G')$. Write 
$$
K_{X}+B_\alpha+\alpha M=\phi^*(K_{X'}+B'+\alpha M')
$$
and 
$$
K_{X}+\tilde{B}_\alpha+\alpha M=\phi^*(K_{X'}+\tilde{B}'+\alpha M')
$$ 
from which we get $B_\alpha-\tilde{B}_\alpha=\phi^*(B'-\tilde{B}')$. 
Perhaps after replacing $\tilde{B}'$ with $b\tilde{B}'+(1-b)B'$ for some small $b>0$, 
we can assume the coefficients of $B_\alpha-\tilde{B}_\alpha$ are sufficiently small. 

Let $G=\phi^*G'$. Since $\Supp G'$ contains some generalised non-klt centre of $({X'},B'+\alpha M')$,
we can assume $G$ and $\rddown{B_\alpha}^{\ge 0}$ have a common component, say $T$. Now 
$$
K_{X}+\tilde{B}_\alpha+t(G+B_\alpha-\tilde{B}_\alpha)+\alpha M
=\phi^*(K_{X'}+\tilde{B}'+\alpha M')+t\phi^*(G'+B'-\tilde{B'})
$$
$$
=\phi^*(K_{X'}+\Omega'+\alpha M').
$$
Since $\mu_T\tilde{B}_\alpha$ is sufficiently close to $\mu_TB_\alpha=1$, we deduce  
$t$ is sufficiently small. Moreover, letting 
$$
\Omega=\tilde{B}_\alpha+t(G+B_\alpha-\tilde{B}_\alpha)
$$ 
we have 
$$
\Omega=(1-t)\tilde{B}_\alpha+tB_\alpha+tG \le B_\alpha+tG 
$$
and
$$
\rddown{\Omega}^{\ge 0}\le \rddown{{B}_\alpha+tG}^{\ge 0}= \rddown{{B}_\alpha}^{\ge 0}.
$$\

\emph{Step 6.} 
In this step we show that $-(K_{X'}+\Omega'+\alpha M')$ is ample. By construction
$$
-(K_{X'}+\tilde{B'}+\alpha M')= -(K_{X'}+{B'}+\alpha M')+B'-\tilde{B}'\sim_\Q A'+G'+B'-\tilde{B}'
$$
is nef and big.
Thus
$$
-(K_{X'}+\Omega'+\alpha M')=-(K_{X'}+\tilde{B'}+t(G'+B'-\tilde{B}')+\alpha M')
$$
$$
=-(K_{X'}+\tilde{B'}+\alpha M')-t(G'+B'-\tilde{B}')
$$
$$
\sim_\Q A'+G'+B'-\tilde{B}'-t(G'+B'-\tilde{B}')
$$
$$
=A'+(1-t)G'+(1-t)(B'-\tilde{B}')
$$
$$
= (1-t)\left(\frac{t}{1-t}A'+A'+G'+B'-\tilde{B}'\right)
$$
which is ample.\\
 
\emph{Step 7.}
In this step we settle the proposition in the case  $\rddown{\Omega'}\neq 0$.
Assume $\rddown{\Omega'}\neq 0$ and pick a component $S'$ of 
$\rddown{\Omega'}$. By Step 5, $\rddown{\Omega'}\le \rddown{B'}$, hence $S'$ is a component of $\rddown{B'}$. 
We then define $\Gamma'$ similar to 
Step 4 by perturbing the coefficients of $\Omega'$, say by letting $\Gamma'=S'+a(\Omega'-S')$ 
for some $a<1$ close to $1$, so that   
$\rddown{\Gamma'}=S'$, $({X'},\Gamma'+\alpha M')$ is generalised plt, and $-(K_{X'}+\Gamma'+\alpha M')$ is ample. 
Then we apply Proposition \ref{p-bnd-compl-plt}. We can then assume $\rddown{\Omega'}=0$.\\

\emph{Step 8.}
In this step we construct a birational model $X''$.
 Let $\Omega^\circ$ be the sum of the birational 
transform of $\Omega'$ and the reduced exceptional divisor of $X\to X'$. So $\Omega^\circ-\Omega$  
is effective and exceptional$/X'$.
Running an MMP$/X'$ on $K_X+\Omega^\circ+\alpha M$ contracts all the components of $\Omega^\circ-\Omega$ 
as 
$$
K_X+\Omega^\circ+\alpha M=K_X+\Omega+\alpha M+\Omega^\circ-\Omega\equiv \Omega^\circ-\Omega/X',
$$
hence we reach a model $X''/X'$ 
such that if $\Omega''$ and $M''$ are the pushdowns of $\Omega^\circ$ and $M$, then 
$(X'',\Omega''+\alpha M'')$ is a $\Q$-factorial generalised 
dlt model of $({X'},\Omega'+\alpha M')$. 
The exceptional prime divisors of $X''\to X'$ all have coefficient $1$ in $\Omega''$. 
Moreover,  any prime exceptional divisor $D$ of $X\to X'$ not contracted over $X''$ 
is a component of $\rddown{\Omega}^{\ge 0}$, hence a component of $\rddown{B_\alpha}^{\ge 0}$, by Step 5. Thus 
if $B''$ is the pushdown of $B_\alpha$, then 
$K_{X''}+B''+\alpha M''$ is the pullback of $K_{X'}+B'+\alpha M'$ to $X''$, and $B''$ is the sum of the 
birational transform of $B'$ and the reduced exceptional divisor of $X''\to X'$. 
Since $(X',B'+M')$ is generalised lc, $M''$ is the pullback of $M'$ and $K_{X''}+B''+M''$ 
is the pullback of $K_{X'}+B'+M'$.\\ 

\emph{Step 9.}
In this final step we finish the proof of the proposition again by applying \ref{p-bnd-compl-plt}.
Let $\tilde{\Delta}''$ be the sum of the birational transform of $\tilde{B}'$ and the 
reduced exceptional divisor of $X''\to X'$. Note that $\tilde{\Delta}''\le \Omega''$ as $\tilde{B}'\le \Omega'$, hence
$({X''},\tilde{\Delta}''+\alpha M'')$ is generalised dlt.
Run an MMP$/X'$ on $K_{X''}+\tilde{\Delta}''+\alpha M''$. The MMP ends with $X'$ because $X'$ 
is $\Q$-factorial and because the generalised klt property of $({X'},\tilde{B}'+\alpha M')$ ensures that 
$$
K_{X''}+\tilde{\Delta}''+\alpha M''\equiv Q''/X'
$$
where $Q''$ is effective whose support is the reduced exceptional divisor of $X''\to X'$.
The last step of the MMP 
is a divisorial contraction which contracts a component $S''$ of $\rddown{\Omega''}$. 
Abuse notation and replace $X''\to X'$ with that last contraction. 

By construction, $({X''},\tilde{\Delta}''+\alpha M'')$ is generalised plt 
and $-(K_{X''}+\tilde{\Delta}''+\alpha M'')$ is ample over $X'$. Defining 
$$
\Gamma''=a\tilde{\Delta}''+(1-a)\Omega''
$$ 
for a sufficiently small $a>0$ we can check that 
$({X''},\Gamma''+\alpha M'')$ is generalised plt, $S''=\rddown{\Gamma''}$, 
and $-(K_{X''}+\Gamma''+\alpha M'')$ is globally ample because 
$$
-(K_{X''}+\Gamma''+\alpha M'')=-a(K_{X''}+\tilde{\Delta}''+\alpha M'')-(1-a)(K_{X''}+\Omega''+\alpha M'')
$$
and because $-(K_{X''}+\Omega''+\alpha M'')$ is the pullback of the ample divisor $-(K_{X'}+\Omega'+\alpha M')$.
Now apply Proposition \ref{p-bnd-compl-plt} to $K_{X''}+B''+M''$.

\end{proof}

\begin{lem}\label{l-compl-s-non-exc}
Assume Theorem \ref{t-bnd-compl} holds in dimension $\le d-1$ and Theorem \ref{t-bnd-compl-usual-local} holds
in dimension $d$. Then Theorem \ref{t-bnd-compl} holds in dimension $d$ 
for those $(X',B'+M')$ such that 
\begin{itemize}
\item $B'\in \mathfrak{R}$, and 

\item $(X',B'+M')$ is strongly non-exceptional. 
\end{itemize}
\end{lem}
\begin{proof}
By definition of strongly non-exceptional pairs, there is $P'\ge 0$ such that 
$K_{X'}+B'+M'+P'\sim_\R 0$ and $(X',B'+P'+M')$ is not 
generalised lc. In particular, $P'\neq 0$. By Lemma \ref{l-s-non-exc-change-to-Q-div}, 
we can replace $P'$ so that is a $\Q$-divisor and that $\sim_\R$ becomes $\sim_\Q$. 

Let $t$ be the generalised lc threshold of $P'$ with respect to $(X',B'+M')$. 
Then $t<1$. Let $\Omega'=B'+tP'$ and let $(X'',\Omega''+M'')$ be a $\Q$-factorial generalised 
dlt model of $(X',\Omega'+M')$. There is a boundary $\Theta''$ such that $B'^\sim\le \Theta''\le \Omega''$, 
$\rddown{\Theta''}\neq 0$, and $\Theta''\in \mathfrak{R}$ where $B'^\sim$ is the birational 
transform of $B'$ (adding $1$ to $\mathfrak{R}$ we are assuming $1\in\mathfrak{R}$). 
Let $\pi$ denote $X''\to X'$ and let $P''$ be the pullback of $P'$. 
Then  $X''$ is of Fano type, and  
$$
-(K_{X''}+\Theta''+M'')=-(K_{X''}+\Omega''+M'')+\Omega''-\Theta''
$$
$$
=-\pi^*(K_{X'}+\Omega'+M')+\Omega''-\Theta'' =-\pi^*(K_{X'}+B'+tP'+M')+\Omega''-\Theta'' 
$$
$$
\sim_\Q \pi^*(1-t)P'+\Omega''-\Theta'' = (1-t)P''+\Omega''-\Theta''
$$
where $(1-t)P''+\Omega''-\Theta''$ is effective.

Run an MMP on $-(K_{X''}+\Theta''+M'')$ 
and let $X'''$ be the resulting model. By the previous paragraph, the MMP ends with a minimal model, that is, 
$-(K_{X'''}+\Theta'''+M''')$ is nef. 
Moreover, since $P''\ge 0$ is nef and non-zero, its pushdown $P'''\neq 0$, hence 
$$
-(K_{X'''}+\Theta'''+M''')\sim_\Q (1-t)P'''+\Omega'''-\Theta''' \not\sim_\Q 0.
$$ 
In addition, since $P''$ is 
semi-ample, there is $Q''\ge 0$ such that  
$$
K_{X''}+\Omega''+Q''+M''\sim_\Q 0
$$ 
and $({X''},\Omega''+Q''+M'')$ is generalised lc. Therefore, $({X'''},\Theta'''+M''')$ 
is generalised lc, however, it is not generalised klt as $({X''},\Theta''+M'')$ is not generalised klt.

By \ref{ss-compl-remarks}(3), if $K_{X'''}+\Theta'''+M'''$ has an $n$-complement $K_{X'''}+{\Theta'''}^++M'''$ with 
${\Theta'''}^+\ge {\Theta'''}$, then $K_{X''}+\Theta''+M''$ has an $n$-complement  $K_{X''}+{\Theta''}^++M''$ with ${\Theta''}^+\ge {\Theta''}$ which in turn gives an $n$-complement $K_{X'}+{B'}^++M'$ of $K_{X'}+B'+M'$ with ${B'}^+\ge {B'}$. 
 Now apply Proposition \ref{p-bnd-compl-non-klt} to $K_{X'''}+\Theta'''+M'''$.

\end{proof}

\begin{lem}\label{l-bnd-index-of-K+B'}
Let $d\in \N$ and assume Theorem \ref{t-bnd-compl} holds in dimension $\le d-1$ and 
Theorem \ref{t-bnd-compl-usual-local} holds in dimension $d$. 
Let $\mathfrak{R}\subset [0,1]$ be a finite set of rational numbers. 
Then there is a number $n\in \N$ depending only on $d,\mathfrak{R}$ such that if $(X',B')$ is a projective lc 
pair of dimension $d$ with $K_{X'}+B'\sim_\Q 0$ and $B'\in\mathfrak{R}$, and $X'$ is of Fano type, then $n(K_{X'}+B')\sim 0$.
\end{lem}
\begin{proof}
We say a few words before going into the proof. First we reduce the problem to the case 
when $X'$ is an $\epsilon$-lc Fano variety for some fixed $\epsilon>0$.
Next we find a bounded $n\in\N$ such that $|-nK_{X'}|$ defines a birational map. 
The rest of the proof is essentially a careful analysis of the linear system $|-nK_{X'}|$.\\

\emph{Step 1.}
In this step we reduce the problem to the case when $X'$ is an $\epsilon$-lc Fano variety 
for some fixed $\epsilon>0$. Taking a small $\Q$-factorialisation we can assume $X'$ is $\Q$-factorial.
By Lemma \ref{l-ep-to-0}, there is $\epsilon\in (0,1)$ depending only on $d,\mathfrak{R}$ 
such that if $D$ is any prime divisor on birational models of $X'$ with 
$a(D,X',0) <\epsilon$, then $a(D,X',B')=0$. 
Let $X''\to X'$ be the birational contraction which extracts exactly those $D$ with $a(D,X',0) <\epsilon$ 
if there is any otherwise let $X''\to X'$ be the identity. 
Then $X''$ is of Fano type and $\epsilon$-lc. Moreover, if $K_{X''}+B''$ is the pullback of $K_{X'}+B'$, then 
all the exceptional divisors of 
$X''\to X'$ appear in $B''$ with coefficient $1$. Replacing $(X',B')$ with $(X'',B'')$ 
we can assume $X'$ is $\epsilon$-lc. After running an MMP on $K_{X'}$ we can assume we have a 
$K_{X'}$-negative Mori fibre structure $X'\to T'$. 

If $\dim T'>0$, then applying Proposition \ref{p-bnd-compl-non-big}, there is an $n$-complement $K_{X'}+B'^+$ 
of $K_{X'}+B'$ 
for some bounded $n\in\N$ with $B'^+\ge B'$. Since  $K_{X'}+B'\sim_\Q 0$, we get $B'^+=B'$, hence 
$n(K_{X'}+B')\sim 0$. Thus  we can assume $\dim T'=0$, so $X'$ is an $\epsilon$-lc Fano variety.\\

\emph{Step 2.}
In this step we introduce divisors $A',R'$ with 
$$
n\left(K_{X'}+\frac{1}{n}R'+\frac{1}{n}A'\right)\sim 0
$$ 
and $({X'},\frac{1}{n}R'+\frac{1}{n}A')$ being lc.
By Lemma \ref{l-compl-s-non-exc}, there is a number $n$ depending only on $d$ such that if 
$Y'$ is any strongly non-exceptional Fano variety of dimension $d$ with klt singularities, then 
$K_{Y'}$ has an $n$-complement. We can assume $pI(\mathfrak{R})|n$.   On the other hand, 
by Proposition \ref{p-eff-bir-delta-2}, there is $m\in \N$ depending only on $d,\epsilon$ 
such that $|-mK_{X'}|$ defines a birational map. Replacing $n$ once more we can assume $m|n$. 
So $|-nK_{X'}|$ also defines a birational map. 

By Lemma \ref{l-mov-part-lin-system}, there is a log resolution $\phi\colon X\to X'$ of $(X',B')$ 
such that $\phi^*(-nK_{X'})\sim A+R$ 
where $A$ is the movable part, $|A|$ is base point free, and $R$ is the fixed part. 
We can assume $A$ is general in $|A|$. 
Then 
$$
n\left(K_{X'}+\frac{1}{n}R'+\frac{1}{n}A'\right)\sim 0
$$ 
where $R',A'$ are the pushdowns of $R,A$. 
We claim $({X'},\frac{1}{n}R'+\frac{1}{n}A')$ is lc. If not, then $(X',0)$ is strongly non-exceptional, 
hence by our choice of $n$ we have an $n$-complement $K_{X'}+{C'}^+$ of $K_{X'}$. Since  
$n{C'}^+\in |-nK_{X'}|$ and $({X'},{C'}^+)$ is lc, we deduce $({X'},\frac{1}{n}R'+\frac{1}{n}A')$ is also lc 
because $A'+R'\in |-nK_{X'}|$ is a general member. This is a contradiction.\\  

\emph{Step 3.}
In this step we introduce $(X',\Delta'+N')$.
Let 
$$
\Delta'=\frac{1}{2}B'+\frac{1}{2n}R'~~~\mbox{ and}~~~ N'=\frac{1}{2n}A'.
$$
 Since
$$
2n(K_{X'}+\Delta'+N')=2n\left( K_{X'}+\frac{1}{2}B'+\frac{1}{2n}R'+\frac{1}{2n}A' \right)
$$
$$
=n(K_{X'}+B')+nK_{X'}+R'+A' \sim n(K_{X'}+B'),
$$ 
it is enough to find a bounded $n$ so that  
$$
2n(K_{X'}+\Delta'+N')\sim 0.
$$

\emph{Step 4.} 
In this step we prove the lemma assuming $(X',\Delta'+N')$ is klt. 
Let $\epsilon'=\min\{ \frac{\epsilon}{2},\frac{1}{2n}\}$. We claim 
$(X',\Delta'+N')$ is $\epsilon'$-lc. If not, then there is some prime divisor $D$ 
with 
$$
0<a(D,X',\Delta'+N')<\epsilon'.
$$
Note that 
$$
 a(D,X',\Delta'+N')=\frac{1}{2}a(D,X',B')+\frac{1}{2}a(D,X',\frac{1}{n}R'+\frac{1}{n}A').
$$\\ 
 Then either $0<a(D,X',B')$ which implies $\epsilon\le a(D,X',B')$ 
by Lemma \ref{l-ep-to-0},
or 
$$
0<a(D,X',\frac{1}{n}R'+\frac{1}{n}A')
$$ 
which implies 
$$
\frac{1}{n}\le a(D,X',\frac{1}{n}R'+\frac{1}{n}A').
$$
In either case we get 
$$
 a(D,X',\Delta'+N')\ge \epsilon',
$$ 
a contradiction. So $(X',\Delta'+N')$ is $\epsilon'$-lc. 
Therefore, $X'$ belongs to a bounded family by 
[\ref{HMX2}, Corollary 1.7] as the coefficients of $\Delta'+N'$ belong to a fixed finite set, 
hence the Cartier index of $K_{X'}+\Delta'+N'$ is bounded by Lemma \ref{l-bnd-couples-bnd-Cartier-index-2}. 
Therefore, there is a bounded $n$ so that  $2n(K_{X'}+\Delta'+N')\sim 0$ because 
$\Pic(X')$ is torsion-free (cf. [\ref{Isk-Prokh}, Proposition 2.1.2]).\\ 
 
\emph{Step 5.}
Finally in this step we treat the case when  
 $(X',\Delta'+N')$ is not klt. Consider this pair as a generalised 
pair with data $\phi\colon X\to X'$ and  $N=\frac{1}{2n}A$. We show it is not generalised klt.
We can write 
$$
K_X+E:=\phi^*K_{X'}~~~\mbox{and}~~~K_X+B:=\phi^*(K_{X'}+B')
$$
 and 
$$
K_X+E+\frac{1}{n}R+\frac{1}{n}A=\phi^*\left(K_{X'}+\frac{1}{n}R'+\frac{1}{n}A'\right).
$$
Thus 
$$
K_X+\frac{1}{2}B+\frac{1}{2}E+\frac{1}{2n}R+\frac{1}{2n}A=\phi^*(K_{X'}+\Delta'+N')
$$ 
where 
$$
\left(X,\frac{1}{2}B+\frac{1}{2}E+\frac{1}{2n}R\right)
$$ 
is not sub-klt because 
$(X',\Delta'+N')$ is not klt (as a usual pair) and $A$ is general. Therefore, 
$(X',\Delta'+N')$ is not generalised klt.

Now obviously $N'\not\sim_\Q 0$, hence by Proposition \ref{p-bnd-compl-non-klt},  
$K_{X'}+\Delta'+N'$ has an $n$-complement $K_{X'}+\Delta'^++N'$ for some bounded $n\in\N$ 
where $\Delta'^+\ge \Delta'$. Since $K_{X'}+\Delta'+N'\sim_\Q 0$, we have $\Delta'^+=\Delta'$.
Therefore,  $2n(K_{X'}+\Delta'+N')\sim 0$ as required. 

\end{proof}

\begin{lem}\label{l-compl-non-exc}
Assume Theorem \ref{t-bnd-compl} holds in dimension $\le d-1$ and Theorem \ref{t-bnd-compl-usual-local} holds
in dimension $d$. Then Theorem \ref{t-bnd-compl} holds in dimension $d$ 
for those $(X',B'+M')$ such that 
\begin{itemize}
\item $B'\in \mathfrak{R}$, and 

\item $(X',B'+M')$ is non-exceptional. 
\end{itemize}
\end{lem}
\begin{proof}

By definition of non-exceptional pairs, there is $P'\ge 0$ such that $K_{X'}+B'+M'+P'\sim_\R 0$ and $(X',B'+P'+M')$ is not 
generalised klt. By Lemma \ref{l-s-non-exc-change-to-Q-div}, we can assume $P'$ is a $\Q$-divisor 
and can replace $\sim_\R$ with $\sim_\Q$.
We can assume $(X',B'+P'+M')$ is generalised lc otherwise $(X',B'+M')$ is strongly non-exceptional, so we 
can apply Lemma \ref{l-compl-s-non-exc}.

 Let $\Omega'=B'+P'$ and let $(X'',\Omega''+M'')$ be a $\Q$-factorial generalised 
dlt model of $(X',\Omega'+M')$. There is a boundary $\Theta''$ such that $B'^\sim\le \Theta''\le \Omega''$, 
$\rddown{\Theta''}\neq 0$, and $\Theta''\in \mathfrak{R}$ where $B'^\sim$ is the birational 
transform of $B'$ (adding $1$ to $\mathfrak{R}$ we are assuming $1\in\mathfrak{R}$). 
Let $\pi$ denote $X''\to X'$ and let $P''$ be the pullback of $P'$. 
Then  $X''$ is of Fano type, and  
$$
-(K_{X''}+\Theta''+M'')=-(K_{X''}+\Omega''+M'')+\Omega''-\Theta'' \sim_\Q \Omega''-\Theta''.
$$

Run an MMP on $-(K_{X''}+\Theta''+M'')$ 
and let $X'''$ be the resulting model. By the previous paragraph, the MMP ends with a minimal model, that is, 
$-(K_{X'''}+\Theta'''+M''')$ is nef. 
Moreover,  since $({X'''},\Omega'''+M''')$ is generalised lc, $({X'''},\Theta'''+M''')$ 
is generalised lc, however, it is not generalised klt as $({X''},\Theta''+M'')$ is not generalised klt.
By \ref{ss-compl-remarks}(3), if $K_{X'''}+\Theta'''+M'''$ has an $n$-complement $K_{X'''}+{\Theta'''}^++M'''$ with 
${\Theta'''}^+\ge {\Theta'''}$, then $K_{X''}+\Theta''+M''$ has an $n$-complement  $K_{X''}+{\Theta''}^++M''$ with ${\Theta''}^+\ge {\Theta''}$ which in turn gives an $n$-complement $K_{X'}+{B'}^++M'$ of $K_{X'}+B'+M'$ with ${B'}^+\ge {B'}$. 
Replacing $(X',B'+M')$ with  $({X'''},\Theta'''+M''')$, we can assume $(X',B'+M')$ is not generalised klt.

Applying Proposition \ref{p-bnd-compl-non-klt}, 
we can further assume $M'\sim_\Q 0$ and that $K_{X'}+B'+M'\sim_\Q 0$ which yield $K_{X'}+B'\sim_\Q 0$. In particular, 
since $M$ is nef, we get $M=\phi^*M'\sim_\Q 0$.
Since $X'$ is of Fano type, $\Pic(X')$ and $\Pic(X)$ are torsion-free (cf. [\ref{Isk-Prokh}, Proposition 2.1.2]).
In particular, $pM\sim 0$, so $pM'\sim 0$. 
Therefore, it is enough to find a bounded $n\in \N$ divisible by $p$ such that 
$n(K_{X'}+B')\sim 0$. Now apply Lemma \ref{l-bnd-index-of-K+B'}.

\end{proof}

\subsection{Boundedness of complements}

In this final subsection we prove the main  inductive result of this section.  

\begin{prop}\label{p-BAB-exc-to-compl}
Assume Theorems \ref{t-bnd-compl-usual-local} and \ref{t-BAB-exc} hold in dimension $\le d$. 
Then Theorem \ref{t-bnd-compl} holds in dimension $d$. 
\end{prop}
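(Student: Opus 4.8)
The plan is to combine Construction \ref{const-the-process} with the special cases of Theorem \ref{t-bnd-compl} already established in this section and with Theorem \ref{t-BAB-exc}. First I would reduce to the case that $X'$ is $\Q$-factorial: replacing $(X',B'+M')$ by a $\Q$-factorial generalised dlt model and invoking \ref{ss-compl-remarks}(2), an $n$-complement on the model pushes down to one on $X'$. Then I would run the process of \ref{const-the-process} with $\epsilon$ chosen sufficiently small depending only on $d,p,\mathfrak{R}$. This produces a birational extraction $X''\to X'$ of the divisors of small log discrepancy, followed by an MMP $X''\bir X'''$ on $-(K_{X''}+\Theta''+M'')$, where $\Theta''\ge B''$ has coefficients in a finite set $\mathfrak{T}\supseteq\mathfrak{R}$ depending only on $\epsilon,\mathfrak{R}$, the pair $(X''',\Theta'''+M''')$ is generalised lc, $-(K_{X'''}+\Theta'''+M''')$ is nef, and $X'''$ is $\Q$-factorial, $\epsilon$-lc and Fano type (Fano type using \ref{ss-gpp}(6), since the extracted divisors appear in the boundary with positive coefficient, and then the MMP preserves this). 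By \ref{ss-compl-remarks}(3), an $n$-complement of $K_{X'''}+\Theta'''+M'''$ with $(\Theta''')^+\ge\Theta'''$ yields one of $K_{X''}+\Theta''+M''$; since then $(\Theta'')^+\ge\Theta''\ge B''$, part (1) of \ref{ss-compl-remarks} shows it is also an $n$-complement of $K_{X''}+B''+M''$, and pushing down by \ref{ss-compl-remarks}(2) gives the desired $n$-complement of $K_{X'}+B'+M'$. So it suffices to complement $K_{X'''}+\Theta'''+M'''$; after renaming, I may assume $B'\in\mathfrak{T}$ a finite set of rationals, $X'$ $\Q$-factorial $\epsilon$-lc Fano type, and $-(K_{X'}+B'+M')$ nef.

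Now I would split according to whether the generalised pair $(X',B'+M')$ is exceptional. If it is non-exceptional, I would simply apply Lemma \ref{l-compl-non-exc} with the finite set $\mathfrak{T}$ in place of $\mathfrak{R}$ (its hypotheses, namely Theorem \ref{t-bnd-compl} in dimension $\le d-1$ and Theorem \ref{t-bnd-compl-usual-local} in dimension $d$, being available at this stage), obtaining $n$ depending only on $d,p,\mathfrak{T}$, hence only on $d,p,\mathfrak{R}$, and divisible by $pI(\mathfrak{R})$ after replacing it by a multiple.

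The main case — and where Theorem \ref{t-BAB-exc} is used — is when $(X',B'+M')$ is exceptional. Then Theorem \ref{t-BAB-exc} in dimension $d$ (applied with $\mathfrak{T}$) says $(X',B')$ is log bounded. Together with $X'$ being $\epsilon$-lc, the coefficients of $B'$ lying in the fixed finite set $\mathfrak{T}$, $pM$ being Cartier, and $-(K_{X'}+B'+M')$ being nef, I would argue that these generalised pairs vary in a bounded family: since $X'$ is bounded, $-K_{X'}$ and $B'$ have bounded numerical class, and since $-(K_{X'}+B'+M')$ is nef, the nef divisor $M'$ lies in a bounded set of numerical classes, hence so does $-(K_{X'}+B'+M')$. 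As $X'$ is Fano type this last divisor is semiample, so for a uniformly bounded $n$ divisible by $pI(\mathfrak{T})$ one gets $|-n(K_{X'}+B'+M')|\ne\emptyset$; choosing $0\le nP'$ in this system and setting $(B')^+=B'+P'$, exceptionality (note that an exceptional generalised pair is automatically generalised klt, because $-(K_{X'}+B'+M')$ semiample admits an effective representative) gives that $(X',(B')^++M')$ is generalised klt, in particular generalised lc, with $n(K_{X'}+(B')^++M')\sim 0$ and $nM$ Cartier; by \ref{ss-compl-remarks}(1) this is the required $n$-complement.

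The hard part will be the bounded-family step in the exceptional case: making precise that log boundedness of $(X',B')$ plus the generalised-pair data forces $M'$ into finitely many numerical classes, and that the resulting semiample divisor becomes base-point-free after a uniformly bounded multiple (an effective base-point-freeness statement over the bounded family). A secondary point to be careful about is the bookkeeping in the reduction through \ref{const-the-process} — in particular that one tests exceptionality after, not before, that reduction; if one instead wishes to test it on the original pair, one notes that non-exceptionality is preserved under the pullback along $X''\to X'$ (the pullback of the witnessing $P'$ stays effective), while the exceptional case is routed through Theorem \ref{t-BAB-exc} as above.
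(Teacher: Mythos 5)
Your proposal follows the paper's proof essentially verbatim: reduce to $\Q$-factorial via a generalised dlt model, run Construction \ref{const-the-process} to get $B'$ with coefficients in a fixed finite set, dispose of the non-exceptional case by Lemma \ref{l-compl-non-exc}, and in the exceptional case invoke Theorem \ref{t-BAB-exc} to get boundedness of $X'$ and produce the complement from it. The step you flag as the hard part is handled in the paper by observing that boundedness of the $\epsilon$-lc variety $X'$ bounds the Cartier index of Weil divisors, giving a bounded $n$ divisible by $pI(\mathfrak{R})$ with $-n(K_{X'}+B'+M')$ nef Cartier, and then applying Koll\'ar's effective base point free theorem [\ref{kollar-ebpf}] (valid since $X'$ is Fano type) to obtain a general member $G'\in|-n(K_{X'}+B'+M')|$ and set ${B'}^+=B'+\tfrac{1}{n}G'$.
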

\begin{proof}
By induction on $d$ we can assume  Theorem \ref{t-bnd-compl} holds in dimension $\le d-1$. 
Let $(X',B'+M')$ be as in Theorem \ref{t-bnd-compl} in dimension $d$.
Replacing $(X',B'+M')$ with a $\Q$-factorial generalised dlt model we can assume $X'$ is $\Q$-factorial. 

Pick $\epsilon\in (0,1)$. Let $\Theta'$ be the boundary whose coefficients are 
the same as $B'$ except that we replace each coefficient in $(1-\epsilon,1)$ with $1$ (similar to \ref{p-B'-to-Theta'}). That is, 
we have $\Theta'=(B')^{\le 1-\epsilon}+\lceil (B')^{>1-\epsilon}\rceil$.
Run an MMP on $-(K_{X'}+\Theta'+M')$ and let $X''$ be the resulting model.
By Proposition \ref{p-B'-to-Theta'}, if $\epsilon$ is sufficiently small depending only on $d, p,\mathfrak{R}$, 
then:
\begin{itemize}
\item $(X',\Theta'+M')$ is generalised lc,

\item the MMP does not contract any component of $\rddown{\Theta'}$,

\item $-(K_{X''}+\Theta''+M'')$ is nef, and 

\item $({X''},\Theta''+M'')$ is generalised lc.
\end{itemize}
Since $1$ is the only accumulation point of $\Phi(\mathfrak{R})$, 
there is a finite set $\mathfrak{T}\subset [0,1]$  of rational numbers which includes $\mathfrak{R}$ and 
which depends only on 
$\epsilon, \mathfrak{R}$ such that $\Theta'\in \mathfrak{T}$ (note that by our choice of $\epsilon$, $\mathfrak{I}$ 
depends only on $d, p,\mathfrak{R}$).

By \ref{ss-compl-remarks}(3), if $K_{X''}+\Theta''+M''$ has an $n$-complement $K_{X''}+{\Theta''}^++M''$ 
with ${\Theta''}^+\ge {\Theta''}$, 
then we get an $n$-complement $K_{X'}+{\Theta'}^++M'$ of $K_{X'}+{\Theta'}+M'$ with ${\Theta'}^+\ge {\Theta'}$. 
Since $\Theta'\ge B'$, 
$K_{X'}+{\Theta'}^++M'$ would be an $n$-complement of $K_{X'}+{B'}+M'$.
Therefore, replacing $X',B',M'$ with $X'',\Theta'',M''$, and  
$\mathfrak{R}$ with $\mathfrak{T}$, we can assume $B'\in \mathfrak{R}$.

By Lemma \ref{l-compl-non-exc}, we can assume $(X',B'+M')$ is exceptional. Since we are assuming 
Theorem \ref{t-BAB-exc} in dimension $d$, $X'$ belongs to a bounded family. Thus we can choose a very ample divisor $A'$ 
so that $A'^d$ and $-A'^{d-1}K_{X'}$ are bounded from above. 
By construction, 
$$
L':=-q(K_{X'}+B'+M')
$$
 is nef and integral where 
$q=pI(\mathfrak{R})$. Moreover, $A'^{d-1}L'$ is bounded from above because $B'+M'$ is pseudo-effective 
which implies 
$$
A'^{d-1}L'\le A'^{d-1}(-qK_{X'})
$$
and the right hand side is bounded.
So by Lemma \ref{l-bnd-couples-bnd-Cartier-index-2}, there is a bounded number $n$ 
divisible by $q$ such that $-n(K_{X'}+B'+M')$ is nef and Cartier. 
Since $X'$ is of Fano type, we can use the effective base point free theorem [\ref{kollar-ebpf}], 
so we can assume $|-n(K_{X'}+B'+M')|$ is base point free. Now let 
$$
G'\in |-n(K_{X'}+B'+M')|
$$ 
be a general member and let ${B'}^+=B'+\frac{1}{n}G'$. Then $({X'},{B'}^++M')$ is generalised lc and 
$n(K_{X'}+{B'}^++M')\sim 0$, hence $K_{X'}+{B'}^++M'$ is an $n$-complement of 
$K_{X'}+B'+M'$.

\end{proof}


\section{\bf Boundedness of exceptional pairs}

The aim of this section is to treat exceptional pairs and exceptional generalised pairs as in 
Theorems \ref{t-BAB-exc-usual} and \ref{t-BAB-exc}, inductively. In the non-exceptional case, 
discussed in the previous section, the main inductive tools were lifting complements from the 
base of a fibration and from a non-klt centre. 

Assume $X$ is an exceptional weak Fano variety of dimension $d$, as in Theorem \ref{t-BAB-exc-usual}. 
Let $\tau$ be as in Proposition \ref{p-eff-bir-tau} in dimension $d$. If $X$ is $\tau$-lc, then by the 
proposition, $|-mK_X|$ defines a birational map for some bounded $m\in\N$. In particular,  
taking $M\in  |-mK_X|$ and letting $\Omega=\frac{1}{m}M$ we get a klt $m$-complement $K_X+\Omega$ 
as $X$ is exceptional, 
hence $X$ is bounded by [\ref{HX}, Theorem 1.3]. If $X$ is not 
$\tau$-lc, there is a prime divisor $D$ on birational models of $X$ with log discrepancy $a(D,X,0)<\tau$. Then 
there is a birational contraction $\phi\colon Y\to X$ from a $\Q$-factorial variety contracting only one divisor 
which is $D$. Thus  $K_Y+eD=\phi^*K_X$ where $e>1-\tau$. The idea here is to run some kind of 
MMP (as in \ref{l-the-simpler-directed-MMP}) in which in each step we try to increase 
$e$ but keeping the nefness of $-(K_Y+eD)$. Since $X$ is exceptional, the pair remains klt. 
In the end we get a model $Y'$ and a fibration $Y'\to Z'$ along which $K_{Y'}+\tilde{e}D'$ is numerically trivial 
where $\tilde{e}>e$. It turns out that $(Y',\tilde{e}D')$ is $\tilde{\epsilon}$-lc for some fixed $\tilde{\epsilon}>0$.
Applying \ref{t-BAB-good-boundary} we deduce $\tilde{e}$ belongs to a 
fixed finite set. If $\dim Z'>0$, then we can pull back a complement from the base and get a bounded complement
of $K_X$ and then apply [\ref{HX}, Theorem 1.3]. If $\dim Z'=0$, we apply  [\ref{HX}, Theorem 1.3] once more.

If $(X',B'+M')$ is exceptional as in Theorem \ref{t-BAB-exc}, adapting the strategy of the previous 
paragraph is much more difficult because of the presence of $B'+M'$.  We need to discuss and bound 
exceptional thresholds (\ref{l-exc-threshold}), bound anti-canonical volumes (\ref{l-exc-bnd-vol}) 
and bound anti-canonical singularities (\ref{l-exc-bnd-lct}) in order to be able to apply a general boundedness  
criterion (\ref{p-from-bnd-lct-to-bnd-var}). The latter criterion also plays an important role  in the 
proof of BAB [\ref{B-BAB}, proof of Theorem 1.1].

\subsection{Bound on singularities}

\begin{lem}\label{l-exc-bnd-sing}
Let $d,p\in\N$ and $\Phi\subset [0,1]$ be a DCC set.
Then there is a number $\epsilon>0$ depending only on $d,p,\Phi$ satisfying the following. 
Let $(X',B'+M')$ be a projective generalised pair  with data $\phi\colon X\to X'$ 
and $M$ such that 
\begin{itemize}
\item $(X',B'+M')$ is exceptional of dimension $d$, 

\item $B'\in \Phi$ and $pM$ is b-Cartier, and 

\item $X'$ is of Fano type.\\
\end{itemize} 
Then for any $0\le P'\sim_\R -(K_{X'}+B'+M')$, the 
pair $(X',B'+P'+M')$ is generalised $\epsilon$-lc (where we consider this generalised pair with 
boundary part $B'+P'$). 
\end{lem}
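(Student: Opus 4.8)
The plan is to argue by contradiction using the standard ACC-type compactness machinery, exactly in the spirit of the earlier lemmas in the paper (compare Lemma \ref{l-ep-to-0} and the subsection \ref{ss-B'-to-Theta'}). Suppose no such $\epsilon$ exists. Then there is a sequence $(X_i',B_i'+M_i')$ of exceptional generalised pairs of dimension $d$ with $B_i'\in\Phi$, $pM_i$ Cartier, $X_i'$ Fano type, together with divisors $0\le P_i'\sim_\R -(K_{X_i'}+B_i'+M_i')$ and prime divisors $D_i$ over $X_i'$ with $a(D_i,X_i',B_i'+P_i'+M_i')=:\epsilon_i\to 0$ (or $<0$, but since the pairs are exceptional these log discrepancies are in fact $>0$, so $\epsilon_i\in(0,\epsilon)$ decreasing to $0$). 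The goal is to extract from this a violation of the ACC for generalised lc thresholds \cite[Theorem 1.4]{BZh}, or of the global ACC for generalised pairs \cite[Theorem 1.5]{BZh}.

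First I would reduce to a cleaner situation. Replacing $P_i'$ by a general small perturbation and rescaling, one can assume each $P_i'$ is a $\Q$-divisor, or keep it $\R$ and pass to the limit at the end. Extract the divisor $D_i$: since $X_i'$ is Fano type, $(X_i',0)$ is klt, so there is a birational morphism $X_i''\to X_i'$ extracting exactly $D_i$; let $K_{X_i''}+B_i''+P_i''+M_i''$ be the pullback of $K_{X_i'}+B_i'+P_i'+M_i'$, so that the coefficient $b_i$ of $D_i$ in $B_i''+P_i''$ equals $1-\epsilon_i$. Now write $t_i$ for the generalised lc threshold of $P_i'$ with respect to $(X_i',B_i'+M_i')$; by exceptionality $t_i>1$ is impossible to use directly, but the key point is that the failure of $\epsilon$-lc is witnessed on the boundary-plus-$P_i'$ pair. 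The cleanest route: let $s_i\in[0,1]$ be the largest number with $(X_i',B_i'+s_iP_i'+M_i')$ generalised $\epsilon_i'$-lc for a suitable comparison value, or — more in line with the paper's technique — consider $\Gamma_i':=B_i'+s_iP_i'$ where $s_i$ is chosen so that $D_i$ computes the generalised lc threshold, i.e. $1-\epsilon_i$ forces $D_i$ onto the boundary of the lc locus. Then $\mu_{D_i}$ of the pulled-back $\Gamma_i'$-divisor lies in $(1-\epsilon_i,1)$ and $t_i$ (the coefficient of $P_i'$) is precisely the generalised lc threshold of $P_i'$ with respect to $(X_i',B_i'+M_i')$ along $D_i$. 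After passing to a subsequence so that the union of all coefficients occurring in the $\Gamma_i'$ (coming from $\Phi$ together with the $t_i$) forms a DCC set, the sequence $t_i$ is an infinite strictly increasing sequence of generalised lc thresholds — contradicting \cite[Theorem 1.4]{BZh}.

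The main obstacle I anticipate is the bookkeeping needed to guarantee that the relevant coefficients genuinely form a DCC set and that the sequence of thresholds is genuinely infinite (not eventually constant), which is exactly the delicate point in \cite[Theorem 1.4]{BZh}-type arguments: one must ensure $\epsilon_i\to 0$ translates into a nonconstant sequence of threshold values, and one must control the moduli part $M_i$ (here handled because $pM_i$ is Cartier, so the "$\frac{\beta}{pl}$"-type contributions stay in a controlled set, cf. Lemma \ref{l-div-adj-dcc}). A secondary technical point is that $P_i'$ is only $\R$-linearly equivalent to $-(K_{X_i'}+B_i'+M_i')$ and may have arbitrary real coefficients; one handles this by noting that the generalised lc threshold computation only sees finitely many coefficient data once we fix a log resolution, and the ACC theorem in \cite{BZh} is stated for DCC coefficient sets including real ones. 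Finally, one should note the conclusion is uniform: $\epsilon$ depends only on $d,p,\Phi$ because the contradiction is purely a statement about those data, so no single sequence as above can exist.

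\medskip
\noindent\emph{Remark.} An alternative, perhaps cleaner, organisation: first use exceptionality to deduce that every $(X_i',B_i'+P_i'+M_i')$ is generalised klt, hence has \emph{positive} minimal generalised log discrepancy $\mathrm{mld}$; then the sequence $\mathrm{mld}(X_i',B_i'+P_i'+M_i')=\epsilon_i\to 0$ is exactly the kind of degeneration forbidden by lower-semicontinuity/ACC for generalised pairs. But since the paper only invokes the ACC for generalised lc thresholds and the global ACC from \cite{BZh}, I would phrase the contradiction through thresholds as above rather than through $\mathrm{mld}$'s, to stay within the toolkit already set up in Section 2.
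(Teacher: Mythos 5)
Your strategy of arguing by contradiction and deferring to the ACC theorems of [BZh] is the right general instinct, but there is a genuine gap in the middle of the argument: you never produce an lc-threshold computation with controlled coefficients, which is what \cite[Theorem~1.4]{BZh} actually requires.

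The divisor $P_i'$ is an arbitrary nonnegative $\R$-divisor $\R$-linearly equivalent to $-(K_{X_i'}+B_i'+M_i')$. Its coefficients are not constrained by $\Phi$ or by $p$, and nothing in your setup keeps them in a DCC set. When you form $\Gamma_i':=B_i'+s_iP_i'$ and claim that ``the union of all coefficients occurring in the $\Gamma_i'$ $\dots$ forms a DCC set'' after passing to a subsequence, that is not something you can arrange: the coefficients of $s_iP_i'$ are essentially arbitrary reals, and a DCC hypothesis cannot be manufactured by subsequencing. Likewise, the threshold $t_i$ of $P_i'$ along $D_i$ is the ratio of $a(D_i,X_i',B_i'+M_i')$ to $\mu_{D_i}\phi_i^*P_i'$, and neither quantity is controlled; there is no reason for the $t_i$ to satisfy any ACC condition, and the ACC theorem simply does not apply to thresholds of divisors with uncontrolled coefficients. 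You seem to sense this (``by exceptionality $t_i>1$ is impossible to use directly'') but the workaround you sketch does not repair it.

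The paper's proof is designed precisely to isolate a single variable coefficient and park all the other coefficients inside $\Phi$. Concretely: after extracting the one divisor $D''$ computing the minimal generalised log discrepancy $a$, one runs an MMP on $-(K_{X''}+B''+cD''+M'')$ (nef minimal model), then runs the directed MMP of \ref{ss-mmp-adding-mult} which increases only the coefficient of $D''$, replacing $c$ by $\tilde{c}$. By exceptionality this process never hits a generalised lc threshold and must terminate at a Mori fibre space $\overline{X}\to\overline{T}$ with $-(K_{\overline{X}}+\overline{B}+\tilde{c}\overline{D}+\overline{M})\equiv 0/\overline{T}$ and $\overline{D}$ ample over $\overline{T}$. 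Restricting to a general fibre produces a generalised pair with $K+\text{boundary}+\text{moduli}\equiv 0$ whose boundary coefficients are in $\Phi$ except for one term $e+\tilde{c}$; the global ACC \cite[Theorem~1.5]{BZh} then forces $e+\tilde{c}$ to be bounded away from $1$, hence $a=1-(e+c)\ge 1-(e+\tilde{c})$ is bounded below. The Mori-fibre-space reduction and the passage to the global ACC (rather than the threshold ACC) are the essential steps missing from your outline; without them the coefficient control needed for any ACC input is not available.
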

\begin{proof}  
Let $(X',B'+M')$ and $P'$ be as in the statement. Since $(X',B'+M')$ is exceptional, $(X',B'+P'+M')$ is generalised klt.
Taking a $\Q$-factorialisation we can assume $X'$ is $\Q$-factorial. 
Let $D''$ be a prime divisor on birational models of $X$ such that 
$$
a:=a(D'',X',B'+P'+M')
$$ 
is minimal.  
We can assume $a<1$. Let  $X''\to X'$ be the birational contraction which extracts exactly $D''$; 
it is the identity morphism if $D''$ is already a divisor on $X'$. 
Let $K_{X''}+B''+M''$ be the pullback of $K_{X'}+B'+M'$, and let $P''$ be the pullback of $P'$. 
Let $e$ and $c$ be the coefficients of $D''$ in $B''$ and $P''$ respectively (note that it is possible to have $e<0$).
 By assumption, $e+c=1-a>0$. By \ref{ss-gpp}(7), $X''$ is of Fano type. 

Running an MMP on 
$$
-(K_{X''}+B''+cD''+M'')\sim_\Q P''-cD''\ge 0
$$ 
we get  a model $X'''$ on which  $-(K_{X'''}+B'''+cD'''+M''')$ is nef. 
We can assume the induced maps 
 $\psi\colon X\bir {X}''$ and $\pi\colon X\bir {X}'''$ are morphisms. 
 Then 
$$
\pi^*(K_{{X}'''}+{B}'''+{c}{D}'''+{M}''')\ge \psi^*(K_{X''}+B''+cD''+M'').
$$ 

Let $\epsilon$ be the number given by Proposition \ref{p-B'-to-Theta'} for the data $d,p,\Phi$. 
We will show that $a\ge \epsilon$. Assume not, that is, assume $a<\epsilon$. We will derive a contradiction.
Let $\Theta'''$ be the same as $B'''+cD'''$ except that we replace each coefficient in $(1-\epsilon,1)$ with $1$.
 Next run an MMP on $-(K_{{X}'''}+\Theta'''+{M}''')$. 
  Let $\overline{X}$ be the resulting model. Then by \ref{p-B'-to-Theta'}, $-(K_{\overline{X}}+\overline{\Theta}+\overline{M})$ 
is nef. 
We can assume the induced map  
$\rho\colon X\bir \overline{X}$ is a morphism. 
Then  
$$
\rho^*(K_{\overline{X}}+\overline{\Theta}+\overline{M})\ge 
\pi^*(K_{{X}'''}+{\Theta}'''+{M}''')\ge \pi^*(K_{X''}+B'''+cD'''+M'')
$$
$$
\ge \psi^*(K_{X''}+B''+cD''+M'')\ge \phi^*(K_{X'}+B'+M'). 
$$

By construction, $\rddown{\Theta'''}\neq 0$, hence $({X}''',{\Theta}'''+{M}''')$ is not generalised klt 
which in turn implies $({\overline{X}},\overline{\Theta}+\overline{M})$ is not generalised klt. In particular, 
$({\overline{X}},\overline{\Theta}+\overline{M})$ is not exceptional as 
$-(K_{\overline{X}}+\overline{\Theta}+\overline{M})$ is nef and ${\overline{X}}$ is of Fano type. Therefore, 
$(X',B'+M')$ is not exceptional, by Lemma \ref{l-exc-compare-bigger-model}, a contradiction.

\end{proof}

\subsection{From complements to Theorem \ref{t-BAB-exc-usual}}

Before treating \ref{t-BAB-exc-usual} we prove a lemma.

\begin{lem}\label{l-the-simpler-directed-MMP}
Assume that 
\begin{itemize}
\item $(X,B)$ is a projective $\Q$-factorial pair, 

\item $-(K_X+B)$ is nef, 

\item $X$ is of Fano type, and that 

\item $D\neq 0$ is an effective $\R$-divisor on $X$.
 
\end{itemize}
Then there is a $-D$-MMP ending with a non-birational contraction 
$X'\to T'$  such that 
\begin{itemize}
\item $-(K_{X'}+B'+tD')$ is globally nef  and numerically trivial over $T'$ for some $t\ge 0$, and

\item the intersection of $K_X+B+tD$  with each extremal ray in the MMP is non-negative.
\end{itemize}
\end{lem}
\begin{proof}
Let $s$ be the largest real number such that  
$-(K_{X}+B+sD)$ 
is nef. Note that it is possible to have $s=0$, e.g. when $K_X+B\equiv 0$. Since $X$ is of Fano type, 
the Mori cone of $X$ is generated by finitely many extremal rays, hence  
there is an extremal ray $R$ such that 
$$
(K_{X}+B+sD)\cdot R=0 ~~~\mbox{and $D\cdot R>0$}.
$$
If $R$ defines a non-birational contraction, 
then we stop and let $t=s$ and let $X=X'\to T'$ be that contraction. 
If not, we let $X\bir Y$ be the divisorial contraction or flip defined by $R$. 
Then $-(K_Y+B_Y+sD_Y)$ is nef  where $B_Y,D_Y$ 
are the pushdowns of $B,D$. Moreover, $Y$ is of Fano type, and since $D\cdot R>0$, we have $D_Y\neq 0$.
Now let $u$ be the largest real number such that 
$-(K_Y+B_Y+uD_Y)$ is nef, and continue as above. 

The process gives a $-D$-MMP which eventually ends with a $-D$-Mori fibre space, that is, 
a non-birational contraction 
$X'\to T'$  such that $D'$ is ample over $T'$ because $D$ is effective. By construction, 
$-(K_{X'}+B'+tD')$ is nef globally and numerically trivial over $T'$ for some $t\ge 0$. 
In the first step of the MMP we have  
$$
(K_X+B+tD)\cdot R=(t-s)D\cdot R\ge 0.
$$ 
The same is true in each step, hence the intersection of $K_X+B+tD$  with each extremal ray in the MMP is non-negative.

\end{proof}

\begin{lem}\label{l-from-compl-to-BAB-exc-usual}
Assume Theorem \ref{t-bnd-compl} holds in dimension $\le d-1$ and Theorem \ref{t-bnd-compl-usual-local}
 holds in dimension $d$. 
Then Theorem \ref{t-BAB-exc-usual} holds in dimension $d$. 
\end{lem}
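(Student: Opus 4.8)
\emph{Proof proposal.} The plan is to reduce to the case where $X$ is Fano, use exceptionality both to bound the singularities of $X$ and to convert effective non-vanishing of $|-mK_X|$ into a complement of bounded index, and then quote Theorem \ref{t-BAB-good-boundary}. First I would do the routine reductions. Let $X$ be an exceptional weak Fano of dimension $d$; since $(X,0)$ is klt with $-K_X$ nef and big, $X$ is of Fano type. Replacing $X$ by the ample model of $-K_X$ I may assume $X$ is Fano: this is harmless for boundedness because the morphism to the ample model is crepant for $K_X$, hence extracts only finitely many divisors (and, over a bounded target, a bounded family of them), and exceptionality is preserved since every $0\le P\sim_\Q -K_X$ pulls back to such a divisor and being klt is a crepant-birational invariant. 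Next, applying Lemma \ref{l-exc-bnd-sing} with $B'=0$, $M=0$, $p=1$ and $\Phi=\{0\}$, there is $\epsilon>0$ depending only on $d$ such that for every $0\le P\sim_\R -K_X$ the pair $(X,P)$ is $\epsilon$-lc; in particular $X$ is $\epsilon$-lc.

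The key point is that it suffices to find $m\in\N$, depending only on $d$, with $|-mK_X|\neq\emptyset$. Granting this, pick a general $M\in|-mK_X|$; since $\frac1m M\sim_\Q -K_X$ and $X$ is exceptional, $(X,\frac1m M)$ is klt, and $m(K_X+\frac1m M)\sim 0$, so $K_X+B^+:=K_X+\frac1m M$ is an $m$-complement of $K_X$ with $B^+\ge 0$. By Lemma \ref{l-exc-bnd-sing} the pair $(X,B^+)$ is $\epsilon$-lc; moreover $B^+\sim_\Q -K_X$ is big and nonzero, and $mB^+=M$ is integral and effective, so every nonzero coefficient of $B^+$ is $\ge\frac1m$. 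Hence $(X,B^+)$ satisfies the hypotheses of Theorem \ref{t-BAB-good-boundary} with parameters $(d,\epsilon,\frac1m)$, and therefore $X$ lies in a bounded family; this gives Theorem \ref{t-BAB-exc-usual} in dimension $d$.

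It remains to produce such an $m$, and here I would run the covering-family and lifting-sections machinery underlying the proofs of Propositions \ref{p-eff-bir-delta-1}, \ref{p-eff-bir-delta-2} and \ref{p-eff-bir-tau}. Choosing $n$ with $\vol(-nK_X)$ large, for general closed points $x,y\in X$ one gets, as in \ref{ss-non-klt-centres}, a $\Q$-divisor $0\le\Delta\sim_\Q -(n+1)K_X$ and a general member $G$ of a bounded covering family which is the centre of the unique non-klt place of $(X,\Delta)$ through $x$, together with the adjunction datum $(F,\Theta_F+P_F)$ on the normalisation $F$ of $G$. Exceptionality enters precisely in controlling this datum: by Lemma \ref{l-subadjunction-non-klt} combined with Lemma \ref{l-sub-bnd-on-gen-subvar} and Proposition \ref{p-non-term-places}, if $G$ were not an isolated non-klt centre one could choose $P_F\ge 0$ in its class so that $(F,\Theta_F+P_F)$ fails to be $\epsilon'$-lc, and transporting this singularity back up would contradict the $\epsilon$-lc property forced by exceptionality; so $G$ is isolated and $(F,\Theta_F+P_F)$ is $\epsilon'$-lc for every choice of $P_F$. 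Since $\dim F<d$, the lower-dimensional instances of the results assumed in the statement (Theorem \ref{t-bnd-compl} in dimension $\le d-1$, together with adjunction for fibre spaces, \ref{l-fib-adj-dcc}, which uses Theorem \ref{t-bnd-compl-usual-local} in dimension $d$) give $h^0(-rnK_X|_F)\neq 0$ for bounded $r$; then Proposition \ref{p-lift-section-lcc} lifts this to $h^0(-lnrK_X)\neq 0$ for bounded $l$. Feeding the resulting relation $nK_X+N\sim_\Q 0$, with $N\ge 0$ of coefficients $\ge\frac1r$, into Proposition \ref{p-eff-bir-delta-1} bounds $m/n$, and arguing as in the final steps of the proof of Proposition \ref{p-eff-bir-tau} (pseudo-effectivity on the log bounded birational models obtained from the non-klt centres) then bounds $m$ itself.

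The main obstacle is exactly this last step: making the non-klt-centre argument go through for the exceptional $\epsilon$-lc Fano $X$ without circularity, i.e. without invoking Theorem \ref{t-eff-bir-e-lc} (which is downstream of this lemma), so that it is \emph{exceptionality}, rather than an ``$\epsilon$ close to $1$'' hypothesis, that rules out the bad non-klt centres, and so that the induction on dimension for the non-vanishing on $F$ is correctly interleaved with the global inductive scheme.
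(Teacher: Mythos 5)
Your opening reductions are fine and essentially match the paper: passing to the ample model, applying Lemma~\ref{l-exc-bnd-sing} with $B'=0$, $M=0$ to get a uniform $\epsilon>0$ (depending only on $d$) so that $X$ is $\epsilon$-lc, and the observation that once $|-mK_X|\neq\emptyset$ for bounded $m$, exceptionality and Lemma~\ref{l-exc-bnd-sing} make $(X,\frac1m M)$ a klt, $\epsilon$-lc pair with $B^+$ big, $\sim_\Q -K_X$, and coefficients $\ge\frac1m$, so that Theorem~\ref{t-BAB-good-boundary} concludes. This last implication is exactly how the paper closes both of its cases.

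Where you diverge from the paper, and where the real gap lies, is in how the bounded $m$ with $h^0(-mK_X)\neq 0$ is produced. The paper does not run a covering-family argument for exceptional Fanos of arbitrary mld; it splits on $\limsup$ of the minimal log discrepancies $\epsilon_i$. When $\epsilon_i\to 1$ it simply quotes Proposition~\ref{p-eff-bir-tau}. When $\limsup\epsilon_i<1$ it does something entirely different: extract a divisor $D_i''$ realizing $\epsilon_i$, run the directed MMP of~\ref{ss-mmp-adding-mult} for $K_{X_i''}+e_iD_i''$ by adding multiples of $D_i''$, use exceptionality (pulled back through the crepant inequality) to rule out hitting an lc threshold, reach a Mori fibre space $X_i'''\to T_i'''$, bound the general fibres by Theorem~\ref{t-BAB-good-boundary}, bound $\tilde e_i$ by Lemma~\ref{l-bnd-fam-intersection}, and then obtain an $n$-complement via Proposition~\ref{p-bnd-compl-non-big} (or~\ref{t-BAB-good-boundary} if $T_i'''$ is a point). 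There is no covering family, no $\Theta_F$, no $P_F$ in Case 1.

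Your attempt to unify both cases into a single covering-family argument hits exactly the obstruction you name at the end, and that obstruction is not cosmetic. In the proof of Proposition~\ref{p-eff-bir-tau} the $\epsilon_i\to 1$ hypothesis is used twice in ways that exceptionality with a merely universal $\epsilon$ cannot reproduce: (i) Step 2 chooses $\epsilon$ so that $1-\epsilon<\min\Phi^{>0}$ to deduce $\Lambda_{F_i}\le 0$; (ii) Step 4 then reads off $h^0(-rK_{X_i}|_{F_i})=h^0(-r\Lambda_{F_i})\neq 0$ precisely because $\Lambda_{F_i}\le 0$. With only the $\epsilon$ from Lemma~\ref{l-exc-bnd-sing}, $\Lambda_{F_i}$ can have positive components, and the non-vanishing on $F_i$ collapses. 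Your suggested replacement --- invoking Theorem~\ref{t-bnd-compl} in dimension $\le d-1$ to get $h^0(-rnK_X|_F)\neq 0$ --- does not obviously apply: the adjunction datum $(F,\Theta_F+P_F)$ satisfies $K_F+\Theta_F+P_F\sim_\Q -nK_X|_F$, so $-(K_F+\Theta_F+P_F)$ is anti-ample rather than anti-nef, and in any case you need effective non-vanishing of $-K_X|_F=-(K_F+\Lambda_F)$ where $\Lambda_F$ is not a boundary of the required sign; these are not the hypotheses of~\ref{t-bnd-compl}. So the ``interleaving'' you flag is not just a bookkeeping concern: as written the key non-vanishing step is unproved, and the paper avoids it entirely in the $\epsilon$-bounded-away-from-$1$ regime by constructing a complement through the directed-MMP / Mori fibre space route instead of a section through covering families.
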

\begin{proof}
Assume the statement is not true. Then there is a sequence $X_i$ of exceptional weak Fano varieties 
of dimension $d$ which do not form a bounded family. Let $q_i\in\N$ be the smallest number such that 
$|-q_iK_{X_i}|$ is base point free. Then  $K_{X_i}$ has a $q_i$-complement which is necessarily klt 
because $X_i$ is exceptional. The set of the $q_i$ is not finite by [\ref{HX}, Theorem 1.3].
Replacing the sequence $X_i$ with a subsequence we can assume the $q_i$ form a strictly increasing sequence. 
In particular, no infinite subsequence of the $X_i$ forms a bounded family. 
Let $ X_i\to \tilde{X}_i$ be the contraction defined by $-K_{X_i}$. Then $\tilde{X}_i$ are Fano varieties,  
and they do not form a bounded family otherwise the $q_i$ would be in a finite set. 
Thus replacing $X_i$ with $\tilde{X}_i$ we can assume 
$X_i$ are Fano varieties. 

Let $\epsilon_i$ 
be the minimal log discrepancy of $X_i$. Let $\epsilon=\limsup \epsilon_i$. 
First assume $\epsilon<1$. Replacing the sequence we can assume $\epsilon_i\le \epsilon$ for every $i$. 
There is a birational contraction $X_i'\to X_i$ 
from a $\Q$-factorial variety which contracts only one prime divisor $D_i'$ and $a(D_i',X_i,0)=\epsilon_i$. 
Let $K_{X_i'}+e_iD_i'$ be the pullback of $K_{X_i}$. Then $e_i=1-\epsilon_i\ge 1-\epsilon$. 
By Lemma \ref{l-the-simpler-directed-MMP}, there is a $-D_i'$-MMP ending with a non-birational contraction 
$X_i''\to T_i''$  such that 
\begin{itemize}
\item $-(K_{X_i''}+e_iD_i''+t_iD_i'')$ is nef globally and numerically trivial over $T_i''$ for some $t_i\ge 0$, and

\item the intersection of $K_{X_i'}+e_iD_i'+t_iD_i'$  with each extremal ray in the MMP is non-negative.
\end{itemize}

Let $\tilde{e}_i=e_i+t_i$ which is $\ge 1-\epsilon$. 
Take common resolutions $\phi_i\colon W_i\to X_i$, $\psi_i\colon W_i\to X_i'$ and $\pi_i\colon W_i\to X_i''$. 
 Then 
$$
\pi_i^*(K_{X_i''}+\tilde{e}_iD_i'') \ge \psi_i^*(K_{X_i'}+\tilde{e}_iD_i')\ge \psi_i^*(K_{X_i'}+{e}_iD_i')=\phi_i^*K_{X_i}
$$
where the first inequality follows from the second item in the list of the properties of the MMP above. 
Thus  if $0\le P_i''\sim_\Q  -(K_{X_i''}+\tilde{e}_iD_i'')$ and if  $K_{X_i}+P_i$ is 
the crepant pullback of $K_{X_i''}+\tilde{e}_iD_i''+P_i''$ to $X_i$, then  $P_i\ge 0$.

Applying Lemma \ref{l-exc-bnd-sing}, $(X_i,P_i)$ is $\tilde{\epsilon}$-lc  for some 
$\tilde{\epsilon}>0$ independent of $i$. Thus $(X_i'',\tilde{e}_iD_i'')$ is also $\tilde{\epsilon}$-lc. 
Applying Theorem \ref{t-BAB-good-boundary} to the restriction of $K_{X_i''}+\tilde{e}_iD_i''$ to the general 
fibres of $X_i''\to T_i''$ shows that these fibres belong to a bounded family. Moreover, by Lemma \ref{l-bnd-fam-intersection}, 
 $\tilde{e}_i$ belongs to a finite set independent of $i$. 
 
If $\dim T_i''>0$, then by Proposition \ref{p-bnd-compl-non-big},  
$K_{X_i''}+\tilde{e}_iD_i''$ has an $n$-complement $K_{X_i''}+B_i''$  for some $n$ independent of $i$ 
such that $\tilde{e}_iD_i''\le B_i''$. On the other hand,  if $\dim T_i''=0$, then 
by  [\ref{HX}, Theorem 1.3] the varieties $X_i''$ form a bounded family, hence by 
Lemma \ref{l-bnd-couples-bnd-Cartier-index-2}, the Cartier index of $K_{X_i''}+\tilde{e}_iD_i''$ is bounded, so
again $K_{X_i''}+\tilde{e}_iD_i''$ has an $n$-complement $K_{X_i''}+B_i''$  for some $n$ independent of $i$ 
where in this case $\tilde{e}_iD_i''=B_i''$. Therefore, by \ref{ss-compl-remarks}(2), $K_{X_i'}+\tilde{e}_iD_i'$ has an $n$-complement 
$K_{X_i'}+B_i'$  for some $n$ such that $\tilde{e}_iD_i'\le B_i'$,  
and this in turn implies $K_{X_i}$ has an 
$n$-complement $K_{X_i}+B_i$.  Since $X_i$ is exceptional, $(X_i,B_i)$ is klt.  
So the $X_i$  form a bounded family by  [\ref{HX}, Theorem 1.3], a contradiction. 

Now we can assume $\epsilon=1$ and that the $\epsilon_i$ are sufficiently close to $1$. 
 By Proposition \ref{p-eff-bir-tau}, there exists $m\in \N$  
such that $|-mK_{X_i}|$ defines a birational map for every $i$. Pick  
$0\le C_i\sim -mK_{X_i}$. Since $X_i$ is exceptional, 
$(X_i,{B_i}:=\frac{1}{m}C_i)$ is klt, hence it is $\frac{1}{m}$-lc.
Therefore, the $X_i$ are bounded by  [\ref{HX}, Theorem 1.3], a contradiction. 

\end{proof}

\subsection{Bound on exceptional thresholds}

\begin{lem}\label{l-exc-threshold}
Let $d,p\in \N$ and let $\Phi\subset [0,1]$ be a DCC set. Then there is $\beta\in (0,1)$ 
depending only on $d,p,\Phi$ satisfying the following. Assume  
$(X',B'+M')$ is a projective generalised pair 
with data $\phi\colon X\to X'$ and $M$ such that 
\begin{itemize}
\item $(X',B'+M')$ is exceptional of dimension $d$,

\item $B'\in\Phi$, and $pM$ is b-Cartier, 

\item $-(K_{X'}+B'+M')$ is nef, and 

\item $X'$ is of Fano type and $\Q$-factorial.\\
\end{itemize}
Then $(X',B'+\alpha M')$ is exceptional for every $\alpha\in [\beta,1]$.
\end{lem}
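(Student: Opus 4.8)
The plan is to argue by contradiction, passing to a sequence, and to exploit the uniform bound on the singularities of complements of exceptional pairs supplied by Lemma \ref{l-exc-bnd-sing}. Suppose the lemma fails. Then there are generalised pairs $(X_i',B_i'+M_i')$ with data $\phi_i\colon X_i\to X_i'$ and $M_i$ satisfying all the hypotheses in dimension $d$, together with numbers $\alpha_i\in(0,1)$ with $\alpha_i\to 1$, such that $(X_i',B_i'+\alpha_iM_i')$ is not exceptional; that is, there is $0\le P_i'\sim_\R -(K_{X_i'}+B_i'+\alpha_iM_i')$ with $(X_i',B_i'+P_i'+\alpha_iM_i')$ not generalised klt. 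Since $X_i'$ is Fano type, the $\R$-linear systems $|{-(K_{X_i'}+B_i'+\alpha M_i')}|_\R$ vary well in $\alpha$, so a limit of non-exceptionality witnesses is again a witness by semicontinuity of generalised log discrepancies; hence the set of $\alpha\in(0,1)$ at which $(X_i',B_i'+\alpha M_i')$ is non-exceptional is closed, and because the pair is exceptional at $\alpha=1$ its supremum is $<1$ and is attained. Replacing $\alpha_i$ by this supremum (still $\to 1$), we may assume $(X_i',B_i'+\alpha M_i')$ is exceptional for every $\alpha\in(\alpha_i,1]$. Finally, by Lemma \ref{l-exc-bnd-sing} applied to the exceptional pairs $(X_i',B_i'+M_i')$, there is a single $\epsilon>0$ depending only on $d,p,\Phi$ such that $(X_i',B_i'+Q_i'+M_i')$ is generalised $\epsilon$-lc for every $0\le Q_i'\sim_\R -(K_{X_i'}+B_i'+M_i')$.

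The core of the argument is to transport the witness $P_i'$ for $(X_i',B_i'+\alpha_iM_i')$ into a complement-type divisor for the genuinely exceptional pair $(X_i',B_i'+M_i')$, and then contradict this $\epsilon$-bound. Since $-(K_{X_i'}+B_i'+M_i')$ is nef on the Fano type variety $X_i'$ it is semi-ample, and $-(K_{X_i'}+B_i'+M_i')\sim_\R P_i'-(1-\alpha_i)M_i'$. Following the structure of the proof of Lemma \ref{l-exc-bnd-sing}, I would extract the divisor computing the (nonpositive) minimal generalised log discrepancy of $(X_i',B_i'+P_i'+\alpha_iM_i')$, run an MMP to make the relevant anti-log-canonical class nef, and then run the directed MMP of \ref{ss-mmp-adding-mult} to push the coefficient of that divisor as high as possible. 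Since the pair is already not generalised klt, the process cannot stop at a generalised lc threshold without yielding the bound we want; it therefore terminates at a Mori fibre space $\overline X_i\to\overline T_i$, and restricting to a general fibre and applying the global ACC for generalised pairs [\ref{BZh}, Theorem 1.5] forces the surviving coefficients into a finite set. Because $1-\alpha_i\to 0$, the contribution of the correction term $(1-\alpha_i)M_i'$ along the relevant place tends to $0$, so for $i\gg 0$ it is absorbed within the $\epsilon$-room; one thereby produces $0\le Q_i'\sim_\R -(K_{X_i'}+B_i'+M_i')$ for which $(X_i',B_i'+Q_i'+M_i')$ is not generalised $\epsilon$-lc, contradicting the first paragraph.

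The main obstacle is that the generalised pair $(X_i',B_i'+\alpha_iM_i')$ carries the nef part $\alpha_iM_i$, and only $pM_i$, not $p\alpha_iM_i$, is assumed Cartier; so neither the complement theory nor the global ACC applies to this pair directly, and the naive move of replacing $P_i'$ by an effective representative of $P_i'-(1-\alpha_i)M_i'$ fails, since $M_i'$ is merely pseudo-effective (being the pushforward of the nef $M_i$), not anti-effective, so that difference need not be effective and the effective representatives of $-(K_{X_i'}+B_i'+M_i')$ need not dominate the bad part of $P_i'$. The resolution is to never treat $\alpha_iM_i$ as an independent nef part, but to carry out the MMP and the ACC relative to the Cartier-controlled nef part $M_i$ of the exceptional pair, and to realise the correction $(1-\alpha_i)M_i'$ by an honest effective divisor — using the negativity lemma along the MMP steps and a careful choice of representative — in such a way that the non-klt locus of $(X_i',B_i'+P_i'+\alpha_iM_i')$ is not disturbed. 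Making this realisation precise, so that the resulting contradiction with Lemma \ref{l-exc-bnd-sing} is uniform in $i$, is the delicate technical point.
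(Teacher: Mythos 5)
Your high-level skeleton matches the paper's: argue by contradiction with $\alpha_i\to 1$ and witnesses $P_i'$, run a directed MMP as in \ref{ss-mmp-adding-mult}, invoke the ACC results of [\ref{BZh}], and try to manufacture an effective $Q_i'\sim_\R -(K_{X_i'}+B_i'+M_i')$ that contradicts exceptionality. You also correctly pinpoint the central technical obstruction: that $-(K_{X_i'}+B_i'+M_i')\sim_\R P_i'-(1-\alpha_i)M_i'$ need not admit an effective representative dominating the non-klt locus of $(X_i',B_i'+P_i'+\alpha_iM_i')$. But you then explicitly leave the resolution of exactly this point as ``the delicate technical point,'' and that is precisely where the proof does its work; so the proposal has a genuine gap.

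What the paper does to close that gap: after taking the generalised lc threshold $t_i$ of $P_i'$ and passing to a $\Q$-factorial generalised dlt model $(X_i'',\Omega_i''+\alpha_iM_i'')$ with $\Gamma_i''\le\Omega_i''$ the boundary with coefficients in $\Phi\cup\{1\}$, one first makes $-(K_{X_i''}+\Gamma_i''+\alpha_iM_i'')$ nef (model $X_i'''$), and then runs the directed MMP of \ref{ss-mmp-adding-mult} for $K_{X_i'''}+\Gamma_i'''+\alpha_iM_i'''$ by adding multiples of $M_i'''$ — not by pushing up the coefficient of an extracted divisor (your description of ``extract the divisor computing the minimal generalised log discrepancy and push its coefficient as high as possible'' conflates this with the proof of Lemma \ref{l-exc-bnd-sing}, which is a different directed MMP). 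If the process stops at $\tilde\alpha_i<1$, one contradicts [\ref{BZh}, Theorems 1.4 or 1.5] since $\tilde\alpha_i\to 1$. If $\tilde\alpha_i=1$, the MMP discrepancy inequality together with the nefness of $M_i$ gives
\[
{\psi_i}_*\rho_i^*(K_{\overline{X}_i}+\overline{\Gamma}_i+\overline{M}_i)\ \ge\ K_{X_i''}+\Gamma_i''+M_i'',
\]
so the crepant pullback of any $0\le \overline Q_i\sim_\R -(K_{\overline X_i}+\overline\Gamma_i+\overline M_i)$ to $X_i''$ has effective boundary part $Q_i''\ge 0$. Pushing down to $X_i'$ then yields $0\le Q_i'\sim_\R -(K_{X_i'}+B_i'+M_i')$ with $(X_i',B_i'+Q_i'+M_i')$ not generalised klt, contradicting exceptionality directly. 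Without this inequality — which is a concrete consequence of the directed MMP being negative for $K+\Gamma+\alpha M$ at each coefficient encountered and nonnegative against $M$ — you have no way to produce an effective $Q_i'$, and ``using the negativity lemma with a careful choice of representative'' does not by itself supply it.

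Two further points. First, the detour through Lemma \ref{l-exc-bnd-sing} is unnecessary: once one has $Q_i'\ge 0$ with $(X_i',B_i'+Q_i'+M_i')$ not generalised klt, exceptionality is already violated, and no $\epsilon$-lc threshold is needed. Second, your concern that ``only $pM_i$, not $p\alpha_iM_i$, is Cartier'' does not in fact obstruct the ACC applications: in the termination case the relevant threshold is that of $\overline M_i$ (with $p\overline M_i$ Cartier) with respect to the \emph{boundary-only} pair $(\overline X_i,\overline\Gamma_i)$, which places it squarely within the hypotheses of [\ref{BZh}, Theorem 1.4], while the Mori fibre space case feeds the varying coefficient $\tilde\alpha_i$ into [\ref{BZh}, Theorem 1.5] in the intended way. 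Finally, the claim in your first paragraph that the set of non-exceptional $\alpha$ is closed with supremum attained is neither obviously true as stated nor used, and should be dropped.
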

\begin{proof}
Suppose the lemma is not true. Then there exist   
a strictly increasing sequence of numbers $\alpha_i$ approaching $1$ and a sequence 
$(X_i',B_i'+M_i')$ of generalised pairs  as in the statement 
such that $(X_i',B_i'+\alpha_i M_i')$ is not exceptional. In particular, $M_i'$ is not numerically trivial. 
Since
$$
 -(K_{X_i'}+B_i'+\alpha_i M_i')=-(K_{X_i'}+B_i'+M_i')+(1-\alpha_i)M_i'
$$ 
is pseudo-effective and $X_i'$ is of Fano type, there is 
$$
0\le P_i'\sim_\R -(K_{X_i'}+B_i'+\alpha_i M_i'),
$$
so it makes sense to say that $(X_i',B_i'+\alpha_i M_i')$ is non-exceptional.
In particular, we can choose $P_i'$ such that $(X_i',B_i'+P_i'+\alpha_iM_i')$ is not generalised klt.
Moreover, running an MMP on $P_i'$ we can assume it is nef. Note that since $N_\sigma(P_i')=0$, 
the MMP does not contract any divisor, hence all our assumptions are 
preserved except that $-(K_{X'}+B'+M')$ may no longer be nef but we will not need nefness.

Let $t_i$ be the generalised lc threshold of $P_i'$ with respect to $(X_i',B_i'+\alpha_iM_i')$.  
Then $t_i\le 1$.
Let $\Omega_i'=B_i'+t_iP_i'$ and let $(X_i'',\Omega_i''+\alpha_iM_i'')$ be a $\Q$-factorial generalised 
dlt model of $(X_i',\Omega_i'+ \alpha_iM_i')$.  
Adding $1$ to $\Phi$, we can find a boundary $\Gamma_i''\in \Phi$ such that $B_i'^\sim\le \Gamma_i''\le \Omega_i''$ and 
$\rddown{\Gamma_i''}\neq 0$ where $B_i'^\sim$ is the birational transform of $B_i'$. Let $G_i''=\Omega_i''-\Gamma_i''$. 
Then from 
$$
-(K_{X_i'}+\Omega_i'+\alpha_iM_i')=-(K_{X_i'}+B_i'+t_iP_i'+\alpha_iM_i')
$$
$$
=-(K_{X_i'}+B_i'+\alpha_iM_i')-t_iP_i'\sim_\R P_i'-t_iP_i'=(1-t_i)P_i'
$$
we get 
$$
-(K_{X_i''}+\Gamma_i''+\alpha_iM_i'')=
-(K_{X_i''}+\Omega_i''+\alpha_iM_i'')+G_i''\sim_\R (1-t_i)P_i''+G_i''\ge 0
$$ 
where $P_i''$ is the pullback of $P_i'$.

By \ref{ss-gpp}(7), $X_i''$ is of Fano type. Run an MMP on $-(K_{X_i''}+\Gamma_i''+M_i'')$ and 
let $X_i'''$ be the resulting model. First we argue that $({X_i'''},\Gamma_i'''+M_i''')$ is generalised 
lc for $i\gg 0$. Since $({X_i''},\Omega_i''+\alpha_iM_i'')$ 
is generalised lc and since $-(K_{X_i''}+\Omega_i''+\alpha_iM_i'')$ is nef hence semi-ample, 
we deduce that $({X_i'''},\Omega_i'''+\alpha_iM_i''')$ is generalised lc which in turn implies 
$({X_i'''},\Gamma_i'''+\alpha_i M_i''')$ is generalised lc. Now by the ACC for generalised lc thresholds [\ref{BZh}, Theorem 1.5],
$({X_i'''},\Gamma_i'''+M_i''')$ is generalised lc for $i\gg 0$,  hence we can assume this holds for every $i$.

We show that $X_i'''$ is a minimal model, that is, 
$-(K_{X_i'''}+\Gamma_i'''+M_i''')$ is nef, for $i\gg 0$. Assume not. Then we can assume 
the MMP ends with a Mori fibre space, 
that is, there is an extremal non-birational contraction $X_i'''\to T_i'''$ such that 
$K_{X_i'''}+\Gamma_i'''+M_i'''$ is ample over $T_i'''$. 
Let $\lambda_i\in [\alpha_i,1]$ be the 
smallest number such that $K_{X_i'''}+\Gamma_i'''+\lambda_iM_i'''$ is nef over $T_i'''$. 
Since
$$
-(K_{X_i'''}+\Gamma_i'''+\alpha_iM_i''')\sim_\R (1-t_i)P_i'''+G_i'''\ge 0,
$$ 
$K_{X_i'''}+\Gamma_i'''+\alpha_iM_i'''$ cannot be ample over $T_i'''$. Thus  
$M_i'''$ is ample over $T_i'''$ and $K_{X_i'''}+\Gamma_i'''+\lambda_iM_i'''\equiv 0/T_i'''$. 
By restricting $K_{X_i'''}+\Gamma_i'''+\lambda_iM_i'''$ to the general fibres of 
$X_i'''\to T_i'''$ and applying the global ACC for generalised pairs [\ref{BZh}, Theorem 1.6] we get a contradiction.

We can assume the induced maps $\psi_i\colon X_i\bir X_i''$ and $\pi_i\colon X_i\bir X_i'''$ are 
morphisms. Then
$$
-\pi_i^*(K_{X_i'''}+\Gamma_i'''+M_i''') \le -\psi_i^*(K_{X_i''}+\Gamma_i''+M_i'').
$$
Thus since $-(K_{X_i'''}+\Gamma_i'''+M_i''')$ is semi-ample, there is 
$$
0\le Q_i''\sim_\R -(K_{X_i''}+\Gamma_i''+M_i'')
$$ 
hence $({X_i''},\Gamma_i''+M_i'')$ is non-exceptional. Therefore, 
$({X_i'},\Gamma_i'+M_i')$ is non-exceptional where $\Gamma_i'$ is the pushdown of $\Gamma_i''$. 
This in turn implies $({X_i'},B_i'+M_i')$ is non-exceptional as $B_i'\le \Gamma_i'$, a contradiction.

\end{proof}

\subsection{Bound on anti-canonical volumes}

\begin{lem}\label{l-exc-bnd-vol}
Let $d,p\in \N$ and let $\Phi\subset [0,1]$ be a DCC set. Then there is $v$ 
depending only on $d,p,\Phi$ satisfying the following. Let $(X',B'+M')$ be a projective generalised pair 
 with data $\phi\colon X\to X'$ and $M$ such that 
\begin{itemize}
\item $(X',B'+M')$ is generalised klt of dimension $d$,

\item  $B'\in\Phi$, and $pM$ is b-Cartier and big, and

\item $K_{X'}+B'+M'\sim_\R 0$.\\
\end{itemize}
Then $\vol(-K_{X'})\le v$.
\end{lem}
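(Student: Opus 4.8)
The plan is to argue by contradiction, producing a sequence $(X_i',B_i'+M_i')$ of generalised klt pairs as in the statement with $\vol(-K_{X_i'})\to\infty$, and to derive a contradiction by reducing to a bounded family. First I would use the hypotheses of this section (boundedness of complements, $\epsilon$-lc bounds, effective birationality) that have already been established or assumed inductively; in particular I expect that this lemma sits downstream of \ref{l-exc-bnd-sing}, \ref{l-exc-threshold}, and \ref{l-from-compl-to-BAB-exc-usual}, so I would freely invoke those. The first reduction is to replace $X'$ by a $\Q$-factorialisation, and then to observe that, since $pM$ is big and $K_{X'}+B'+M'\sim_\R 0$, we have $-K_{X'}=B'+M'\sim_\R 0$-equivalent data making $-K_{X'}$ big; running an MMP on $-K_{X'}$ and then the anticanonical contraction, one reduces to the case where $X'$ is an $\epsilon$-lc Fano (the $\epsilon$ coming from \ref{l-exc-bnd-sing} applied to $P'=0$, using exceptionality), or more precisely where $(X',B'+M')$ is exceptional and $X'$ is Fano type — bookkeeping that $\vol(-K_{X'})$ only goes up under these operations.

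Next I would bound things on a fibre of a Mori fibre space or directly. The key point is that exceptionality of $(X_i',B_i'+M_i')$, combined with \ref{l-exc-bnd-sing} (with $P'=0$), gives a uniform $\epsilon>0$ with $(X_i',B_i'+M_i')$ generalised $\epsilon$-lc, hence $X_i'$ is $\epsilon$-lc. Then I would like to invoke an effective birationality statement: by \ref{p-eff-bir-tau} or the combination of Propositions \ref{p-eff-bir-delta-1}, \ref{p-eff-bir-delta-2} — but those are for varieties without the moduli part. So the honest route is to pass to complements: since we are assuming (inductively) Theorems \ref{t-bnd-compl} and \ref{t-BAB-exc} in the relevant dimensions, $K_{X_i'}+B_i'+M_i'$ has an $n$-complement $K_{X_i'}+B_i'^++M_i'$ with $n$ depending only on $d,p,\Phi$ (after shrinking $\Phi$ to a finite hyperstandard set via \ref{const-the-process}, or directly if $B'$ already lies in a finite set). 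Exceptionality forces $(X_i',B_i'^++M_i')$ to be klt, in fact $\epsilon$-lc by \ref{l-exc-bnd-sing}.

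Now with a klt $n$-complement in hand, one has a boundary-plus-moduli description $K_{X_i'}+B_i'^++M_i'\sim_\Q 0$ with $B_i'^+$ having coefficients in a fixed finite set and $pM_i'$ Cartier, and $X_i'$ being $\epsilon$-lc Fano type; this is exactly the setup of Theorem \ref{t-BAB-good-boundary} (or its generalised-pair analogue \ref{t-BAB-exc} in the exceptional case). Since $B_i'^+$ is big when $M_i'$ is big — more carefully, $B_i'^++M_i'\sim_\Q -K_{X_i'}$ is big, and one can transfer the bigness of $M_i'$ into the boundary part after an MMP, cf.\ the argument in the proof of \ref{t-BAB-good-boundary} — we land in a situation where \ref{t-BAB-good-boundary}-type boundedness applies, hence $X_i'$ forms a bounded family. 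But a bounded family of projective varieties has $\vol(-K_{X'})$ uniformly bounded (Hilbert polynomials take finitely many values with respect to a fixed very ample divisor), contradicting $\vol(-K_{X_i'})\to\infty$.

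The main obstacle I anticipate is the bookkeeping around the moduli part $M'$: Theorem \ref{t-BAB-good-boundary} as stated is for usual pairs $(X,B)$ with $B$ big and $K_X+B\sim_\R 0$, whereas here the $\R$-trivial combination is $K_{X'}+B'+M'$ with $M'$ nef (and $pM$ big). One needs either to absorb $M'$ into the boundary by running an appropriate MMP and using that $-K_{X'}$ is big (so that after the MMP and anticanonical contraction $M'$ can be replaced by a genuine effective big boundary of bounded coefficients, via the complement $B'^+$), or to use the exceptional-pair version \ref{t-BAB-exc}. Either way the delicate point is preserving the $\epsilon$-lc property and the finiteness of boundary coefficients through these birational modifications, and ensuring $\vol(-K_{X'})$ does not decrease. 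Once boundedness of the $X_i'$ is secured, the volume bound is immediate.
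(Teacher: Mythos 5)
The paper's proof is genuinely different from yours and, in particular, avoids a circularity that your route runs into. Rather than trying to bound the $X_i'$ as a family and then read off the volume, it works directly with volumes: after $\Q$-factorialising, Lemma \ref{l-large-boundaries} (applicable since $pM_i$ is nef, big, and Cartier) gives that $K_{X_i}+3dpM_i$ is big, hence $\vol(-K_{X_i'})<\vol(3dpM_i')$, so the claim reduces to bounding $\vol(M_i')$. If $\vol(M_i')\to\infty$, one picks $\delta_i\to 0$ with $\vol(\delta_iM_i')>d^d$ and, using $K_{X_i'}+B_i'+M_i'\sim_\R 0$, notes that
$$
-(K_{X_i'}+B_i'+\alpha_iM_i')\sim_\R\delta_iM_i',\qquad \alpha_i=1-\delta_i,
$$
has volume $>d^d$; the standard volume-to-non-klt construction then exhibits some $0\le P_i'\sim_\R -(K_{X_i'}+B_i'+\alpha_iM_i')$ with $(X_i',B_i'+P_i'+\alpha_iM_i')$ not generalised klt, so $(X_i',B_i'+\alpha_iM_i')$ is non-exceptional. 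Since $\alpha_i\to 1$, this contradicts Lemma \ref{l-exc-threshold}. The only substantive input beyond MMP basics is \ref{l-exc-threshold}, which rests on the ACC results for generalised pairs and is logically prior to all the complements and boundedness theorems in dimension $d$.

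Your proposal, by contrast, invokes Theorems \ref{t-bnd-compl} and \ref{t-BAB-exc} in order to produce a bounded complement or outright boundedness for the $d$-dimensional pair $(X_i',B_i'+M_i')$, and this is circular: Lemma \ref{l-exc-bnd-vol} is cited in Step 4 of Proposition \ref{p-from-compl-to-BAB-exc}, which is exactly what proves Theorem \ref{t-BAB-exc} in dimension $d$; and \ref{t-BAB-exc} in dimension $d$ in turn feeds into Theorem \ref{t-bnd-compl} in dimension $d$ via Proposition \ref{p-BAB-exc-to-compl}. Using either of those theorems in dimension $d$ inside this lemma is therefore not allowed, and using only the dimension $\le d-1$ versions does not produce a complement for, or bound, a $d$-dimensional pair. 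There is a second, independent gap which you yourself flag: Theorem \ref{t-BAB-good-boundary} requires a big $\R$-boundary with coefficients bounded away from $0$, and passing from ``$B^++M'\sim_\Q -K_{X'}$ is big'' to such a boundary (while retaining the $\epsilon$-lc property) is not a matter of running an MMP; in the paper it is precisely the content of Proposition \ref{p-from-bnd-lct-to-bnd-var} together with Steps 2--3 of \ref{p-from-compl-to-BAB-exc}, and those arguments take the volume bound you are trying to establish as one of their hypotheses. The point of the paper's arrangement is that this particular bound must be extracted by a self-contained volume computation, and the trade it makes is on the exceptional threshold \ref{l-exc-threshold} rather than on any boundedness theorem.
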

\begin{proof}
First note that the assumptions imply that $X'$ is of Fano type. Indeed, since $M$ is big, $M'$ is big, 
so $M'\sim_\Q H'+D'$ where $H'$ is ample and $D'$ is effective. Now if $\lambda>0$ is small, then 
$$
(X',B'+\lambda D'+\lambda H'+(1-\lambda)M')
$$ 
is generalised klt which implies that 
$$
(X',B'+\lambda D'+(1-\lambda)M')
$$ 
is generalised klt too. Moreover,
$$
-(K_{X'}+B'+\lambda D'+(1-\lambda)M'))\sim_\Q \lambda H'
$$
is ample, so $X'$ is of Fano type by \ref{ss-gpp}(6).

Now if the lemma does not hold, then there is a sequence of generalised pairs 
$(X_i',B_i'+M_i')$ as in the statement such that the volumes $\vol(-K_{X_i'})$ form a 
strictly increasing sequence approaching $\infty$. 
After taking a small $\Q$-factorialisation 
we can assume $X_i'$ is $\Q$-factorial. Moreover, replacing $X_i$ we can assume $X_i\to X_i'$ is a 
resolution, in particular, $pM_i$ is Cartier. 
   Since $pM_i$ is nef and big, by Lemma \ref{l-large-boundaries}, $K_{X_i}+3dpM_i$ is big, 
hence $K_{X_i'}+3dpM_i'$ is big too. Thus 
$$
\vol(-K_{X_i'})<\vol(-K_{X_i'}+K_{X_i}+3dpM_i)=\vol(3dpM_i'),
$$ 
hence it is 
enough to show $\vol(M_i')$ is bounded from above. We can then assume the volumes $\vol(M_i')$   
form a strictly increasing sequence of numbers approaching $\infty$. 
   
There is a strictly decreasing sequence of numbers $\delta_i$ 
approaching zero such that $\vol(\delta_iM_i')>(2d)^d$. 
Thus letting $\alpha_i=1-\delta_i$,
$$
\vol(-(K_{X_i'}+B_i'+\alpha_iM_i'))=\vol(\delta_i M_i')>(2d)^d.
$$
On the other hand, running an MMP on $M_i'$ and replacing $X_i'$ with the resulting model, 
we can assume $M_i'$ is nef and big. Note that the MMP does not contract any divisor, so all 
the assumptions are preserved.
Now by \ref{ss-non-klt-centres}(2), there is some 
$$
0\le P_i'\sim_\R -(K_{X_i'}+B_i'+\alpha_iM_i') 
$$ 
such that $(X_i',P_i')$ is not klt which in turn implies 
$(X_i',B_i'+P_i'+\alpha_iM_i')$ is not generalised klt (note that $M_i'$ may not be ample but \ref{ss-non-klt-centres}(2) 
still applies as $M_i'$ is nef and big so it can be approximated by ample divisors with volume $>(2d)^d$). 
In particular,  $(X_i',B_i'+\alpha_iM_i')$ is non-exceptional. This contradicts  
Lemma \ref{l-exc-threshold} as $\lim \alpha_i=1$ and $(X_i',B_i'+M_i')$ are exceptional as 
$(X_i',B_i'+M_i')$ is generalised klt and $K_{X_i'}+B_i'+M_i'\sim_\R 0$.

\end{proof}

\subsection{Bound on lc thresholds}

\begin{lem}\label{l-exc-bnd-lct}
Let $d,p,l\in \N$ and let $\Phi\subset [0,1]$ be a DCC set. Then there is a positive real 
number $t$ depending only on $d,p,l,\Phi$ satisfying the following.  
Let $(X',B'+M')$ be a projective generalised pair  with data $\phi\colon X\to X'$ and $M$ such that 
\begin{itemize}
\item $(X',B'+M')$ is exceptional of dimension $d$,

\item  $B'\in\Phi$, and $pM$ is b-Cartier and big, 

\item $-(K_{X'}+B'+M')$ is nef, and

\item $X'$ is of Fano type and $\Q$-factorial.\\
\end{itemize}
Then for any $L'\in|-lK_{X'}|$, the pair $(X',tL')$ is klt.
\end{lem}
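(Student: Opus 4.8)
The plan is to argue by contradiction in the usual DCC/ACC style. Suppose there is a sequence $(X_i',B_i'+M_i')$ of generalised pairs as in the statement, together with divisors $L_i'\in|-l K_{X_i'}|$ and positive numbers $t_i\to 0$ such that $(X_i',t_iL_i')$ is not klt. The first step is to bring the $L_i'$ into the orbit of the previously established boundedness machinery by passing to $\Q$-linear equivalence: since $-K_{X_i'}$ is big and $X_i'$ is Fano type, one has $K_{X_i'}+B_i'+M_i'\sim_\R -(\text{something effective})$ is controlled, and I would instead want to compare $L_i'$ with $M_i'$ and with $-(K_{X_i'}+B_i'+M_i')$. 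Concretely, $\frac{1}{l}L_i'\sim_\Q -K_{X_i'}\sim_\Q B_i'+M_i' - (K_{X_i'}+B_i'+M_i')$, and since $-(K_{X_i'}+B_i'+M_i')$ is nef (in fact we may, after an MMP as in \ref{ss-mmp-adding-mult}, reduce to a situation where $-(K_{X_i'}+B_i'+M_i')\sim_\R A_i'+C_i'$ with $A_i'$ ample), the divisor $\frac{1}{l}L_i'$ is $\R$-linearly equivalent to $B_i'+M_i'$ plus effective ample-ish pieces.

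The key step is then to interpret ``$(X_i',t_iL_i')$ not klt for $t_i\to 0$'' as forcing $X_i'$ out of a bounded family, and to contradict that. I would run the following chain: by Lemma \ref{l-exc-threshold} there is $\beta\in(0,1)$ (depending only on $d,p,\Phi$) with $(X_i',B_i'+\beta M_i')$ still exceptional; by Lemma \ref{l-exc-bnd-sing} applied to $(X_i',B_i'+\beta M_i')$ — or directly to $(X_i',B_i'+M_i')$ — there is $\epsilon>0$ depending only on $d,p,\Phi$ so that for every $0\le P'\sim_\R -(K_{X_i'}+B_i'+M_i')$ the pair $(X_i',B_i'+P'+M_i')$ is generalised $\epsilon$-lc. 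In particular $X_i'$ is $\epsilon$-lc Fano type. Now by Lemma \ref{l-exc-bnd-vol} the volumes $\vol(-K_{X_i'})$ are bounded above by some $v=v(d,p,\Phi)$ (in the big-$M$ case; when $pM$ is not big the statement of the lemma still assumes it is, so we are fine). Having $\epsilon$-lc and bounded anti-canonical volume is precisely the input to effective birationality: by Theorem \ref{t-eff-bir-e-lc} (equivalently Proposition \ref{p-eff-bir-delta-2} after producing a good complement, or directly) there is $m$ depending only on $d,\epsilon$ — hence only on $d,p,\Phi$ — with $|-mK_{X_i'}|$ birational, and combined with the volume bound this should place the $X_i'$ in a bounded family of couples $(X_i',\Supp \Gamma_i')$ for a suitable boundary $\Gamma_i'\ge \frac{1}{l}L_i'$ (using $\frac{1}{l}L_i'\in|-K_{X_i'}|$).

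Once $X_i'$ lies in a bounded family and $L_i'$ moves in the bounded linear system $|-lK_{X_i'}|$, the total space $(X_i', L_i')$ is log bounded: the coefficients of $L_i'$ are at most $l\cdot(\text{bounded intersection number})$, by computing $L_i'\cdot A^{d-1}$ against a fixed very ample $A$ as in the proof of Lemma \ref{l-bnd-fam-intersection} / Step 8 of Proposition \ref{p-eff-bir-delta-1}. For a log bounded family of pairs $(X',L')$ with bounded coefficients, the lc threshold of $L'$ with respect to $(X',0)$ — equivalently the largest $t$ with $(X',tL')$ klt — is bounded below by a positive constant $t=t(d,p,l,\Phi)$; this is a standard consequence of boundedness (pass to a bounded family of log resolutions and use that lc thresholds are computed by finitely many divisors with bounded log discrepancies). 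This contradicts $t_i\to 0$ and proves the lemma.

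\medskip

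The main obstacle I expect is the reduction producing the bounded family: one must be careful that the invocation of Lemma \ref{l-exc-bnd-vol} genuinely applies (it requires $pM$ big, which is hypothesised here, and $K_{X'}+B'+M'\sim_\R 0$ — so one first replaces $-(K_{X'}+B'+M')$ by an effective divisor via an MMP of the type in \ref{ss-mmp-adding-mult} or \ref{const-the-process}, keeping exceptionality, to arrive at the $K+B+M\sim_\R 0$ situation without losing control of $L'$). Equally delicate is getting effective birationality of $|-mK_{X'}|$ with $m$ depending only on the allowed data: this is exactly the content of Theorem \ref{t-eff-bir-e-lc}, but one should confirm that the $\epsilon$ furnished by Lemma \ref{l-exc-bnd-sing} depends only on $d,p,\Phi$, which it does. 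Modulo these bookkeeping points the argument is the standard ``exceptional $\Rightarrow$ bounded singularities $\Rightarrow$ bounded volume $\Rightarrow$ bounded family $\Rightarrow$ bounded lc threshold'' cascade.
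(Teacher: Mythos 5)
Your proposal takes a fundamentally different and, unfortunately, circular route. The final step of your ``cascade'' --- from ``bounded family'' to ``bounded lc threshold'' --- is sound as a general fact, but the step \emph{into} a bounded family is the problem. Effective birationality of $|-mK_{X_i'}|$ together with a bound on $\vol(-K_{X_i'})$ yields only \emph{log birational boundedness} (as the paper shows repeatedly, e.g.\ Step 7 of Proposition \ref{p-eff-bir-delta-1}), not boundedness. Promoting this to genuine boundedness for $\epsilon$-lc Fano varieties is exactly Conjecture \ref{conj-BAB}, which is not available here. In the paper this gap is closed by producing a klt complement $K_{X'}+\Omega$ with coefficients bounded below and with $(X',\Omega)$ $\epsilon$-lc, and then invoking Theorem \ref{t-BAB-good-boundary}; that construction is precisely what Proposition \ref{p-from-bnd-lct-to-bnd-var} does, and its proof (Steps 3--5) crucially uses the lc-threshold bound of Lemma \ref{l-exc-bnd-lct} to perturb the complement into an $\epsilon$-lc one. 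Thus the boundedness of exceptional Fano varieties (\ref{p-from-compl-to-BAB-exc}, hence Theorem \ref{t-BAB-exc}) depends on the present lemma, and you cannot invoke it, even implicitly, in the present proof.

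The paper's actual argument avoids all of this and is much shorter. Suppose, for contradiction, $(X_i',t_iL_i')$ is not klt with $t_i\to 0$. By Lemma \ref{l-exc-threshold} there is $\beta\in(0,1)$ depending only on $d,p,\Phi$ with $(X_i',B_i'+\beta M_i')$ still exceptional. Since $pM_i$ is nef big Cartier, Lemma \ref{l-large-boundaries} gives that $3dpM_i'-\frac{1}{l}L_i'\sim_\R K_{X_i'}+3dpM_i'$ is big; so once the generalised lc threshold $s_i\le t_i$ of $L_i'$ with respect to $(X_i',B_i'+\beta M_i')$ satisfies $s_i<\frac{1-\beta}{3dpl}$, the divisor $-(K_{X_i'}+B_i'+s_iL_i'+\beta M_i')=-(K_{X_i'}+B_i'+M_i')+(1-\beta)M_i'-s_iL_i'$ is big, hence $\R$-linearly equivalent to some $P_i'\ge 0$. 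Then $(X_i',B_i'+s_iL_i'+P_i'+\beta M_i')$ is not generalised klt, contradicting the exceptionality of $(X_i',B_i'+\beta M_i')$. You should aim to reproduce this direct exceptionality-threshold argument rather than detouring through boundedness.
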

\begin{proof}
If the statement does not hold, then there exist a decreasing sequence of numbers $t_i$ 
approaching zero and a sequence 
$(X_i',B_i'+M_i')$ of generalised pairs as in the statement such that $(X_i',t_iL_i')$ is 
not klt for some $L_i'\in |-lK_{X_i'}|$. Replacing $X_i$ we can assume $X_i\to X_i'$ is a resolution, 
in particular, $pM_i$ is Cartier. 
Since $pM_i$ is nef and big, by Lemma \ref{l-large-boundaries}, $K_{X_i}+3dpM_i$ is big, hence  
$$
K_{X_i'}+3dpM_i'\sim_\Q -\frac{1}{l}L_i'+3dpM_i'
$$ 
is also big. 

By Lemma \ref{l-exc-threshold}, there is a rational number $\beta\in(0,1)$ such that  
$(X_i',B_i'+\beta M_i')$ is exceptional for every $i$. 
Let $s_i$ be the generalised lc threshold of $L_i'$ with respect to 
 $({X_i'},B_i'+\beta M_i')$. Then $s_i\le t_i$. We can assume $s_i<\frac{1-\beta}{3dpl}$ for every $i$. 
Thus    
$$
-(K_{X_i'}+B_i'+s_iL_i'+ \beta M_i')= -(K_{X_i'}+B_i'+M_i')+(1-\beta)M_i'-s_iL_i'
$$
is big by the previous paragraph. Therefore, there is 
$$
0\le P_i'\sim_\R -(K_{X_i'}+B_i'+s_iL_i'+ \beta M_i').
$$
Now 
$$
({X_i'},B_i'+s_iL_i'+P_i'+\beta M_i')
$$ 
is not generalised klt, so 
$({X_i'},B_i'+\beta M_i')$ is non-exceptional. This is a contradiction. 

\end{proof}

\subsection{From bound on lc thresholds to boundedness of varieties}

The next result is one of the key statements of this section. As mentioned before it plays a crucial 
role in the proof of BAB as well [\ref{B-BAB}, proof of Theorem 1.1].

\begin{prop}\label{p-from-bnd-lct-to-bnd-var}
Let $d,m,v\in \N$ and let $t_l$ be a sequence of positive real numbers. 
Assume Theorem \ref{t-bnd-compl} holds in dimension $\le d-1$ and Theorem \ref{t-bnd-compl-usual-local} 
holds in dimension $d$.
Let $\mathcal{P}$ be the set of projective varieties $X$ such that 
\begin{itemize}
\item $X$ is a klt weak Fano variety of dimension $d$, 

\item $K_X$ has an $m$-complement, 

\item $|-mK_X|$ defines a birational map,

\item $\vol(-K_X)\le v$, and 

\item for any $l\in\N$ and any $L\in |-lK_X|$,  the pair $(X,t_lL)$ is klt.\\
\end{itemize}
Then $\mathcal{P}$ is a bounded family. 
\end{prop}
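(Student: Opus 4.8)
The strategy is the standard bootstrapping argument for boundedness of Fano varieties in a bounded birational class, using the hypotheses to first pass to a log birationally bounded model, then upgrade birational boundedness to boundedness by constructing a divisor with bounded coefficients that is klt.

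\emph{Step 1: log birational boundedness.} Since $|-mK_X|$ defines a birational map and $\vol(-K_X)\le v$, we take a resolution $\phi\colon W\to X$ on which $\phi^*(-mK_X)$ decomposes as $A_W+R_W$ with $A_W$ base point free and $R_W$ the fixed part. Then $\vol(A_W)\le \vol(-mK_X)=m^d\vol(-K_X)$ is bounded from above. Letting $\Sigma_W$ be the sum of the reduced exceptional divisor of $\phi$ and the support of the birational transform of $-K_X$ (i.e.\ of $R_W$ together with a general $A_W$), we bound $\vol(K_W+\Sigma_W+2(2d+1)A_W)$ from above by comparing with $\vol(-rmK_X)$ for a bounded $r$ (using Lemma \ref{l-large-boundaries} to get bigness, exactly as in the proof of Theorem \ref{t-BAB-good-boundary}). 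Then by [\ref{HMX}, Lemmas 3.2 and 2.4.2(4)] the pair $(W,\Sigma_W)$ is log birationally bounded; passing to the contraction $W\to \tilde W$ defined by $A_W$ and a log resolution $\overline{W}\to\tilde W$, we get log smooth log bounded pairs $(\overline{W},\Sigma_{\overline{W}})$ with birational morphisms $W\to\overline{W}$, and we may assume $\Supp\Sigma_{\overline{W}}$ contains the exceptional locus of $\overline{W}\bir X$ together with the support of $A_{\overline{W}}+R_{\overline{W}}$, the pushdowns of $A_W,R_W$; moreover the coefficients of $A_{\overline{W}}+R_{\overline{W}}$ are bounded from above by the intersection-number computation of Step 8 of the proof of Proposition \ref{p-eff-bir-delta-1}.

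\emph{Step 2: transferring the complement to $\overline{W}$.} By hypothesis $K_X$ has an $m$-complement $K_X+B^+$, so $(X,B^+)$ is lc with $m(K_X+B^+)\sim 0$ and $B^+\ge 0$. Since $\vol(-K_X)\le v$ we also get that $mB^+\in|-mK_X|$, so by the last hypothesis (applied with $l=m$ and the divisor $mB^+$) we have that $(X,t_m\cdot mB^+)$ is klt; more importantly, writing the crepant pullback $K_{\overline{W}}+B^+_{\overline{W}}$ of $K_X+B^+$, the divisor $B^+_{\overline{W}}$ has support inside $\Sigma_{\overline{W}}$ (its positive part lies in the birational transform of $B^+$ plus the exceptional locus), so $(\overline{W},B^+_{\overline{W}})$ is a log bounded, log smooth sub-lc pair with $m(K_{\overline{W}}+B^+_{\overline{W}})\sim 0$. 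The point of carrying the complement along is that on $\overline{W}$ it gives a divisor $\sim_\Q -K_{\overline{W}}$ modulo bounded correction, with coefficients lying in the fixed finite set $\frac1m\Z\cap[0,1]$.

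\emph{Step 3: boundedness via [\ref{HMX2}, Corollary 1.7] or Theorem \ref{t-BAB-good-boundary}.} Running an MMP on $-K_X$ we may assume $X$ is a weak Fano with $-K_X$ nef and big, then contracting we assume $X$ is Fano. Now we produce on $X$ a boundary $\Delta$ with coefficients in a fixed finite set such that $(X,\Delta)$ is $\epsilon$-lc for some fixed $\epsilon>0$ and $K_X+\Delta\sim_\Q 0$: take $\Delta=\frac1m B^+$ if this is already klt; in general combine $\frac12 B^+$ with $\frac1{2m}(R+A)$ where $\phi^*(-mK_X)\sim A+R$ with $A$ general, so that the coefficients of $\Delta$ lie in a finite set and, using the last hypothesis with $l=m$ (to control singularities of $\frac1m(R+A)$) together with Lemma \ref{l-ep-to-0}, the pair $(X,\Delta)$ is $\epsilon$-lc for a fixed $\epsilon$ — this is exactly the argument of the last few paragraphs of the proof of Lemma \ref{l-compl-non-exc}. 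Then $X$ belongs to a bounded family by [\ref{HMX2}, Corollary 1.7]. Alternatively, once $\vol(-K_X)\le v$ and log birational boundedness are in hand, apply [\ref{HMX2}, Theorem 1.6] to $(X,\Delta)$ directly.

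\emph{Main obstacle.} The genuinely delicate point is the $\epsilon$-lc statement in Step 3: the coefficients of the auxiliary part $\frac1m(R+A)$ need not be bounded away from making the pair log canonical, and this is precisely where the fifth hypothesis (klt-ness of $(X,t_lL)$ for $L\in|-lK_X|$) and Lemma \ref{l-ep-to-0} must be deployed carefully, mirroring the end of the proof of Lemma \ref{l-compl-non-exc}. Everything else is bookkeeping with volumes and log birational boundedness that has already been carried out in the earlier propositions.
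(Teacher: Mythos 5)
Your Steps 1 and 2 (log birational boundedness of a log smooth model $(\overline{W},\Sigma_{\overline{W}})$ carrying all the relevant divisors, with bounded coefficients of $A_{\overline{W}}+R_{\overline{W}}$) agree with the paper. But Step 3 has a genuine gap that your own ``Main obstacle'' paragraph points at without resolving.

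The problem is that the boundary $\Delta=\tfrac12 B^+ + \tfrac1{2m}(A+R)$ need not be klt. Indeed the natural (and in the paper the chosen) $m$-complement is $B^+=\tfrac1m(A+R)$, so your $\Delta$ is just $B^+$ again. Even for a different choice of $B^+$: if a prime divisor $D$ has $a(D,X,B^+)=0$ and $a(D,X,\tfrac1m(A+R))=0$ simultaneously, then $a(D,X,\Delta)=0$ and $(X,\Delta)$ is not klt. Lemma~\ref{l-ep-to-0} only gives a positive gap in the discrepancy once you know it is nonzero; it does not force $\tfrac1m(A+R)$ to be klt. And the fifth hypothesis with $l=m$ only says $(X,t_m(A+R))$ is klt for a possibly very small $t_m<\tfrac1m$, which is much weaker than what you need. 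So the analogy with the end of Lemma~\ref{l-compl-non-exc} is misleading: there the not-klt case is allowed and handled by extracting a bounded $l$-complement (bounding the Cartier index, not the variety), whereas here you must produce an honest $\epsilon$-lc boundary with $K_X+\Delta\sim_\Q 0$ to feed into Theorem~\ref{t-BAB-good-boundary} or [HMX2, Cor.~1.7].

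The paper's key idea, which is missing from your proposal, is to exploit the bounded model $\overline{W}$ to produce a single auxiliary divisor $G_{\overline W}\sim lA_{\overline W}$ (with $l$ depending only on the family) whose support contains all of $\Sigma_{\overline W}$, and then to set
$$
\Delta_{\overline W}\ :=\ B^+_{\overline W}+\tfrac{t}{m}A_{\overline W}-\tfrac{t}{lm}G_{\overline W},
$$
where $t:=t_{lm}$ comes from the fifth hypothesis applied to $G+lR\in|-lmK_X|$. Subtracting $\tfrac{t}{lm}G_{\overline W}$ strictly decreases every coefficient of $B^+_{\overline W}$ (because $\Supp B^+_{\overline W}\subseteq\Supp G_{\overline W}$), and adding back $\tfrac{t}{m}A_{\overline W}$ — a single general divisor not in $\rddown{B^+_{\overline W}}$ — restores $K_{\overline W}+\Delta_{\overline W}\sim_\Q 0$. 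This gives a sub-$\epsilon$-lc $\Delta_{\overline W}$ with $\epsilon$ depending only on $t,l,m$. The resulting crepant pullback $\Delta$ on $X$ may have negative coefficients, which is repaired by constructing a bounded-index lc complement $\Omega\ge t(G+lR)$ (either $\tfrac1{lm}(G+lR)$ itself if lc, or via Lemma~\ref{l-compl-s-non-exc} otherwise) and averaging: $\Theta=\tfrac12\Delta+\tfrac12\Omega\ge 0$ is $\tfrac{\epsilon}{2}$-lc with $K_X+\Theta\sim_\Q 0$ and coefficients in a fixed finite set, so Theorem~\ref{t-BAB-good-boundary} applies. Without the $G_{\overline W}$ trick you have no mechanism to push the problematic coefficient-$1$ places strictly below $1$, so your $\Delta$ will not in general be klt.
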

\begin{proof}
We first give a short summary of the proof. Using the assumption that $|-mK_X|$ defines a birational map 
and $\vol(-K_X)\le v$, we find a bounded  smooth birational model $\overline{W}$ of $X$. We take an 
$m$-complement $K_X+B^+$ of $K_X$. If the complement is klt, we apply [\ref{HX}, Theorem 1.3]. 
Otherwise we let $K_{\overline{W}}+B^+_{\overline W}$ be the crepant pullback of $K_X+B^+$ to 
${\overline W}$, and try to perturb $B^+_{\overline W}$ to get $\Delta_{\overline W}\sim_\Q B^+_{\overline W}$ 
with $K_{\overline{W}}+\Delta_{\overline W}$ being sub-klt and with bounded Cartier index. 
We pull back the latter to $X$ to get 
$K_X+\Delta$. The main issue here is that $\Delta$ may not be effective. Using the final 
assumption of the proposition and some complement theory we construct $\Theta$ 
with coefficients in a fixed finite set so that 
$K_X+\Theta\sim_\Q 0$ is klt, and again apply  [\ref{HX}, Theorem 1.3].\\

\emph{Step 1.}
In this step we consider a birationally bounded model of $X$. 
By Lemma \ref{l-bnd-Fano-change-model}, 
we can take a small $\Q$-factorialisation of $X$, hence  assume $X$ is $\Q$-factorial. 
Let $M$ be a general element of $|-mK_X|$. Applying Prospotion \ref{p-log-bir-bnd-cert-pairs} (with $B=0$), 
 there is a bounded set of couples $\mathcal{P}$ 
and a number $c\in\R^{>0}$ depending only on $d,v$ such that 
there is a projective log smooth couple $(\overline{W},{\Sigma}_{\overline{W}})\in \mathcal{P}$ 
and a birational map $\overline{W}\bir X$ such that 
\begin{itemize}
\item  $\Supp {\Sigma}_{\overline{W}}$ contains the exceptional 
divisor of  $\overline{W}\bir X$ and the birational transform of $\Supp {M}$;

\item if $X'\to X$ and $X'\to \overline{W}$ is a common resolution and 
$M_{\overline{W}}$ is the pushdown of $M_{X'}:=M|_{X'}$, then each coefficient of $M_{\overline{W}}$ is at most $c$;

\item there is a resolution $\phi\colon W\to X$ such that $M_W:=M|_W\sim A_W+R_W$ where $A_W$ is 
the movable part of $|M_W|$, $|A_W|$ is base point free, and if $X'\to X$ factors through $W$, then 
$A_{X'}:=A_W|_{X'}\sim 0/\overline{X}$.
\end{itemize}

Note that since we assumed $M$ is a general element of $|-mK_X|$, we can assume 
$M_W=A_W+R_W$ and that $A_W$ is general in  $|A_W|$. In particular, if $A_{\overline{W}}$ is the 
pushdown of $A_W|_{X'}$, then $A_{\overline W}\le \Sigma_{\overline W}$.\\

\emph{Step 2.}
In this step we discuss complements.  
Let $M,A,R$ be the pushdowns of $M_W,A_W,R_W$ to $X$.
By assumption $K_X$ has an $m$-complement, say $K_X+B^+$. In particular, 
$mB^+$ is an element of the linear system $|M|=|-mK_X|$. But since $(X,B^+)$ is lc and since $A+R$ is a 
general element of $|-mK_X|$, 
we deduce that $(X,\frac{1}{m}A+\frac{1}{m}R)$ is lc too. Thus replacing $B^+$ we can assume 
$B^+=\frac{1}{m}A+\frac{1}{m}R$. 
By [\ref{HX}, Theorem 1.3], we can assume $(X,B^+)$ is not klt. 
 The idea is to construct another complement which is klt.
Replacing $m$ from the beginning we can assume $m>1$ so that  $A$ is not a component of $\rddown{B^+}$.\\

\emph{Step 3.} 
In this step we use the model $\overline W$ to construct a new lc complement $K_X+\Omega$. 
Since  $|A_{\overline W}|$ defines a birational contraction and since $A_{\overline W}\le \Sigma_{\overline W}$, 
there is $l\in \N$ depending only on the family $\mathcal{P}$ such that $lA_{\overline W}\sim G_{\overline W}$ 
for some $G_{\overline W}\ge 0$ whose support contains $\Sigma_{\overline W}$.
Now let $K_{\overline W}+B^+_{\overline W}$ be the crepant pullback of $K_X+B^+$ to ${\overline W}$. 
Then $({\overline W},B^+_{\overline W})$ is sub-lc and   
$$
\Supp B^+_{\overline W}\subseteq \Sigma_{\overline{W}}\subseteq \Supp G_{\overline{W}}.
$$ 
Let $G$ be the pushdown of $G_{X'}:=G_{\overline{W}}|_{X'}$ to $X$ where as in Step 1, 
$X'\to X$ and $X'\to \overline{W}$ is a common resolution. Since $A_{X'}\sim 0/\overline{W}$, 
we deduce that $A_{X'}$ is the pullback of $A_{\overline W}$. Thus from 
$lA_{\overline W}\sim G_{\overline W}$ we get $lA_{X'}\sim G_{X'}$ which in turn gives $lA\sim G$.
Therefore,  $G+lR\in |-lmK_X|$. 

Now, by assumption, $(X,t(G+lR))$ is klt where $t:=t_{lm}$. 
In particular, this means the coefficients of 
$t(G+lR)$ belong to a fixed finite set depending only on $t$. 
Decreasing $t$ we can assume it is rational and that $t<\frac{1}{lm}$.

If $(X,\frac{1}{lm}(G+lR))$
is lc, then we let $\Omega=\frac{1}{lm}(G+lR)$ and $n=lm$. But if it is not lc, then the pair $(X,t(G+lR))$
is strongly non-exceptional, hence by Lemma \ref{l-compl-s-non-exc}, there is $n$ depending only on $d,t$ 
such that there is $\Omega\ge t(G+lR)$ with 
 $(X,\Omega)$ is lc and $n(K_X+\Omega)\sim 0$.\\ 

\emph{Step 4.}
In this step we introduce $\Delta_{\overline W}$ and $\Delta$.
Let 
$$
\Delta_{\overline W}:=
B^+_{\overline W}+\frac{t}{m}A_{\overline W}-\frac{t}{lm}G_{\overline W}
$$
which satisfies  $K_{\overline W}+\Delta_{\overline W}\sim_\Q 0$. Since $A_{\overline W}$ is not 
a component of $\rddown{B^+_{\overline W}}$ and since $\Supp B^+_{\overline W}\subseteq \Supp G_{\overline{W}}$, 
there is $\epsilon>0$ depending only on $t,l,m$ 
such that  
$({\overline W},\Delta_{\overline W})$ is sub-$\epsilon$-lc.

Let $K_X+\Delta$ be the crepant pullback of $K_{\overline W}+\Delta_{\overline W}$ to $X$. 
Then $K_X+\Delta\sim_\Q 0$ 
and $(X,\Delta)$ is sub-$\epsilon$-lc. 
However, 
$$
\Delta=B^++\frac{t}{m}A-\frac{t}{lm}G
$$ 
has negative coefficients, so $(X,\Delta)$ is only a sub-pair.\\ 

\emph{Step 5.}
Finally we produce a suitable klt complement $K_X+\Theta$ and prove boundedness of $X$.
Let $\Theta= \frac{1}{2}\Delta+\frac{1}{2}\Omega$. Then 
$$
\Theta=\frac{1}{2}B^++\frac{t}{2m}A-\frac{t}{2lm}G+\frac{1}{2}\Omega\ge 
$$
$$
\frac{1}{2}B^++\frac{t}{2m}A-\frac{t}{2lm}G+\frac{t}{2}(G+lR) \ge -\frac{t}{2lm}G+\frac{t}{2}G \ge 0.
$$\\  
Since $(X,\Delta)$ is sub-$\epsilon$-lc and $(X,\Omega)$ is lc, $(X,\Theta)$ is $\frac{\epsilon}{2}$-lc.
Moreover, $K_X+\Theta\sim_\Q 0$, and 
the coefficients of $\Theta$ belong to a fixed finite set depending only on $t,l,m,n$. 
Now apply [\ref{HX}, Theorem 1.3].

\end{proof}

\subsection{From complements to Theorem \ref{t-BAB-exc}}

We arrive at the main result of this section. 

\begin{prop}\label{p-from-compl-to-BAB-exc}
Assume Theorem \ref{t-bnd-compl} holds in dimension $\le d-1$ and Theorem \ref{t-bnd-compl-usual-local} 
holds in dimension $d$. Then Theorem \ref{t-BAB-exc} holds in dimension $d$.
\end{prop}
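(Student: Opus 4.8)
The plan is to show that $X'$ lies in a bounded family and that $\Supp B'$ then has bounded degree, which together give log boundedness of $(X',B')$. First I would pass to a $\Q$-factorial generalised dlt model of $(X',B'+M')$: this preserves exceptionality and the conditions $B'\in\Phi(\mathfrak R)$, $pM$ Cartier, $X'$ Fano type, $-(K_{X'}+B'+M')$ nef, and since there is a birational morphism from the model to $X'$ it suffices to bound the model. So assume $X'$ is $\Q$-factorial. Lemma \ref{l-exc-bnd-sing} then gives $\epsilon=\epsilon(d,p,\mathfrak R)>0$ with $(X',B'+M')$ generalised $\epsilon$-lc; in particular $(X',B')$ is $\epsilon$-lc and $(X',B'+M')$ is generalised klt. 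Lemma \ref{l-exc-threshold} supplies $\beta\in(0,1)$ depending only on $d,p,\mathfrak R$ with $(X',B'+\alpha M')$ exceptional for every $\alpha\in[\beta,1]$, which I will use to trade a little of the positivity of $M'$ for room in the later estimates.

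Next I would normalise the positivity of the pair. Since $X'$ is Fano type and $-(K_{X'}+B'+M')$ is nef it is semi-ample; let $X'\to V'$ be the contraction it defines. If $\dim V'>0$, then restricting to a general fibre (using adjunction for fibre spaces, and Proposition \ref{l-fib-adj-dcc} to control the hyperstandard coefficients) produces on the fibre an exceptional generalised pair of dimension $<d$ with $K+B+M\sim_\R 0$, and on $V'$ an exceptional generalised pair of dimension $<d$ with anti-ample log canonical divisor; by Theorem \ref{t-BAB-exc} in lower dimension (available from the assumed lower-dimensional \ref{t-bnd-compl} together with \ref{t-bnd-compl-usual-local} in lower dimension) these are log bounded, after which a relative Hilbert scheme argument bounds the fibration $X'\to V'$, hence $X'$ and $\Supp B'$. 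So I may assume $\dim V'=0$, i.e.\ $K_{X'}+B'+M'\sim_\R 0$. I then reduce to the case $M'$ big: if $M'$ is not big, run the MMP on $M'$ over the base of the contraction it defines and restrict to the fibres, exactly as in Proposition \ref{p-bnd-compl-non-big}, again invoking the lower-dimensional \ref{t-BAB-exc}; so assume $pM$ is big.

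With $X'$ $\Q$-factorial Fano type, $\epsilon$-lc, $(X',B'+M')$ exceptional generalised klt, $K_{X'}+B'+M'\sim_\R 0$ and $pM$ big, I would assemble the hypotheses of Proposition \ref{p-from-bnd-lct-to-bnd-var}. Lemma \ref{l-exc-bnd-vol} bounds $\vol(-K_{X'})\le v$. Running the MMP on $-K_{X'}$ to a Fano model and writing $-K\sim_\Q B'+M'$ (replacing $M'$ by a general member of its $\Q$-linear system after perturbing $B'$ so that its nonzero coefficients stay bounded below by the smallest positive element of $\Phi(\mathfrak R)$), Proposition \ref{p-eff-bir-delta-2} gives $m=m(d,p,\mathfrak R)$ with $|-mK_{X'}|$ birational; boundedness of an $m$-complement of $K_{X'}$ follows from \ref{t-bnd-compl-usual-local} in dimension $d$ (applied on the Fano model); and Lemma \ref{l-exc-bnd-lct} bounds the log canonical thresholds of all $L\in|-lK_{X'}|$. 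Proposition \ref{p-from-bnd-lct-to-bnd-var} (applied on the Fano model and transferred back using that $X'$ is $\epsilon$-lc and reached by an MMP) then shows $X'$ is bounded; fixing a bounded very ample $A$ on $X'$, from $(K_{X'}+B'+M')\cdot A^{d-1}\le 0$ and nefness of $M$ upstairs we get $B'\cdot A^{d-1}$ bounded, so $\Supp B'$ is bounded, and as the nonzero coefficients of $B'\in\Phi(\mathfrak R)$ are bounded below, $(X',B')$ is log bounded.

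The main obstacle I anticipate is the reduction to $M'$ big together with the fibration case $\dim V'>0$: one must carefully transfer exceptionality to both the fibre and the base under adjunction for fibre spaces, keep the boundary coefficients hyperstandard (Proposition \ref{l-fib-adj-dcc}), and then bound a fibration from boundedness of base and fibres — the last step being genuinely more than bounding the two pieces separately. A secondary difficulty is obtaining uniform effective birationality of $|-mK_{X'}|$: since Theorem \ref{t-eff-bir-e-lc} is not yet available, one must route through Proposition \ref{p-eff-bir-delta-2}, which forces the above reduction to a situation with $K_{X'}+B'\sim_\Q 0$ and boundary coefficients bounded below.
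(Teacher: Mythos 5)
Your final paragraph correctly matches the paper's closing step (bound $\vol(-K_{X'})$ by Lemma~\ref{l-exc-bnd-vol}, bound the lc thresholds by Lemma~\ref{l-exc-bnd-lct}, and apply Proposition~\ref{p-from-bnd-lct-to-bnd-var}), but the route you propose to reach the situation ``$K_{X'}+B'+M'\sim_\Q 0$ and $M$ big'' has genuine gaps that the paper avoids by an entirely different manoeuvre.

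Two problems with your reduction. First, when $-(K_{X'}+B'+M')$ is nef and big the semi-ample contraction $X'\to V'$ is \emph{birational}, so there are no positive-dimensional fibres to restrict to, and nothing is gained: you remain with an ample anti-log-canonical generalised pair in dimension $d$. Second, and more seriously, in the genuinely fibred case you invoke ``a relative Hilbert scheme argument'' to bound the total space from bounded base and bounded general fibre. This is not available in the paper's toolkit, and it is not a formal step: boundedness of a family of fibrations requires uniform control of the variation and of a relative polarisation, which does not follow from bounding the two pieces separately (you yourself flag this). Likewise, using the MMP over the base defined by $M'$ ``exactly as in Proposition~\ref{p-bnd-compl-non-big}'' misreads that proposition: \ref{p-bnd-compl-non-big} lifts a \emph{complement} from the base; it gives no mechanism for bounding the total space. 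There are further difficulties in keeping $pM$ Cartier for the induced generalised structure on $V'$, and in arranging bounded-below coefficients for a replacement of $M'$ by an effective member so that Proposition~\ref{p-eff-bir-delta-2} applies — the latter essentially presupposes the bigness you are trying to establish. You also cite \ref{t-bnd-compl-usual-local} in dimension $d$ for a global $m$-complement of $K_{X'}$, but that theorem requires $\dim Z>0$; the global statement comes instead from Lemma~\ref{l-from-compl-to-BAB-exc-usual} (exceptional case) or Lemma~\ref{l-compl-non-exc} (non-exceptional case).

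The paper's reduction never touches fibrations. After passing to a $\Q$-factorialisation and invoking Lemma~\ref{l-exc-bnd-sing}, it first constructs an $n$-complement $K_{X'}+C'$ of $K_{X'}$ via the lemmas just mentioned, gets $|-mK_{X'}|$ birational from Proposition~\ref{p-eff-bir-delta-2} applied to a weak-Fano model, and then \emph{averages}: writing $C'=\tfrac{1}{m}A'+\tfrac{1}{m}R'$ with $A$ base point free, set $\Delta'=\tfrac12B'+\tfrac1{2m}R'$ and $N=\tfrac12 M+\tfrac1{2m}A$, so the new nef part is automatically big and of bounded Cartier index. Constructing a complement of $(X',\Delta'+N')$ — via Lemma~\ref{l-compl-non-exc} if it is non-exceptional, or via Lemma~\ref{l-exc-threshold} together with potential birationality of $-r(K_{X'}+\Delta'+\beta N')$ if it is exceptional — then produces a new $B'$ and $M$ with $K_{X'}+B'+M'\sim_\Q 0$, $M$ big, and exceptionality preserved. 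That averaging-plus-complement step is the decisive idea your proposal is missing.
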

\begin{proof}
We first give a short summary of the proof. We reduce boundedness of $(X', \Supp B')$ to boundedness of $X'$, 
and to the case $B'\in\mathfrak{R}$. We argue that it is enough to construct a bounded klt complement 
of $X'$. We consider an lc complement of $K_{X'}$, and then reduce the problem to the situation 
when $K_{X'}+B'+M'\sim_\Q 0$ and $M$ is big. Finally we apply \ref{p-from-bnd-lct-to-bnd-var} to 
the minimal model of $-K_{X'}$ to produce the required bounded klt complement of  $K_{X'}$.\\

\emph{Step 1.}
In this step we reduce the problem to boundedness of $X'$, and the case  $B'\in\mathfrak{R}$.
Let $(X',B'+M')$ be as in Theorem \ref{t-BAB-exc} in 
dimension $d$. It is enough to show $X'$ is bounded because then we can find a very ample 
Cartier divisor $H'$ so that $-K_{X'}\cdot H'^{d-1}$ is bounded from above 
 (note that although $X'$ may not be $\Q$-factorial but $D'\cdot H'^{d-1}$ 
is well-defined for any Weil divisor $D'$); then 
$$
B'\cdot H'^{d-1}\le B'\cdot H'^{d-1}-(K_{X'}+B'+M')\cdot H'^{d-1} \le -K_{X'}\cdot H'^{d-1}
$$ 
where the first inequality follows from nefness of $-(K_{X'}+B'+M')$ and the second inequality 
follows from the fact that $M'$ is pseudo-effective: thus $B'\cdot H'^{d-1}$ is bounded 
and this implies $(X',B')$ is log bounded as the coefficients of $B'$ belong to the 
DCC set $\Phi(\mathfrak{R})$. 

On the other hand, by Lemma \ref{l-exc-bnd-sing}, $(X',B'+M')$ is generalised 
$\epsilon$-lc for some $\epsilon>0$ depending only on $d,p,\mathfrak{R}$. In particular, 
the coefficients of $B'$ belong to a finite set depending only on $d,p,\mathfrak{R}$ because $1$ is the only accumulation 
point of $\Phi(\mathfrak{R})$. Extending $\mathfrak{R}$ we can assume $B'\in\mathfrak{R}$.\\

\emph{Step 2.}
In this step we reduce the problem to existence of a klt complement and the case when 
$X'$ is $\Q$-factorial. 
By [\ref{HX}, Theorem 1.3], 
it is enough to show that $K_{X'}$ has a klt $a$-complement for some bounded number $a\in \N$. 
This in turn follows from existence of a klt 
$a$-complement for $K_{X''}$ for a bounded $a\in \N$ where $X''\to X'$ is a small $\Q$-factorialisation. 
Let $K_{X''}+B''+M''$ be the 
pullback of $K_{X'}+B'+M'$. Then replacing $X',B',M'$ with $X'',B'',M''$,  we can assume 
$X'$ is $\Q$-factorial.\\

\emph{Step 3.}
In this step we consider a suitable lc complement of $K_{X'}$.
Run an MMP on $-K_{X'}$ and let $\tilde{X}'$ be the resulting model which is a klt weak Fano. 
Then $K_{\tilde{X}'}$ has an $n$-complement for some $n$ depending only on $d$ 
by applying Lemma \ref{l-from-compl-to-BAB-exc-usual} if $\tilde{X}'$ is exceptional or by applying
Lemma \ref{l-compl-non-exc} otherwise. This implies $K_{X'}$ 
also has an $n$-complement $K_{X'}+C'$, by \ref{ss-compl-remarks}(3). 

On the other hand, 
since $(X',B'+M')$ is generalised $\epsilon$-lc and $-(K_{X'}+B'+M')$ is semi-ample, 
$(\tilde{X}',\tilde{B}'+\tilde{M}')$ is generalised $\epsilon$-lc where $\tilde{B}',\tilde{M}'$ are the pushdowns of 
$B',M'$. Thus $\tilde{X}'$ is $\epsilon$-lc. 
Thus by Proposition \ref{p-eff-bir-delta-2}, $|-mK_{\tilde{X}'}|$ defines a 
birational map for some $m\in\N$ depending only on $d,\epsilon,n$ which in turn implies 
$|-mK_{X'}|$ also defines a birational map. Replacing 
both $m$ and $n$ by $pmn$, we can assume $m=n$ and that $p$ divides $m,n$. Moreover, 
by Lemma \ref{l-mov-part-lin-system}, replacing 
$\phi\colon X\to X'$ and $C'$ we can assume $C'=\frac{1}{m}A'+\frac{1}{m}R'$ 
where $\phi^*(-mK_{X'})\sim A+R$, $A$ is the movable part of $|\phi^*(-mK_{X'})|$, $|A|$ 
is base point free, and $R$ is the fixed part.\\

\emph{Step 4.} 
In this and the next step we reduce the problem to the situation in which 
 $K_{X'}+B'+M'\sim_\Q 0$ and that $M$ is big. We do this by introducing 
a new generalised pair $({X'},\Delta'+N')$. Let 
$$
\Delta':=\frac{1}{2}B'+\frac{1}{2m}R' ~~~\mbox{and} ~~~N:=\frac{1}{2}M+\frac{1}{2m}A.
$$ 
Then $({X'},\Delta'+N')$ is generalised lc and $-(K_{X'}+\Delta'+N')$ is nef. 
Note that the coefficients of $\Delta'$ belong to a fixed finite set depending only on $\mathfrak{R},m$, 
and that $2pmN$ is b-Cartier.
 
Assume $({X'},\Delta'+N')$ is non-exceptional. Then 
by Lemma \ref{l-compl-non-exc}, $K_{X'}+\Delta'+N'$ has an $l$-complement 
$K_{X'}+{\Delta'}^++N'$ for some $l$ depending only on $d,p,m,\mathfrak{R}$ such that 
$G':={\Delta'}^+-{\Delta'}\ge 0$. Then
$$
lm(K_{X'}+{B'}+2G'+M')\sim lm(K_{X'}+{B'}+2G'+M')+lm(K_{X'}+C')
$$
$$
=lm(2K_{X'}+B'+\frac{1}{m}R'+2G'+M'+\frac{1}{m}A')
$$
$$
=2lm(K_{X'}+{\Delta'}+G'+N')=2lm(K_{X'}+{\Delta'}^++N')\sim 0.
$$
Let $B'^+=B'+2G'$. Since $(X',B'+M')$ is exceptional, $({X'},{B'}^++M')$ is generalised klt. 
Thus 
$$
(X',\frac{1}{2}{B'}^++\frac{1}{2}{\Delta'}^++\frac{1}{2}M'+\frac{1}{2}N')
$$
is generalised klt, hence exceptional because 
$$
K_{X'}+\frac{1}{2}{B'}^++\frac{1}{2}{\Delta'}^++\frac{1}{2}M'+\frac{1}{2}N'\sim_\Q 0.
$$
 Now replace $B'$  with 
$\frac{1}{2}{B'}^++\frac{1}{2}{\Delta'}^+$ and replace $M$ with $\frac{1}{2}M+\frac{1}{2}N$. 
Replacing $p,\mathfrak{R}$ accordingly, we can then assume $K_{X'}+B'+M'\sim_\Q 0$ 
and that $M$ is big.\\ 

\emph{Step 5.} 
In this step we assume $({X'},\Delta'+N')$ is exceptional. 
By Lemma \ref{l-exc-threshold}, there is a rational number $\beta\in(0,1)$ depending only on 
$d,p,m,\mathfrak{R}$ such that $(X',\Delta'+\beta N')$ is exceptional. 
Since $N=\frac{1}{2}M+\frac{1}{2m}A$ and $A$ is base point free and big, there is $r\in\N$ such that 
$$
-r(K_{X'}+\Delta'+\beta N')=-r(K_{X'}+\Delta'+N')+r(1-\beta)N'
$$
is integral and potentially birational where $r$ depends only on $d,p,m,\beta,\mathfrak{R}$. Then 
$$
|K_{X'}-r(K_{X'}+\Delta'+\beta N')|
$$
defines a birational map by [\ref{HMX}, Lemma 2.3.4], hence 
$$
|mK_{X'}-rm(K_{X'}+\Delta'+\beta N')|
$$
also defines a birational map which in turn implies 
$$
|-rm(K_{X'}+\Delta'+\beta N')|=|mK_{X'}+mC'-rm(K_{X'}+\Delta'+\beta N')|
$$
defines a birational map as well. In particular, there is ${\Delta'}^+\ge \Delta'$ such that 
$$
rm(K_{X'}+{\Delta'}^++\beta N')\sim 0.
$$ 
Since $({X'},\Delta'+\beta N')$ is exceptional, 
$({X'},{\Delta'}^++\beta N')$ is generalised klt, hence exceptional.
Now replace $B'$ and $M$ with ${\Delta'}^+$ and $\beta N$, respectively.
Replacing $p,\mathfrak{R}$ accordingly, from now on we can then assume $K_{X'}+B'+M'\sim_\Q 0$ 
and that $M$ is big.\\

\emph{Step 6.}
Finally, we will use Proposition \ref{p-from-bnd-lct-to-bnd-var} to show $K_{X'}$ has a bounded 
klt complement as discussed in Step 2.
Let $\tilde{X}'$ be as in Step 3 which is the result of an MMP on $-K_{X'}$. It is enough to show that 
$K_{\tilde X'}$ has a klt $a$-complement for some bounded number $a\in \N$. 
Since $K_{X'}+B'+M'\sim_\Q 0$, 
we can replace $X'$ with $\tilde{X}'$, hence assume $X'$ is a weak Fano. 
By Step 3, $K_{X'}$ has an $m$-complement and $|-mK_{X'}|$ defines a birational map for some bounded $m\in\N$. 
Moreover, by Lemma \ref{l-exc-bnd-vol}, $\vol(-K_{X'})\le v$ for some number $v$ depending only on $d,p,\mathfrak{R}$.
On the other hand, by Lemma \ref{l-exc-bnd-lct}, for each $l\in\N$ there is a positive real 
number $t_l$ depending only on $d,p,l,\mathfrak{R}$ such that for any $L'\in|-lK_{X'}|$, the pair $(X',t_lL')$ is klt. 
Therefore, $X'$ belongs to a bounded family by Proposition \ref{p-from-bnd-lct-to-bnd-var}, and 
this implies the existence of the required bounded klt complement for $K_{X'}$, by Lemma \ref{l-bnd-couples-bnd-Cartier-index}.

\end{proof}


\section{\bf Boundedness of relative complements}

In this section we treat Theorem \ref{t-bnd-compl-usual-local} inductively. The results of Sections 
6 and 7 rely on this theorem. Proofs are similar to those in Section 6.

\begin{prop}\label{p-bnd-compl-usual-local-plt}
Assume Theorems \ref{t-bnd-compl-usual} and \ref{t-bnd-compl-usual-local} hold in dimension $d-1$. 
Then Theorem \ref{t-bnd-compl-usual-local} holds in dimension $d$ for those $(X,B)$ and $X\to Z$ such that 
\begin{itemize}
\item $B\in \mathfrak{R}$,

\item $({X},\Gamma)$ is $\Q$-factorial plt for some $\Gamma$,

\item $-(K_{X}+\Gamma)$ is ample over $Z$, 

\item $S:=\rddown{\Gamma}$ is irreducible and it is a component of $\rddown{B}$, and

\item $S$ intersects the fibre of $X\to Z$ over $z$.
\end{itemize}
\end{prop}

\begin{proof}
The proof is nearly identical to that of Proposition \ref{p-bnd-compl-plt} but 
 for convenience we will write a complete proof because we need some 
slight adjustments, e.g. Step 1,  and also the notation is different.\\ 

\emph{Step 1}.
In this step we show that the induced morphism $S\to f(S)$ is a 
contraction where $f$ denotes $X\to Z$ and $f(S)$ is the image of $S$ with reduced structure. 
 From the exact sequence 
$$
0\to \mathcal{O}_X(-S) \to \mathcal{O}_X \to \mathcal{O}_S \to 0
$$
we get the exact sequence 
$$
f_*\mathcal{O}_X \to f_*\mathcal{O}_S \to R^1f_*\mathcal{O}_X(-S)=0
$$
where the vanishing follows from the relative Kawamata-Viehweg vanishing theorem 
[\ref{KMM-mmp}, Theorem 1-2-5] as 
$$
-S= K_X+\Gamma-S-(K_X+\Gamma)
$$ 
with $(X,\Gamma-S)$ being klt and $-(K_X+\Gamma)$ ample over $Z$. Thus 
$f_*\mathcal{O}_X\to f_*\mathcal{O}_S$ is surjective. Therefore, if $\pi\colon V\to Z$ denotes the finite part of the 
Stein factorisation of $S\to Z$, then  
$$
\mathcal{O}_Z=f_*\mathcal{O}_X\to  f_*\mathcal{O}_S=\pi_*\mathcal{O}_V
$$ 
is surjective. But  $\mathcal{O}_Z\to\pi_*\mathcal{O}_V$ factors as 
$\mathcal{O}_Z\to \mathcal{O}_{f(S)}\to \pi_*\mathcal{O}_{V}$, hence $\mathcal{O}_{f(S)}\to \pi_*\mathcal{O}_{V}$ 
is surjective which is then an isomorphism as the induced morphism $V\to f(S)$ is finite.
 Therefore,  $V\to f(S)$ is an isomorphism and $S\to f(S)$ is a contraction.\\

\emph{Step 2}.
 In this step we consider adjunction  and  complements on $S$.
Consider a log resolution $\phi\colon X'\to X$  of $(X,B)$, let $S'$ be the birational transform of $S$, and 
$\psi\colon S'\to S$ be the induced morphism. By  adjunction, we can write 
$K_{S}+B_{S}:= (K_{X}+B)|_{S}$.
By Lemma \ref{l-div-adj-dcc}, $B_{S}\in \Phi(\mathfrak{S})$ for some finite set of rational numbers 
$\mathfrak{S}\subset [0,1]$ which only depends on $\mathfrak{R}$. 
Moreover, restricting $K_{X}+\Gamma$ to $S$ shows that $S$ is of Fano type over $f(S)$. In addition, 
$z\in f(S)$ by assumption.

Now applying Theorem \ref{t-bnd-compl-usual-local} in dimension $d-1$ if $\dim f(S)>0$, or applying 
Theorem \ref{t-bnd-compl-usual} in dimension $d-1$ if $\dim f(S)=0$, there is $n\in \N$ 
which depends only on $d-1, \mathfrak{S}$ such that $K_{S}+B_{S}$ has 
an $n$-complement $K_{S}+B_{S}^+$ over $z$, with $B_{S}^+\ge B_{S}$. 
Replacing $n$  with $nI(\mathfrak{R})$ we can assume $n$ is divisible by 
$I(\mathfrak{R})$. In particular, $nB$ is integral as $B\in \mathfrak{R}$.
In the subsequent steps we will lift the complement $K_{S}+B_{S}^+$ to an  
$n$-complement $K_{X}+{B}^+$ of $K_{X}+B$ over $z$, with ${B}^+\ge B$.\\

\emph{Step 3}.
In this step we introduce basic notation. Write 
$$
N':=-(K_{X'}+B'):=-\phi^*(K_{X}+B)
$$
and let $T'=\rddown{B'^{\ge 0}}$ and $\Delta'=B'-T'$. Define  
$$
L':=-nK_{X'}-nT'-\rddown{(n+1){\Delta'}}
$$
which is an integral divisor. Note that 
$$
L'=n\Delta'-\rddown{(n+1){\Delta'}}+nN'.
$$
Now write 
$$
K_{X'}+\Gamma':=\phi^*(K_{X}+\Gamma).
$$ 
Replacing $\Gamma$ with $(1-a)\Gamma+aB$ for some $a\in(0,1)$ sufficiently 
close to $1$, we can assume $B'-\Gamma'$ has sufficiently small 
(positive or negative) coefficients.\\ 

\emph{Step 4}.
In this step we define a divisor $P'$ and study its properties.
Let $P'$ be the unique integral divisor so that 
$$
\Lambda':=\Gamma'+{n{\Delta'}}-\rddown{(n+1){\Delta'}}+P'
$$ 
is a boundary, $(X',\Lambda')$ is plt, and $\rddown{\Lambda'}=S'$ (in particular, we are assuming $\Lambda'\ge 0$). 
More precisely, we let $\mu_{S'}P'=0$ and for each prime divisor $D'\neq S'$, we let 
$$
\mu_{D'}P'=-\mu_{D'}\rddown{\Gamma'+{n{\Delta'}}-\rddown{(n+1){\Delta'}}}
$$
which satisfies 
$$
\mu_{D'}P'=-\mu_{D'}\rddown{\Gamma'-\Delta'+\langle (n+1)\Delta'\rangle}
$$ 
where $\langle (n+1)\Delta'\rangle$ is the fractional part of $(n+1)\Delta'$.
This implies $0\le \mu_{D'}P'\le 1$ for any prime divisor $D'$: indeed we can assume $D'\neq S'$; 
if $D'$ is a component of $T'$, then 
$D'$ is not a component of $\Delta'$ but $\mu_{D'}\Gamma'\in(0,1)$, hence $\mu_{D'}P'=0$; if $D'$ is not a 
component of $T'$, then $\mu_{D'}(\Gamma'-\Delta')=\mu_{D'}(\Gamma'-B')$ is sufficiently small, hence 
$0\le \mu_{D'}P'\le 1$. 

We show $P'$ is exceptional$/X$. 
Assume $D'$ is a component of $P'$ which is not exceptional$/X$. Then $D'\neq S'$, and since $nB$ is integral,  
$\mu_{D'}n\Delta'$ is integral, hence  
$\mu_{D'}\rddown{(n+1)\Delta'}=\mu_{D'}n\Delta'$ which implies 
$\mu_{D'}P'=-\mu_{D'}\rddown{\Gamma'}=0$, a contradiction.\\

\emph{Step 5.}
In this step we use Kawamata-Viehweg vanishing to lift sections from $S'$ to $X'$.
Let $A:=-(K_{X}+\Gamma)$ and let $A'=\phi^*A$.
Then 
$$
L'+P'= {n{\Delta'}}-\rddown{(n+1){\Delta'}}+ nN'+P'
$$
$$
=K_{X'}+\Gamma'+A'+{n{\Delta'}}-\rddown{(n+1){\Delta'}}+ nN'+P'
$$
$$
=K_{X'}+\Lambda'+A'+nN'.  
$$
Shrinking $Z$ around $z$ we can assume $Z$ is affine. Now since $A'+nN'$ is nef and big over $Z$ 
and $(X',\Lambda'-S')$ is klt, we get 
$h^1(L'+P'-S')=0$ by the relative Kawamata-Viehweg vanishing theorem [\ref{KMM-mmp}, Theorem 1-2-5], hence  
$$
H^0(L'+P')\to H^0((L'+P')|_{S'})
$$
is surjective.\\

\emph{Step 6.}
In this step we define several divisors.
Let $R_{S}:=B_{S}^+-B_{S}$. Then, perhaps after shrinking $Z$ around $z$, we have 
$$
-n(K_{S}+B_{S})=-n(K_{S}+B^+_{S}+B_S-B^+_S)\sim -n(B_S-B^+_S)=nR_{S}\ge 0.
$$
Letting $R_{S'}$ be the pullback of $R_{S}$ we get 
$$
-n(K_{S'}+B_{S'}):=-n\psi^*(K_{S}+B_{S})\sim nR_{S'}\ge 0.
$$
Then 
$$
nN'|_{S'}=-n(K_{X'}+B')|_{S'}=-n(K_{S'}+B_{S'})\sim nR_{S'}.
$$
By construction, 
$$
(L'+P')|_{S'}=({n{\Delta'}}-\rddown{(n+1){\Delta'}}+ nN'+P')|_{S'}
$$
$$
\sim G_{S'}:=nR_{S'}+{n{\Delta_{S'}}}-\rddown{(n+1){\Delta_{S'}}}+P_{S'}
$$
where $\Delta_{S'}=\Delta'|_{S'}$ and $P_{S'}=P'|_{S'}$.\\

\emph{Step 7.}
In this step we show $G_{S'}\ge 0$ and that it lifts to some effective divisor $G'$ on $X'$.
Assume $C'$ is a component of $G_{S'}$ with negative coefficient. Then 
 there is a component $D'$ of ${n{\Delta'}}-\rddown{(n+1){\Delta'}}$ with negative coefficient such that $C'$ is a 
component of $D'|_{S'}$. But 
$$
\mu_{C'} ({n{\Delta_{S'}}}-\rddown{(n+1){\Delta_{S'}}})=\mu_{C'} (-\Delta_{S'}+\langle(n+1)\Delta_{S'}\rangle)\ge 
-\mu_{C'} \Delta_{S'}=-\mu_{D'} \Delta'>-1
$$ 
which gives $\mu_{C'}G_{S'}>-1$ and this in turn implies $\mu_{C'}G_{S'}\ge 0$ because $G_{S'}$ is integral, a contradiction. 
Therefore $G_{S'}\ge 0$, and by Step 5, $L'+P'\sim G'$ for some effective divisor $G'$ whose support does not contain $S'$ 
and $G'|_{S'}=G_{S'}$.\\ 

\emph{Step 8.}
In this step we introduce $B^+$.
By the previous step and the definition of $L'$ and the fact that $P'$ is exceptional$/X$, we have    
$$
-nK_{X}-nT-\rddown{(n+1)\Delta}=L=L+P\sim G\ge 0
$$
where $L$, etc, are the pushdowns of $L'$, etc.
Since $nB$ is integral, $\rddown{(n+1)\Delta}= n\Delta$, so  
$$
-n(K_{X}+B)=-nK_{X}-nT-{n\Delta}
=L\sim nR:=G\ge 0.
$$
Let ${B}^+=B+R$. Then $n(K_{X}+{B}^+)\sim 0$.\\ 

\emph{Step 9.}
In this step we show that $(X,{B}^+)$ is lc over some neighbourhood of $z$ 
which implies that $K_X+B^+$ is an $n$-complement 
of $K_X+B$ with $B^+\ge B$.
First we show $R|_{S}=R_{S}$. Since  
$$
nR':=G'-P'+\rddown{(n+1)\Delta'}- n\Delta'\sim L'+\rddown{(n+1)\Delta'}- n\Delta'=nN'\sim_\Q 0/X
$$
and since $\rddown{(n+1)\Delta}- n\Delta=0$ as  
$n\Delta$ is integral, we get  $\phi_*nR'=G=nR$ and that $R'$ is the pullback of $R$. 
Now 
$$
nR_{S'}=G_{S'}-P_{S'}+\rddown{(n+1)\Delta_{S'}}- n\Delta_{S'}
$$
$$
=(G'-P'+\rddown{(n+1)\Delta'}- n\Delta')|_{S'}=nR'|_{S'}
$$
which means $R_{S'}=R'|_{S'}$, hence $R_{S}=R|_{S}$.

The previous line implies 
$$
K_{S}+B_{S}^+=K_S+B_S+R_S=(K_X+B+R)|_S=(K_{X}+{B}^+)|_{S}.
$$
 By inversion of adjunction (\ref{l-inv-adjunction} or the usual version in [\ref{kawakita}]), 
 $(X,{B}^+)$ is lc near $S$. Let 
$$
\Omega:=a{B}^++(1-a)\Gamma 
$$ 
for some $a\in(0,1)$ close to $1$.
If $(X,{B}^+)$ is not  lc near the fibre over $z$, 
then $(X,\Omega)$ is also not  lc  near the fibre over $z$. Note that $(X,\Omega)$ is lc 
near $S$. But then 
$$
-(K_{X}+\Omega)=-a(K_X+B^+)-(1-a)(K_X+\Gamma)
$$
is ample over $Z$ and  the non-klt locus of $(X,\Omega)$ near the fibre over $z$ has at least two disjoint components 
one of which is $S$. This contradicts the 
connectedness principle (\ref{l-connectedness} or the usual version [\ref{Kollar-flip-abundance}, Theorem 17.4]). 
Therefore,  $(X,{B}^+)$ is  lc over some neighbourhood of $z$.

\end{proof}

\begin{prop}\label{p-bnd-compl-usual-local}
Assume Theorems \ref{t-bnd-compl-usual} and \ref{t-bnd-compl-usual-local} hold in dimension $d-1$. 
Then Theorem \ref{t-bnd-compl-usual-local} holds in dimension $d$.
\end{prop}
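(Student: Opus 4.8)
The plan is to reduce, through a chain of birational modifications over $Z$, to the plt situation already treated in Lemma \ref{l-bnd-compl-usual-local-plt}, so that the whole argument stays in dimension $d$ and calls on dimension $d-1$ only through that lemma. Since being an $n$-complement over $z$ is local on $Z$, I would first shrink $Z$ around $z$; we may then assume $Z$ is affine and that the image of every non-klt centre of $(X,B)$ contains $z$ (discard, by further shrinking, the finitely many non-klt centres whose image misses $z$). Taking a small $\Q$-factorialisation — which preserves the hypotheses and along which complements descend by the arguments of \ref{ss-compl-remarks} — we may also assume $X$ is $\Q$-factorial, hence $(X,0)$ is klt because $X$ is Fano type over $Z$.

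Next I would manufacture a marked non-klt divisor over $z$. Pick a general hyperplane section $H$ of $Z$ with $z\in H$ and let $t\ge 0$ be the largest number for which $(X,B+tf^*H)$ is lc over a neighbourhood of $z$; then $t<\infty$, and using the generality of $H$ together with our shrinking of $Z$ the pair $(X,B+tf^*H)$ has a non-klt place whose centre contains $z$, while $-(K_X+B+tf^*H)$ is still nef$/Z$ because $f^*H\cdot C=0$ for every curve $C$ in a fibre of $f$. Now pass to a $\Q$-factorial dlt model $Y\to X$ of $(X,B+tf^*H)$, lower the coefficients of the birational transforms of the components of $tf^*H$ (and of any exceptional$/X$ divisors) to suitable values in $\Phi(\mathfrak{R})$ so that the resulting boundary $B'$ satisfies $B'\in\Phi(\mathfrak{R})$, the crepant pullback of $K_X+B$ to $Y$ lies below $K_Y+B'$, and at least one component of $\rddown{B'}$ maps onto a subvariety through $z$ — this is exactly the construction in Step 3 of the proof of \ref{l-fib-adj-dcc} — and then run an MMP$/Z$ on $-(K_Y+B')$, which is $\Q$-linearly equivalent to an effective divisor over $Z$ after shrinking, to restore relative nefness; as in \ref{ss-B'-to-Theta'}, no component of $\rddown{B'}$ over $z$ is contracted. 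We arrive at $(X'',B'')$ with $X''$ Fano type over $Z$, $(X'',B'')$ lc, $-(K_{X''}+B'')$ nef$/Z$, $B''\in\Phi(\mathfrak{R})$, and an irreducible component $S''$ of $\rddown{B''}$ with $z\in f(S'')$.

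Then I would trade the hyperstandard coefficients for coefficients in a finite set. For $\epsilon>0$ small, depending only on $d$ and $\mathfrak{R}$, put $\Theta''=(B'')^{\le 1-\epsilon}+\lceil(B'')^{>1-\epsilon}\rceil$ and run an MMP$/Z$ on $-(K_{X''}+\Theta'')$; a relative version of the analysis in \ref{ss-B'-to-Theta'} — the ACC inputs [\ref{BZh}, Theorems 1.4 and 1.5] being applied on the projective general fibres of the relative Mori fibre structures produced, so that shrinking $Z$ is harmless — shows that $(X'',\Theta'')$ is lc, no component of $\rddown{\Theta''}$ (in particular $S''$) is contracted, the outcome $X'''$ carries $-(K_{X'''}+\Theta''')$ nef$/Z$ with $(X''',\Theta''')$ lc and $X'''$ Fano type over $Z$, and $\Theta'''\in\mathfrak{R}'$ for a finite set of rationals $\mathfrak{R}'\supseteq\mathfrak{R}$ depending only on $d,\mathfrak{R}$ (using that $1$ is the only accumulation point of $\Phi(\mathfrak{R})$). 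Since $X'''$ is Fano type over $Z$, I can now perturb $\Theta'''$ to a boundary $\Gamma'''$ with $(X''',\Gamma''')$ $\Q$-factorial plt, $\rddown{\Gamma'''}=S'''$, with $S'''$ a component of $\rddown{\Theta'''}$ meeting the fibre over $z$, and $-(K_{X'''}+\Gamma''')$ ample over $Z$; Lemma \ref{l-bnd-compl-usual-local-plt} (applicable as $\Theta'''\in\mathfrak{R}'$) then produces an $n$-complement $K_{X'''}+{\Theta'''}^+$ of $K_{X'''}+\Theta'''$ over $z$ with ${\Theta'''}^+\ge\Theta'''$ and $n$ divisible by $I(\mathfrak{R})$, depending only on $d,\mathfrak{R}$.

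It remains to descend this complement. By the mechanism of \ref{ss-compl-remarks}(2)--(3), applied in turn to the MMP $X''\dashrightarrow X'''$, to the dlt model and MMP of the second paragraph, and to the $\Q$-factorialisation of the first — at each stage the pullback of the smaller log divisor lies below that of the larger, plus an effective exceptional divisor — the complement on $X'''$ pushes down to an $n$-complement $K_X+B^+$ of $K_X+B$ over $z$ with $B^+\ge B$; the assertion that it is then also an $mn$-complement for every $m\in\N$ is immediate from the definition, as in \ref{ss-compl-remarks}(1). The main obstacle is the bookkeeping: checking that the reduction to coefficients in $\mathfrak{R}$ goes through in the relative (not proper over a point) setting exactly as in \ref{ss-B'-to-Theta'}, and that the marked divisor survives every intermediate MMP over $Z$, so that the concluding descent really retains the inequality $B^+\ge B$.
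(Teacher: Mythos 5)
The overall architecture you propose — (a) engineer a non-klt component over $z$ by bringing in $tf^*H$ and taking a dlt model with coefficients pushed down into $\Phi(\mathfrak{R})$, (b) pass from $\Phi(\mathfrak{R})$ to a finite set via the $\Theta$-trick, (c) reach a plt situation and invoke Lemma \ref{l-bnd-compl-usual-local-plt}, (d) descend the complement via \ref{ss-compl-remarks} — is the right skeleton and matches the paper's Steps 1--2 and its use of \ref{l-bnd-compl-usual-local-plt}. The descent bookkeeping in your last paragraph is also handled correctly.

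The gap is in one sentence, and it is the heart of the matter: ``Since $X'''$ is Fano type over $Z$, I can now perturb $\Theta'''$ to a boundary $\Gamma'''$ with $(X''',\Gamma''')$ $\Q$-factorial plt, $\rddown{\Gamma'''}=S'''$, \dots and $-(K_{X'''}+\Gamma''')$ ample over $Z$.'' After your step (b) you only know $-(K_{X'''}+\Theta''')$ is \emph{nef} over $Z$ — it may well be numerically trivial over $Z$, or it may fail to be big over $Z$, and $\rddown{\Theta'''}$ may have several components with other non-klt centres running into $S'''$. In that case there is no reason you can deform $\Theta'''$ to a plt boundary supported near $S'''$ whose negative log-canonical is \emph{ample} over $Z$: e.g.\ if $-(K_{X'''}+\Theta''')\equiv 0/Z$ and $S'''$ is not the whole non-klt locus, decreasing the other coefficients leaves you with something effective but not necessarily ample, and increasing the coefficient of $S'''$ to exactly $1$ is forced, so there is no room. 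The paper's Steps 3--5 of \ref{p-bnd-compl-usual-local} (mirroring Steps 4--6 of \ref{p-bnd-compl-non-klt}) exist precisely to bridge this: one first shrinks only the \emph{horizontal} coefficients by a factor $\alpha$ to regain bigness of $-(K_X+\Delta)$ over $Z$ without losing the vertical non-klt component through $z$, runs MMPs over the auxiliary models $V$ and $T$, introduces a second boundary $\tilde\Delta=\beta\Delta$ that is klt, decomposes $-(K_X+\Delta)\sim_\R A+G/Z$, and then either perturbs directly (when $\Supp G$ avoids the non-klt locus) or runs a tie-breaking with the lc threshold of $G+\Delta-\tilde\Delta$ and, if the round-down of the resulting $\Omega$ is empty, extracts a single divisor as in Step 6 of \ref{p-bnd-compl-non-klt}. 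None of this is automatic from Fano type, and it must be inserted between your steps (b) and (c) for the proof to close.

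A second, more minor caveat: your opening reduction — shrinking $Z$ so that every non-klt centre of $(X,B)$ maps through $z$ — is not obviously compatible with the later relative MMPs (which can reorganize the non-klt locus), and the paper never invokes it; what is actually used is only that some marked non-klt component survives and continues to meet the fibre over $z$, which your step (a) does establish and which the paper tracks explicitly in Steps 1--3.
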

\begin{proof}
When $(X,B)$ is klt and $-(K_X+B)$ is nef and big over $Z$, the theorem is essentially [\ref{PSh-I}, Theorem 3.1]. 
Apart from Steps 1 and 2,  the proof below is similar to that of Proposition \ref{p-bnd-compl-non-klt}.\\

\emph{Step 1.}
In this step we reduce to the situation in which $\rddown{B}$ has a vertical component intersecting the fibre over $z$.
Pick an effective Cartier divisor 
$N$ on $Z$ passing through $z$. Let $t$ be the lc threshold of $f^*N$ with 
respect to $(X,B)$ over $z$ where $f$ denotes $X\to Z$. Let $\Omega=B+tf^*N$. 
Shrinking $Z$ we can assume $(X,\Omega)$ is lc everywhere.
Let $(X',\Omega')$ be a $\Q$-factorial 
dlt model of $(X,\Omega)$.  Then $X'$ is of Fano type over $Z$. Moreover, there is a boundary $\Delta'\le \Omega'$ 
such that $\Delta'\in \Phi(\mathfrak{R})$, 
 some component of $\rddown{\Delta'}$ is vertical over $Z$ intersecting the fibre over $z$, 
 and $B\le \Delta$ where $\Delta$ is the pushdown of $\Delta'$. 

Run an MMP$/Z$ on $-(K_{X'}+\Delta')$ and let $X''$ be the resulting model. 
Since 
$$
-(K_{X'}+\Delta')=-(K_{X'}+\Omega')+(\Omega'-\Delta')
$$ 
where $-(K_{X'}+\Omega')$ is nef$/Z$ and $\Omega'-\Delta'\ge 0$, the 
MMP ends with a minimal model, that is, $-(K_{X''}+\Delta'')$ is nef over $Z$. 
Moreover, if $K_{X''}+\Delta''$ has an $n$-complement $K_{X''}+{\Delta''}^+$ over $z$ with ${\Delta''}^+\ge {\Delta''}$, then  
$K_{X'}+\Delta'$ has an $n$-complement $K_{X'}+{\Delta'}^+$ over $z$ with ${\Delta'}^+\ge {\Delta'}$ which in turn implies 
$K_X+B$ also has an $n$-complement $K_X+B^+$ over $z$ with $B^+\ge B$. Since $-(K_{X'}+\Omega')$ is semi-ample 
over $Z$, $(X'',\Omega'')$ is lc, hence $(X'',\Delta'')$ is lc. In particular, no component of  $\rddown{\Delta'}$ 
is contracted by the MMP otherwise $a(S',X'',\Delta'')<0$ for any contracted component $S'$ of  $\rddown{\Delta'}$ 
contradicting the previous sentence.
Replacing $(X,B)$ with 
$({X''},\Delta'')$ we can assume $\rddown{B}$ has a component intersecting the fibre over $z$ and that 
$X$ is $\Q$-factorial.\\

\emph{Step 2.}
In this step we reduce to the case $B\in\mathfrak{R}$.
Let $\epsilon>0$ be a sufficiently small number. Let $\Theta$ be the boundary whose coefficients are the same as  
$B$ except that we replace each coefficient in $(1-\epsilon,1)$ with $1$. 
Run an MMP$/Z$ on $-(K_X+\Theta)$ and let $X'$ be the resulting model. By Proposition \ref{p-B'-to-Theta'},  
we can choose $\epsilon$ depending only on $d,\mathfrak{R}$ so that no component of $\rddown{\Theta}$ is 
contracted by the MMP, $(X',\Theta')$ is lc,  and that 
$-(K_{X'}+\Theta')$ is nef over $Z$. Moreover, the coefficients of $\Theta'$ belong to some fixed finite set depending only 
on $\mathfrak{R},\epsilon$ because $1$ is the only accumulation point of $\Phi(\mathfrak{R})$. 

If $K_{X'}+\Theta'$ has an $n$-complement $K_{X'}+{\Theta'}^+$ over $z$ with 
${\Theta'}^+\ge {\Theta'}$, then  $K_{X}+\Theta$ has an $n$-complement $K_{X}+{\Theta}^+$ over $z$ with 
${\Theta}^+\ge {\Theta}$ which in turn implies 
$K_X+B$ also has an $n$-complement $K_X+B^+$ over $z$ with $B^+\ge B$.
Replacing $(X,B)$ with $(X',\Theta')$ and extending $\mathfrak{R}$, from now on  
we can assume $B\in\mathfrak{R}$. In the following steps we try to mimic the arguments of the proof 
of Proposition \ref{p-bnd-compl-non-klt}.\\

\emph{Step 3.}
In this step we define a boundary $\Delta$.
Since $X$ is of Fano type over $Z$, $-K_X$ is big over $Z$. So since $-(K_X+B)$ is nef over $Z$, 
$$
-(K_X+\alpha B)=-\alpha(K_X+B)-(1-\alpha)K_X 
$$
is big over $Z$ for any $\alpha\in (0,1)$. We will assume $\alpha$ is sufficiently close to $1$. 
Define a boundary $\Delta$ as follows. Let $D$ be a prime divisor. If $D$ is vertical over $Z$, 
let $\mu_D\Delta=\mu_DB$ but if $D$ is horizontal over $Z$, let $\mu_D\Delta=\mu_D\alpha {B}$. 
Then $(X,\Delta)$ is lc, 
$\alpha {B}\le \Delta\le B$, $\rddown{\Delta}$ has a vertical component intersecting the fibre over $z$, 
and $-(K_X+\Delta)$ is big over $Z$ as $\Delta=\alpha {B}$ near the generic fibre.\\
  
\emph{Step 4.}
In this step we introduce a boundary $\tilde{\Delta}\le \Delta$ and reduce to the case when 
  $-(K_{X}+\Delta)$ and $-(K_{X}+\tilde{\Delta})$  are nef and big over $Z$, 
 some component of $\rddown{\Delta}$ 
 intersects the fibre over $z$, and that $(X,\tilde\Delta)$ is klt.  
Let $X\to V/Z$ be the contraction defined by $-(K_X+B)$.  
Run an MMP on $-(K_{X}+\Delta)$ over $V$ and let $X'$ be the resulting model. 
Then $-(K_{X'}+\Delta')$ is nef and big over $V$ but may not be nef over $Z$. However, after 
replacing $\Delta$ with $aB+(1-a)\Delta$ for some $a\in(0,1)$ sufficiently close to $1$   
(i.e. increasing $\alpha$ to get closer to $1$) we can assume $-(K_{X'}+\Delta')$ 
is nef and big over $Z$. The MMP does not contract any component of $\rddown{\Delta}$. Now 
replace $(X,B)$ with $(X',B')$ and replace $\Delta$ with $\Delta'$ so that we can assume $-(K_{X}+\Delta)$ 
is nef and big over $Z$. Let $X\to T/Z$ be the contraction defined by $-(K_X+{\Delta})$. 

Let $\tilde{\Delta}=\beta \Delta$ for some $\beta<1$. After running an MMP on 
$-(K_X+\tilde{\Delta})$ over $T$ 
we can assume  $-(K_X+\tilde{\Delta})$ is nef and big over $T$, hence also nef and big over 
$Z$ if we replace $\beta$ with a number sufficiently close to $1$. 
The MMP may contract the components of $\rddown{\Delta}$ 
but after replacing $(X,B)$ with a suitable  $\Q$-factorial dlt 
model, increasing $\alpha,\beta$, and replacing $K_X+{\Delta}$ and $K_X+\tilde{\Delta}$ with their 
pullbacks we can assume $(X,B)$ is $\Q$-factorial dlt and that there are boundaries 
$\tilde{\Delta}\le \Delta \le B$ so that $-(K_{X}+\Delta)$ and $-(K_{X}+\tilde{\Delta})$  are nef and big over $Z$, 
 some component of $\rddown{\Delta}$ 
 intersects the fibre over $z$, and that $(X,\tilde\Delta)$ is klt. Shrinking $Z$ around $z$ we can 
 assume every component of $\rddown{\Delta}$ intersects the fibre over $z$.\\

\emph{Step 5.}
In this step we introduce divisors $A,G$ and yet another boundary $\Gamma$.
We can write $-(K_X+\Delta)\sim_\R A+G/Z$ where $A\ge 0$ is ample and $G\ge 0$. 
Assume $\Supp G$ does not contain any non-klt centre of $(X,\Delta)$. 
Then $(X,\Delta+\delta G)$ is dlt for any sufficiently small $\delta>0$. Moreover,
$$
-(K_X+\Delta+\delta G)\sim_\R (1-\delta)\left(\frac{\delta}{1-\delta} A+A+G\right)/Z
$$
is ample over $Z$, hence by perturbing the coefficients of $\Delta+\delta G$ we can find a boundary 
$\Gamma$ such that $(X,\Gamma)$ is plt, $S:=\rddown{\Gamma}\subseteq \rddown{B}$ 
is irreducible intersecting the fibre over $z$, and $-(K_X+\Gamma)$ is ample over $Z$.
So we can apply Proposition \ref{p-bnd-compl-usual-local-plt}. 
From now on we assume $\Supp G$ contains some non-klt centre of $(X,\Delta)$.\\

\emph{Step 6.}
In this step we define another boundary $\Omega$.
Let $t$ be the lc threshold of $G+\Delta-\tilde{\Delta}$ with respect to $(X,\tilde{\Delta})$ over $z$.  
Replacing $\tilde{\Delta}$ we can assume $\Delta-\tilde{\Delta}$ is sufficiently small, hence $t$ is sufficiently small too. 
Then letting $\Omega=\tilde{\Delta}+t(G+\Delta-\tilde{\Delta})$, 
 any non-klt place of $(X,\Omega)$ is a non-klt place of $(X,\Delta)$ (this can be seen on a 
 log resolution of $(X,B+\Omega)$). By construction, over $Z$ we have
$$
-(K_X+\Omega)=-(K_X+\tilde{\Delta}+t(G+\Delta-\tilde{\Delta}))
$$
$$
=-(K_X+\Delta)+\Delta-\tilde{\Delta}-t(G+\Delta-\tilde{\Delta})
$$
$$
\sim_\R A+G-tG+(1-t)(\Delta-\tilde\Delta)
$$
$$
=(1-t)\left(\frac{t}{1-t}A+A+G+\Delta-\tilde\Delta\right)
$$
which implies $-(K_X+\Omega)$ is ample over $Z$ because 
$$
-(K_X+\tilde\Delta)=-(K_X+\Delta)+\Delta-\tilde\Delta\sim_\R A+G+\Delta-\tilde\Delta/Z
$$
is nef and big over $Z$.\\

\emph{Step 7.}
In this step we finish the proof of the proposition using  \ref{p-bnd-compl-usual-local-plt}.
If $\rddown{\Omega}\neq 0$, then there is a component $S$ of $\rddown{\Omega}\le \rddown{\Delta}\le \rddown{B}$ 
and there is a 
boundary $\Gamma$ so that $(X,\Gamma)$ is plt, $S=\rddown{\Gamma}$ 
intersects the fibre over $z$, and $-(K_X+\Gamma)$ is ample over $Z$. 
 So we can apply Proposition \ref{p-bnd-compl-usual-local-plt}.

Now assume $\rddown{\Omega}=0$. Let $(X',\Omega')$ be a $\Q$-factorial dlt model of $(X,\Omega)$.
Shrinking $Z$ we can assume every component of $\rddown{\Omega'}$ intersects the fibre over $z$.
Running an MMP on $K_{X'}+\rddown{\Omega'}$ over $X$ ends with $X$ because $\rddown{\Omega'}$ 
is the reduced exceptional divisor of $X'\to X$ and because $X$ is $\Q$-factorial klt. The last step of the 
MMP is a divisorial contraction $X''\to X$ contracting one prime divisor $S''$, and 
$(X'',S'')$ is plt and $-(K_{X''}+S'')$ is ample over $X$. Moreover, if we denote the pullbacks of 
$K_X+\Omega$ and $K_X+\Delta$ to $X''$ by $K_{X''}+\Omega''$ and 
$K_{X''}+\Delta''$, respectively, then 
$S''$ is a component of both $\rddown{\Omega''}$ and $\rddown{\Delta''}$. Now since $-(K_X+\Omega)$
is ample over $Z$ and $-(K_{X''}+S'')$ is ample over $X$, we can find a 
boundary $\Gamma''$ so that $(X'',\Gamma'')$ is plt, $S''=\rddown{\Gamma''}$
intersects the fibre over $z$, and $-(K_{X''}+\Gamma'')$ is ample 
over $Z$. In addition, if $K_{X'}+B''$ is the pullback of $K_X+B$, then $S''$ is a component of $\rddown{B''}$ 
since $\Delta''\le B''$. 
Now apply Proposition \ref{p-bnd-compl-usual-local-plt} to get an $n$-complement $K_{X''}+B''^+$ 
of $K_{X''}+B''$ over $z$ with $B''^+\ge B''$ for some bounded $n\in\N$. This gives 
an $n$-complement $K_{X}+B^+$ 
of $K_{X}+B$ over $z$ with $B^+\ge B$.

\end{proof}


\section{\bf Anti-canonical volume}

In this section we prove Theorem \ref{t-BAB-to-bnd-vol} which claims that the anti-canonical volumes 
of $\epsilon$-lc Fano varieties of a given dimension are bounded. 
Recall that we treated this boundedness for exceptional Fano varieties in \ref{l-from-compl-to-BAB-exc-usual}. 
To deal with the non-exceptional case we need Conjecture \ref{conj-BAB} in lower dimension.
We will also use Theorem \ref{t-eff-bir-e-lc}. Although \ref{t-eff-bir-e-lc} will be proved later in the final 
section but this is not a problem 
because no result of this paper relies on Theorem \ref{t-BAB-to-bnd-vol}. However, \ref{t-BAB-to-bnd-vol} 
is an important ingredient of the proof of BAB in [\ref{B-BAB}].

\begin{proof}(of Theorem \ref{t-BAB-to-bnd-vol})
We give a short summary of the proof. It is easy to derive the second claim of the theorem (birational boundedness) 
to the first claim which is the existence of $v$. Now if 
$\vol(-K_X)$ is too large, we find $0\le B\sim_\Q -aK_X$ with $a>0$ too small but with $\vol(B)>(2d)^d$ 
and $(X,B)$ exactly $\epsilon'$-lc for some $\epsilon'\in(0,\epsilon)$. We extract a prime divisor 
$D'$ with  $a(D',X,B)=\epsilon'$, say via $X'\to X$, run MMP on $-D'$ giving a Mori fibre space $X''\to Z$ 
and $K_{X''}+sD''\equiv 0/Z$ where $s$ is too large. The case $\dim Z>0$ is settled by induction 
and restriction to the fibres of $X''\to Z$. To treat the case $\dim Z=0$ we 
create a covering family of non-klt centres (similar to the proof of \ref{p-bnd-vol-good-boundary}) 
and use adjunction on these centres and BAB (\ref{conj-BAB}) in lower dimension  to get a contradiction.\\

\emph{Step 1.}
In this step we reduce the theorem to existence of $v$ and introduce basic notation.
The birational boundedness claim follows from existence of $v$ and Theorem \ref{t-eff-bir-e-lc}: 
indeed then there is $m\in\N$ depending only on $d,\epsilon$ such that $|-mK_X|$ defines a birational map,
 and $\vol(-mK_X)$ is bounded from above, so we can apply Proposition \ref{p-log-bir-bnd-cert-pairs} 
 by taking some $0\le M\sim -mK_X$. Moreover, we can assume $X$ is Fano by taking the contraction 
 $X\to \tilde{X}$ defined by $-K_X$ and by replacing $X$ with $\tilde{X}$. 
 
If there is no $v$ as in the statement, then there is a sequence $X_i$ of $\epsilon$-lc Fano varieties of 
dimension $d$ such that $\vol(-K_{X_i})$ is an increasing sequence approaching $\infty$.
We will derive a contradiction. 
Fix $\epsilon'\in (0,\epsilon)$. Then there exist a decreasing sequence of rational numbers $a_i$ 
approaching $0$, and $\Q$-boundaries 
$B_i\sim_\Q -{a_i}K_{X_i}$ such that $(2d)^d<\vol(B_i)$ and 
$(X_i,B_i)$ is $\epsilon'$-lc but not $\epsilon''$-lc for any $\epsilon''>\epsilon'$. We can assume $a_i<1$, hence
$-(K_{X_i}+B_i)\sim_\Q -(1-a_i)K_{X_i}$ is ample. 

For each $i$, there is a prime divisor $D_i'$ on birational models of 
$X_i$ such that $a(D_i',X_i,B_i)=\epsilon'$. If $D_i'$ is a divisor on $X_i$, then we let 
$\phi_i\colon X_i'\to X_i$ to be a small $\Q$-factorialisation, otherwise we let it be a 
birational contraction which extracts only $D_i'$ with $X_i'$ being $\Q$-factorial [\ref{BCHM}, Corollary 1.4.3]. 
Let 
$$
K_{X_i'}+e_iD_i'=\phi_i^*K_{X_i} ~~~\mbox{and}~~~ K_{X_i'}+B_{i}'=\phi_i^*(K_{X_i}+B_i).
$$
Then $e_i\le 1-\epsilon$ but $\mu_{D_i}B_{i}'=1-\epsilon'$.   
Therefore, the coefficient of $D_i'$ in 
$$
P_{i}':=\phi_i^*B_i=B_{i}'-e_iD_i'
$$ 
is at least $\epsilon-\epsilon'$.\\
 
\emph{Step 2.}
In this step we obtain Mori fibre spaces $X_i''\to Z_i$ and numbers $s_i$.
Let $H_i$ be a general ample $\Q$-divisor so that $K_{X_i}+B_i+H_i\sim_\Q 0$ and 
$({X_i},B_i+H_i)$ is $\epsilon'$-lc, and let 
$H_{i}'$ be its pullback to $X_i'$.
Run an MMP on $-D_i'$ which ends with a $-D_i'$-Mori fibre space, that is, an extremal 
non-birational contraction $X_i''\to Z_i$ where $D_i''$, the pushdown of $D_i'$, is ample over $Z_i$. 
Letting $b_i=\frac{1}{a_i}-1$ we get 
$$
b_iB_i=\frac{1}{a_i}B_i-B_i \sim_\Q -K_{X_i}-B_i\sim_\Q H_i
$$ 
and the $b_i$ form an increasing sequence 
approaching $\infty$. In particular, $K_{X_i}+B_i+b_iB_i\sim_\Q 0$. Thus 
$$
K_{X_i'}+B_i'+b_iP_i'=K_{X_i'}+B_i'+b_i\phi_i^*B_i\sim_\Q K_{X_i'}+B_i'+H_i'\sim_\Q 0
$$
which gives 
$$
K_{X_i''}+B_i''+b_iP_i''\sim_\Q K_{X_i''}+B_i''+H_i''\sim_\Q 0
$$
 where $B_i''$ denotes the pushdown of $B_i'$, etc. Moreover, $\mu_{D_i''}b_iP_i''\ge b_i(\epsilon-\epsilon')$.
So there is a number $s_i\ge b_i(\epsilon-\epsilon')$ so that $K_{X_i''}+s_iD_i''\sim_\Q 0/Z_i$.
In particular, $\lim s_i=\infty$.\\

\emph{Step 3.}
In this step we treat the case $\dim Z_i>0$ and modify the setting when $\dim Z_i=0$.
Assume $\dim Z_i>0$ for every $i$ and let $V_i$ be a general fibre of $X_i''\to Z_i$. 
By the previous step, $(X_i'',B_i''+H_i'')$ is $\epsilon'$-lc, hence $X_i''$ is $\epsilon'$-lc 
which implies $V_i$ is an $\epsilon'$-lc Fano variety. 
Since we are assuming Conjecture \ref{conj-BAB} in dimension $\le d-1$, 
$V_i$ belongs to a bounded family. Restricting $K_{X_i''}+s_iD_i''$ to $V_i$ we get 
$K_{V_i}+s_iD_{V_i}\sim_\Q 0$ where $D_{V_i}=D_i''|_{V_i}$. 
This contradicts Lemma \ref{l-bnd-fam-intersection}.
From now on we can assume $\dim Z_i=0$ for every $i$.

By construction, 
$$
\vol(-K_{X_i''})\ge \vol(b_iP_i'')\ge \vol(b_iP_{i}')=\vol(b_iB_i)>(2b_id)^d.
$$
Replacing $\epsilon$ with $\epsilon'$ and replacing $X_i$ with $X_i''$ 
we can assume there is a prime divisor $D_i$ on $X_i$ such that $K_{X_i}+s_iD_i\sim_\Q 0$ 
and that the $s_i$ form an increasing sequence approaching $\infty$.\\ 

\emph{Step 4.}    
In this step we fix $i$, create a family of non-klt centres on $X_i$, and consider adjunction. 
By \ref{ss-non-klt-centres}(2), there is a  covering family of subvarieties of $X_i$  
such that for any pair of general closed points $x_i,y_i\in X_i$ there exist a member $G_i$ of the family and a $\Q$-divisor 
$0\le \Delta_i\sim_\Q -a_iK_{X_i}$  so that $(X_i,\Delta_i)$ is lc at $x_i$ with a unique non-klt place whose centre 
contains $x_i$, that centre is $G_i$, and $(X_i,\Delta_i)$ is not klt at $y_i$. 
As $-(K_{X_i}+\Delta_i)$ is ample, 
$\dim G_i\neq 0$ by the connectedness principle. 

Let $F_i$ be the normalisation of $G_i$.
 By Construction \ref{constr-adjunction-non-klt-centre} and Theorem \ref{t-subadjunction} 
(by taking $B=0$ and $\Delta=\Delta_i$) and the 
 ACC for lc thresholds [\ref{HMX2}, Theorem 1.1], 
 there is a $\Q$-boundary $\Theta_{F_i}$ with  
coefficients in a fixed DCC set $\Phi$ depending only on $d$ such that we can write 
$$
(K_{X_i}+\Delta_i)|_{F_i}\sim_\Q K_{F_i}+\Delta_{F_i}:=K_{F_i}+\Theta_{F_i}+P_{F_i}
$$
where $P_{F_i}$ is pseudo-effective. Increasing $a_i$ and adding to $\Delta_i$ we can assume 
$P_{F_i}$ is big and effective.

Let $D_{F_i}:=D_i|_{F_i}$. Since $G_i$ is general, it is not contained in $\Supp D_i$. 
By Lemma \ref{l-subadjunction-integral-div},  
 each component of $D_{F_i}$ has coefficient at least $1$ in $\Theta_{F_i}+D_{F_i}$. 
Replacing $\Delta_i$ with $\Delta_i+D_i$ and replacing $P_{F_i}$ with $P_{F_i}+D_{F_i}$, 
we can assume  each component of 
$D_{F_i}$ has coefficient at least $1$ in $\Delta_{F_i}$. 
Note that we also need to replace $a_i$ with $a_i+\frac{1}{s_i}$ which we still can 
assume to form a decreasing sequence approaching $0$.\\

\emph{Step 5.}
Let $F_i'\to F_i$ be a log resolution of $(F_i,\Delta_{F_i})$. In this step we define a boundary $\Sigma_{F_i'}$.
Pick a rational number $\epsilon'\in (0,{\epsilon})$. By construction, $(F_i,\Delta_{F_i})$ is not $\epsilon'$-lc. 
Define a boundary $\Sigma_{F_i'}$ on $F_i'$ as follows. 
Let $S_i$ be a prime divisor and let $w_i$ be its coefficient in 
$\Delta_{{F_i'}}$ where $K_{F_i'}+\Delta_{{F_i'}}$ is the pullback of $K_{F_i}+\Delta_{{F_i}}$. 
If $w_i\le 0$, then let the coefficient of $S_i$ in 
$\Sigma_{{F_i'}}$ be zero. But if $w_i>0$, then let the coefficient of $S_i$ in 
$\Sigma_{{F_i'}}$ be the minimum of $w_i$ and $1-\epsilon'$. Then we can write 
$$
\Sigma_{{F_i'}}=\Delta_{F_i'}+E_{F_i'}-N_{F_i'}
$$ 
where $E_{F_i'},N_{F_i'}$ are effective with no common components, $E_{F_i'}$ is exceptional$/F_i$, 
each component of $N_{F_i'}$ has coefficient $> 1-\epsilon'$ in $\Delta_{F_i'}$, and $N_{F_i'}\neq 0$.
Note that $(F_i',\Sigma_{F_i'})$ is $\epsilon'$-lc.\\

\emph{Step 6.}
In this step we consider a birational model $F_i''$ from which we obtain a Mori fibre space 
$\tilde{F}_i\to T_i$.
Let $(F_i'',\Sigma_{{F_i''}})$ be a log minimal model of $(F_i',\Sigma_{F_i'})$ over $F_i$. 
By construction, 
$$
K_{F_i''}+\Sigma_{{F_i''}}=K_{F_i''}+\Delta_{F_i''}+E_{F_i''}-N_{F_i''}\sim_\Q E_{F_i''}-N_{F_i''}/F_i.
$$
So by the negativity lemma, 
$E_{F_i''}=0$, hence $\Delta_{F_i''}=\Sigma_{F_i''}+N_{F_i''}\ge 0$. Moreover, $N_{F_i''}\neq 0$ because 
the birational transform of each component of $D_{F_i}$ is a component of $N_{F_i''}$. 

Since $-(K_{X_i}+\Delta_i)$ is ample, $-(K_{F_i}+\Delta_{F_i})$ is ample, hence $-(K_{F_i''}+\Delta_{F_i''})$ 
is semi-ample. Let $0\le L_{F_i''}\sim_\Q -(K_{F_i''}+\Delta_{F_i''})$ be general with 
coefficients $\le 1-\epsilon'$. Then $({F_i''},\Sigma_{F_i''}+L_{F_i''})$ is $\epsilon'$-lc as 
$({F_i''},\Sigma_{F_i''})$ is $\epsilon'$-lc. Moreover, since $E_{F_i''}= 0$, we have
$$
K_{{F_i''}}+\Sigma_{{F_i''}}+L_{F_i''}+N_{{F_i''}}
=K_{F_i''}+\Delta_{F_i''}-N_{F_i''}+L_{F_i''}+N_{{F_i''}} \sim_\Q  0.
$$   
Thus running an MMP on $K_{F_i''}+\Sigma_{F_i''}+L_{F_i''}$ ends with a Mori fibre space $\tilde{F}_i\to T_i$. 
As we are assuming Conjecture \ref{conj-BAB} in dimension $\le d-1$, the 
general fibres of $\tilde{F}_i\to T_i$ are bounded because $K_{\tilde{F_i}}$ is 
$\epsilon'$-lc and anti-ample over $T_i$.\\

\emph{Step 7.}
In this step we define $\Lambda_{F_i}$ and study $\Delta_{F_i}-\Lambda_{F_i}$.
 By Lemma \ref{l-sub-bnd-on-gen-subvar}, we can write $K_{F_i}+\Lambda_{F_i}=K_{X_i}|_{F_i}$ 
where $({F_i},\Lambda_{F_i})$ is sub-$\epsilon$-lc and 
$$
\Delta_{F_i}-\Lambda_{F_i}\ge \Theta_{F_i}-\Lambda_{F_i}\ge 0.
$$ 
Moreover, 
$$
\Delta_{F_i}-\Lambda_{F_i}=K_{F_i}+\Delta_{F_i}-K_{F_i}-\Lambda_{F_i}\sim_\Q 
(K_{X_i}+\Delta_i)|_{F_i}-K_{X_i}|_{F_i}=\Delta_i|_{F_i}.
$$
Let 
$K_{F_i''}+\Lambda_{F_i''}$ be the pullback of $K_{F_i}+\Lambda_{F_i}$. Then  
$\Delta_{F_i''}-\Lambda_{F_i''}$ is the pullback of $\Delta_{F_i}-\Lambda_{F_i}$, hence 
$$
\Delta_{F_i''}-\Lambda_{F_i''}\sim_\Q \Delta_i|_{F_i''}\sim_\Q a_i{s_i} D_{F_i''}
$$
where $D_{F_i''}:=D_i|_{F_i''}$. 

On the other hand, by Step 6, 
$N_{\tilde{F}_i}$, the pushdown of $N_{F_i''}$, is ample over $T_i$. Let $\tilde{C}_i$ be one of its components that is  
ample over $T_i$. Let $C_i''$ on $F_i''$ be the birational transform of $\tilde{C}_i$. Since $C_i''$ is a component of 
$N_{F_i''}$, it is a component of $\Delta_{F_i''}$ with coefficient $>1-\epsilon'$ which in 
turn implies it is a component of 
$\Delta_{F_i''}-\Lambda_{F_i''}$ with coefficient $>\epsilon-\epsilon'$.\\   

\emph{Step 8.}
In this final step we get a contradiction by restricting to the general fibres of 
$\tilde{F}_i\to T_i$. Then 
$$
K_{X_i}+\Delta_i+s_i(1-a_i)D_i\sim_\Q 0,
$$  
hence 
$$
K_{F_i''}+\Delta_{F_i''}+s_i(1-a_i)D_{F_i''}\sim_\Q 0
$$ 
which in turn gives 
$$
K_{F_i''}+\Delta_{F_i''}+\frac{1-a_i}{a_i}(\Delta_{F_i''}-\Lambda_{F_i''})\sim_\Q 0
$$
and then 
$$
K_{\tilde{F}_i}+\Delta_{\tilde{F_i}}+\frac{1-a_i}{a_i}(\Delta_{\tilde{F_i}}-\Lambda_{\tilde{F_i}})\sim_\Q 0.
$$
But now $\tilde{C}_i$ is a component of $\frac{1-a_i}{a_i}(\Delta_{\tilde{F_i}}-\Lambda_{\tilde{F_i}})$ 
whose coefficient is at least 
$$
\frac{(1-a_i)(\epsilon-\epsilon')}{a_i}
$$ 
which approaches $\infty$ as 
$i$ grows large. 
Restricting to the general fibres of $\tilde{F}_i\to T_i$ and applying Lemma \ref{l-bnd-fam-intersection} 
gives a contradiction.

\end{proof}


\section{\bf Proofs of main results}

Recall that we proved Theorem \ref{t-BAB-good-boundary} in Section 5 and proved Theorem \ref{t-BAB-to-bnd-vol} 
in Section 9. We prove the other main results by induction so lets assume all the theorems in the introduction hold in dimension $d-1$. 
They can be verified easily in dimension $1$.

\begin{proof}(of Theorem \ref{t-bnd-compl-usual-local})
This follows from Theorems \ref{t-bnd-compl-usual} and \ref{t-bnd-compl-usual-local} in dimension $d-1$, and  
Proposition \ref{p-bnd-compl-usual-local}.

\end{proof}

\begin{proof}(of Corollary \ref{cor-bnd-index})
Shrinking $X$ around the generic point of $V$ we can assume $(X,\Delta)$ is klt. Then $X$ is of Fano type over itself. Thus 
by Theorem \ref{t-bnd-compl-usual-local} in dimension $d$, $K_X+B$ has an $n$-complement 
$K_X+B^+$ near the generic point $v$ of $V$ for some $n$ depending only on $d$ and $\mathfrak{R}$ such that $B^+\ge B$. 
Since $V$ is an lc centre of $(X,B)$, we deduce that $B^+=B$ near $v$ which in particular means $n(K_X+B)$ is Cartier near $v$.  

\end{proof}

\begin{proof}(of Theorem \ref{t-BAB-exc-usual})
This follows from Theorem \ref{t-bnd-compl} in dimension $\le d-1$, Theorem \ref{t-bnd-compl-usual-local} 
in dimension $d$, and Lemma \ref{l-from-compl-to-BAB-exc-usual}. 

\end{proof}

\begin{proof}(of Theorem \ref{t-BAB-exc})
This follows from Theorem \ref{t-bnd-compl} in dimension $\le d-1$, Theorem \ref{t-bnd-compl-usual-local} 
in dimension $d$, and Proposition \ref{p-from-compl-to-BAB-exc}. 

\end{proof}

\begin{proof}(of Theorem \ref{t-bnd-compl})
This follows from Theorems \ref{t-bnd-compl-usual-local} and \ref{t-BAB-exc} in dimension $d$, 
and  Proposition \ref{p-BAB-exc-to-compl}.

\end{proof}

\begin{proof}(of Theorem \ref{t-bnd-compl-usual})
This is a special case of Theorem \ref{t-bnd-compl}.

\end{proof}

\begin{proof}(of Theorem \ref{t-eff-non-van-fano})
This is a consequence of Theorem \ref{t-bnd-compl-usual}.

\end{proof}

\begin{proof}(of Theorem \ref{t-eff-bir-e-lc})
This follows from Theorem \ref{t-eff-non-van-fano} and Proposition \ref{p-eff-bir-delta-2}. 

\end{proof}


\vspace{2cm}

\textsc{DPMMS, Centre for Mathematical Sciences} \endgraf
\textsc{University of Cambridge,} \endgraf
\textsc{Wilberforce Road, Cambridge CB3 0WB, UK} \endgraf
\email{c.birkar@dpmms.cam.ac.uk\\}

\end{document}